\documentclass[draft]{amsart}

\usepackage{cite,amstext,amsfonts,amsmath,amsthm,amscd,amssymb,euscript,mathrsfs,latexsym,gastex,color,cancel,soul}

\frenchspacing

\newtheorem{theorem}{Theorem}[section]

\newtheorem{proposition}[theorem]{Proposition}

\newtheorem{lemma}[theorem]{Lemma}

\newtheorem{corollary}[theorem]{Corollary}

\newtheorem{remark}[theorem]{Remark}

\theoremstyle{definition}

\newtheorem{problem}[theorem]{Problem}

\DeclareMathOperator{\con}{con}

\DeclareMathOperator{\ini}{ini}

\DeclareMathOperator{\mul}{mul}

\DeclareMathOperator{\occ}{occ}

\DeclareMathOperator{\simple}{sim}

\DeclareMathOperator{\var}{var}

\numberwithin{equation}{section}

\numberwithin{figure}{section}

\begin{document}

\title[Two weaker variants of congruence permutability]{Two weaker variants\\ 
of congruence permutability\\ 
for monoid varieties}
\thanks{The work was supported by the Ministry of Science and Higher Education of the Russian Federation (project FEUZ-2020-0016).}

\author[S. V. Gusev]{Sergey V. Gusev}
\address{Institute of Natural Sciences and Mathematics, Ural Federal University, Lenina str.~51, 620000 Ekaterinburg, Russia}
\email{sergey.gusb@gmail.com, bvernikov@gmail.com}

\author[B. M. Vernikov]{Boris M. Vernikov}

\keywords{Monoid, variety, free object of a variety, fully invariant congruence, permutative congruences}

\subjclass{20M07}

\begin{abstract}
We completely determine all varieties of monoids on whose free objects all fully invariant congruences or all fully invariant congruences contained in the least semilattice congruence permute. Along the way, we find several new monoid varieties with the distributive subvariety lattice (only a few examples of varieties with such a property are known so far). 
\end{abstract}

\maketitle

\section{Introduction and summary}
\label{Sec: introduction}

In this article, we study varieties of monoids as semigroups equipped with an additional \mbox{0-ary} operation that fixes the identity element. But we start with the universal-algebraic notions that closely connected with our research.

For congruences $\alpha$ and $\beta$ on an algebra $A$, we denote by $\alpha\beta$ their \emph{relational product}, that is, the relation
$$
\{(a,b)\in A\times A\mid a\,\alpha\,c\,\beta\,b\ \text{for some}\ c\in A\}.
$$
The congruences $\alpha$ and $\beta$ on an algebra $A$ are said to \emph{permute} if $\alpha\beta=\beta\alpha$. It is well-known that, exactly in this case, the relation $\alpha\beta$ is again a congruence on $A$ that coincides with the \emph{lattice join} $\alpha\vee\beta$ of $\alpha$ and $\beta$, that is, the least congruence containing both $\alpha$ and $\beta$.

The family of \emph{congruence permutable} varieties (that is, varieties on whose algebras all congruences permute) is very rich and important. In particular, it includes all varieties of groups and (not necessarily associative) rings. Unfortunately, proper semigroup or monoid varieties fail to belong to this family. Saying so, we refer to the following fact: a congruence permutable semigroup or monoid variety must consist entirely of groups. For semigroup varieties this claim first verified by Tully~\cite{Tully-64} and was then rediscovered and strengthened several times (see Evans~\cite[p.~35]{Evans-71}, Freese and Nation~\cite[Corollary on pp.~57--58]{Freese-Nation-73}, Jones~\cite[Theorem~1.2(iii)]{Jones-88} or Lipparini~\cite[Corollary~0]{Lipparini-95}). Analog of this fact is true for monoid varieties as well (see Section~\ref{Sec: generalizations}).

However, if we restrict to fully invariant congruences, the situation considerably improves since there already exist interesting varieties on whose semigroups all fully invariant congruences permute. For example, Pastijn~\cite{Pastijn-91} and Petrich and Reilly~\cite{Petrich-Reilly-90} have observed that fully invariant congruences on completely simple semigroups permute. 

Considering fully invariant congruences is most natural for free objects of varieties. Indeed, if $\mathbf V$ is a variety of algebras, then the lattice of fully invariant congruences on $\mathbf V$-free object over a countably infinite alphabet is known to be anti-isomorphic to the subvariety lattice of $\mathbf V$. Thus, any ``positive'' information about the fully invariant congruences on $\mathbf V$-free objects contributes to clarifying the structure of the subvariety lattice of $\mathbf V$. In particular, the permutability of fully invariant congruences on $\mathbf V$-free objects reflects in the very important Arguesian property of the corresponding subvariety lattice. (Recall that, by results of J\'onsson, any lattice of permuting equivalences is Arguesian~\cite{Jonsson-53} and the class of Arguesian lattices is self-dual~\cite{Jonsson-72}.) For brevity, we call a variety of algebras $\mathbf V$ $fi$-\emph{permutable} if every two fully invariant congruences on any $\mathbf V$-free object permute.

In order to introduce one more interesting class of varieties closely related with $fi$-permutable ones, we need some definitions and notation. Commutative idempotent semigroups usually are called \emph{semilattices}. We call commutative idempotent monoids \emph{semilattice monoids}. Recall that an element $a$ of a lattice $L$ is called \emph{neutral} if, for any $x,y\in L$, the sublattice of $L$ generated by $a$, $x$ and $y$ is distributive. Let $\mathbf{SL}_{\mathsf{sem}}$ [respectively, $\mathbf{SL}$] be the variety of all semilattices [semilattice monoids] and $\mathbb{SEM}$ [respectively, $\mathbb{MON}$] be the lattice of all semigroup [monoid] varieties. It is well known that $\mathbf{SL}_{\mathsf{sem}}$ [respectively, $\mathbf{SL}$] is an atom of the lattice $\mathbb{SEM}$ [respectively, $\mathbb{MON}$] and a neutral element of this lattice (see Volkov~\cite[Proposition~4.1]{Volkov-05} for the semigroup case and Gusev~\cite[Theorem~1.1]{Gusev-18} for the monoid one). These claims together with well-known properties of neutral elements in lattices (see~\cite[Theorem~254]{Gratzer-11}, for instance) imply that the lattice $\mathbb{SEM}$ [respectively, $\mathbb{MON}$] is decomposed into a subdirect product of the 2-element chain and the interval $[\mathbf{SL}_{\mathsf{sem}},\mathbf{SEM}]$ [the interval $[\mathbf{SL},\mathbf{MON}]$, respectively], where $\mathbf{SEM}$ [respectively, $\mathbf{MON}$] is the variety of all semigroups [monoids]. On every semigroup [monoid] $S$, there exists the least congruence $\sigma$ such that the quotient $S/\sigma$ is a semilattice [semilattice monoid]. The congruence $\sigma$ is called the \emph{least semilattice congruence} on $S$. The aforementioned observations show that it is natural to consider semigroup and monoid varieties $\mathbf V$ such that, on any $\mathbf V$-free object $F$, not all fully invariant congruences but only those of them that are contained in the least semilattice congruence on $F$ permute. We call a semigroup or monoid variety with such a property \emph{almost $fi$-permutable}. In view of the aforementioned properties of the varieties $\mathbf{SL}_{\mathsf{sem}}$ and $\mathbf{SL}$, an almost $fi$-permutable variety of semigroups or monoids has the Arguesian subvariety lattice. The class of almost $fi$-permutable semigroup varieties is quite wide; in particular, it contains all completely regular varieties~\cite{Pastijn-91,Petrich-Reilly-90}. As we will prove below, the same is true for almost $fi$-permutable monoid varieties (see Theorem~\ref{Th: almost fi-perm}).

A complete classification of $fi$-permutable and almost $fi$-permutable semigroup varieties is given by Vernikov and Volkov in~\cite{Vernikov-Volkov-97} and~\cite{Vernikov-Volkov-00}, respectively. The second result contains a minor inaccuracy that is fixed by Vernikov and Shaprynski\v{\i}~\cite{Vernikov-Shaprynskii-14}. Semigroup varieties with other multiplicative restrictions to fully invariant congruences on their free objects were examined in Vernikov~\cite{Vernikov-04a,Vernikov-04b,Vernikov-04c}, Vernikov and Shaprynski\v{\i}~\cite{Vernikov-Shaprynskii-14} and some other articles; further details see in Section~\ref{Sec: generalizations}. The present article is devoted to a complete determination of $fi$-permutable and almost $fi$-permutable monoid varieties. 

To formulate the main results of the article, we need some definitions and notation. Let $\mathfrak X$ be a countably infinite set called an \emph{alphabet}. As usual, we denote by $\mathfrak X^+$ [respectively, by $\mathfrak X^\ast$] the free semigroup [monoid] over the alphabet $\mathfrak X$; elements of $\mathfrak X^+$ and $\mathfrak X^\ast$ are called \emph{words}, while elements of $\mathfrak X$ are said to be \emph{letters}. Words unlike letters are written in bold. The two words forming an identity are connected by the symbol $\approx$, while the symbol $=$ denotes, among other things, the equality relation on $\mathfrak X^+$ or $\mathfrak X^\ast$. 

For a possibly empty set $W$ of words, we denote by $I(W)$ the set of all words that are not subwords of words from $W$. It is clear that $I(W)$ is an ideal of $\mathfrak X^\ast$. Let $S(W)$ denote the Rees quotient monoid $\mathfrak X^\ast/I(W)$. If $W=\{\mathbf w_1,\mathbf w_2,\dots,\mathbf w_k\}$, then we write $S(\mathbf w_1,\mathbf w_2,\dots,\mathbf w_k)$ rather than $S(\{\mathbf w_1,\mathbf w_2,\dots,\mathbf w_k\})$. The monoids of the kind $S(W)$ often appeared in the literature (see~\cite{Gusev-20+,Gusev-Vernikov-18,Jackson-05,Jackson-Lee-18,Lee-14}, for instance).

As usual, $\mathbb N$ denotes the set of all natural numbers. We denote by $\var M$ the monoid variety generated by a monoid $M$. Put
$$
\mathbf D_k=
\begin{cases}
\var S(xy)&\text{if}\ k=1,\\
\var S(xt_1xt_2\cdots xt_{k-1}x)&\text{if}\ k>1
\end{cases}
\quad\text{and}\quad\mathbf D_\infty=\bigvee_{k\in\mathbb N}\mathbf D_k.
$$

For any $n\in\mathbb N$, we denote by $S_n$ the full symmetric group on the set $\{1,2,\dots,n\}$. For convenience, we put $S_0=S_1$. Let $\mathbb N_0=\mathbb N\cup\{0\}$. For any $n,m\in\mathbb N_0$, $\pi,\tau\in S_n$ and $\rho\in S_{n+m}$, we define the words
\begin{align*}
\mathbf c_{n,m}[\rho]&=\biggl(\prod_{i=1}^{n} z_it_i\biggr)xyt\biggl(\prod_{i=n+1}^{n+m} z_it_i\biggr)x\biggl(\prod_{i=1}^{n+m} z_{i\rho}\biggr)y,\\[-3pt]
\mathbf c_{n,m}^\prime[\rho]&=\biggl(\prod_{i=1}^{n} z_it_i\biggr)yxt\biggl(\prod_{i=n+1}^{n+m} z_it_i\biggr)x\biggl(\prod_{i=1}^{n+m} z_{i\rho}\biggr)y,\\[-3pt]
\mathbf w_n[\pi,\tau]&=\biggl(\prod_{i=1}^n z_it_i\biggr)x\biggl(\prod_{i=1}^n z_{i\pi}z_{n+i\tau}\biggr)x\biggl(\prod_{i=n+1}^{2n} t_iz_i\biggr),\\[-3pt]
\mathbf w_n^\prime[\pi,\tau]&=\biggl(\prod_{i=1}^n z_it_i\biggr)x^2\biggl(\prod_{i=1}^n z_{i\pi}z_{n+i\tau}\biggr)\biggl(\prod_{i=n+1}^{2n} t_iz_i\biggr).
\end{align*}
We denote by $\mathbf d_{n,m}[\rho]$ and $\mathbf d_{n,m}^\prime[\rho]$ the words that are obtained from the words $\mathbf c_{n,m}[\rho]$ and $\mathbf c_{n,m}^\prime[\rho]$, respectively, when reading the last words from right to left. 

We fix notation for the following six identities:
\begin{align*}
\sigma_1:&\enskip xysxty\approx yxsxty,\\
\sigma_2:&\enskip xsytxy\approx xsytyx,\\
\sigma_3:&\enskip xsxyty\approx xsyxty,\\
\alpha_1:&\enskip xysxtxhy\approx yxsxtxhy,\\
\alpha_2:&\enskip xysxtyhx\approx yxsxtyhx,\\
\alpha_3:&\enskip xysytxhx\approx yxsytxhx.
\end{align*}
For any $i=1,2,3$, we denote by $\beta_i$ the identity dual to $\alpha_i$. The identity $\sigma_1$ [respectively, $\sigma_2$, $\sigma_3$] allows us to swap adjacent occurrences of two letters in a word whenever both the occurrences are non-last [both the occurrences are non-first, one occurrence is non-first and another one is non-last]. We will use these facts throughout the article many times without explicitly specifying this. 

Let $\var\Sigma$ denote the monoid variety given by an identity system $\Sigma$. We fix notation for the following monoid varieties:
\begin{align*}
&\mathbf K=\var\{x^2y\approx x^2yx,\,xyx\approx xyx^2,\,x^2y^2\approx y^2x^2\},\\
&\mathbf N=\var\{x^2\approx x^3,\,x^2y\approx yx^2,\,xyxzx\approx x^2yz,\,\sigma_2,\,\sigma_3\},\\
&\mathbf P_n=\var\{x^n\approx x^{n+1},\,x^ny\approx yx^n,\,x^2y\approx xyx\},\ \text{where}\ n\in\mathbb N,\\
&\mathbf Q_{r,s}=\var
\left\{
\begin{array}{l}
x^2\approx x^3,\,x^2y\approx yx^2,\\
\sigma_3,\,\alpha_i,\,\beta_j,\\
\mathbf c_{n,m}[\rho]\approx\mathbf c_{n,m}^\prime[\rho],\\
\mathbf d_{n,m}[\rho]\approx\mathbf d_{n,m}^\prime[\rho]
\end{array}
\middle\vert
\begin{array}{l}
1\le i,j\le 3,\\
i\ne r,\,j\ne s,\\
n,m\in\mathbb N_0,\\
\rho\in S_{n+m}
\end{array}
\right\}
,\ \text{where}\ 1\le r,s\le 3,\\
&\mathbf R=\var\{x^2\approx x^3,\,x^2y\approx yx^2,\,\sigma_1,\,\sigma_2,\,\mathbf w_n[\pi,\tau]\approx\mathbf w_n^\prime[\pi,\tau]\mid n\in\mathbb N,\,\pi,\tau\in S_n\}.
\end{align*}
By $\mathbf V^\delta$ we denote the monoid variety \emph{dual} to the variety $\mathbf V$ (in other words, $\mathbf V^\delta$ consists of monoids dual to members of $\mathbf V$).

Our first main result is the following

\begin{theorem}
\label{Th: fi-perm}
A variety of monoids $\mathbf V$ is $fi$-permutable if and only if one of the following holds:
\begin{itemize}
\item[(i)] $\mathbf V$ is a group variety;
\item[(ii)] $\mathbf V$ is a variety of idempotent monoids;
\item[(iii)] $\mathbf V$ is contained in one of the following varieties: $\mathbf D_\infty\vee\mathbf N$, $\mathbf D_\infty\vee\mathbf N^\delta$, $\mathbf K$, $\mathbf K^\delta$, $\mathbf P_n$, $\mathbf P_n^\delta$, $\mathbf Q_{r,s}$ or $\mathbf R$, where $n\in\mathbb N$ and $1\le r,s\le 3$.
\end{itemize}
\end{theorem}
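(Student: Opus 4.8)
To prove Theorem~\ref{Th: fi-perm}, the plan is to treat the two implications separately after isolating the one structural fact that does most of the organising work: $fi$-permutability is inherited by subvarieties. Indeed, if $\mathbf W\subseteq\mathbf V$ and $\theta$ is the fully invariant congruence on the $\mathbf V$-free object $F$ for which $F/\theta$ is the $\mathbf W$-free object, then every fully invariant congruence on $F/\theta$ lifts to a fully invariant congruence on $F$ containing $\theta$, and for congruences $\alpha,\beta\supseteq\theta$ one has $\alpha\theta=\alpha$ and $\beta\theta=\beta$, whence $(\alpha/\theta)(\beta/\theta)=(\alpha\beta)/\theta$ and $(\beta/\theta)(\alpha/\theta)=(\beta\alpha)/\theta$; so permutability descends. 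Thus the $fi$-permutable monoid varieties form an order ideal of $\mathbb{MON}$, and Theorem~\ref{Th: fi-perm} asserts that this ideal is generated by all group varieties, all varieties of idempotent monoids, and the eight varieties $\mathbf D_\infty\vee\mathbf N$, $\mathbf D_\infty\vee\mathbf{N}^\delta$, $\mathbf K$, $\mathbf{K}^\delta$, $\mathbf P_n$, $\mathbf{P}_n^\delta$ ($n\in\mathbb N$), $\mathbf Q_{r,s}$ ($1\le r,s\le3$) and $\mathbf R$.

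For the ``if'' part it therefore suffices to prove that each of these generating varieties is $fi$-permutable. Group varieties are even congruence permutable (groups have a Mal'cev term), so there is nothing to do. For varieties of idempotent monoids I would use the known description of the countable distributive lattice of such varieties together with the combinatorial model of the free idempotent monoid to check $\alpha\beta=\beta\alpha$ directly. For each of the eight remaining families the first task is to determine its subvariety lattice — this is where the promised new monoid varieties with distributive subvariety lattice emerge, and the arguments rest on explicit identity bases and on separating subvarieties by the Rees quotient monoids $S(W)$ — and the second task is to read off from that lattice a transparent description of the fully invariant congruences on the free object, realised as classes of words modulo the defining identities, and then to verify permutability. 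The families $\mathbf Q_{r,s}$ and $\mathbf R$, whose identity bases contain the parametrised word equalities $\mathbf c_{n,m}[\rho]\approx\mathbf c_{n,m}^\prime[\rho]$, $\mathbf d_{n,m}[\rho]\approx\mathbf d_{n,m}^\prime[\rho]$ and $\mathbf w_n[\pi,\tau]\approx\mathbf w_n^\prime[\pi,\tau]$, demand by far the most intricate bookkeeping.

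For the ``only if'' part I would argue contrapositively (one may streamline parts of it using that an $fi$-permutable variety is in particular almost $fi$-permutable). First one assembles a list of obstruction varieties — some indexed by periodicity parameters, the rest finitely many up to duality — and shows each is \emph{not} $fi$-permutable by exhibiting two fully invariant congruences on a suitable free object (usually the free object of the variety generated by a handful of small monoids) that do not permute. Then one proves the structural dichotomy: a monoid variety $\mathbf V$ containing none of the obstructions is a group variety, or a variety of idempotent monoids, or contained in one of the eight varieties above. This dichotomy is established by a long analysis of the identities holding in $\mathbf V$: avoiding the periodic obstructions forces $\mathbf V$ to be aperiodic of bounded index; the hypothesis that $\mathbf V$ is neither a group variety nor idempotent pins down a concrete small non-idempotent member to anchor the argument; and successive exclusions of the remaining obstructions force $\mathbf V$ to satisfy enough of $\sigma_1,\sigma_2,\sigma_3$ and of the identities $\alpha_i,\beta_j$ and of the word equalities defining $\mathbf Q_{r,s}$ or $\mathbf R$, or else to collapse into (a dual of) $\mathbf K$ or $\mathbf P_n$, where at each branching the surviving words are tested against the monoids $S(W)$.

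The main obstacle is exactly this last dichotomy, hand in hand with the computation of the subvariety lattices of $\mathbf Q_{r,s}$ and $\mathbf R$: one must single out, among the vast supply of aperiodic monoid identities, precisely which patterns of ``shuffled'' occurrences of letters are compatible with $fi$-permutability, and this is governed by the symmetric-group parameters $\rho$, $\pi$ and $\tau$ appearing in $\mathbf c_{n,m}[\rho]$, $\mathbf d_{n,m}[\rho]$ and $\mathbf w_n[\pi,\tau]$, forcing a fine case analysis. Proving completeness of the obstruction list — that nothing outside the group case, the idempotent case and the eight families can be $fi$-permutable — is the step I expect to absorb most of the work, and it is also the step from which the new distributive-lattice examples fall out as a by-product.
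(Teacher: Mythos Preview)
Your high-level architecture matches the paper's: hereditary $fi$-permutability (the paper packages this as Lemma~\ref{Lem: lifting}), sufficiency by checking each generating variety, and necessity via a catalogue of non-permuting pairs of fully invariant congruences (the paper's Lemmas~\ref{Lem: non-fi-perm}--\ref{Lem: var S(c_{n+1,m}[rho]) and S(c_{n,m+1}[tau])}) followed by a case split on the aperiodicity index. So as a plan this is correct and in the same spirit.

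Two tactical points where you diverge are worth flagging. First, for idempotent monoids the paper does not argue directly on the free band: it invokes the Pastijn and Petrich--Reilly result that completely regular semigroup varieties are almost $fi$-permutable and transfers it to monoids (Lemma~\ref{Lem: cr is fi-perm}). Your direct approach via the Green--Rees structure of free bands would also work, but the transfer is cheaper. Second, and more significantly, for $\mathbf Q_{r,s}$ and $\mathbf R$ you propose to \emph{first} determine the subvariety lattice and \emph{then} read off permutability. The paper does not do this, and it is not clear your order is feasible: the full lattices $L(\mathbf Q_{r,s})$ and $L(\mathbf R)$ are never computed. Instead, the paper proves a reduction lemma (Lemma~\ref{Lem: reduction to linear-balanced}) showing that inside $\mathbf A^\ast$ any identity of $\mathbf X\wedge\mathbf Y$ can be replaced, up to identities already valid in the ambient variety, by a \emph{linear-balanced} identity; then Propositions~\ref{Prop: Q_{r,s} is fi-perm} and~\ref{Prop: R is fi-perm} verify $(\mathbf u,\mathbf v)\in\theta_{\mathbf X}\theta_{\mathbf Y}$ by a direct combinatorial induction on the number of adjacent-letter swaps needed to turn $\mathbf u$ into $\mathbf v$, using Corollary~\ref{Cor: pxyq=pyxq in X wedge Y} to push each swap into one of $\mathbf X$ or $\mathbf Y$. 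The distributivity of the ambient lattice $L(\mathbf A')$ (Proposition~\ref{Prop: L(A') is distributive}) is proved separately and is used only for Corollaries~\ref{Cor: fi-perm distributive} and~\ref{Cor: almost fi-perm distributive}, not for $fi$-permutability itself. If you attempt your order you will likely find yourself needing the permutability-style arguments anyway just to understand which identities are independent inside $\mathbf Q_{r,s}$ and $\mathbf R$.
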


A variety of semigroups [monoids] is called \emph{completely regular} if it consists of \emph{completely regular} semigroups [monoids], that is, unions of groups. The second main result of the article is the following

\begin{theorem}
\label{Th: almost fi-perm}
A variety of monoids $\mathbf V$ is almost $fi$-permutable if and only if $\mathbf V$ either is a completely regular variety or is contained in one of the varieties listed in the item~(iii) of Theorem~\ref{Th: fi-perm}.
\end{theorem}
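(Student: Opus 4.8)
The plan is to deduce Theorem~\ref{Th: almost fi-perm} from Theorem~\ref{Th: fi-perm} together with two auxiliary facts: (A) every completely regular variety of monoids is almost $fi$-permutable; and (B) a monoid variety that is almost $fi$-permutable but not completely regular is in fact $fi$-permutable. Granting (A) and (B), the rest is bookkeeping. For sufficiency, a variety contained in one of the varieties of item~(iii) of Theorem~\ref{Th: fi-perm} is $fi$-permutable, and $fi$-permutability trivially implies almost $fi$-permutability, since the fully invariant congruences contained in the least semilattice congruence on a free object form a subfamily of all fully invariant congruences; a completely regular variety is almost $fi$-permutable by~(A). For necessity, if $\mathbf V$ is almost $fi$-permutable and not completely regular, then $\mathbf V$ is $fi$-permutable by~(B), so Theorem~\ref{Th: fi-perm} applies; since group varieties and varieties of idempotent monoids are completely regular, the only option left is that $\mathbf V$ lies in one of the varieties of item~(iii).

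For~(A), I would first record the characterization that a monoid variety is completely regular if and only if it satisfies an identity $x\approx x^{n+1}$ for some $n\in\mathbb N$: in such a variety the free monogenic monoid must be a cyclic group, because an element of a monoid lying in a subgroup generates, together with the identity element, a group, and a completely regular variety cannot contain $(\mathbb N_0,+,0)$; conversely $x\approx x^{n+1}$ puts every element in the subgroup with identity $x^n$. Now let $\mathbf V$ be completely regular and $F$ a $\mathbf V$-free monoid. Then $F$ is a completely regular monoid, hence a semilattice $Y$ of completely simple semigroups $\{F_\gamma\}_{\gamma\in Y}$, and the classes of the least semilattice congruence $\sigma$ on $F$ are precisely the $F_\gamma$. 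A fully invariant congruence $\theta\subseteq\sigma$ cannot identify elements from different $\sigma$-classes, so it restricts to a congruence on each $F_\gamma$; hence for two such congruences the relational product is computed componentwise, and on each completely simple component the relevant restrictions permute by the results of Pastijn~\cite{Pastijn-91} and Petrich and Reilly~\cite{Petrich-Reilly-90}. One has to check that the restrictions inherit enough invariance to invoke those results, but this is routine and is the monoid counterpart of the corresponding step in the semigroup case; alternatively, (A) can be deduced from the semigroup statement via the standard correspondence between a monoid variety and the semigroup variety generated by its members, verifying that it transports $\sigma$ and the congruences below it appropriately.

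Fact~(B) is where the real work lies, and I would prove it contrapositively: if $\mathbf V$ is not completely regular and not $fi$-permutable, then $\mathbf V$ is not almost $fi$-permutable. First, not being completely regular, $\mathbf V$ fails $x\approx x^{n+1}$ for every $n$, so its free monogenic monoid is $(\mathbb N_0,+,0)$ or a finite monogenic monoid with a tail of length at least two; in either case it has a two-element semilattice monoid as a homomorphic image, whence $\mathbf{SL}\subseteq\mathbf V$. In particular $\mathbf V$ is neither a group variety nor a variety of idempotent monoids, and, being not $fi$-permutable, it is not contained in any variety of item~(iii) of Theorem~\ref{Th: fi-perm}. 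The case analysis underlying Theorem~\ref{Th: fi-perm} then supplies a monoid $M$ from an explicit finite list of ``critical'' — and not completely regular — monoids of the form $S(W)$ with $M\in\mathbf V$, together with two fully invariant congruences on the $\var M$-free object that do not permute. Since $M$ is not completely regular, $\mathbf{SL}\subseteq\var M\subseteq\mathbf V$, and almost $fi$-permutability is inherited by subvarieties containing $\mathbf{SL}$ — for such a subvariety the fully invariant congruences below its least semilattice congruence correspond to fully invariant congruences below $\sigma$ on the $\mathbf V$-free object — so it suffices to show that each such $\var M$ is not almost $fi$-permutable. This amounts to checking that the non-permuting pair exhibited for $M$ can be taken inside the least semilattice congruence; this is plausible because the identities defining these congruences are content-preserving (they never delete a letter, e.g.\ identities of the forms $\sigma_1$--$\sigma_3$ or $x^2\approx x^3$), so the subvarieties involved automatically contain $\mathbf{SL}$. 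Nevertheless this last verification has to be carried out case by case over the critical monoids, and that is the main obstacle.
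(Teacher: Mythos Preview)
Your overall plan (reduce to Theorem~\ref{Th: fi-perm} via (A) and (B)) is the paper's plan too, and your sketch of (A) is essentially Lemma~\ref{Lem: cr is fi-perm}, although the paper proves that lemma more directly by pushing the problem to the semigroup variety generated by $\mathbf V$ via Lemma~\ref{Lem: embedding} and then quoting Pastijn/Petrich--Reilly.

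The problem is your argument for (B). You never establish that a non-completely-regular almost $fi$-permutable variety is \emph{aperiodic}, and your ``critical monoid $S(W)$'' scheme tacitly assumes it: all the monoids $S(W)$ are aperiodic, so if $\mathbf V$ contains, say, $\mathbf A_p$ for some prime $p$, nothing in your outline excludes it. The case analysis in the proof of Theorem~\ref{Th: fi-perm} only gets going \emph{after} aperiodicity is secured (there via Lemma~\ref{Lem: non-fi-perm}(i), which uses $\theta_{\mathbf A_n}$ --- a congruence \emph{not} below $\sigma$), so you cannot simply borrow its output. Concretely, $\mathbf A_p\vee\mathbf C_2$ is neither completely regular nor $fi$-permutable, and your proposed mechanism does not produce a witness against almost $fi$-permutability for it. The paper handles this with the dedicated pair in Lemma~\ref{Lem: non-fi-perm}(ii): since $\mathbf C_2\subseteq\mathbf V$ by Corollary~\ref{Cor: cr}, and both $\mathbf A_n\vee\mathbf{SL}$ and $\mathbf C_2$ contain $\mathbf{SL}$, the non-permutation of $\theta_{\mathbf A_n\vee\mathbf{SL}}$ and $\theta_{\mathbf C_2}$ forces $\mathbf A_n\nsubseteq\mathbf V$, hence $\mathbf V$ is aperiodic.

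Once aperiodicity is in hand, the entire ``case-by-case check that the non-permuting pairs lie below $\sigma$'' is unnecessary, and this is the key insight you are missing. If $\mathbf V$ is aperiodic, then by Lemma~\ref{Lem: group variety} every subvariety $\mathbf X\subseteq\mathbf V$ either is the trivial variety (so $\theta_{\mathbf X}$ is universal and permutes with everything) or contains $\mathbf{SL}$ (so $\theta_{\mathbf X}\subseteq\sigma$). Hence for aperiodic $\mathbf V$ the notions of $fi$-permutability and almost $fi$-permutability coincide, and Theorem~\ref{Th: fi-perm} applies verbatim. You came close to this with your remark that the relevant identities are ``content-preserving'', but the clean statement is: in an aperiodic non-group monoid variety \emph{every} fully invariant congruence on a free object lies below $\sigma$.
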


As we have mentioned above, any [almost] $fi$-permutable semigroup variety has the Arguesian subvariety lattice. It follows from results of~\cite{Vernikov-Volkov-97,Vernikov-Volkov-00} that, beyond the completely simple [completely regular] case, subvariety lattices of such varieties possesses the much stronger distributive law. Results of the present article imply the following analogs of these claims. 

\begin{corollary}
\label{Cor: fi-perm distributive}
The subvariety lattice of any non-group $fi$-permutable variety of monoids is distributive.
\end{corollary}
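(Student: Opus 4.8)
The plan is to derive Corollary~\ref{Cor: fi-perm distributive} as a short formal consequence of Theorem~\ref{Th: fi-perm}, once the relevant building‑block varieties are known to have distributive subvariety lattices. Write $L(\mathbf X)$ for the lattice of subvarieties of a monoid variety $\mathbf X$. The first, elementary, step is the observation that if $\mathbf V\subseteq\mathbf W$ then $L(\mathbf V)$ is the principal ideal $\{\mathbf U\in L(\mathbf W)\mid\mathbf U\subseteq\mathbf V\}$ of $L(\mathbf W)$; in particular $L(\mathbf V)$ is a sublattice of $L(\mathbf W)$, and every sublattice of a distributive lattice is distributive. Hence it suffices to show that each non-group $fi$-permutable variety of monoids is contained in some variety with distributive subvariety lattice.

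So let $\mathbf V$ be a non-group $fi$-permutable variety of monoids. By Theorem~\ref{Th: fi-perm}, either $\mathbf V$ is a variety of idempotent monoids, or $\mathbf V$ is contained in one of $\mathbf D_\infty\vee\mathbf N$, $\mathbf D_\infty\vee\mathbf N^\delta$, $\mathbf K$, $\mathbf K^\delta$, $\mathbf P_n$, $\mathbf P_n^\delta$ (for some $n\in\mathbb N$), $\mathbf Q_{r,s}$ (for some $1\le r,s\le3$), or $\mathbf R$. In the first case I would invoke the known description of the lattice of varieties of idempotent monoids, which is distributive; then $L(\mathbf V)$ is a sublattice of it and we are done. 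In the second case it remains to verify that each of the eight listed varieties has a distributive subvariety lattice; since $\mathbf X\mapsto\mathbf X^\delta$ is an order-preserving involution of $\mathbb{MON}$, it restricts to an isomorphism $L(\mathbf X)\cong L(\mathbf X^\delta)$, so it is enough to treat $\mathbf D_\infty\vee\mathbf N$, $\mathbf K$, $\mathbf P_n$, $\mathbf Q_{r,s}$ and $\mathbf R$. This distributivity is the additional information announced in the abstract; I expect it to be obtained in the course of proving Theorem~\ref{Th: fi-perm}, by describing the lattice of fully invariant congruences of each of these varieties — equivalently, via the anti-isomorphism recalled in Section~\ref{Sec: introduction}, its subvariety lattice — combinatorially in terms of the identities $\sigma_i$, $\alpha_i$, $\beta_j$ and the word families $\mathbf c_{n,m}[\rho]$, $\mathbf c_{n,m}^\prime[\rho]$, $\mathbf w_n[\pi,\tau]$, $\mathbf w_n^\prime[\pi,\tau]$, and then embedding that lattice into a direct product of chains (or checking the distributive law directly on a generating set).

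I expect the whole difficulty to sit in that last step: the corollary itself is purely formal given Theorem~\ref{Th: fi-perm} plus the distributivity of the building-block varieties, so the main obstacle is to pin down the subvariety lattices of $\mathbf D_\infty\vee\mathbf N$, $\mathbf K$, $\mathbf P_n$, $\mathbf Q_{r,s}$ and $\mathbf R$ accurately enough to read off distributivity — these being, as the abstract stresses, new examples of varieties with this property. Two easy points also need a line each: that the non-group, non-idempotent $fi$-permutable varieties are exactly those covered by item~(iii) (so the dichotomy above is exhaustive), and that group varieties must genuinely be excluded, since already the variety of all groups has a non-distributive subvariety lattice.
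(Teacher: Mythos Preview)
Your proposal is correct and matches the paper's approach: the corollary is deduced formally from Theorem~\ref{Th: fi-perm} together with the distributivity of $L(\mathbf W)$ for each of the building-block varieties (idempotent monoids via Wismath, $\mathbf K$ as a chain, explicit descriptions for $\mathbf D_\infty\vee\mathbf N$ and $\mathbf P_n$, and a common supervariety $\mathbf A^\prime\supseteq\mathbf Q_{r,s},\mathbf R$ handled by Proposition~\ref{Prop: L(A') is distributive}), with duality covering the $\delta$-versions. The only refinement worth noting is that the paper does not treat $\mathbf Q_{r,s}$ and $\mathbf R$ separately but shows both lie in the auxiliary variety $\mathbf A^\prime$, whose subvariety lattice is proved distributive via the criterion of Lemma~\ref{Lem: smth imply distributivity}.
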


\begin{corollary}
\label{Cor: almost fi-perm distributive}
The subvariety lattice of any non-completely regular almost $fi$-permutable variety of monoids is distributive.
\end{corollary}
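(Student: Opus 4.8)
The plan is to deduce the statement directly from Theorems~\ref{Th: fi-perm} and~\ref{Th: almost fi-perm} together with Corollary~\ref{Cor: fi-perm distributive}; no new computation is required. Let $\mathbf V$ be a non-completely regular almost $fi$-permutable variety of monoids. By Theorem~\ref{Th: almost fi-perm}, the variety $\mathbf V$ is either completely regular or contained in one of the varieties
\[
\mathbf D_\infty\vee\mathbf N,\quad \mathbf D_\infty\vee\mathbf N^\delta,\quad \mathbf K,\quad \mathbf K^\delta,\quad \mathbf P_n,\quad \mathbf P_n^\delta,\quad \mathbf Q_{r,s},\quad \mathbf R
\]
for some $n\in\mathbb N$ and $1\le r,s\le 3$. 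Since $\mathbf V$ is not completely regular, the second alternative must hold; in other words, $\mathbf V$ satisfies condition~(iii) of Theorem~\ref{Th: fi-perm}, and therefore $\mathbf V$ is $fi$-permutable.

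Next I would observe that $\mathbf V$ is not a group variety: every group is a union of groups (namely of itself), so any group variety is completely regular, contrary to our hypothesis. Hence $\mathbf V$ is a non-group $fi$-permutable variety of monoids, and Corollary~\ref{Cor: fi-perm distributive} immediately yields that the subvariety lattice of $\mathbf V$ is distributive.

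There is essentially no obstacle in this final deduction: all the difficulty is absorbed into the earlier parts of the paper, in particular into the proof of Corollary~\ref{Cor: fi-perm distributive}, which rests on verifying that each of the eight named varieties $\mathbf D_\infty\vee\mathbf N,\dots,\mathbf R$ has a distributive subvariety lattice. One could equivalently bypass Corollary~\ref{Cor: fi-perm distributive} and argue that, once those eight subvariety lattices are known to be distributive, the subvariety lattice of $\mathbf V$ is a principal ideal of one of them and hence is itself distributive, since every ideal of a distributive lattice is distributive.
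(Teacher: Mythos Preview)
Your proposal is correct and is essentially the same approach as the paper. In the paper, Corollaries~\ref{Cor: fi-perm distributive} and~\ref{Cor: almost fi-perm distributive} are proved together: via Theorems~\ref{Th: fi-perm} and~\ref{Th: almost fi-perm} one reduces both statements (outside the group/idempotent and completely regular cases) to showing that each of the eight named varieties has a distributive subvariety lattice, and this is then verified. Your argument simply factors this through Corollary~\ref{Cor: fi-perm distributive}, observing that any non-completely regular almost $fi$-permutable variety is automatically non-group and $fi$-permutable; this is a harmless repackaging, and your alternative ``principal ideal of a distributive lattice'' remark is exactly the paper's implicit reasoning.
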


The claims that the varieties $\mathbf D_\infty\vee\mathbf N$, $\mathbf D_\infty\vee\mathbf N^\delta$, $\mathbf P_n$, $\mathbf P_n^\delta$, $\mathbf Q_{r,s}$ and $\mathbf R$ with $n\in\mathbb N$ and $1\le r,s\le 3$ have distributive subvariety lattices are new. Moreover, we find below some monoid variety that properly contains $\mathbf Q_{r,s}$ and $\mathbf R$ and has the distributive subvariety lattice (see Proposition~\ref{Prop: L(A') is distributive} and proof of Corollaries~\ref{Cor: fi-perm distributive} and~\ref{Cor: almost fi-perm distributive} given in Section~\ref{Sec: proof of main results}). All these facts are of some independent interest because only a few examples of monoid varieties with the distributive subvariety lattice are known so far.

The main result of the article~\cite{Vernikov-Volkov-00} shows that the class of almost $fi$-permutable semigroup varieties is not closed under taking of subvarieties. This fact contrasts with the following assertion which immediately follows from Theorem~\ref{Th: almost fi-perm}.

\begin{corollary}
\label{Cor: almost fi-perm hereditary}
Every subvariety of any almost $fi$-permutable monoid variety is almost $fi$-permutable variety too.\qed
\end{corollary}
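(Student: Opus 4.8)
The plan is to derive the statement directly from Theorem~\ref{Th: almost fi-perm}, so that the entire task reduces to checking that each of the two alternatives occurring in that characterization is closed under passing to subvarieties. Let $\mathbf V$ be an almost $fi$-permutable monoid variety and let $\mathbf U$ be an arbitrary subvariety of $\mathbf V$. By Theorem~\ref{Th: almost fi-perm}, either $\mathbf V$ is completely regular, or $\mathbf V$ is contained in one of the varieties $\mathbf D_\infty\vee\mathbf N$, $\mathbf D_\infty\vee\mathbf N^\delta$, $\mathbf K$, $\mathbf K^\delta$, $\mathbf P_n$, $\mathbf P_n^\delta$, $\mathbf Q_{r,s}$, $\mathbf R$ for suitable $n\in\mathbb N$ and $1\le r,s\le 3$; it suffices to treat these two cases in turn.

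First I would handle the completely regular case. Every monoid in $\mathbf U$ is a monoid in $\mathbf V$, hence a union of groups, so $\mathbf U$ is again a completely regular variety. (Equivalently, one may note that the class of completely regular monoids is a variety in the enriched signature obtained by adjoining the unary operation $x\mapsto x^{-1}$ sending each element to its inverse in the maximal subgroup containing it, and a variety is trivially closed under subvarieties.) The sufficiency direction of Theorem~\ref{Th: almost fi-perm} then gives that $\mathbf U$ is almost $fi$-permutable. In the remaining case one has $\mathbf U\subseteq\mathbf V\subseteq\mathbf W$, where $\mathbf W$ is one of the listed varieties; hence $\mathbf U$ is itself contained in a variety from item~(iii) of Theorem~\ref{Th: fi-perm}, and the sufficiency direction of Theorem~\ref{Th: almost fi-perm} again yields that $\mathbf U$ is almost $fi$-permutable. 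This exhausts all possibilities, completing the argument.

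Since the proof amounts to a case check over the statement of Theorem~\ref{Th: almost fi-perm}, there is no genuine obstacle here. The only point worth flagging is the contrast already recorded in the introduction: the Vernikov--Volkov classification of almost $fi$-permutable \emph{semigroup} varieties does not have the form ``completely regular, or below one of a fixed family of varieties'', and correspondingly that class fails to be closed under subvarieties; the monoid classification in Theorem~\ref{Th: almost fi-perm} has exactly the shape that makes downward closure automatic, which is precisely what this corollary records.
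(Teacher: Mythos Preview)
Your argument is correct and matches the paper's own treatment: the paper simply states that the corollary ``immediately follows from Theorem~\ref{Th: almost fi-perm}'' and marks it with a \qed, and your proof spells out exactly why---both alternatives in that theorem (completely regular, or below one of the listed varieties) are manifestly downward closed.
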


One more immediate corollary of Theorems~\ref{Th: fi-perm} and~\ref{Th: almost fi-perm} is the following claim.

\begin{corollary}
\label{Cor: almost fi-perm = fi-perm without cr}
If a non-completely regular monoid variety is almost $fi$-permutable, then it is $fi$-permutable.\qed
\end{corollary}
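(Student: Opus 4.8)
The plan is to read the statement off directly from the two main theorems, which we may assume. Suppose $\mathbf V$ is a monoid variety that is almost $fi$-permutable and is \emph{not} completely regular. By Theorem~\ref{Th: almost fi-perm}, exactly one of two possibilities occurs: either $\mathbf V$ is completely regular, or $\mathbf V$ is contained in one of the varieties listed in item~(iii) of Theorem~\ref{Th: fi-perm}. The first possibility is excluded by hypothesis, so the second one must hold; that is, $\mathbf V$ is contained in one of $\mathbf D_\infty\vee\mathbf N$, $\mathbf D_\infty\vee\mathbf N^\delta$, $\mathbf K$, $\mathbf K^\delta$, $\mathbf P_n$, $\mathbf P_n^\delta$, $\mathbf Q_{r,s}$ or $\mathbf R$ for some $n\in\mathbb N$ and some $r,s$ with $1\le r,s\le 3$.

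Now item~(iii) of Theorem~\ref{Th: fi-perm} asserts precisely that every monoid variety contained in one of these varieties is $fi$-permutable. Hence $\mathbf V$ is $fi$-permutable, which is what we wanted. I do not expect any genuine obstacle here: all the substance is already carried by Theorems~\ref{Th: fi-perm} and~\ref{Th: almost fi-perm}, and the corollary merely records that item~(iii) of Theorem~\ref{Th: fi-perm} reappears verbatim in Theorem~\ref{Th: almost fi-perm} as the sole alternative to complete regularity. (This is also consistent with the fact that the two ``extra'' cases in Theorem~\ref{Th: fi-perm}, namely group varieties and varieties of idempotent monoids, consist of completely regular monoids and are therefore already subsumed by the completely regular case of Theorem~\ref{Th: almost fi-perm}.) Accordingly, the proof is the one-line deduction above, and the \qed in the statement reflects exactly this.
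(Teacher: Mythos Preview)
Your argument is correct and matches the paper's approach: the paper likewise presents this corollary as an immediate consequence of Theorems~\ref{Th: fi-perm} and~\ref{Th: almost fi-perm}, with no separate proof beyond the \qed. One tiny wording quibble: the two alternatives in Theorem~\ref{Th: almost fi-perm} are not mutually exclusive (e.g., $\mathbf{SL}$ is both completely regular and contained in the listed varieties), so ``exactly one'' should be ``at least one''; this does not affect the argument.
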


Results of the articles~\cite{Vernikov-Volkov-97} and~\cite{Vernikov-Volkov-00} show that the analog of the last claim for semigroup varieties is not the case.

The article consists of seven sections. Section~\ref{Sec: preliminaries} contains definitions, notation, certain known results and its simple corollaries. In Section~\ref{Sec: auxiliary results} we prove a number of auxiliary assertions. Section~\ref{Sec: non-fi-perm} contains several examples of non-permutative fully invariant congruences on the free monoid $\mathfrak X^\ast$, while in Section~\ref{Sec: fi-perm} we prove the $fi$-permutability of several concrete monoid varieties. Section~\ref{Sec: proof of main results} is devoted to verification of Theorems~\ref{Th: fi-perm} and~\ref{Th: almost fi-perm} and Corollaries~\ref{Cor: fi-perm distributive} and~\ref{Cor: almost fi-perm distributive}. Finally, in Section~\ref{Sec: generalizations} we discuss some generalizations of the notion of $fi$-permutability.

\section{Preliminaries}
\label{Sec: preliminaries}

We start with a general remark which can be straightforwardly checked.

\begin{lemma}
\label{Lem: lifting} 
Let $\alpha,\beta$ and $\nu$ be equivalences on a set $S$ such that $\alpha,\beta\supseteq\nu$. Then $\alpha$ and $\beta$ permute if and only if the equivalences $\alpha/\nu$ and $\beta/\nu$ on the quotient set $S/\nu$ permute.\qed
\end{lemma}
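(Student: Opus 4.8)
The plan is to reduce everything to a single bijective correspondence between relational products on $S$ and on $S/\nu$. First I would recall that, since $\nu\subseteq\alpha\cap\beta$, the relations $\alpha/\nu$ and $\beta/\nu$ on $S/\nu$ are well defined by $(a\nu,b\nu)\in\alpha/\nu\iff(a,b)\in\alpha$ (and similarly for $\beta$), where $a\nu$ denotes the $\nu$-class of $a$. The heart of the argument is the claim that, for all $a,b\in S$,
$$
(a\nu,b\nu)\in(\alpha/\nu)(\beta/\nu)\iff(a,b)\in\alpha\beta,
$$
together with the analogous equivalence for $\beta\alpha$ and $(\beta/\nu)(\alpha/\nu)$.

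To prove this claim, suppose $(a\nu,b\nu)\in(\alpha/\nu)(\beta/\nu)$. Then there is some class $c\nu$ with $(a\nu,c\nu)\in\alpha/\nu$ and $(c\nu,b\nu)\in\beta/\nu$, which by definition means $(a,c)\in\alpha$ and $(c,b)\in\beta$, whence $(a,b)\in\alpha\beta$. Conversely, if $(a,b)\in\alpha\beta$, pick $c\in S$ with $(a,c)\in\alpha$ and $(c,b)\in\beta$; then $(a\nu,c\nu)\in\alpha/\nu$ and $(c\nu,b\nu)\in\beta/\nu$, so $(a\nu,b\nu)\in(\alpha/\nu)(\beta/\nu)$. (Note $\nu\subseteq\alpha\cap\beta$ is exactly what makes the two directions match up without any mismatch of classes.)

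Given the claim, the lemma follows immediately. If $\alpha$ and $\beta$ permute, i.e. $\alpha\beta=\beta\alpha$, then for any $a,b\in S$ we have $(a\nu,b\nu)\in(\alpha/\nu)(\beta/\nu)\iff(a,b)\in\alpha\beta\iff(a,b)\in\beta\alpha\iff(a\nu,b\nu)\in(\beta/\nu)(\alpha/\nu)$, so $(\alpha/\nu)(\beta/\nu)=(\beta/\nu)(\alpha/\nu)$. Conversely, if $\alpha/\nu$ and $\beta/\nu$ permute, then for $(a,b)\in\alpha\beta$ we get $(a\nu,b\nu)\in(\alpha/\nu)(\beta/\nu)=(\beta/\nu)(\alpha/\nu)$, hence $(a,b)\in\beta\alpha$; thus $\alpha\beta\subseteq\beta\alpha$, and the reverse inclusion holds by symmetry.

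There is no serious obstacle here; the only point requiring a moment's care is the well-definedness of $\alpha/\nu$ and $\beta/\nu$ and the verification that the correspondence between relational products is genuinely a two-way implication, both of which hinge on the hypothesis $\alpha,\beta\supseteq\nu$.
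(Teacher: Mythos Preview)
Your proof is correct and is precisely the routine verification the paper has in mind; the paper itself omits the proof entirely (the lemma is stated with a \qed and the remark that it ``can be straightforwardly checked''), so your argument fills in exactly the expected details.
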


Lemma~\ref{Lem: lifting} shows that, when studying permuting fully invariant congruences, we may consider congruences on the free monoid $\mathfrak X^\ast$ that contain the fully invariant congruence $\nu$ on $\mathfrak X^\ast$ corresponding to a variety $\mathbf V$ instead of congruences on the $\mathbf V$-free object $\mathfrak X^\ast/\nu$. This is convenient for it is easier to deal with elements of $\mathfrak X^\ast$ (that is, words) than with elements of an arbitrary free objects of monoid varieties.

The well-known result by J\'onsson~\cite{Jonsson-53} (see also~\cite[Theorem~410]{Gratzer-11}, for instance) immediately implies the following

\begin{lemma}
\label{Lem: fi-perm implies modularity}
Every $fi$-permutable monoid variety has a modular and moreover, Arguesian subvariety lattice.\qed
\end{lemma}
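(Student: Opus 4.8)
The plan is to translate the statement into one about congruence lattices of a free object, apply Jónsson's theorem there, and then dualize. Fix the $\mathbf V$-free object $F=\mathfrak X^\ast/\nu$ over the countably infinite alphabet $\mathfrak X$, where $\nu$ is the fully invariant congruence on $\mathfrak X^\ast$ corresponding to $\mathbf V$. As recalled in Section~\ref{Sec: introduction}, the lattice of fully invariant congruences on $F$ is anti-isomorphic to the subvariety lattice $L(\mathbf V)$. One first notes the routine fact that the fully invariant congruences on $F$ form a sublattice of the full congruence lattice $\operatorname{Con}(F)$: the intersection of fully invariant congruences is clearly fully invariant, and the join of fully invariant congruences $\alpha,\beta$ — that is, the transitive closure of $\alpha\cup\beta$ — is fully invariant because any endomorphism of $F$ sends a chain connecting two $(\alpha\vee\beta)$-related elements to a chain of the same kind.

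Now I would invoke the hypothesis. To say that $\mathbf V$ is $fi$-permutable is exactly to say that any two fully invariant congruences on $F$ permute; hence, by the basic fact about permuting congruences recalled in Section~\ref{Sec: introduction}, for such $\alpha,\beta$ the relational product $\alpha\beta$ is again a congruence and equals $\alpha\vee\beta$. Thus the lattice of fully invariant congruences on $F$ is a lattice of pairwise permuting equivalence relations on the set $F$, and Jónsson's theorem~\cite{Jonsson-53} (see also~\cite[Theorem~410]{Gratzer-11}) applies to give that this lattice is Arguesian, and in particular modular.

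It remains to carry the conclusion over to $L(\mathbf V)$, which is only anti-isomorphic, not isomorphic, to the lattice of fully invariant congruences on $F$. Modularity is a self-dual property, so the dual of a modular lattice is modular; and, by the other cited result of Jónsson~\cite{Jonsson-72}, the class of Arguesian lattices is self-dual, so the dual of an Arguesian lattice is Arguesian. Applying these two facts to the Arguesian lattice of fully invariant congruences on $F$ shows that $L(\mathbf V)$ is Arguesian and modular, as required. I do not anticipate a genuine obstacle here: every ingredient is either a quoted theorem (Jónsson~\cite{Jonsson-53} and~\cite{Jonsson-72}) or the standard anti-isomorphism between fully invariant congruences and subvarieties, and the sole point that needs a line of checking — that the fully invariant congruences form a sublattice of $\operatorname{Con}(F)$ — is elementary, and could in any case be sidestepped by passing to the sublattice of $\operatorname{Con}(F)$ generated by the fully invariant congruences in question, on which Jónsson's theorem still applies.
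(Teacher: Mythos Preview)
Your proposal is correct and follows exactly the route the paper has in mind: the lemma is stated in the paper with a bare \qed, and the reasoning is the one already sketched in the introduction---J\'onsson's theorem~\cite{Jonsson-53} applied to the lattice of permuting fully invariant congruences on the free object, followed by the anti-isomorphism with $L(\mathbf V)$ and the self-duality of the Arguesian identity~\cite{Jonsson-72}. You have simply made explicit the details (sublattice property, dualization) that the paper leaves to the reader.
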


We denote the empty word by $\lambda$. The \emph{content} of a word $\mathbf w$, that is, the set of all letters occurring in $\mathbf w$ is denoted by $\con(\mathbf w)$. The following assertion is well known (see~\cite[Lemma~2.1]{Gusev-Vernikov-18}, for instance).

\begin{lemma}
\label{Lem: group variety}
Let $\mathbf V$ be a monoid variety. The following are equivalent:
\begin{itemize}
\item[a)] $\mathbf V$ is a group variety;
\item[b)] $\mathbf V$ satisfies an identity $\mathbf u\approx\mathbf v$ with $\con(\mathbf u)\ne\con(\mathbf v)$;
\item[c)] $\mathbf{SL}\nsubseteq\mathbf V$.\qed
\end{itemize}
\end{lemma}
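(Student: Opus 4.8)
The plan is to prove the equivalence of (a), (b) and (c) by establishing the cycle of implications $(a)\Rightarrow(b)\Rightarrow(c)\Rightarrow(a)$, which is the most economical route.

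First I would handle $(a)\Rightarrow(b)$. If $\mathbf V$ is a group variety, then in any group the identity element is the value of $xx^{-1}$, and more to the point, as a monoid variety $\mathbf V$ satisfies some identity expressing that every letter is invertible — concretely, $\mathbf V$ satisfies $\mathbf u\approx\mathbf v$ for suitable words with unequal content; the cleanest choice is to observe that a group variety satisfies an identity of the form $x\mathbf w\approx\mathbf w$ (or similar) where $x\notin\con(\mathbf w)$. Actually the slickest argument: every group, viewed as a monoid, satisfies $x^{n}\approx\lambda$ for some $n$ if the variety has finite exponent, but for the general case one should instead note that a group variety $\mathbf V$ cannot contain the two-element semilattice monoid $\mathbf{SL}$ as a member (a nontrivial semilattice is not a group), and then derive a content-violating identity. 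I expect the smoothest path is to prove $(a)\Rightarrow(c)$ and $(c)\Rightarrow(b)$ and $(b)\Rightarrow(a)$ instead, so let me reorganize: $(a)\Rightarrow(c)$ is immediate since $\mathbf{SL}$ is not a group variety and the class of group varieties is closed under subvarieties, so if $\mathbf{SL}\subseteq\mathbf V$ then $\mathbf V$ would have a non-group member.

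For $(c)\Rightarrow(b)$: suppose $\mathbf{SL}\nsubseteq\mathbf V$. Since $\mathbf{SL}$ is generated by the two-element semilattice monoid, and $\mathbf{SL}=\var\{x^{2}\approx x,\ xy\approx yx\}$, the failure $\mathbf{SL}\nsubseteq\mathbf V$ means $\mathbf V$ does not satisfy every consequence of $\{x^{2}\approx x,\ xy\approx yx\}$; but that is not quite what we want. Instead I would argue: the variety $\mathbf{SL}$ is an atom of $\mathbb{MON}$ and a neutral element, and a standard fact (see the discussion in the introduction) is that $\mathbf{SL}\subseteq\mathbf V$ if and only if $\mathbf V$ satisfies only identities $\mathbf u\approx\mathbf v$ with $\con(\mathbf u)=\con(\mathbf v)$ — this is because $\mathbf{SL}$ itself satisfies precisely the balanced-in-content identities, so $\mathbf{SL}\subseteq\mathbf V$ iff every identity of $\mathbf V$ holds in $\mathbf{SL}$ iff every identity of $\mathbf V$ is content-preserving. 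Contrapositively, $\mathbf{SL}\nsubseteq\mathbf V$ iff $\mathbf V$ satisfies some identity with $\con(\mathbf u)\ne\con(\mathbf v)$, which is exactly (b). This simultaneously gives $(c)\Rightarrow(b)$ and $(b)\Rightarrow(c)$.

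Finally $(b)\Rightarrow(a)$: if $\mathbf V\models\mathbf u\approx\mathbf v$ with, say, a letter $x\in\con(\mathbf u)\setminus\con(\mathbf v)$, then substituting $\lambda$ for all letters other than $x$ yields an identity $x^{k}\approx\lambda$ for some $k\ge 1$, whence $\mathbf V$ has finite exponent dividing... well, $x^{k}\approx\lambda$ forces every element to be invertible (with inverse $x^{k-1}$), so every monoid in $\mathbf V$ is a group; closure under the variety operators then shows $\mathbf V$ is a group variety. The main obstacle — really the only delicate point — is pinning down the characterization ``$\mathbf{SL}\subseteq\mathbf V$ iff all identities of $\mathbf V$ preserve content'', which rests on knowing the identity basis (or at least the content-behaviour of identities) of $\mathbf{SL}$; since the paper cites this lemma as well known with a reference to~\cite[Lemma~2.1]{Gusev-Vernikov-18}, I would simply invoke that the free semilattice monoid on $\mathfrak X$ has underlying set the finite subsets of $\mathfrak X$ under union, so two words are equal there exactly when they have the same content, making the equivalence transparent.
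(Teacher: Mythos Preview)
Your argument is correct. The paper does not actually prove this lemma: it is stated with a terminal \qed\ and attributed as well known to~\cite[Lemma~2.1]{Gusev-Vernikov-18}, so there is nothing to compare against beyond the citation.

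A couple of remarks on presentation. Your write-up circles a bit before settling on the route $(a)\Rightarrow(c)$, $(b)\Leftrightarrow(c)$, $(b)\Rightarrow(a)$; in a final version you should just commit to that route from the start. The key observations you identified are exactly the right ones: $(b)\Leftrightarrow(c)$ follows because the free object of $\mathbf{SL}$ on $\mathfrak X$ is the set of finite subsets of $\mathfrak X$ under union, so two words agree there iff they share content; $(b)\Rightarrow(a)$ follows by substituting $1$ for every letter except some $x\in\con(\mathbf u)\setminus\con(\mathbf v)$ (or symmetrically) to obtain $x^{k}\approx 1$ with $k\ge 1$, forcing every element to be a unit; and $(a)\Rightarrow(c)$ is immediate since a nontrivial semilattice monoid is not a group.
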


The following fact is well-known. It was explicitly noted, for example, in~\cite[Proposition~2.1]{Gusev-18} or~\cite[Subsection~1.1]{Jackson-Lee-18}.

\begin{lemma}
\label{Lem: embedding}
The map from the lattice $\mathbb{MON}$ of all monoid varieties to the lattice $\mathbb{SEM}$ of all semigroup varieties that maps a monoid variety generated by a monoid $M$ to the semigroup variety generated by the semigroup reduct of $M$ is an embedding from $\mathbb{MON}$ to $\mathbb{SEM}$.\qed
\end{lemma}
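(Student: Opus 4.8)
The plan is to translate the statement into the language of fully invariant congruences, where the combinatorics becomes transparent. Recall (as already noted in the Introduction) that the lattice of all monoid [semigroup] varieties is anti-isomorphic to the lattice $\mathrm{FIC}(\mathfrak X^\ast)$ [$\mathrm{FIC}(\mathfrak X^+)$] of fully invariant congruences on the free monoid $\mathfrak X^\ast$ [free semigroup $\mathfrak X^+$], a variety $\mathbf V$ corresponding to $\theta_{\mathbf V}=\{(\mathbf u,\mathbf v)\mid\mathbf V\models\mathbf u\approx\mathbf v\}$. Write $\Phi$ for the map in the statement, extended to an arbitrary monoid variety $\mathbf V$ by sending it to the semigroup variety generated by the semigroup reducts of all monoids in $\mathbf V$; it is evidently well defined and monotone. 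Since for nonempty words a monoid identity says the same about a monoid as the corresponding semigroup identity says about its reduct, one checks at once that $\theta_{\Phi(\mathbf V)}=\theta_{\mathbf V}\cap(\mathfrak X^+\times\mathfrak X^+)$; in particular this intersection is again a fully invariant congruence on $\mathfrak X^+$ (directly: every endomorphism of $\mathfrak X^+$ extends to an endomorphism of $\mathfrak X^\ast$ fixing $\lambda$ and mapping $\mathfrak X^+$ into itself). Thus, through the two anti-isomorphisms, $\Phi$ becomes the restriction map $R\colon\mathrm{FIC}(\mathfrak X^\ast)\to\mathrm{FIC}(\mathfrak X^+)$, $\theta\mapsto\theta\cap(\mathfrak X^+\times\mathfrak X^+)$ (conjugation by anti-isomorphisms preserves the property of being a lattice embedding), so it suffices to show that $R$ is an injective lattice homomorphism. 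Monotonicity of $R$ and the equality $R(\theta_1\cap\theta_2)=R(\theta_1)\cap R(\theta_2)$ are immediate.

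Injectivity reduces to one observation about the empty word: for $\theta\in\mathrm{FIC}(\mathfrak X^\ast)$, a nonempty word $\mathbf w$ and a letter $z\notin\con(\mathbf w)$, one has $(\lambda,\mathbf w)\in\theta$ if and only if $(z,\mathbf wz)\in\theta$. Necessity follows by multiplying on the right by $z$; sufficiency follows by applying to $(z,\mathbf wz)\in\theta$ the endomorphism of $\mathfrak X^\ast$ sending $z$ to $\lambda$ and fixing the remaining letters, which is permitted since $\theta$ is fully invariant. Hence $\theta$ can be recovered from $R(\theta)$ uniformly (the pairs deleted by $R$, all of the form $(\lambda,\mathbf w)$, are visible in $R(\theta)$), so $R$ is injective --- in fact an order embedding.

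It remains to prove $R(\theta_1\vee\theta_2)\subseteq R(\theta_1)\vee R(\theta_2)$, the reverse inclusion being trivial. Since $\theta_1,\theta_2$ are fully invariant congruences, $\theta_1\vee\theta_2$ is the transitive closure of $\theta_1\cup\theta_2$, and likewise $R(\theta_1)\vee R(\theta_2)$ is the transitive closure of $R(\theta_1)\cup R(\theta_2)$. So fix nonempty words $\mathbf u,\mathbf v$ with $(\mathbf u,\mathbf v)\in\theta_1\vee\theta_2$ and choose a sequence $\mathbf u=\mathbf w_0,\mathbf w_1,\dots,\mathbf w_k=\mathbf v$ with $(\mathbf w_{j-1},\mathbf w_j)\in\theta_1\cup\theta_2$ for every $j$ and with as few empty $\mathbf w_j$ as possible. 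If some $\mathbf w_j=\lambda$ then, the endpoints being nonempty, we may first collapse a maximal run of consecutive empty words to a single $\lambda$, which leaves a fragment $\mathbf a\mathrel{\theta_p}\lambda\mathrel{\theta_q}\mathbf b$ in which $\mathbf a,\mathbf b$ are nonempty and $p,q\in\{1,2\}$; multiplying $(\lambda,\mathbf b)\in\theta_q$ on the left by $\mathbf a$ and $(\mathbf a,\lambda)\in\theta_p$ on the right by $\mathbf b$ yields $\mathbf a\mathrel{\theta_q}\mathbf a\mathbf b\mathrel{\theta_p}\mathbf b$, so replacing $\mathbf a,\lambda,\mathbf b$ by $\mathbf a,\mathbf a\mathbf b,\mathbf b$ produces a sequence with strictly fewer empty words, a contradiction. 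Therefore the chosen sequence lies entirely in $\mathfrak X^+$; each of its steps belongs to $R(\theta_1)\cup R(\theta_2)$, and hence $(\mathbf u,\mathbf v)\in R(\theta_1)\vee R(\theta_2)$. Combining the three paragraphs, $R$, and therefore $\Phi$, is an injective lattice homomorphism, that is, a lattice embedding. The only non-routine ingredient is the rerouting of a zig-zag between two nonempty words around its passages through $\lambda$ in this last paragraph; that is where I expect the main difficulty to lie, everything else being bookkeeping.
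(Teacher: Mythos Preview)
The paper does not prove this lemma at all: it is stated with a terminal \qed and attributed to Gusev~\cite[Proposition~2.1]{Gusev-18} and Jackson--Lee~\cite[Subsection~1.1]{Jackson-Lee-18} as a well-known fact. Your argument, by contrast, is a complete self-contained proof. It is correct: the translation to the restriction map $R\colon\mathrm{FIC}(\mathfrak X^\ast)\to\mathrm{FIC}(\mathfrak X^+)$ is sound, the recovery of the $\lambda$-pairs from $R(\theta)$ via the endomorphism erasing a fresh letter gives injectivity, and the rerouting trick $(\mathbf a,\lambda,\mathbf b)\mapsto(\mathbf a,\mathbf a\mathbf b,\mathbf b)$ handles join preservation cleanly. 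One cosmetic point: once you have chosen a zig-zag with the fewest empty entries, no two consecutive entries can be $\lambda$ (otherwise deleting one would already shorten the count), so the ``collapse a maximal run'' step is vacuous and could be dropped; the replacement step alone yields the contradiction.
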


The fully invariant congruence on the free monoid $\mathfrak X^\ast$ corresponding to a variety $\mathbf V$ will be denoted by $\theta_{\mathbf V}$. It is evident that congruences $\alpha$ and $\beta$ on an algebra $A$ permute whenever they are comparable in the congruence lattice of $A$. Therefore, the following claim is true.

\begin{lemma}
\label{Lem: comparable is fi-permut}
If the varieties of monoids $\mathbf X$ and $\mathbf Y$ are comparable in the lattice $\mathbb{MON}$, then the congruences $\theta_{\mathbf X}$ and $\theta_{\mathbf Y}$ permute.\qed
\end{lemma}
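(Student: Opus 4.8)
The plan is to reduce the claim to the elementary observation, recalled immediately before the statement, that two comparable congruences on any algebra permute. First I would note that, for monoid varieties $\mathbf X$ and $\mathbf Y$, the inclusion $\mathbf X\subseteq\mathbf Y$ is equivalent to the reverse inclusion $\theta_{\mathbf Y}\subseteq\theta_{\mathbf X}$ between the associated fully invariant congruences on $\mathfrak X^\ast$, since an identity holding in $\mathbf Y$ must hold in $\mathbf X$; this is just the order-reversing part of the standard anti-isomorphism between $\mathbb{MON}$ and the lattice of fully invariant congruences on $\mathfrak X^\ast$ already invoked in the introduction. Hence comparability of $\mathbf X$ and $\mathbf Y$ in $\mathbb{MON}$ forces comparability of $\theta_{\mathbf X}$ and $\theta_{\mathbf Y}$ in the congruence lattice of $\mathfrak X^\ast$.

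It then suffices to verify the general fact that comparable congruences permute, which I would dispatch in one line: if $\alpha\subseteq\beta$ are congruences on an algebra $A$, then $\alpha\beta\subseteq\beta\beta=\beta$, while for any $(a,b)\in\beta$ we have $a\,\alpha\,a\,\beta\,b$ and so $\beta\subseteq\alpha\beta$; thus $\alpha\beta=\beta$, and by symmetry $\beta\alpha=\beta$, giving $\alpha\beta=\beta\alpha$. Applying this with $\{\alpha,\beta\}=\{\theta_{\mathbf X},\theta_{\mathbf Y}\}$ completes the argument. There is no genuine obstacle here; the only point worth stating explicitly is that $\theta_{\mathbf X}$ and $\theta_{\mathbf Y}$ are congruences on one and the same algebra $\mathfrak X^\ast$, so that their relational products make sense and the comparability transferred from $\mathbb{MON}$ is comparability within a single congruence lattice.
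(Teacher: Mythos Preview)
Your proposal is correct and follows essentially the same approach as the paper: the paper states just before the lemma that it is evident that comparable congruences on an algebra permute, and then records the lemma with a \qed, leaving the translation from comparability in $\mathbb{MON}$ to comparability of $\theta_{\mathbf X}$ and $\theta_{\mathbf Y}$ implicit. You have simply written out both steps explicitly.
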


\begin{lemma}
\label{Lem: cr is fi-perm}
If $\mathbf X$ and $\mathbf Y$ are completely regular monoid varieties and $\mathbf X,\mathbf Y\supseteq\mathbf{SL}$, then the congruences $\theta_{\mathbf X}$ and $\theta_{\mathbf Y}$ permute.
\end{lemma}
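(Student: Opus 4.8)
The plan is to reduce the statement to its known semigroup analogue: by Pastijn~\cite{Pastijn-91} and Petrich and Reilly~\cite{Petrich-Reilly-90}, every completely regular semigroup variety is almost $fi$-permutable, i.e. on its free semigroup any two fully invariant congruences that are contained in the least semilattice congruence permute.

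Two preliminary observations are needed. First, every completely regular monoid variety $\mathbf Z$ satisfies $x^{n+1}\approx x$ for some $n\in\mathbb N$: the free monoid of $\mathbf Z$ on one letter $x$ is generated by $x$ and completely regular, which forces it to be finite with $x$ generating a cyclic group, so that $x^{n+1}=x$ holds in it; consequently the join of two completely regular monoid (or semigroup) varieties is again completely regular (take $n$ to be the least common multiple of the two numbers). Second, if a monoid variety $\mathbf Z$ contains $\mathbf{SL}$, then $\theta_{\mathbf Z}\subseteq\theta_{\mathbf{SL}}$, where $\theta_{\mathbf{SL}}$ identifies exactly the words with equal content and is the least semilattice congruence on $\mathfrak X^\ast$; moreover the empty word $\lambda$ is a one-element $\theta_{\mathbf Z}$-class, because $\mathbf u\mathrel{\theta_{\mathbf Z}}\lambda$ would force $\mathbf{SL}\models\mathbf u\approx\lambda$, whence $\con(\mathbf u)$ is empty and $\mathbf u=\lambda$.

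Given a completely regular monoid variety $\mathbf Z$ with $\mathbf Z\supseteq\mathbf{SL}$, let $\mathbf Z^s$ be the semigroup variety generated by the semigroup reducts of the monoids in $\mathbf Z$. By the first observation $\mathbf Z^s$ satisfies $x^{n+1}\approx x$ and so is completely regular, and $\mathbf Z^s\supseteq\mathbf{SL}_{\mathsf{sem}}$ since the two-element semilattice is the semigroup reduct of the two-element semilattice monoid. The point I would verify is that, for nonempty words $\mathbf u,\mathbf v$, the equivalence $\mathbf Z\models\mathbf u\approx\mathbf v\iff\mathbf Z^s\models\mathbf u\approx\mathbf v$ holds --- the nontrivial direction because an identity between nonempty words is inherited by subsemigroups and by homomorphic images. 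Hence the restriction of $\theta_{\mathbf Z}$ to the free semigroup $\mathfrak X^+$ coincides with the fully invariant congruence $\theta_{\mathbf Z^s}$ on $\mathfrak X^+$, and it is contained in the content congruence on $\mathfrak X^+$, which is the least semilattice congruence there.

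Now apply all this to $\mathbf X$ and $\mathbf Y$. Put $\mathbf W=\mathbf X^s\vee\mathbf Y^s$; then $\mathbf W$ is completely regular and $\mathbf W\supseteq\mathbf{SL}_{\mathsf{sem}}$, and $\nu=\theta_{\mathbf W}=\theta_{\mathbf X^s}\cap\theta_{\mathbf Y^s}$ lies below both $\theta_{\mathbf X^s}$ and $\theta_{\mathbf Y^s}$. On the $\mathbf W$-free semigroup $\mathfrak X^+/\nu$ the congruences $\theta_{\mathbf X^s}/\nu$ and $\theta_{\mathbf Y^s}/\nu$ are fully invariant and lie below the least semilattice congruence, so they permute by the cited result; hence $\theta_{\mathbf X^s}$ and $\theta_{\mathbf Y^s}$ permute on $\mathfrak X^+$ by Lemma~\ref{Lem: lifting}. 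Finally I would transfer back to $\mathfrak X^\ast$: if $\mathbf u\mathrel{\theta_{\mathbf X}}\mathbf w\mathrel{\theta_{\mathbf Y}}\mathbf v$ and one of $\mathbf u,\mathbf w,\mathbf v$ equals $\lambda$, then by the second observation all three are $\lambda$ and there is nothing to do; otherwise $\mathbf u,\mathbf w,\mathbf v\in\mathfrak X^+$, where $\theta_{\mathbf X},\theta_{\mathbf Y}$ restrict to the permuting congruences $\theta_{\mathbf X^s},\theta_{\mathbf Y^s}$, which supplies the required $\mathbf w'$ with $\mathbf u\mathrel{\theta_{\mathbf Y}}\mathbf w'\mathrel{\theta_{\mathbf X}}\mathbf v$. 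The step I expect to need the most care is precisely this passage between $\mathfrak X^\ast$ and $\mathfrak X^+$: checking that $\lambda$ is isolated in $\theta_{\mathbf X}$ (this is exactly where $\mathbf X\supseteq\mathbf{SL}$, and thus the exclusion of group varieties, enters) and that $\theta_{\mathbf X}$ restricted to $\mathfrak X^+$ really is the fully invariant congruence of a completely regular semigroup variety.
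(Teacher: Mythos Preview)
Your proposal is correct and follows essentially the same approach as the paper: reduce to the Pastijn/Petrich--Reilly result on almost $fi$-permutability of completely regular semigroup varieties by passing from $\mathfrak X^\ast$ to $\mathfrak X^+$, after isolating the trivial case where $\lambda$ is involved via the hypothesis $\mathbf{SL}\subseteq\mathbf X,\mathbf Y$. The paper's version is slightly more compact (it invokes the embedding of Lemma~\ref{Lem: embedding} directly rather than explicitly forming $\mathbf W=\mathbf X^s\vee\mathbf Y^s$ and lifting via Lemma~\ref{Lem: lifting}), but the substance is the same.
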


\begin{proof}
Let $(\mathbf u,\mathbf v)\in\theta_{\mathbf X}\theta_{\mathbf Y}$. Then $\mathbf u\,\theta_{\mathbf X}\,\mathbf w\,\theta_{\mathbf Y}\,\mathbf v$ for some word $\mathbf w$. Now Lemma~\ref{Lem: group variety} applies with the conclusion that $\con(\mathbf u)=\con(\mathbf w)=\con(\mathbf v)$. Therefore, either $\mathbf u=\mathbf v=\mathbf w=\lambda$ or all the words $\mathbf u$, $\mathbf v$ and $\mathbf w$ are non-empty. Clearly, $(\mathbf u,\mathbf v)\in\theta_{\mathbf Y}\theta_{\mathbf X}$ in the first case. Let us consider the second one.

Let $\varphi$ be the embedding from $\mathbb{MON}$ to $\mathbb{SEM}$ mentioned in Lemma~\ref{Lem: embedding}. Put $\mathbf X^\prime=\varphi(\mathbf X)$ and $\mathbf Y^\prime=\varphi(\mathbf Y)$. Let $\alpha$ and $\beta$ denote the fully invariant congruences on the free semigroup $\mathfrak X^+$ corresponding to the varieties $\mathbf X^\prime$ and $\mathbf Y^\prime$, respectively. Since every monoid satisfies the identities $x\approx 1\cdot x\approx x\cdot 1$ and the words $\mathbf u$, $\mathbf v$ and $\mathbf w$ are non-empty, we may assume that these words do not contain the symbol of \mbox{0-ary} operation. Hence the identities $\mathbf u\approx\mathbf w$ and $\mathbf w\approx\mathbf v$ hold in the varieties $\mathbf X^\prime$ and $\mathbf Y^\prime$, respectively. Thus $\mathbf u\,\alpha\,\mathbf w\,\beta\,\mathbf v$. It is verified independently by Pastijn~\cite{Pastijn-91} and Petrich and Reilly~\cite{Petrich-Reilly-90} that each completely regular semigroup variety is almost $fi$-permutable. Then the congruences $\alpha$ and $\beta$ permute, whence there is a word $\mathbf w^\prime\in\mathfrak X^+$ with $\mathbf u\,\beta\,\mathbf w^\prime\,\alpha\,\mathbf v$. Therefore, the identities $\mathbf u\approx\mathbf w^\prime$ and $\mathbf w^\prime\approx\mathbf v$ hold in the varieties $\mathbf Y$ and $\mathbf X$, respectively. Thus, $(\mathbf u,\mathbf v)\in\theta_{\mathbf Y}\theta_{\mathbf X}$, and we are done.
\end{proof}

A word $\mathbf w$ is called an \emph{isoterm} for a class of monoids if no monoid in the class satisfies any non-trivial identity of the form $\mathbf w\approx\mathbf w^\prime$. It is evident that if a word $\mathbf u$ is an isoterm for a monoid variety $\mathbf V$, then every subword of $\mathbf u$ is an isoterm for $\mathbf V$ too. Below we will use this fact many times, as a rule, without mentioning it explicitly. The following statement explains an important role that monoids of the form $S(W)$ play.
 
\begin{lemma}[{Jackson~\cite[Lemma~3.3]{Jackson-05}}]
\label{Lem: S(W) in V}
Let $\mathbf V$ be a monoid variety and $W$ be a set of words. Then $S(W)$ lies in $\mathbf V$ if and only if each word in $W$ is an isoterm for $\mathbf V$.\qed
\end{lemma}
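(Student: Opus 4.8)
The plan is to prove both implications directly from the concrete description of $S(W)=\mathfrak X^\ast/I(W)$: its nonzero elements are exactly the classes $[\mathbf u]$ of words $\mathbf u$ occurring as a subword of some word in $W$, each such class is the singleton $\{\mathbf u\}$, and $[\mathbf u]\cdot[\mathbf v]$ equals $[\mathbf{uv}]$ when $\mathbf{uv}$ is again a subword of a word in $W$ and equals the two-sided zero $0$ otherwise. Throughout, a substitution $\varphi\colon\mathfrak X\to S(W)$ is encoded by an endomorphism $\psi$ of $\mathfrak X^\ast$ that sends a letter $x$ to a chosen representative of $\varphi(x)$ whenever $\varphi(x)\ne 0$ (on the remaining letters the value of $\psi$ is irrelevant).

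For the ``only if'' direction, assume $S(W)\in\mathbf V$, fix $\mathbf w\in W$, and suppose $\mathbf V$ satisfies $\mathbf w\approx\mathbf w'$. First I would evaluate the substitution $\varphi$ sending each $x\in\con(\mathbf w)$ to the nonzero element $[x]$ and every other letter to $0$; then every prefix of $\mathbf w$ is a subword of $\mathbf w\in W$, so $\varphi(\mathbf w)=[\mathbf w]\ne 0$, whence $\varphi(\mathbf w')=[\mathbf w]\ne 0$ as well. From $\varphi(\mathbf w')\ne 0$ I read off that $\mathbf w'$ involves no letter outside $\con(\mathbf w)$ (else the zero would absorb) and that $\varphi(\mathbf w')=[\mathbf w']$, a nonzero element; since nonzero classes are singletons, $[\mathbf w']=[\mathbf w]$ forces $\mathbf w'=\mathbf w$ in $\mathfrak X^\ast$, so the identity $\mathbf w\approx\mathbf w'$ was trivial and $\mathbf w$ is an isoterm.

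For the ``if'' direction, assume every word in $W$ is an isoterm for $\mathbf V$, take an arbitrary identity $\mathbf p\approx\mathbf q$ of $\mathbf V$ and a substitution $\varphi$ into $S(W)$, and aim to show $\varphi(\mathbf p)=\varphi(\mathbf q)$. The preliminary step is to record that, provided $W$ contains a nonempty word, necessarily $\con(\mathbf p)=\con(\mathbf q)$: otherwise, substituting $1$ for all letters but one $x\in\con(\mathbf p)\setminus\con(\mathbf q)$ would give $\mathbf V\models x^k\approx\lambda$ for some $k\ge 1$, and then a nonempty $\mathbf w\in W$ could be replaced by a strictly longer word over the same content, contradicting that $\mathbf w$ is an isoterm; the case when all words of $W$ are empty is degenerate, since $S(W)$ is then trivial or the two-element semilattice monoid, and the claim reduces to Lemma~\ref{Lem: group variety}. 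After that, by symmetry I may assume $\varphi(\mathbf p)\ne 0$. Then all letters of $\con(\mathbf p)=\con(\mathbf q)$ have nonzero image, the word $\psi(\mathbf p)$ is a subword of some $\mathbf w\in W$, say $\mathbf w=\mathbf a\,\psi(\mathbf p)\,\mathbf b$, and $\varphi(\mathbf p)=[\psi(\mathbf p)]$. Since $\mathbf V$ satisfies $\mathbf p\approx\mathbf q$, it also satisfies $\mathbf a\,\psi(\mathbf p)\,\mathbf b\approx\mathbf a\,\psi(\mathbf q)\,\mathbf b$, that is $\mathbf w\approx\mathbf a\,\psi(\mathbf q)\,\mathbf b$; as $\mathbf w$ is an isoterm this is an equality of words in $\mathfrak X^\ast$, and cancelling $\mathbf a$ and $\mathbf b$ yields $\psi(\mathbf p)=\psi(\mathbf q)$. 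Hence $\psi(\mathbf q)$ is again a subword of $\mathbf w\in W$, so $\varphi(\mathbf q)=[\psi(\mathbf q)]=[\psi(\mathbf p)]=\varphi(\mathbf p)$.

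The genuinely delicate point is not any computation but the bookkeeping around the zero: one must rule out letters of $\con(\mathbf q)\setminus\con(\mathbf p)$ being sent to $0$ (this is exactly what the content coincidence $\con(\mathbf p)=\con(\mathbf q)$ buys, and that coincidence is where the isoterm hypothesis enters somewhat indirectly), and one must keep track of letters sent to the identity $1=[\lambda]$, which merely shorten $\psi(\mathbf p)$ and do not disturb the argument. Once $\con(\mathbf p)=\con(\mathbf q)$ is secured and $\varphi(\mathbf p)\ne 0$, the isoterm property of the words in $W$ together with cancellativity of $\mathfrak X^\ast$ does all the remaining work.
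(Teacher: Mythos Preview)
Your argument is correct. The paper does not supply its own proof of this lemma; it simply cites Jackson~\cite[Lemma~3.3]{Jackson-05} and marks the statement with \qed. What you have written is essentially the standard proof, and it matches the argument in Jackson's paper: the ``only if'' direction evaluates the identity substitution into $S(W)$, and the ``if'' direction embeds $\psi(\mathbf p)$ as a factor of some $\mathbf w\in W$ and uses the isoterm hypothesis together with cancellativity of $\mathfrak X^\ast$.

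One minor remark on presentation: your handling of the content equality $\con(\mathbf p)=\con(\mathbf q)$ is a bit more elaborate than necessary. Once some word in $W$ is an isoterm for $\mathbf V$, so is the empty word $\lambda$ (any subword of an isoterm is an isoterm), and ``$\lambda$ is an isoterm for $\mathbf V$'' is exactly the condition $\mathbf{SL}\subseteq\mathbf V$ of Lemma~\ref{Lem: group variety}, which immediately gives $\con(\mathbf p)=\con(\mathbf q)$ for every identity of $\mathbf V$. This lets you avoid the separate case split on whether $W$ contains a nonempty word. But this is cosmetic; your version is fine as it stands.
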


For any $n\ge 2$, we put $\mathbf C_n=\var\{x^n\approx x^{n+1},\,xy\approx yx\}$. The following statement is well-known. It readily follows from~\cite[Corollary~6.1.5]{Almeida-94}, for instance.

\begin{lemma}
\label{Lem: x^n is isoterm}
Let $n\in\mathbb N$. For a monoid variety $\mathbf V$, the following are equivalent:
\begin{itemize}
\item[a)] $x^n$ is not an isoterm for $\mathbf V$;
\item[b)] $\mathbf V$ satisfies the identity
\begin{equation}
\label{x^n=x^m}
x^n\approx x^m
\end{equation}
for some $m>n$;
\item[c)] $\mathbf C_{n+1}\nsubseteq\mathbf V$.\qed
\end{itemize}
\end{lemma}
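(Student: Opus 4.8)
The statement to prove is Lemma~\ref{Lem: x^n is isoterm}, asserting the equivalence of three conditions about a monoid variety $\mathbf V$ and a fixed $n \in \mathbb N$: (a) $x^n$ is not an isoterm for $\mathbf V$; (b) $\mathbf V$ satisfies $x^n \approx x^m$ for some $m > n$; (c) $\mathbf C_{n+1} \nsubseteq \mathbf V$.

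\begin{proof}[Proof plan]
The plan is to establish the cycle of implications (a)$\Rightarrow$(b)$\Rightarrow$(c)$\Rightarrow$(a), since all three turn out to be essentially unwindings of the same phenomenon. The implication (b)$\Rightarrow$(a) is immediate: an identity $x^n \approx x^m$ with $m > n$ is a non-trivial identity of the form $x^n \approx \mathbf w'$, witnessing that $x^n$ fails to be an isoterm. For the converse (a)$\Rightarrow$(b), suppose $\mathbf V$ satisfies a non-trivial identity $x^n \approx \mathbf w'$. Since both sides must have the same content (only the letter $x$ occurs), $\mathbf w' = x^m$ for some $m \ne n$; and since the identity is non-trivial, $m \ne n$, so either $m > n$ or $m < n$. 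In the latter case, substituting and iterating (or simply noting $x^m \approx x^n$ gives $x^n \approx x^m \approx x^{n + (n-m)}$ by multiplying both sides by a suitable power of $x$ — more carefully, from $x^m \approx x^n$ with $m < n$ one derives $x^m \approx x^{m+k(n-m)}$ for all $k$, hence an identity $x^n \approx x^{m'}$ with $m' > n$ by choosing $k$ large). This yields (b).

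The link to condition (c) is the standard correspondence between $\mathbf C_{n+1}$ and the identity~\eqref{x^n=x^m}. For (c)$\Rightarrow$(a), observe that $\mathbf C_{n+1}$ is generated by a commutative monoid satisfying $x^{n+1} \approx x^{n+2}$ but \emph{not} $x^n \approx x^{n+1}$ (the monoid $\{1, x, x^2, \dots, x^n = x^{n+1}\}$ with commuting generators, or more simply the cyclic monoid $\langle x \mid x^{n+1} = x^{n+2}\rangle$), for which $x^n$ is an isoterm among powers of a single letter precisely up to exponent $n$. If $x^n$ were an isoterm for $\mathbf V$, then by Lemma~\ref{Lem: S(W) in V} we would get $S(x^n) \in \mathbf V$; one then checks that $\var S(x^n) \supseteq \mathbf C_{n+1}$, giving $\mathbf C_{n+1} \subseteq \mathbf V$, the contrapositive of what is wanted. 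For the reverse (b)$\Rightarrow$(c), note that an identity $x^n \approx x^m$ with $m > n$ forces $x^n \approx x^{n+1}$ (by the usual argument: from $x^n \approx x^m$ one gets $x^n \approx x^{n+k(m-n)}$ for all $k$, and combining two such one obtains $x^n \approx x^{n+1}$, or invoke the elementary fact that the cyclic monoid generated by this relation collapses the index to at most $n$). But $\mathbf C_{n+1}$ does not satisfy $x^n \approx x^{n+1}$, hence $\mathbf C_{n+1} \nsubseteq \mathbf V$.

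The main obstacle, such as it is, is purely bookkeeping with exponents of a single letter: one must be careful that the passage ``$x^n \approx x^m$ for \emph{some} $m$'' (either direction) is equivalent to ``$x^n \approx x^{n+1}$'', which is a small semigroup-theoretic computation about the monogenic monoid defined by such a relation (its structure being a ``tail'' of length $\le n$ followed by a cyclic group, here forced to be trivial). Since the paper already cites~\cite[Corollary~6.1.5]{Almeida-94} for the whole statement, the cleanest route is to defer this monogenic analysis to that reference and present the three implications at the level above, as sketched. No genuine difficulty is anticipated.
\end{proof}
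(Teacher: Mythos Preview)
The paper gives no proof of this lemma at all; it simply cites \cite[Corollary~6.1.5]{Almeida-94} and attaches a \qed. So there is nothing to compare approaches against, and your sketch is more than the paper offers. That said, one step in your sketch is actually wrong, not merely sketchy.

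In the implication (b)$\Rightarrow$(c) you assert that an identity $x^n\approx x^m$ with $m>n$ forces $x^n\approx x^{n+1}$. This is false: from $x^n\approx x^m$ you only obtain $x^n\approx x^{n+k(m-n)}$ for all $k\ge 0$, i.e.\ equality of powers whose exponents are congruent modulo $m-n$, and there is no way to ``combine two such'' to get period~$1$. Concretely, the commutative monoid variety defined by $x^2\approx x^4$ contains the four-element cyclic monoid of index~$2$ and period~$2$, which does not satisfy $x^2\approx x^3$. The implication (b)$\Rightarrow$(c) is nevertheless true, but for the simpler reason you almost state in your (c)$\Rightarrow$(a) paragraph: the free monogenic monoid in $\mathbf C_{n+1}$ has $1,x,\dots,x^{n+1}$ pairwise distinct with $x^k=x^{n+1}$ for all $k\ge n+1$, so $x^n\ne x^m$ there whenever $m>n$; hence $\mathbf C_{n+1}\not\models x^n\approx x^m$ and thus $\mathbf C_{n+1}\nsubseteq\mathbf V$.

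A smaller gap: in (a)$\Rightarrow$(b) you claim ``both sides must have the same content'', but this is not automatic for an arbitrary monoid variety (cf.\ Lemma~\ref{Lem: group variety}). The fix is immediate in the monoid setting: from a non-trivial $x^n\approx\mathbf w'$, substitute $1$ for every letter of $\mathbf w'$ other than $x$ to get $x^n\approx x^k$ with $k=\occ_x(\mathbf w')$; if this happens to be trivial ($k=n$) then $\mathbf w'$ contains some $y\ne x$, and substituting $x$ for $y$ instead yields $x^n\approx x^{n+\occ_y(\mathbf w')}$ with $\occ_y(\mathbf w')\ge 1$.
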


It is well known that a monoid variety is completely regular if and only if it satisfies an identity of the form 
\begin{equation}
\label{x=x^{n+1}}
x\approx x^{n+1}
\end{equation}
for some $n\in\mathbb N$. Then Lemma~\ref{Lem: x^n is isoterm} implies

\begin{corollary}[{Gusev and Vernikov~\cite[Corollary~2.6]{Gusev-Vernikov-18}}]
\label{Cor: cr}
A variety of monoids $\mathbf V$ is completely regular if and only if $\mathbf C_2\nsubseteq\mathbf V$.\qed
\end{corollary}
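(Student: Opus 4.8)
The plan is to reduce the statement to the known semigroup fact via Lemmas~\ref{Lem: x^n is isoterm} and~\ref{Lem: embedding}. Recall that a monoid variety $\mathbf V$ is completely regular precisely when it satisfies $x\approx x^{n+1}$ for some $n\in\mathbb N$; this is the characterisation we take as given. So it suffices to show that $\mathbf V$ satisfies such an identity if and only if $\mathbf C_2\nsubseteq\mathbf V$, where $\mathbf C_2=\var\{x^2\approx x^3,\,xy\approx yx\}$.

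For the direction ``$\mathbf V$ completely regular $\Rightarrow\mathbf C_2\nsubseteq\mathbf V$'': if $\mathbf V$ satisfies $x\approx x^{n+1}$, then in particular $x^2\approx x^{n+2}$, so $x^2\approx x^2\cdot x^n\approx x^2\cdot x\cdot x^{n-1}\approx\dots$; more directly, from $x\approx x^{n+1}$ one deduces $x\approx x\cdot x^n\approx x^{n+1}\cdot x^n=x^{2n+1}$, and iterating (or just using $x\approx x^{n+1}$ with a suitable substitution) one sees the cyclic group $\mathbb Z_n$ lies in $\mathbf V$ when $n>1$, or $\mathbf V$ is trivial when $n=1$; in either case $x^2$ is an isoterm for $\mathbf C_2$ but not for $\mathbf V$ (since $\mathbf V\models x\approx x^{n+1}$ forces $x^2\approx x^{n+2}$ with $n+2>2$), hence by Lemma~\ref{Lem: x^n is isoterm} we get $\mathbf C_2\nsubseteq\mathbf V$. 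Actually the cleanest route: $\mathbf V\models x\approx x^{n+1}$ implies $\mathbf V\models x^2\approx x^{n+2}$, and since $n+2>2$, Lemma~\ref{Lem: x^n is isoterm} (applied with this value) gives $\mathbf C_2\nsubseteq\mathbf V$ directly, because $\mathbf C_2$ there is the variety $\var\{x^2\approx x^3,\,xy\approx yx\}$ with the relevant index being $2$.

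For the converse ``$\mathbf C_2\nsubseteq\mathbf V\Rightarrow\mathbf V$ completely regular'': by Lemma~\ref{Lem: x^n is isoterm} with $n=2$, the condition $\mathbf C_2=\mathbf C_{2+1}$... wait, we need $\mathbf C_{n+1}\nsubseteq\mathbf V$ with $n=1$, i.e. $\mathbf C_2\nsubseteq\mathbf V$, which by Lemma~\ref{Lem: x^n is isoterm} (taking $n=1$, so $x^n=x$) tells us $\mathbf V$ satisfies $x\approx x^m$ for some $m>1$, i.e. $x\approx x^{(m-1)+1}$ with $m-1\in\mathbb N$. That is exactly the defining identity of complete regularity, so $\mathbf V$ is completely regular. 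The only subtlety to double-check is that Lemma~\ref{Lem: x^n is isoterm} is stated for all $n\in\mathbb N$ including $n=1$, where $x^1=x$ is an isoterm for a variety iff that variety is not completely regular; this is indeed the content, and $\mathbf C_{1+1}=\mathbf C_2$ matches the displayed definition.

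I expect the main (and only) obstacle to be purely bookkeeping: making sure the indices line up when invoking Lemma~\ref{Lem: x^n is isoterm} with $n=1$, and confirming that the ``well-known'' characterisation of complete regularity by the identity~\eqref{x=x^{n+1}} is being used in exactly the form quoted. No genuine combinatorics on words is needed. The proof should be two or three sentences: quote the identity characterisation of complete regularity, then cite Lemma~\ref{Lem: x^n is isoterm} with $n=1$ to translate ``$x$ is an isoterm'' into ``$\mathbf C_2\nsubseteq\mathbf V$'' and ``$\mathbf V\models x\approx x^m$, $m>1$'' into its negation, and observe these are complementary.
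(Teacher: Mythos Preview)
Your proposal is correct and follows exactly the paper's route: the paper simply notes the well-known characterisation of complete regularity via an identity $x\approx x^{n+1}$ and then says Lemma~\ref{Lem: x^n is isoterm} (with $n=1$) immediately yields the corollary. Your write-up is considerably more verbose than necessary---the meandering in the forward direction and the ``wait'' aside can all be cut---but the underlying two-line argument is the same as the paper's.
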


A variety of monoids is called \emph{aperiodic} if all its groups are singletons. It is well known that a variety is aperiodic if and only if it satisfies an identity of the form 
\begin{equation}
\label{x^n=x^{n+1}}
x^n\approx x^{n+1}
\end{equation}
for some $n\in\mathbb N$. 

\begin{lemma}
\label{Lem: x^n=x^m in X wedge Y}
Let $\mathbf X$ and $\mathbf Y$ be aperiodic monoid varieties. If $\mathbf X\wedge\mathbf Y$ satisfies a non-trivial identity of the form~\eqref{x^n=x^m}, then this identity holds in either $\mathbf X$ or $\mathbf Y$.
\end{lemma}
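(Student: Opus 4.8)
The plan is to analyze what it means for $\mathbf X\wedge\mathbf Y$ to satisfy $x^n\approx x^m$ with $m>n$. By Lemma~\ref{Lem: x^n is isoterm}, this is equivalent to $x^n$ not being an isoterm for $\mathbf X\wedge\mathbf Y$, equivalently $\mathbf C_{n+1}\nsubseteq\mathbf X\wedge\mathbf Y$. Since $\mathbf C_{n+1}=\mathbf X\wedge\mathbf Y\wedge\mathbf C_{n+1}$ would follow from $\mathbf C_{n+1}\subseteq\mathbf X$ and $\mathbf C_{n+1}\subseteq\mathbf Y$, we get that $\mathbf C_{n+1}\nsubseteq\mathbf X$ or $\mathbf C_{n+1}\nsubseteq\mathbf Y$. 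Applying Lemma~\ref{Lem: x^n is isoterm} again in the other direction to whichever of $\mathbf X,\mathbf Y$ excludes $\mathbf C_{n+1}$, we conclude that that variety satisfies $x^n\approx x^{m'}$ for some $m'>n$. So the heart of the matter is already handled by Lemma~\ref{Lem: x^n is isoterm}; the one remaining point is to upgrade "satisfies $x^n\approx x^{m'}$ for \emph{some} $m'>n$'' to "satisfies the \emph{specific} identity $x^n\approx x^m$''.

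First I would record the easy reduction above to pin down, say, $\mathbf X$, as the variety satisfying $x^n\approx x^{m'}$ for some $m'>n$. Then I would use aperiodicity of $\mathbf X$: $\mathbf X$ satisfies $x^p\approx x^{p+1}$ for some $p\in\mathbb N$. From $x^n\approx x^{m'}$ in $\mathbf X$, substituting repeatedly one sees that the monogenic free object in $\mathbf X$ has its cyclic "tail'' collapsing at index $n$, so $\mathbf X$ in fact satisfies $x^n\approx x^{n+d}$ where $d=m'-n$, and combined with $x^p\approx x^{p+1}$ one gets $x^n\approx x^{n+1}$ in $\mathbf X$, hence $x^n\approx x^k$ for \emph{every} $k\ge n$; in particular $x^n\approx x^m$ holds in $\mathbf X$. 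Here the one thing to be slightly careful about is that $x^n\approx x^{m'}$ alone need not give $x^n\approx x^{n+1}$ — it only gives periodicity of the tail with period dividing $m'-n$ — which is exactly why I invoke aperiodicity to kill the period down to $1$.

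The main (and only real) obstacle is therefore the final bookkeeping step of passing from "some identity of the form~\eqref{x^n=x^m}'' to "the given identity~\eqref{x^n=x^m}''; everything else is a direct two-step application of Lemma~\ref{Lem: x^n is isoterm} together with the observation that $\mathbf C_{n+1}\subseteq\mathbf X$ and $\mathbf C_{n+1}\subseteq\mathbf Y$ imply $\mathbf C_{n+1}\subseteq\mathbf X\wedge\mathbf Y$. I expect the whole argument to be short.
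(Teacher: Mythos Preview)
Your proposal is correct and follows essentially the same approach as the paper: use Lemma~\ref{Lem: x^n is isoterm} (via the equivalence with $\mathbf C_{n+1}\nsubseteq\mathbf X\wedge\mathbf Y$) to get that one of $\mathbf X,\mathbf Y$ satisfies $x^n\approx x^r$ for some $r>n$, and then invoke aperiodicity to upgrade this to $x^n\approx x^{n+1}$ and hence to the specific identity $x^n\approx x^m$. The paper's write-up is terser, but the logical content is identical.
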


\begin{proof}
We may assume without loss of generality that $n<m$. Lemma~\ref{Lem: x^n is isoterm} implies that one of the varieties $\mathbf X$ and $\mathbf Y$, say, $\mathbf X$ satisfies an identity of the form $x^n\approx x^r$ for some $r>n$. But the variety $\mathbf X$ is aperiodic, whence it satisfies the identity~\eqref{x^n=x^{n+1}} and therefore, the identity~\eqref{x^n=x^m}.
\end{proof}
 
For any $k\in\mathbb N$, we fix notation for the following identity:
$$
\delta_k:\enskip xt_1xt_2x\cdots t_kx\approx x^2t_1t_2\cdots t_k.
$$
For convenience, we denote by $\delta_\infty$ the trivial identity. 

\begin{lemma}[{Lee~\cite[proof of Proposition~4.1]{Lee-14}}]
\label{Lem: basis for D_k}
Let $k\in\mathbb N\cup\{\infty\}$. Then 
$$
\mathbf D_k=\var\{x^2\approx x^3,\,x^2y\approx yx^2,\,\sigma_1,\,\sigma_2,\,\sigma_3,\,\delta_k\}.\eqno{\qed}
$$
\end{lemma}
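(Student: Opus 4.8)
Throughout write $\mathbf{E}_k$ for the variety $\var\{x^2\approx x^3,\,x^2y\approx yx^2,\,\sigma_1,\,\sigma_2,\,\sigma_3,\,\delta_k\}$ appearing on the right, and for finite $k$ let $\mathbf{w}_k$ be the word defining $\mathbf{D}_k$, so $\mathbf{w}_1=xy$ and $\mathbf{w}_k=xt_1xt_2\cdots xt_{k-1}x$ for $k>1$, whence $\mathbf{D}_k=\var S(\mathbf{w}_k)$. The plan is to prove the inclusions $\mathbf{D}_k\subseteq\mathbf{E}_k$ and $\mathbf{E}_k\subseteq\mathbf{D}_k$ for finite $k$ and then to deduce the case $k=\infty$. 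For $\mathbf{D}_k\subseteq\mathbf{E}_k$ it suffices to check that the monoid $S(\mathbf{w}_k)$ satisfies the six identities defining $\mathbf{E}_k$. Here I would use that $\mathbf{w}_k$ is square-free (it has no factor of the form $\mathbf{s}\mathbf{s}$, the $t_i$ being pairwise distinct), so every non-identity element of $S(\mathbf{w}_k)$ squares to $0$; this disposes of $x^2\approx x^3$ and $x^2y\approx yx^2$ at once. For $\sigma_1,\sigma_2,\sigma_3$ and $\delta_k$ one notes that in $\mathbf{w}_k$ no letter occurs more than $k$ times, while the left side of $\delta_k$ and both sides of each $\sigma_i$ contain a letter occurring with more than the admissible multiplicity unless enough letters are sent to the monoid identity by the substitution, in which case the two sides reduce to literally the same word; this is a short case analysis on which letters the substitution kills.

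The heart of the matter is the reverse inclusion $\mathbf{E}_k\subseteq\mathbf{D}_k$, i.e.\ that every identity holding in $S(\mathbf{w}_k)$ is a consequence of $\Sigma_k:=\{x^2\approx x^3,\,x^2y\approx yx^2,\,\sigma_1,\,\sigma_2,\,\sigma_3,\,\delta_k\}$. I would argue by a normal-form argument. First, record the obstructions that $S(\mathbf{w}_k)$ places on an identity $\mathbf{u}\approx\mathbf{v}$: mapping one chosen letter to $\bar x$ and the rest to the identity forces $\con(\mathbf{u})=\con(\mathbf{v})$, $\simple(\mathbf{u})=\simple(\mathbf{v})$ and hence $\mul(\mathbf{u})=\mul(\mathbf{v})$; mapping a handful of letters to $\bar x$ and to distinct $\bar t_i$'s and the rest to the identity shows that, for those letters, only occurrence patterns that are factors of $\mathbf{w}_k$ survive, and in particular no letter may occur more than $k$ times with content in between. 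Then define a canonical form $\overline{\mathbf{w}}$ of a word $\mathbf{w}$ by: reducing every exponent to at most $2$ using $x^2\approx x^3$; collapsing, via $\delta_k$, every letter occurring more than $k$ times to a square; and using $x^2y\approx yx^2$ together with $\sigma_1,\sigma_2,\sigma_3$ to gather the resulting squares at a fixed spot and to sort the remaining occurrences into a prescribed order determined by the positions of first and last occurrences. It then remains to verify that $\Sigma_k\vdash\mathbf{w}\approx\overline{\mathbf{w}}$ and that $\overline{\mathbf{w}}$ is completely determined by the invariants that $S(\mathbf{w}_k)$ reads off; granting these, $S(\mathbf{w}_k)\models\mathbf{u}\approx\mathbf{v}$ yields $\overline{\mathbf{u}}=\overline{\mathbf{v}}$, whence $\Sigma_k\vdash\mathbf{u}\approx\mathbf{v}$. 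For $k=\infty$ one notes that $\delta_\infty$ is trivial and that $\Sigma_k\vdash\delta_{k+1}$ (apply $\delta_k$ and then push the extra occurrence of $x$ past everything using centrality of $x^2$), so $\mathbf{D}_1\subseteq\mathbf{D}_2\subseteq\cdots$; since $\mathbf{D}_\infty=\bigvee_k\mathbf{D}_k$ by definition, the case $k=\infty$ follows from the finite cases, or else one runs the same normalization with the $\delta_k$-collapsing step suppressed and checks that the resulting canonical form is a complete invariant for $\mathbf{D}_\infty$.

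The main obstacle is the normalization for finite $k$: one has to show that the four rearranging identities $x^2y\approx yx^2,\ \sigma_1,\ \sigma_2,\ \sigma_3$ are precisely strong enough to bring the non-collapsing occurrences into the canonical order without ever identifying two words that $S(\mathbf{w}_k)$ can distinguish, while $\delta_k$ accounts for exactly the over-occurring letters. The delicate part is the bookkeeping — keeping track of which occurrence of each letter is first, which is last, and which adjacent transposition each of $\sigma_1$, $\sigma_2$, $\sigma_3$ authorizes — so that the canonical order is both reachable from $\Sigma_k$ and a complete invariant for $S(\mathbf{w}_k)$.
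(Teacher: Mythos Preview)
The paper does not supply its own proof: the lemma is stated with a terminal \qed\ and is attributed to Lee~\cite[proof of Proposition~4.1]{Lee-14}. So there is no in-paper argument to compare against; your two-inclusion plus normal-form outline is the standard route and is essentially what Lee carries out.

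One imprecision to flag. You write that ``both sides of each $\sigma_i$ contain a letter occurring with more than the admissible multiplicity.'' For $k\ge 2$ this is false: the multiple letters in $\sigma_1,\sigma_2,\sigma_3$ each occur exactly twice, which does not exceed $k$. The check that $S(\mathbf w_k)\models\sigma_i$ is still routine, but the reason is structural rather than a bare multiplicity count: any non-identity element of $S(\mathbf w_k)$ that can appear twice inside a factor of $\mathbf w_k$ must be the single letter $\bar x$ (every $t_j$ is simple), and then the adjacent block $\bar x\bar x$ already forces the value $0$ by square-freeness; hence whenever both $x$ and $y$ are sent to non-identity elements, both sides of each $\sigma_i$ vanish, while if either is sent to $1$ the two sides become literally equal. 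You may well have this in mind under ``a short case analysis,'' but the multiplicity sentence as written does not cover it. Your normal-form plan for $\mathbf E_k\subseteq\mathbf D_k$ and the passage to $k=\infty$ (for a fixed identity choose $k$ exceeding every letter multiplicity so that the $\delta_k$-step of the normalization is vacuous and the derivation already lies in $\Sigma_\infty$) are sound in outline.
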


\begin{lemma}
\label{Lem: does not contain D_{k+1}}
Let $\mathbf V$ be a monoid variety. If $\mathbf D_{k+1}\nsubseteq\mathbf V$ for some $k\in\mathbb N$, then $\mathbf V$ satisfies an identity of the form
\begin{equation}
\label{...t_ix...=...t_ix^{e_i}...}
xt_1xt_2x\cdots t_kx\approx x^{e_0}t_1x^{e_1}t_2x^{e_2}\cdots t_kx^{e_k},
\end{equation}
where $e_0,e_1,\dots,e_k\in\mathbb N_0$ and $e_i>1$ for some $i\in\{0,1,\dots,k\}$.
\end{lemma}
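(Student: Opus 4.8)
The plan is to turn the hypothesis, via Lemma~\ref{Lem: S(W) in V}, into the statement that a certain very rigid word fails to be an isoterm for $\mathbf V$, and then to manufacture the required identity by substituting into a violated isoterm identity.

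Write $\mathbf w$ for the left-hand side $xt_1xt_2\cdots xt_kx$ of the identity in the statement; by definition $\mathbf D_{k+1}=\var S(\mathbf w)$. Since $\mathbf D_{k+1}\nsubseteq\mathbf V$, the monoid $S(\mathbf w)$ does not lie in $\mathbf V$, so by Lemma~\ref{Lem: S(W) in V} the word $\mathbf w$ is not an isoterm for $\mathbf V$; that is, $\mathbf V\models\mathbf w\approx\mathbf w'$ for some word $\mathbf w'\ne\mathbf w$. If $\mathbf V$ is a group variety, I would finish immediately: by Lemma~\ref{Lem: group variety} it satisfies an identity whose sides have different contents, and erasing all but one letter gives $\mathbf V\models x^p\approx\lambda$ for some $p\in\mathbb N$; for $p=1$ the variety is trivial and there is nothing to prove, while for $p\ge 2$ we have $\mathbf V\models x\approx x^{p+1}$, so $\mathbf w\approx x^{p+1}t_1xt_2\cdots t_kx$ is an identity of the required form (with $e_0=p+1$). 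Thus I may assume $\mathbf V$ is not a group variety; then, again by Lemma~\ref{Lem: group variety}, every identity of $\mathbf V$ preserves content, so $\con(\mathbf w')=\con(\mathbf w)=\{x,t_1,\dots,t_k\}$ and $\mathbf w'$ is a word over this alphabet in which each of these letters occurs.

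The decisive move is to substitute the empty word $\lambda$ for $x$ in $\mathbf w\approx\mathbf w'$. This produces $\mathbf V\models t_1t_2\cdots t_k\approx\mathbf v$, where $\mathbf v$ is obtained from $\mathbf w'$ by deleting every occurrence of $x$, and $\con(\mathbf v)=\{t_1,\dots,t_k\}$. I would then distinguish three cases. \emph{(1)} If some $t_j$ occurs in $\mathbf v$ at least twice, then erasing all the other $t_i$ yields $\mathbf V\models x\approx x^m$ for some $m\ge 2$, and $\mathbf w\approx x^mt_1xt_2\cdots t_kx$ is of the required form. \emph{(2)} If each $t_j$ occurs in $\mathbf v$ exactly once but their order in $\mathbf v$ is not $t_1,\dots,t_k$, then there are indices $a<b$ with $t_b$ before $t_a$ in $\mathbf v$; erasing all $t_i$ with $i\notin\{a,b\}$ yields $\mathbf V\models t_at_b\approx t_bt_a$, so $\mathbf V$ is commutative and $\mathbf w\approx x^{k+1}t_1t_2\cdots t_k$ works (this uses $k\ge 1$). \emph{(3)} Otherwise $\mathbf v=t_1t_2\cdots t_k$, so $\mathbf w'=x^{e_0}t_1x^{e_1}t_2\cdots t_kx^{e_k}$ for some $e_0,\dots,e_k\in\mathbb N_0$ with $(e_0,\dots,e_k)\ne(1,\dots,1)$. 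If $e_i>1$ for some $i$, then $\mathbf w\approx\mathbf w'$ is itself the required identity; if instead $e_i\le 1$ for all $i$, then $e_j=0$ for some $j$, and I would apply the endomorphism of $\mathfrak X^\ast$ fixing $x$ and sending $t_j\mapsto x^{1-e_{j-1}}t_j$ for each $j\in\{1,\dots,k\}$. A direct check shows that this endomorphism carries $\mathbf w'$ to $xt_1xt_2\cdots xt_kx^{e_k}$ (which equals $\mathbf w$ when $e_k=1$, and becomes $\mathbf w$ after right multiplication by $x$ when $e_k=0$), while it carries $\mathbf w$ to $x^{2-e_0}t_1x^{2-e_1}t_2\cdots x^{2-e_{k-1}}t_kx$; since some $e_j$ is $0$, one of the exponents $2-e_0,\dots,2-e_{k-1}$ equals $2$ (and if $e_k$ is the only vanishing one, then the trailing exponent becomes $2$ after the right multiplication), so once more we obtain an identity of the required form.

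The substitution computations involved are routine, so the step I expect to need the most care is case \emph{(3)}: one has to verify that the prescribed endomorphism really recovers $\mathbf w$ from $\mathbf w'$ and that the resulting word on the $\mathbf w$-side has an exponent exceeding $1$---which is precisely what the vanishing of some $e_j$ guarantees. Apart from that, the proof is entirely mechanical once the substitution $x\mapsto\lambda$ has been performed.
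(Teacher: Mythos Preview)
Your argument is correct and self-contained. The paper takes a shorter but less self-contained route: it splits into the completely regular case (handled exactly as in your group case, via $x\approx x^{n+1}$) and the non-completely-regular case, which it simply cites from \cite[Lemma~2.15]{Gusev-Vernikov-18}. Your approach instead uses Lemma~\ref{Lem: S(W) in V} directly to obtain a nontrivial identity $\mathbf w\approx\mathbf w'$, substitutes $x\mapsto\lambda$, and runs a case analysis on the shape of $\mathbf w'_x$, culminating in a neat endomorphism trick $t_j\mapsto x^{1-e_{j-1}}t_j$ that rebuilds $\mathbf w$ from $\mathbf w'$ while raising some exponent on the other side. This buys you independence from the external reference, at the cost of a longer case analysis; the paper's version is terse precisely because the real work has been exported. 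One small stylistic point: in your case~(3), the sentence covering the possibility that $e_k$ is the only vanishing exponent could be streamlined by simply saying that when $e_k=0$ you always right-multiply by $x$, which forces the trailing exponent to $2$ regardless of the other $e_i$.
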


\begin{proof}
If $\mathbf V$ is non-completely regular, then this claim is proved in Gusev and Vernikov~\cite[Lemma~2.15]{Gusev-Vernikov-18}. Suppose now that $\mathbf V$ is completely regular. Then $\mathbf V$ satisfies the identity~\eqref{x=x^{n+1}} for some $n\in\mathbb N$. Hence the identity $xt_1xt_2x\cdots t_kx\approx x^{n+1}t_1xt_2x\cdots t_kx$ holds in $\mathbf V$.
\end{proof}

Put $\mathbf E=\var\{x^2\approx x^3,\,x^2y^2\approx y^2x^2,\,\delta_1\}$. 

\begin{lemma}
\label{Lem: does not contain E}
Let $\mathbf V$ be a monoid variety satisfying the identity~\eqref{x^n=x^{n+1}} with $n\ge 2$. If $\mathbf C_2\subseteq\mathbf V$ and $\mathbf E\nsubseteq\mathbf V$, then $\mathbf V$ satisfies the identity
\begin{equation}
\label{x^nyx^n=yx^n}
x^nyx^n\approx yx^n.
\end{equation}
\end{lemma}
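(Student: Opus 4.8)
The goal is to derive the identity $x^ny x^n\approx yx^n$ in $\mathbf V$, given that $\mathbf V$ satisfies $x^n\approx x^{n+1}$ with $n\ge 2$, contains $\mathbf C_2$, and does not contain $\mathbf E$. The plan is to read off the consequence of $\mathbf E\nsubseteq\mathbf V$ via isoterms. By Lemma~\ref{Lem: S(W) in V}, the condition $\mathbf E\nsubseteq\mathbf V$ should be converted into a statement that some short word is \emph{not} an isoterm for $\mathbf V$; the natural candidate here is the word $xyx$ (or a closely related word such as $x^2yx$ or $xyx^2$ appearing in $\delta_1$), and more precisely one expects that $\mathbf E$ is generated (or, in the relevant interval, is the join of generators) by a monoid of the form $S(W)$ whose exclusion forces a nontrivial identity on $xyx$. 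First I would pin down exactly which monoid $S(W)$ sits in the relation ``$\subseteq\mathbf V$ iff $xyx$ is an isoterm for $\mathbf V$''; inspecting the defining identities of $\mathbf E$ (namely $x^2\approx x^3$, $x^2y^2\approx y^2x^2$, $\delta_1\colon xtxtx'... $ — actually $\delta_1\colon xt_1x\approx x^2t_1$), one sees that $\mathbf E$ is exactly the variety forcing $xyx$ to behave nontrivially, so $\mathbf E\nsubseteq\mathbf V$ means $S(xyx)\notin\mathbf V$, i.e. $xyx$ is not an isoterm for $\mathbf V$.

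Next I would use this to get a nontrivial identity $xyx\approx\mathbf w'$ holding in $\mathbf V$. Since $\mathbf V\supseteq\mathbf C_2$, the word $x^2$ is an isoterm for $\mathbf V$ (otherwise $\mathbf C_2\nsubseteq\mathbf V$ by Lemma~\ref{Lem: x^n is isoterm}), and likewise $x$, $xy$, $yx$ are isoterms; hence the only freedom in $\mathbf w'$ is in the multiplicity of $x$, and $\con(\mathbf w')=\{x,y\}$ with $y$ occurring once. So $\mathbf w'$ must be of the form $x^{a}yx^{b}$ with $a+b\ge 2$ and $(a,b)\ne(1,1)$. Using $x^n\approx x^{n+1}$ we may normalise exponents modulo the aperiodicity, and using $xy$, $yx$ isoterms we cannot have $a=0$ together with... — in any case a short case analysis on $(a,b)$, combined with substituting $x\mapsto x^n$ (which is idempotent since $x^n\approx x^{n+1}\Rightarrow x^{2n}\approx x^n$) and multiplying, reduces every case to the desired conclusion. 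Concretely, from $xyx\approx x^a y x^b$ one substitutes $x\mapsto x^n$, $y\mapsto y$ to obtain $x^n y x^n\approx x^{an}yx^{bn}=x^n y x^n$ — that is trivial, so instead one must substitute more cleverly: put $y\mapsto x^{n}y x^{n}$ or use the substitution $x\mapsto x$, $y\mapsto x^{n-1}yx^{n-1}$ and then absorb powers using $x^n\approx x^{n+1}$ and commutativity of $x^2$ with everything (from $x^2y^2\approx y^2x^2$, available since... actually $x^2y\approx yx^2$ should be derivable here), to collapse to $x^nyx^n\approx yx^n$.

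The main obstacle I anticipate is the last step: translating the ``defect'' identity $xyx\approx x^ayx^b$ into precisely $x^nyx^n\approx yx^n$ rather than some other absorption identity. This requires knowing that $\mathbf V$ satisfies $x^2y\approx yx^2$ (so that high powers of $x$ are central), which must itself be extracted — presumably from $x^2y^2\approx y^2x^2$ in $\mathbf E$'s basis together with $x^n\approx x^{n+1}$, or it may already be a hypothesis-level fact about the ambient setting; I would check whether $\mathbf V$ is assumed or forced to satisfy $x^2y\approx yx^2$, and if not, carry the commuting power $x^{2}$ through the computation explicitly. Once central powers of $x$ are in hand, the endgame is a routine manipulation: write $x^nyx^n$, insert $x^ayx^b$ for the middle segment after a suitable substitution, and use $x^nx=x^n$ to delete the leading power. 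The only real care needed is in the case $a\ge 1$ versus $a=0$: if $a=0$ (so $xyx\approx yx^b$), one gets prefix-deletion directly; if $b=0$ one gets the dual, and combining a deletion identity with its dual plus centrality yields the two-sided absorption $x^nyx^n\approx yx^n$. I would organise the write-up as: (1) $\mathbf C_2\subseteq\mathbf V$ gives $x^2$ (hence $x^k$ for $k\le n$) isoterm; (2) $\mathbf E\nsubseteq\mathbf V$ plus Lemma~\ref{Lem: S(W) in V} gives $xyx$ not an isoterm; (3) classify the resulting identity as $xyx\approx x^ayx^b$; (4) derive centrality of $x^n$; (5) finish by substitution and absorption.
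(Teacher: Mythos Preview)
Your plan has a genuine gap at the very first step. You assert that $\mathbf E\nsubseteq\mathbf V$ is equivalent to $xyx$ not being an isoterm for $\mathbf V$, but this is false: the monoid $S(xyx)$ generates $\mathbf D_2$, not $\mathbf E$, and $\mathbf E$ and $\mathbf D_2$ are incomparable (indeed $\mathbf E$ satisfies $\delta_1\colon xtx\approx x^2t$, so $xyx$ is \emph{not} an isoterm for $\mathbf E$ itself, while $\mathbf D_2$ satisfies $x^2y\approx yx^2$, which $\mathbf E$ does not by Lemma~\ref{Lem: yx^n=w in E}). A concrete counterexample to your implication is $\mathbf V=\mathbf D_2$: here $\mathbf E\nsubseteq\mathbf V$ holds, yet $xyx$ is an isoterm for $\mathbf V$, so your argument cannot even start.

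Even granting a nontrivial identity $xyx\approx x^a y x^b$, your endgame does not reach the target. Substituting $x\mapsto x^n$ and using $x^n\approx x^{n+1}$ gives $x^nyx^n\approx x^{\min(an,n)}yx^{\min(bn,n)}$, which is trivial unless $a=0$ or $b=0$; and the case $b=0$ yields only the dual identity $x^nyx^n\approx x^ny$, not~\eqref{x^nyx^n=yx^n}. Your proposed remedy --- first deriving centrality of $x^n$ --- would be circular, since $x^ny\approx yx^n$ is strictly stronger than~\eqref{x^nyx^n=yx^n} and is \emph{not} a consequence of the hypotheses (it requires also $\mathbf E^\delta\nsubseteq\mathbf V$, which is not assumed).

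The paper's proof proceeds quite differently: it splits on whether $\mathbf D_1\subseteq\mathbf V$. If not, then $\mathbf V$ is commutative (by~\cite[Lemma~2.14]{Gusev-Vernikov-18}) and the identity follows trivially. If $\mathbf D_1\subseteq\mathbf V$, then~\cite[Lemma~4.1 and Proposition~4.2]{Gusev-Vernikov-18} give an identity $x^pyx^q\approx yx^r$ with $p,q\ge 1$, $r\ge 2$; substituting $x\mapsto x^n$ and collapsing exponents via $x^n\approx x^{n+1}$ then yields~\eqref{x^nyx^n=yx^n} directly. The essential content you are missing is that the exclusion of $\mathbf E$ (given $\mathbf D_1\subseteq\mathbf V$) forces an identity with \emph{no} $x$ on one side of $y$ --- this is exactly what your isoterm analysis of $xyx$ cannot deliver.
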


\begin{proof}
If $\mathbf D_1\nsubseteq\mathbf V$, then $\mathbf V$ is commutative by Gusev and Vernikov~\cite[Lemma~2.14]{Gusev-Vernikov-18}. Then $\mathbf V$ satisfies the identities $x^nyx^n\approx yx^{2n}\approx yx^n$ and therefore, the identity~\eqref{x^nyx^n=yx^n}. Suppose now that $\mathbf D_1\subseteq\mathbf V$. Then $\mathbf V$ satisfies an identity of the form $x^pyx^q\approx yx^r$ for some $p,q\in\mathbb N$ and $r\ge 2$ by~\cite[Lemma~4.1 and Proposition~4.2]{Gusev-Vernikov-18}. One can substitute $x^n$ for $x$ in this identity and apply the identity~\eqref{x^n=x^{n+1}}. As a result, we obtain the identity~\eqref{x^nyx^n=yx^n}.
\end{proof}

The following statement immediately follows from Gusev and Vernikov~\cite[Proposition~4.2]{Gusev-Vernikov-18}.

\begin{lemma}
\label{Lem: yx^n=w in E}
If the variety $\mathbf E$ satisfies an identity of the form $yx^k\approx\mathbf w$ with $k\ge 2$, then $\mathbf w=yx^\ell$ for some $\ell\ge 2$.\qed
\end{lemma}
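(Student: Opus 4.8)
The plan is to prove the lemma by producing an invariant of the fully invariant congruence $\theta_{\mathbf E}$ on $\mathfrak X^\ast$ that, for every letter occurring exactly once in a word, records which letters occur to the left of that occurrence; applied to the identity $yx^k\approx\mathbf w$, this invariant will pin down the shape of $\mathbf w$.

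First I would fix the rough form of $\mathbf w$. Since $\mathbf{SL}\subseteq\mathbf E$ --- which is immediate from the three defining identities of $\mathbf E$ --- Lemma~\ref{Lem: group variety} gives $\con(\mathbf w)=\con(yx^k)=\{x,y\}$. Moreover $\mathbf C_2\subseteq\mathbf E$, because $x^2y^2\approx y^2x^2$ and $\delta_1$ hold in every commutative monoid satisfying $x^2\approx x^3$; hence, for each fixed letter, the number of its occurrences truncated at $2$ is a $\theta_{\mathbf E}$-invariant. Since $yx^k$ contains $y$ once and $x$ exactly $k\ge 2$ times, $\mathbf w$ must contain $y$ exactly once and $x$ at least twice, so $\mathbf w=x^ayx^b$ for some $a,b\in\mathbb N_0$ with $a+b\ge 2$. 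It then remains only to prove that $a=0$.

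The key step is to show that, for a word $\mathbf u$ and a letter $z$ occurring exactly once in $\mathbf u$, the set $B(\mathbf u,z)$ of all letters having an occurrence in $\mathbf u$ to the left of the occurrence of $z$ is preserved by $\theta_{\mathbf E}$ (observe that $z$ again occurs exactly once in any $\theta_{\mathbf E}$-related word, by the occurrence-count invariant, so $B$ stays defined). I would verify this on a single elementary rewriting, namely the replacement of one side of a defining identity of $\mathbf E$ by the other inside an arbitrary context. Such a rewriting alters a subword of the form $\mathbf a^2$, $\mathbf a^2\mathbf b^2$ or $\mathbf a\mathbf b\mathbf a$, turning it into $\mathbf a^3$, $\mathbf b^2\mathbf a^2$ or $\mathbf a^2\mathbf b$ respectively; in every case the altered subword has the same content before and after, and a letter occurring only once in the whole word cannot lie inside $\mathbf a$, nor---in the first two cases---inside $\mathbf b$. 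Thus the occurrence of $z$ lies either outside the altered subword or, only in the $\delta_1$ case, inside the factor $\mathbf b$; a short case check, according to whether that occurrence precedes, lies within, or follows the altered subword, then shows that the set of letters to the left of $z$ does not change.

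Granting this, the proof concludes quickly: applying the invariant to $\mathbf w\,\theta_{\mathbf E}\,yx^k$ with $z=y$, and using that no letter precedes $y$ in $yx^k$, we obtain that no letter precedes $y$ in $\mathbf w=x^ayx^b$, whence $a=0$. Hence $\mathbf w=yx^b$, and $b=\occ_x(\mathbf w)\ge 2$ by the occurrence-count invariant, so the lemma holds with $\ell=b$. The main obstacle is precisely the verification that $B$ is $\theta_{\mathbf E}$-invariant, and inside it the $\delta_1$ case is the delicate one, since $\delta_1$ may carry an entire block of letters past the occurrence of $z$.
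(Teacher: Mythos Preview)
Your proof is correct. The paper does not actually prove this lemma; it simply states that the claim follows immediately from \cite[Proposition~4.2]{Gusev-Vernikov-18} and marks the statement with \qed. So there is no argument in the paper to compare against beyond the citation.

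Your approach is a self-contained invariant argument: you check directly that the three defining identities of $\mathbf E$ preserve (i) the occurrence count truncated at~$2$ and (ii) the set of letters appearing to the left of a simple letter. The second invariant is the crux, and your case analysis for $\delta_1$ is the right one: since a simple letter $z$ cannot lie in the image $\mathbf a$ of the repeated variable, it either lies outside the rewritten subword or in the image $\mathbf b$ of the single variable, and in either case the first copy of $\mathbf a$ already contributes $\con(\mathbf a)$ to the left of $z$, so the passage $\mathbf a\mathbf b\mathbf a\leftrightarrow\mathbf a^2\mathbf b$ does not alter $B(\cdot,z)$. This yields a short elementary proof that does not rely on the structural results of \cite{Gusev-Vernikov-18}; the trade-off is that the cited proposition presumably gives a more general normal-form description of $\theta_{\mathbf E}$-classes, of which this lemma is a special case.
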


Put $\mathbf L=\var S(xtxysy)$ and $\mathbf M=\var S(xytxsy)$.

\begin{lemma}[{Gusev and Vernikov~\cite[Lemma~4.9]{Gusev-Vernikov-18}}]
\label{Lem: V contains D_2 but does not contain L or M or M^delta}
Let $\mathbf V$ be a monoid variety with $\mathbf D_2\subseteq\mathbf V$.
\begin{itemize}
\item[(i)] If $\mathbf M\nsubseteq\mathbf V$, then $\mathbf V$ satisfies the identity $\sigma_1$.
\item[(ii)] If $\mathbf M^\delta\nsubseteq\mathbf V$, then $\mathbf V$ satisfies the identity $\sigma_2$.
\item[(iii)] If $\mathbf L\nsubseteq\mathbf V$, then $\mathbf V$ satisfies the identity $\sigma_3$.\qed
\end{itemize}
\end{lemma}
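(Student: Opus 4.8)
The plan is to handle the three parts in parallel, reducing each to a short statement about which words are isoterms. Since $\mathbf D_2=\var S(xtx)$ and $xtx$ is a palindrome, the variety $\mathbf D_2$ is self-dual; moreover $\sigma_2$ and $\mathbf M^\delta$ are the duals of $\sigma_1$ and $\mathbf M$ (while $\sigma_3$ and $\mathbf L$ are each self-dual, which is why part~(iii) has no dual partner). Hence part~(ii) is obtained by applying part~(i) to the variety $\mathbf V^\delta$, and only (i) and (iii) require proof. For both the plan is the same: from the failure of the relevant inclusion produce a single nontrivial identity whose left-hand side is the ``forbidden'' word, and then use the monoid laws $x\approx 1\cdot x\approx x\cdot 1$ (that is, substitution of $1$ for a letter) together with the hypothesis $\mathbf D_2\subseteq\mathbf V$ to pin that identity down completely.

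For (i), assume $\mathbf D_2\subseteq\mathbf V$ and $\mathbf M\nsubseteq\mathbf V$. By Lemma~\ref{Lem: S(W) in V}, $xtx$ is an isoterm for $\mathbf V$ whereas $xytxsy$ is not, so $\mathbf V$ satisfies a nontrivial identity $xytxsy\approx\mathbf w$. Substituting $1$ for $y$ and, separately, $1$ for $x$ yields $\mathbf V\models xtxs\approx\mathbf w_1$ and $\mathbf V\models ytsy\approx\mathbf w_2$, where $\mathbf w_1$ and $\mathbf w_2$ are obtained from $\mathbf w$ by deleting all occurrences of $y$ and of $x$, respectively. A direct check shows that $xtxs$ and $ytsy$ are isoterms for $\mathbf D_2$ --- in each of them no letter occurs thrice and there is no factor of the form $\mathbf u\mathbf u$, and their single-occurrence letters $t$ and $s$ obstruct every nontrivial application of $\sigma_1,\sigma_2,\sigma_3,\delta_2$ from the basis in Lemma~\ref{Lem: basis for D_k}; equivalently, $S(xtxs),S(ytsy)\in\mathbf D_2$ --- and hence, since $\mathbf D_2\subseteq\mathbf V$, they are isoterms for $\mathbf V$ as well. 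Therefore $\mathbf w_1=xtxs$ and $\mathbf w_2=ytsy$. Comparing these two ``projections'' forces $\con(\mathbf w)=\{x,y,s,t\}$, with $x,y$ occurring exactly twice and $s,t$ exactly once in $\mathbf w$, and determines the order of all occurrences in $\mathbf w$ except for the relative order of the first occurrence of $x$ and the first occurrence of $y$. Thus $\mathbf w\in\{xytxsy,\,yxtxsy\}$, and nontriviality gives $\mathbf w=yxtxsy$; since $xytxsy\approx yxtxsy$ is exactly $\sigma_1$ with the letter names $s$ and $t$ interchanged, $\mathbf V\models\sigma_1$.

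Part (iii) runs identically: from $\mathbf D_2\subseteq\mathbf V$ and $\mathbf L\nsubseteq\mathbf V$ one gets a nontrivial identity $xtxysy\approx\mathbf w$; deleting $y$ gives $xtxs$ and deleting $x$ gives $tysy$, both again isoterms for $\mathbf D_2$; the same bookkeeping forces $\mathbf w\in\{xtxysy,\,xtyxsy\}$, hence $\mathbf w=xtyxsy$, and $xtxysy\approx xtyxsy$ is $\sigma_3$ with $s$ and $t$ interchanged.

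The one step where I expect real care to be needed is the isoterm verification --- this is the only place the hypothesis $\mathbf D_2\subseteq\mathbf V$ enters, and one must check not merely that no single defining identity of $\mathbf D_2$ applies to $xtxs$, $ytsy$, $tysy$ but that no finite sequence of them alters these words; the tidiest route is to verify directly that $S(xtxs)$, $S(ytsy)$, $S(tysy)$ satisfy the basis of Lemma~\ref{Lem: basis for D_k} and to quote Lemma~\ref{Lem: S(W) in V}. Once that is in place the recovery of $\mathbf w$ from its two one-letter-deleted images together with its content is routine word combinatorics, and part (ii) costs nothing beyond the duality remark made at the outset.
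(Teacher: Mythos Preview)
Your argument is correct. The paper itself gives no proof of this lemma --- it is quoted from the cited article with a bare \qed\ --- so there is nothing to compare against; your self-contained proof via the two one-letter deletions, the isoterm checks for $xtxs$, $ytsy$, $tysy$ in $\mathbf D_2$, and the reconstruction of $\mathbf w$ from its projections is a clean and standard way to establish the result.

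One small remark on presentation: rather than arguing that ``no finite sequence'' of the basis identities alters $xtxs$, the quickest route is the one you already hint at --- verify that $S(xtxs)$, $S(ytsy)$, $S(tysy)$ all lie in $\mathbf D_2$ (each has a single multiple letter occurring exactly twice in distinct blocks, so none of $x^2\approx x^3$, $x^2y\approx yx^2$, $\sigma_1$, $\sigma_2$, $\sigma_3$, $\delta_2$ can apply nontrivially under any substitution) and invoke Lemma~\ref{Lem: S(W) in V}. With that in place, the reconstruction step is watertight: knowing $\mathbf w_y=xtxs$ and $\mathbf w_x=ytsy$ together with $\con(\mathbf w)=\{x,y,s,t\}$ (forced by $\mathbf{SL}\subseteq\mathbf V$) determines every relative position in $\mathbf w$ except that of the first $x$ and the first $y$, exactly as you say.
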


Put $\mathbf O=\var\{\sigma_2,\,\sigma_3\}$. Following Lee~\cite{Lee-13}, we call an identity of the form
$$
\mathbf u_0\biggl(\prod_{i=1}^r t_i\mathbf u_i\biggr)\approx\mathbf v_0\biggl(\prod_{i=1}^r t_i\mathbf v_i\biggr),
$$
where $\{t_0,t_1,\dots,t_r\}=\simple\bigl(\prod_{i=0}^r t_i\mathbf u_i\bigr)=\simple\bigl(\prod_{i=0}^r t_i\mathbf v_i\bigr)$ \emph{efficient} if $\mathbf u_i\mathbf v_i\ne\lambda$ for any $i=0,1,\dots,r$. 

\begin{lemma}[{Lee~\cite[Lemma~8 and Remark~11]{Lee-13}}] 
\label{Lem: subvarieties of O}
Every non-commutative subvariety of the variety $\mathbf O$ can be given within $\mathbf O$ by a finite number of efficient identities of the form either
\begin{equation}
\label{one letter in a block}
x^{e_0}\biggl(\prod_{i=1}^r t_ix^{e_i}\biggr)\approx x^{f_0}\biggl(\prod_{i=1}^r t_ix^{f_i}\biggr),
\end{equation}
where $r,e_0,f_0,e_1,f_1,\dots,e_r,f_r\in\mathbb N_0$ or 
\begin{equation}
\label{two letters in a block}
x^{e_0}y^{f_0}\biggl(\prod_{i=1}^r t_ix^{e_i}y^{f_i}\biggr)\approx y^{f_0}x^{e_0}\biggl(\prod_{i=1}^r t_ix^{e_i}y^{f_i}\biggr),
\end{equation}
where $r\in\mathbb N_0$, $e_0,f_0\in\mathbb N$, $e_1,f_1,\dots,e_r,f_r\in\mathbb N_0$, $\sum_{i=0}^r e_i\ge 2$ and $\sum_{i=0}^r f_i\ge 2$.\qed
\end{lemma}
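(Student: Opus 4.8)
The plan is to reduce a general identity valid in a non-commutative subvariety $\mathbf V$ of $\mathbf O$, working modulo the identities $\sigma_2$ and $\sigma_3$, first to identities whose two sides differ very locally, and then to a finite subset of such identities. The first ingredient is an $\mathbf O$-analogue of the normal form for $\mathbf D_k$ recorded in Lemma~\ref{Lem: basis for D_k}. The identities $\sigma_2$ and $\sigma_3$ say precisely that two adjacent occurrences of distinct letters may be transposed whenever the transposition is ``invisible'': $\sigma_2$ when both occurrences are non-first, $\sigma_3$ when one is non-first and the other is non-last. Using this, I would prove by induction on word length that every word is $\mathbf O$-equivalent to a canonical representative of the form $\mathbf u_0\bigl(\prod_{i=1}^r t_i\mathbf u_i\bigr)$, with $t_1,\dots,t_r$ its simple letters in order of occurrence and $\mathbf u_0,\dots,\mathbf u_r$ the intervening blocks of non-simple letters, where the representative is determined by the order of the simple letters together with, for each $i$, the letters of $\mathbf u_i$, their multiplicities, and the data recording which occurrences inside $\mathbf u_i$ are first or last occurrences of their letter; within a block the remaining (``middle'') occurrences may be permuted freely.

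Next, let $\mathbf V\subseteq\mathbf O$ be non-commutative. Then $\mathbf V$ is not a group variety, since any group variety satisfying $\sigma_2$ is commutative (cancel the common factor $xsyt$ in $xsytxy\approx xsytyx$); hence, by Lemma~\ref{Lem: group variety}, every identity valid in $\mathbf V$ has the same content on both sides. Take such an identity $\mathbf u\approx\mathbf v$ and put both sides into the normal form above. Slicing $\mathbf u\approx\mathbf v$ into steps each of which alters a single non-simple block, plus possibly one step transposing the first occurrences of two letters in the leading portion of the word, exhibits $\mathbf u\approx\mathbf v$ as a consequence modulo $\mathbf O$ of a conjunction of identities whose two sides differ inside one non-simple block or in such a leading transposition. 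Applying to each of these the substitution that collapses every letter but the one or two relevant ones to the identity element, one obtains, respectively, an identity of the form~\eqref{one letter in a block} or of the form~\eqref{two letters in a block} valid in $\mathbf V$; conversely, the original identity is recovered from these by reinstating the ambient context. Deleting the blocks on which the two normal forms already coincide makes the identities efficient. Thus $\mathbf V$ is defined within $\mathbf O$ by its valid identities of the forms~\eqref{one letter in a block} and~\eqref{two letters in a block}.

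It then remains to pass from this a priori infinite set of identities to a finite one. For this I would equip the identities of the two shapes with the natural ``substitute-and-delete-blocks'' preorder and invoke a Higman-type well-quasi-order argument to see that this preorder, restricted to those shapes, is a well-quasi-order; any set of minimal elements among the valid identities of $\mathbf V$ then yields a finite basis of $\mathbf V$ relative to $\mathbf O$.

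The step I expect to be the main obstacle is the second one: pinning down the normal form of the first paragraph exactly, and checking that the passage from an arbitrary identity to the bounded forms~\eqref{one letter in a block}--\eqref{two letters in a block} is an equivalence modulo $\mathbf O$ \emph{alone}, so that no deductive strength is lost and none is manufactured. Once the bounded forms are available, the well-quasi-order argument of the third paragraph is comparatively routine.
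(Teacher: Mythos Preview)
The paper does not give its own proof of this lemma: it is quoted verbatim from Lee~\cite[Lemma~8 and Remark~11]{Lee-13} and closed with a \qed. So there is nothing in the present paper to compare your argument against; the authors simply import the result.

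That said, your outline is broadly the right shape for how such a result is established, and is consistent in spirit with Lee's original argument: normalize both sides of an identity using $\sigma_2$ and $\sigma_3$ (so that within each block all non-first occurrences may be shuffled past each other and past non-last occurrences), observe that content is preserved because a non-group variety contains $\mathbf{SL}$, and then decompose the remaining discrepancy between the two normalized sides into elementary moves each of which, after erasing the irrelevant letters, becomes an instance of~\eqref{one letter in a block} or~\eqref{two letters in a block}. Your caveat about the second step is well placed: the delicate point is indeed checking that the ``local'' identities obtained by specialization suffice, \emph{within $\mathbf O$}, to derive the original identity---in particular that the order of first occurrences in the leading block is the only obstruction not absorbed by $\sigma_2,\sigma_3$, which is exactly what produces the shape~\eqref{two letters in a block}. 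For the finiteness step, a Higman-type well-quasi-order on the exponent sequences is a reasonable route and is essentially what underlies Lee's Remark~11; one must just verify that implication between identities of each shape in $\mathbf O$ is compatible with the relevant embedding order on tuples in $\mathbb N_0^{r+1}$.
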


For a word $\mathbf w$ and a letter $x$, let $\occ_x(\mathbf w)$ denote the number of occurrences of $x$ in $\mathbf w$. A letter $x$ is called \emph{simple} [\emph{multiple}] \emph{in a word} $\mathbf w$ if $\occ_x(\mathbf w)=1$ [respectively, $\occ_x(\mathbf w)>1$]. The set of all simple [multiple] letters of a word $\mathbf w$ is denoted by $\simple(\mathbf w)$ [respectively, $\mul(\mathbf w)$]. Let $\mathbf w$ be a word and $\simple(\mathbf w)=\{t_1,t_2,\dots,t_m\}$. We will assume without loss of generality that $\mathbf w(t_1,t_2,\dots,t_m)=t_1t_2\cdots t_m$. Then $\mathbf w=\mathbf w_0t_1\mathbf w_1\cdots t_m\mathbf w_m$ for some words $\mathbf w_0,\mathbf w_1,\dots,\mathbf w_m$. The words $\mathbf w_0$, $\mathbf w_1$, \dots, $\mathbf w_m$ are called \emph{blocks} of the word $\mathbf w$. The representation of the word $\mathbf w$ as a product of alternating simple in $\mathbf w$ letters and blocks is called a \emph{decomposition} of the word $\mathbf w$.

The following statement follows from Lemma~\ref{Lem: S(W) in V} and the definition of the variety $\mathbf D_1$.

\begin{lemma}
\label{Lem: decompositions of u and v}
Let $\mathbf u\approx\mathbf v$ be an identity that holds in the variety $\mathbf D_1$. Suppose that 
\begin{equation}
\label{decomposition of u}
\mathbf u_0t_1\mathbf u_1\cdots t_m\mathbf u_m
\end{equation}
is the decomposition of the word $\mathbf u$. Then $\con(\mathbf u)=\con(\mathbf v)$ and the decomposition of the word $\mathbf v$ has the form
\begin{equation}
\label{decomposition of v}
\mathbf v_0t_1\mathbf v_1\cdots t_m\mathbf v_m
\end{equation}
for some words $\mathbf v_0,\mathbf v_1,\dots,\mathbf v_m$.\qed
\end{lemma}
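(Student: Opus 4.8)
The goal is Lemma~\ref{Lem: decompositions of u and v}: if $\mathbf u\approx\mathbf v$ holds in $\mathbf D_1$, then $\con(\mathbf u)=\con(\mathbf v)$ and the decomposition of $\mathbf v$ has the same sequence of simple letters $t_1,\dots,t_m$ in the same order as that of $\mathbf u$. Here is the plan.

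\smallskip

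First I would dispose of the content claim. Recall $\mathbf D_1=\var S(xy)$, so $\mathbf{SL}$ is a subvariety of $\mathbf D_1$ (indeed $xy$ is an isoterm for $\mathbf{SL}$, or more directly $S(xy)$ has the two-element semilattice as a quotient). Hence $\mathbf D_1$ is not a group variety, and by Lemma~\ref{Lem: group variety} (the equivalence of (a) and (b)) any identity $\mathbf u\approx\mathbf v$ holding in $\mathbf D_1$ must satisfy $\con(\mathbf u)=\con(\mathbf v)$. (Alternatively: $xy$ being an isoterm for $\mathbf D_1$ forces any $\mathbf D_1$-identity to preserve content, since otherwise some letter could be deleted.)

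\smallskip

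Next comes the heart of the matter: showing that a letter simple in $\mathbf u$ is simple in $\mathbf v$, and that the linear order of these simple letters is preserved. The key observation is that $xy$ is an isoterm for $\mathbf D_1$ by Lemma~\ref{Lem: S(W) in V}, and therefore, since every subword of an isoterm is an isoterm, every \emph{linear} word (a word in which every letter is simple) is an isoterm for $\mathbf D_1$: if $\mathbf w = y_1 y_2 \cdots y_k$ with distinct letters $y_i$, then substituting for the $y_i$ appropriate words built from $x,y$ realizes $\mathbf w$ as essentially iterating the isoterm property of $xy$; more precisely one argues that any identity of $\mathbf D_1$ whose left side is linear must be trivial. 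Granting this, take the identity $\mathbf u\approx\mathbf v$ and a substitution $\xi$ that sends every multiple letter of $\mathbf u$ to the empty word and fixes every simple letter of $\mathbf u$. Then $\xi(\mathbf u) = t_1 t_2 \cdots t_m$ (using the normalization $\mathbf u(t_1,\dots,t_m)=t_1\cdots t_m$ from the paragraph preceding the lemma), a linear word, and $\mathbf D_1$ satisfies $\xi(\mathbf u)\approx\xi(\mathbf v)$. Since $t_1\cdots t_m$ is an isoterm, we get $\xi(\mathbf v) = t_1\cdots t_m$ as words. But $\xi(\mathbf v)$ is obtained from $\mathbf v$ by deleting all occurrences of letters not in $\{t_1,\dots,t_m\}$; since $\con(\mathbf v)=\con(\mathbf u)$, the letters not in $\{t_1,\dots,t_m\}$ are exactly $\mul(\mathbf u)$. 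The equality $\xi(\mathbf v)=t_1\cdots t_m$ then forces: each $t_i$ occurs exactly once in $\mathbf v$ (so $t_i\in\simple(\mathbf v)$, i.e.\ $\simple(\mathbf u)\subseteq\simple(\mathbf v)$), and they occur in $\mathbf v$ in the order $t_1,t_2,\dots,t_m$. By symmetry $\simple(\mathbf v)\subseteq\simple(\mathbf u)$, so $\simple(\mathbf v)=\{t_1,\dots,t_m\}$ and $\mathbf v(t_1,\dots,t_m)=t_1\cdots t_m$. This is precisely the statement that the decomposition of $\mathbf v$ has the form~\eqref{decomposition of v}, with the $\mathbf v_j$ being the (possibly empty) subwords of $\mathbf v$ lying between consecutive $t_i$'s.

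\smallskip

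The step I expect to be the only real obstacle — though it is still routine — is the claim that every linear word is an isoterm for $\mathbf D_1$, equivalently that $\mathbf D_1$ satisfies no nontrivial identity whose sides are linear words with equal content. This is exactly what Lemma~\ref{Lem: S(W) in V} delivers combined with the subword-heredity of isoterms and a substitution argument, but one must be a little careful: from "$xy$ is an isoterm for $\mathbf D_1$" one directly gets that $xy$ admits no nontrivial identity; to lift this to an arbitrary linear word $y_1\cdots y_k$ one uses that $S(y_1 y_2\cdots y_k)$ itself lies in $\mathbf D_1$ precisely when $y_1\cdots y_k$ is an isoterm for $\mathbf D_1$, and this monoid does lie in $\mathbf D_1$ because $y_1\cdots y_k$ is an isoterm for $\var S(xy)$ (any identity of $\var S(xy)$ with linear left side must be trivial, as one checks by evaluating in $S(xy)$: a linear word of length $\le 2$ is a nonzero element, and for length $\ge 2$ one maps two chosen adjacent letters onto the generators of $S(xy)$ and the rest to $1$). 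Once this sublemma is in hand, the rest of the argument above is bookkeeping, and the proof closes by the symmetry between $\mathbf u$ and $\mathbf v$.
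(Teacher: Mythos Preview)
Your proof is correct and follows exactly the route the paper indicates: the lemma is stated with a \qed\ and the preceding sentence says it ``follows from Lemma~\ref{Lem: S(W) in V} and the definition of the variety $\mathbf D_1$''. Your argument is a faithful unpacking of that hint---using that $xy$ (and hence every linear word, via subword heredity and two-letter restrictions) is an isoterm for $\mathbf D_1=\var S(xy)$, then erasing the multiple letters to force $\mathbf v(t_1,\dots,t_m)=t_1\cdots t_m$ and invoking symmetry for $\simple(\mathbf v)\subseteq\simple(\mathbf u)$.
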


As usual, the subvariety lattice of a variety $\mathbf V$ is denoted by $L(\mathbf V)$. 

\begin{lemma}[{Gusev and Vernikov~\cite[Proposition~6.1]{Gusev-Vernikov-18}}]
\label{Lem: L(K) is a chain}
The lattice $L(\mathbf K)$ is a chain.\qed
\end{lemma}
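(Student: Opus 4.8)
The plan is to pin down $L(\mathbf K)$ completely. I expect it to be the chain whose bottom part is
$$\mathbf T\subset\mathbf{SL}\subset\mathbf C_2\subset\mathbf D_1$$
(here $\mathbf T$ denotes the trivial variety), capped by the interval $[\mathbf D_1,\mathbf K]$, which I must also show is linearly ordered.

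First I would read off the basic structure of $\mathbf K$. Substituting $1$ for $y$ in $x^2y\approx x^2yx$ gives $x^2\approx x^3$, so $\mathbf K$ is aperiodic; substituting an arbitrary word $\mathbf w$ for $y$ in the first two defining identities gives that $\mathbf K$ satisfies $x^2\mathbf w\approx x^2\mathbf wx$ and $x\mathbf wx\approx x\mathbf wx^2$ for every word $\mathbf w$. Consequently, no word with a repeated letter is an isoterm for $\mathbf K$: such a word has a subword of the form $x\mathbf wx$, which by the second consequence is not an isoterm, and subwords of isoterms are isoterms. Conversely, for a linear word $\mathbf u$ every non-identity element of $S(\mathbf u)$ squares to $0$, so all four defining identities of $\mathbf K$ hold in $S(\mathbf u)$ and $S(\mathbf u)\in\mathbf K$ by Lemma~\ref{Lem: S(W) in V}. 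Hence the isoterms for $\mathbf K$ are exactly the linear words. Since the two word-identities above are inherited by any $\mathbf V\in L(\mathbf K)$, every isoterm for $\mathbf V$ is linear; moreover, if $xy$ is an isoterm for $\mathbf V$, then $S(xy)\in\mathbf V$, so $\mathbf D_1=\var S(xy)\subseteq\mathbf V$, and then, again by Lemma~\ref{Lem: S(W) in V}, \emph{every} linear word is an isoterm for $\mathbf V$.

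Now take any $\mathbf V\in L(\mathbf K)$. If $x$ is not an isoterm for $\mathbf V$, then $\mathbf V$ satisfies $x\approx x^m$ for some $m\geq2$ by Lemma~\ref{Lem: x^n is isoterm}, hence $x\approx x^2$ by aperiodicity, and with $x^2y^2\approx y^2x^2$ this gives $xy\approx yx$; so $\mathbf V$ is a variety of commutative idempotent monoids, i.e.\ $\mathbf V\in\{\mathbf T,\mathbf{SL}\}$. If $x$ is an isoterm for $\mathbf V$ but $xy$ is not, then $\mathbf C_2\subseteq\mathbf V$ by Lemma~\ref{Lem: x^n is isoterm}, and $\mathbf V$ satisfies some non-trivial $xy\approx\mathbf w$ with $\con(\mathbf w)=\{x,y\}$; here $\mathbf w$ must be linear, for otherwise one of $x,y$ occurs in $\mathbf w$ at least twice and substituting $1$ for the other letter gives $x\approx x^j$ or $y\approx y^j$ with $j\geq2$, contradicting $\mathbf C_2\subseteq\mathbf V$. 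So $\mathbf w=yx$, $\mathbf V$ is commutative, hence $\mathbf V\subseteq\mathbf C_2$ (a commutative variety satisfying $x^2\approx x^3$), and so $\mathbf V=\mathbf C_2$. Finally, if $xy$ is an isoterm for $\mathbf V$, then $\mathbf D_1\subseteq\mathbf V\subseteq\mathbf K$. Thus every member of $L(\mathbf K)$ is $\mathbf T$, $\mathbf{SL}$, $\mathbf C_2$, or lies in $[\mathbf D_1,\mathbf K]$; since $\mathbf T\subset\mathbf{SL}\subset\mathbf C_2\subset\mathbf D_1$ is a chain and $\mathbf T,\mathbf{SL},\mathbf C_2\subseteq\mathbf D_1$, every member of $L(\mathbf K)$ is comparable with $\mathbf D_1$, and it remains to prove that $[\mathbf D_1,\mathbf K]$ is a chain.

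This last step is the main obstacle. One checks first that $\mathbf D_1\subsetneq\mathbf K$: by Lemma~\ref{Lem: basis for D_k}, $\mathbf D_1$ satisfies $\delta_1$, i.e.\ $xyx\approx x^2y$, which is not a consequence of the defining identities of $\mathbf K$ (no rewriting of $xyx$ by those identities ever reaches $x^2y$). Every variety in $[\mathbf D_1,\mathbf K]$ has all linear words as isoterms, so such varieties are distinguished only by identities between non-linear words. To control these I would establish a canonical form for words modulo $\theta_{\mathbf K}$: cap all exponents at $2$ via $x^2\approx x^3$, then use $x^2\mathbf w\approx x^2\mathbf wx$, $x\mathbf wx\approx x\mathbf wx^2$ and $x^2y^2\approx y^2x^2$ to reduce an arbitrary word to one recording only its ordered content, which of its letters are multiple, and a bounded amount of positional data about the repeated occurrences. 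Using such a normal form one enumerates the fully invariant congruences between $\theta_{\mathbf K}$ and $\theta_{\mathbf D_1}$, i.e.\ the members of $[\mathbf D_1,\mathbf K]$, and verifies that the relevant separating identities (such as $\delta_1$, the centrality law $x^2y\approx yx^2$, and $\sigma_1,\sigma_2,\sigma_3$ — one notes, e.g., that $x^2y\approx yx^2$ implies $\delta_1$ modulo the basis of $\mathbf K$) are totally ordered in strength over $\mathbf K$, so that no two incomparable subvarieties arise in this interval. Producing the normal form and establishing this linear ordering, while ruling out any overlooked subvariety, is the part of the proof that requires genuine work.
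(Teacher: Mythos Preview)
The paper does not prove this lemma at all; it simply cites Gusev--Vernikov~\cite[Proposition~6.1]{Gusev-Vernikov-18} and marks the statement with \qed. So there is no proof here to compare against, only the external reference.

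Your reduction is sound. The identification of the isoterms for $\mathbf K$ as exactly the linear words is correct, and the case split showing that every subvariety of $\mathbf K$ is one of $\mathbf T$, $\mathbf{SL}$, $\mathbf C_2$, or lies in $[\mathbf D_1,\mathbf K]$ is clean and accurate.

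The genuine gap is precisely where you locate it yourself: you have not proved that $[\mathbf D_1,\mathbf K]$ is a chain, and this is the entire content of the result. Your outline---produce a normal form for words modulo $\theta_{\mathbf K}$, enumerate the intermediate fully invariant congruences, and check that the separating identities are linearly ordered in strength---is a plausible strategy, but as stated it is only a promise. You have not said what the normal form actually is, what the intermediate varieties are, or which identities cut out each one. In the cited source the chain is described explicitly (for instance, it passes through $\mathbf E=\var\{x^2\approx x^3,\,x^2y^2\approx y^2x^2,\,\delta_1\}$, which your own observation $\mathbf K\{x^2y\approx yx^2\}=\mathbf D_1$ already shows sits strictly between $\mathbf D_1$ and $\mathbf K$), and the argument rests on a concrete solution of the word problem in $\mathbf K$ together with an explicit list of its subvarieties. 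Until you produce that list and verify that no two members are incomparable, the proof is not complete.
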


Let $\mathbf V\Sigma$ denote the variety given by the identity system $\Sigma$ within the variety $\mathbf V$.

\begin{lemma}
\label{Lem: smth imply distributivity}
Let $\mathbf V$ be a variety of algebras and $\mathbf W\subseteq\mathbf V$. Suppose that there is an identity system $\Sigma$ such that:
\begin{itemize}
\item[(i)] if $\mathbf W\subseteq\mathbf U\subseteq\mathbf V$, then $\mathbf U=\mathbf V\Phi$ for some identity system $\Phi\subseteq\Sigma$;
\item[(ii)] if $\mathbf U,\mathbf U^\prime\in[\mathbf W,\mathbf V]$ and $\mathbf U\wedge\mathbf U^\prime$ satisfies an identity $\sigma\in\Sigma$, then $\sigma$ holds in either $\mathbf U$ or $\mathbf U^\prime$.
\end{itemize}
Then the interval $[\mathbf W,\mathbf V]$ of the lattice $L(\mathbf V)$ is distributive.
\end{lemma}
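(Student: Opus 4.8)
The plan is to encode each variety in the interval $[\mathbf W,\mathbf V]$ by the set of identities from $\Sigma$ that it satisfies, and to show that this encoding is a lattice anti-embedding of $[\mathbf W,\mathbf V]$ into the power-set lattice of $\Sigma$, which is distributive. For $\mathbf U\in[\mathbf W,\mathbf V]$ put $\Sigma(\mathbf U)=\{\sigma\in\Sigma\mid\mathbf U\ \text{satisfies}\ \sigma\}$. The first step is to check, using hypothesis~(i), that $\mathbf U=\mathbf V\Sigma(\mathbf U)$ for every such $\mathbf U$. Indeed, (i) gives $\mathbf U=\mathbf V\Phi$ for some $\Phi\subseteq\Sigma$; then $\Phi\subseteq\Sigma(\mathbf U)$, so $\mathbf V\Sigma(\mathbf U)\subseteq\mathbf V\Phi=\mathbf U$, and the reverse inclusion $\mathbf U\subseteq\mathbf V\Sigma(\mathbf U)$ is immediate because $\mathbf U\subseteq\mathbf V$ and $\mathbf U$ satisfies every identity in $\Sigma(\mathbf U)$. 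In particular the assignment $\mathbf U\mapsto\Sigma(\mathbf U)$ is injective on $[\mathbf W,\mathbf V]$ and clearly inclusion-reversing.

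Next I would compute the effect of this assignment on meets and joins. For the join no hypotheses are needed: since $\mathbf U\vee\mathbf U'$ is the variety generated by $\mathbf U\cup\mathbf U'$, an identity holds in it precisely when it holds in both $\mathbf U$ and $\mathbf U'$, so $\Sigma(\mathbf U\vee\mathbf U')=\Sigma(\mathbf U)\cap\Sigma(\mathbf U')$. For the meet, the inclusion $\Sigma(\mathbf U)\cup\Sigma(\mathbf U')\subseteq\Sigma(\mathbf U\wedge\mathbf U')$ is trivial because $\mathbf U\wedge\mathbf U'\subseteq\mathbf U$ and $\mathbf U\wedge\mathbf U'\subseteq\mathbf U'$; the reverse inclusion is exactly the content of hypothesis~(ii), applicable since $\mathbf U\wedge\mathbf U'\in[\mathbf W,\mathbf V]$. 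Hence $\Sigma(\mathbf U\wedge\mathbf U')=\Sigma(\mathbf U)\cup\Sigma(\mathbf U')$. Thus $\mathbf U\mapsto\Sigma(\mathbf U)$ is an injective, inclusion-reversing map that carries $\wedge$ to $\cup$ and $\vee$ to $\cap$; its image is therefore closed under $\cup$ and $\cap$, hence is a sublattice of the power-set lattice of $\Sigma$, and the map is an anti-isomorphism of $[\mathbf W,\mathbf V]$ onto that sublattice.

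Finally, every sublattice of a power-set lattice is distributive, and distributivity of lattices is self-dual; consequently $[\mathbf W,\mathbf V]$, being anti-isomorphic to a distributive lattice, is itself distributive. I do not expect a genuine obstacle in this argument: the only points requiring attention are keeping track of the order-reversal in the Galois-type correspondence between subvarieties and identity sets, and recognising that hypothesis~(ii) says precisely that $\Sigma(\mathbf U\wedge\mathbf U')\subseteq\Sigma(\mathbf U)\cup\Sigma(\mathbf U')$ — the single non-formal ingredient of the proof, everything else being a routine unwinding of definitions.
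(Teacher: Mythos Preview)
Your proof is correct and takes a genuinely different route from the paper's. The paper argues by contradiction: assuming $[\mathbf W,\mathbf V]$ contains a diamond or a pentagon generated by $\mathbf X,\mathbf Y,\mathbf Z$, it picks a single identity $\sigma\in\Sigma$ separating two of them, applies~(ii) once to push $\sigma$ from $\mathbf X\wedge\mathbf Y=\mathbf X\wedge\mathbf Z$ into $\mathbf X$, and then derives a contradiction from $\mathbf X\vee\mathbf Y=\mathbf X\vee\mathbf Z$. Your argument instead builds the structural picture behind this: the map $\mathbf U\mapsto\Sigma(\mathbf U)$ is an anti-isomorphism of $[\mathbf W,\mathbf V]$ onto a sublattice of the power-set lattice of $\Sigma$, with~(i) giving injectivity and~(ii) giving preservation of meets. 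Your approach yields strictly more information (an explicit representation of the interval inside a Boolean lattice) and makes transparent why the two hypotheses suffice; the paper's proof is shorter and avoids setting up the map, needing only one separating identity and one application of~(ii).
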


\begin{proof}
Arguing by contradiction, we suppose that there are varieties $\mathbf X,\mathbf Y,\mathbf Z\in[\mathbf W,\mathbf V]$ such that the sublattice $L$ of the interval $[\mathbf W,\mathbf V]$ generated by $\mathbf X$, $\mathbf Y$ and $\mathbf Z$ is one of the two 5-element non-distributive lattices. We may assume without loss of generality that the varieties $\mathbf X$ and $\mathbf Y$ are atoms in the lattice $L$ and either $\mathbf Z$ also is an atom in $L$ or $\mathbf Y\subset\mathbf Z$. In either case there is an identity $\sigma$ that holds in $\mathbf Y$ and fails in $\mathbf Z$. By the claim~(i), we may assume that $\sigma\in\Sigma$. The identity $\sigma$ holds in the variety $\mathbf X\wedge\mathbf Y=\mathbf X\wedge\mathbf Z$. Since this identity fails in $\mathbf Z$, the claim~(ii) implies that it holds in $\mathbf X$. Therefore, $\sigma$ holds in $\mathbf X\vee\mathbf Y=\mathbf X\vee\mathbf Z$. But this contradicts the fact that $\sigma$ fails in $\mathbf Z$.
\end{proof}

\section{Auxiliary results}
\label{Sec: auxiliary results}

\subsection{Linear-balanced identities}
\label{Subsec: linear-balanced}
If $\mathbf w$ is a word and $X\subseteq\con(\mathbf w)$, then we denote by $\mathbf w(X)$ the word obtained from $\mathbf w$ by deleting all letters except letters from $X$. If $X=\{x_1,x_2,\dots,x_k\}$, then we write $\mathbf w(x_1,x_2,\dots,x_k)$ rather than $\mathbf w(\{x_1,x_2,\dots,x_k\})$. If $X\subseteq\con(\mathbf w)$ and $x_1,x_2,\dots,x_k\in\con(\mathbf w)\setminus X$, then we write $\mathbf w(x_1,x_2,\dots,x_k,X)$ instead of $\mathbf w(X\cup\{x_1,x_2,\dots,x_k\})$. Clearly, if $\mathbf u$ and $\mathbf v$ are words with $\con(\mathbf u)=\con(\mathbf v)$ and $X\subseteq\con(\mathbf u)$, then the identity $\mathbf u\approx\mathbf v$ implies the identity $\mathbf u(X)\approx\mathbf v(X)$. We will use this fact throughout the rest of the article many times without explicitly specifying this.

A non-empty word $\mathbf w$ is called \emph{linear} if $\occ_x(\mathbf w)\le 1$ for each letter $x$. Let $\mathbf u$ and $\mathbf v$ be words and~\eqref{decomposition of u} and~\eqref{decomposition of v} be decompositions of $\mathbf u$ and $\mathbf v$, respectively. A letter $x$ is called \emph{linear-balanced in the identity} $\mathbf u\approx\mathbf v$ if $x$ is multiple in $\mathbf u$ and $\occ_x(\mathbf u_i)=\occ_x(\mathbf v_i)\le 1$ for all $i=0,1,\dots,m$; the identity $\mathbf u\approx\mathbf v$ is called \emph{linear-balanced} if any letter $x\in\mul(\mathbf u)\cup\mul(\mathbf v)$ is linear-balanced in this identity. 

\begin{lemma}
\label{Lem: xt_1x...t_kx is isoterm}
Let $\mathbf V$ be a monoid variety such that the word $\bigl(\prod_{i=1}^k xt_i\bigr)x$ is an isoterm for $\mathbf V$, $\mathbf u$ be a word such that all its blocks are linear words and $\occ_x(\mathbf u)\le k+1$ for every letter $x$. Then every identity of the form $\mathbf u\approx\mathbf v$ that holds in the variety $\mathbf V$ is linear-balanced.
\end{lemma}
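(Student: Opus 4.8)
The plan is to reduce the statement to a claim about one multiple letter at a time, and then to reconstruct the block‑profile of that letter in $\mathbf v$ by restricting the identity $\mathbf u\approx\mathbf v$ to that letter together with a carefully chosen set of simple letters, using that suitable ``skeletons'' of $\mathbf u$ are subwords of $\bigl(\prod_{i=1}^k xt_i\bigr)x$ and hence isoterms for $\mathbf V$. First, since the word $xt_1$ is a subword of $\bigl(\prod_{i=1}^k xt_i\bigr)x$, it is an isoterm for $\mathbf V$, so $\mathbf D_1\subseteq\mathbf V$ by Lemma~\ref{Lem: S(W) in V}; applying Lemma~\ref{Lem: decompositions of u and v} to $\mathbf u\approx\mathbf v$ we get $\con(\mathbf u)=\con(\mathbf v)$ and decompositions $\mathbf u=\mathbf u_0t_1\mathbf u_1\cdots t_m\mathbf u_m$, $\mathbf v=\mathbf v_0t_1\mathbf v_1\cdots t_m\mathbf v_m$ with the same simple letters. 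In particular $\mul(\mathbf u)=\mul(\mathbf v)$, so it suffices to fix $x\in\mul(\mathbf u)$ and prove $\occ_x(\mathbf u_i)=\occ_x(\mathbf v_i)$ for every $i$; note each $\occ_x(\mathbf u_i)\le1$ already, since the blocks of $\mathbf u$ are linear.

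Second, write $\occ_x(\mathbf u)=p+1$, so $1\le p\le k$, and let $\mathbf u_{i_0},\dots,\mathbf u_{i_p}$ with $i_0<\cdots<i_p$ be the blocks of $\mathbf u$ containing $x$. Since the $\ell$‑th and $(\ell+1)$‑th occurrences of $x$ lie in different blocks, there is at least one simple letter strictly between them; choose one, $s_\ell$, for $\ell=1,\dots,p$. Restricting $\mathbf u$ to $\{x,s_1,\dots,s_p\}$ then gives exactly $xs_1xs_2\cdots xs_px$, a relabelling of $\bigl(\prod_{i=1}^p xt_i\bigr)x$, which is a prefix, hence a subword, of $\bigl(\prod_{i=1}^k xt_i\bigr)x$ and therefore an isoterm for $\mathbf V$. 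Consequently $\mathbf v(x,s_1,\dots,s_p)=xs_1xs_2\cdots xs_px$ as well, so $\occ_x(\mathbf v)=p+1$ and, in $\mathbf v$, exactly one occurrence of $x$ precedes $s_1$, exactly one lies between $s_\ell$ and $s_{\ell+1}$, and exactly one follows $s_p$.

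Third, I would force each of these occurrences into the correct block by a clever choice of the $s_\ell$'s. If $0<\ell<p$ then $\mathbf u_{i_\ell}$ is interior ($0<i_\ell<m$), and taking $s_\ell=t_{i_\ell}$ (the simple letter immediately before $\mathbf u_{i_\ell}$) and $s_{\ell+1}=t_{i_\ell+1}$ (immediately after it) makes the unique occurrence of $x$ lying between these two \emph{consecutive} simple letters of $\mathbf v$ land inside $\mathbf v_{i_\ell}$, so $\occ_x(\mathbf v_{i_\ell})=1$. Similarly, if $i_0=0$ the occurrence before $s_1=t_1$ can only sit in $\mathbf v_0$, and if $i_p=m$ the one after $s_p=t_m$ can only sit in $\mathbf v_m$. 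Once all $p+1$ occurrences of $x$ in $\mathbf v$ have been located in the blocks $\mathbf v_{i_0},\dots,\mathbf v_{i_p}$, the equality $\sum_i\occ_x(\mathbf v_i)=p+1$ forces $\occ_x(\mathbf v_i)=0$ for $i\notin\{i_0,\dots,i_p\}$, matching $\mathbf u$; this completes the argument for $x$, and hence (over all $x\in\mul(\mathbf u)$) for the identity.

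The genuine obstacle is the two remaining cases $i_0\ge1$ and $i_p\le m-1$, where the first (resp. last) occurrence of $x$ in $\mathbf u$ is not flush with the start (resp. end) of $\mathbf u$: a priori an occurrence of $x$ in $\mathbf v$ could have escaped to the left of $t_{i_0}$ (resp. to the right of $t_{i_p+1}$). I would rule this out by contradiction. Choose $s_1=t_{i_0+1}$; if some occurrence of $x$ in $\mathbf v$ precedes $t_{i_0}$ it must be the unique one preceding $s_1$, so $\mathbf v$ has no $x$ strictly between $t_{i_0}$ and $s_1$, and restricting the resulting identity $t_{i_0}xs_1xs_2\cdots xs_px\approx xt_{i_0}s_1xs_2x\cdots xs_px$ to $\{x,t_{i_0},s_2,\dots,s_p\}$ (that is, deleting $s_1$) yields $t_{i_0}x^2s_2xs_3\cdots xs_px\approx xt_{i_0}xs_2x\cdots xs_px$, whose right‑hand side is a relabelling of $\bigl(\prod_{i=1}^p xt_i\bigr)x$ and whose left‑hand side is not — contradicting that $\bigl(\prod_{i=1}^p xt_i\bigr)x$ is an isoterm for $\mathbf V$. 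The dual argument handles $\mathbf u_{i_p}$. I expect the only delicate bookkeeping to be the combinatorics of which simple letters fall strictly between which occurrences of $x$, together with checking that the chosen $s_1,\dots,s_p$ are pairwise distinct and distinct from $t_{i_0}$; the degenerate case $p=1$ (no $s_2,\dots,s_p$) should be noted but causes no real trouble.
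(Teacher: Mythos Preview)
Your proof is correct and follows essentially the same idea as the paper's: restrict $\mathbf u\approx\mathbf v$ to the multiple letter $x$ together with suitable simple separators so that the $\mathbf u$-side becomes a relabelling of the isoterm $\bigl(\prod_{i=1}^p xt_i\bigr)x$, and then read off the block profile of $x$ in $\mathbf v$ from the forced equality. The paper's execution is more economical: it fixes a single set of separators $T=\{t_{i_1+1},\dots,t_{i_{r-1}+1}\}$ (the simple letter just after each $x$-block of $\mathbf u$, except the last) and derives $i_s=j_s$ for all $s$ directly from the one equality $\mathbf u(x,T)=\mathbf v(x,T)$, rather than using a different restriction for each interior block and a separate contradiction argument (with the auxiliary letter $t_{i_0}$) for the boundary blocks as you do.
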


\begin{proof}
Suppose that an identity $\mathbf u\approx\mathbf v$ holds in $\mathbf V$. Let~\eqref{decomposition of u} be the decomposition of $\mathbf u$. Lemma~\ref{Lem: S(W) in V} implies that $\mathbf D_1\subseteq\mathbf V$. Then the decomposition of $\mathbf v$ has the form~\eqref{decomposition of v} and $\mul(\mathbf u)=\mul(\mathbf v)$ by Lemma~\ref{Lem: decompositions of u and v}. Let $x\in\mul(\mathbf u)$ and $r=\occ_x(\mathbf u)$. Then $x$ occurs in exactly $r$ blocks of $\mathbf u$, say, in blocks $\mathbf u_{i_1},\mathbf u_{i_2},\dots,\mathbf u_{i_r}$ with $i_1<i_2<\cdots<i_r$. Let $T=\{t_{i_1+1},t_{i_2+1},\dots,t_{i_{r-1}+1}\}$. Clearly, $\mathbf u(x,T)=\bigl(\prod_{s=1}^{r-1} xt_{i_s+1}\bigr)x$. Since $r\le k+1$ and the word $\bigl(\prod_{i=1}^k xt_i\bigr)x$ is an isoterm for $\mathbf V$, the word $\mathbf u(x,T)$ is an isoterm for $\mathbf V$ too. Therefore, $\mathbf u(x,T)=\mathbf v(x,T)$. It follows that $\occ_x(\mathbf v)=r$ and $x$ occurs at most once in each block of $\mathbf v$. Suppose that $x$ occurs in blocks $\mathbf v_{j_1},\mathbf v_{j_2},\dots,\mathbf v_{j_r}$ of $\mathbf v$ and $j_1<j_2<\cdots<j_r$. If $i_1<j_1$, then $\mathbf u(x,T)\ne\mathbf v(x,T)$ because the first occurrence of $x$ is preceded by $t_{i_1+1}$ in $\mathbf v$. We have a contradiction again. Therefore, $j_1\le i_1$. By symmetry, $i_1\le j_1$, whence $i_1=j_1$. Analogous considerations show that $i_s=j_s$ for all $s=2,3,\dots,r$. This means that the letter $x$ is linear-balanced in the identity $\mathbf u\approx\mathbf v$.
\end{proof}

\subsection{The variety $\mathbf A$ and its subvarieties}
\label{Subsec: A}
Put 
$$
\mathbf A=\var\{x^2\approx x^3,\,x^2y\approx yx^2\}.
$$ 
The following assertion follows from Lemma~3.3 of the work by Lee~\cite{Lee-14}, its proof and Lemma~4.2 of the same work.

\begin{lemma}
\label{Lem: Straubing identities in A}
Let $\mathbf u\approx\mathbf v$ be a non-trivial identity of the form~\eqref{one letter in a block} with $r,e_0,f_0,e_1$, $f_1,\dots,e_r,f_r\in\mathbb N_0$, $\sum_{i=0}^r e_i\ge 2$ and $\sum_{i=0}^r f_i\ge 2$. Then
\begin{itemize}
\item[(i)] if $e_i,f_j>1$ for some $0\le i,j\le r$, then the identity $\mathbf u\approx\mathbf v$ holds in $\mathbf A$;
\item[(ii)] if $e_0,e_1,\dots,e_r\le 1$ and $e=\sum_{i=0}^r e_i$, then $\mathbf A\{\mathbf u\approx\mathbf v\}=\mathbf A\{\delta_{e-1}\}$.\qed
\end{itemize}
\end{lemma}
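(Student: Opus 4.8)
The plan is to organise everything around the central element $x^2$ of $\mathbf A$. From $x^2\approx x^3$ and $x^2y\approx yx^2$ one gets $x^k\approx x^2$ for all $k\ge 2$, the fact that $x^2$ commutes with every letter, and, most usefully, $x^2\mathbf p_1x^\ell\mathbf p_2\approx x^2\mathbf p_1\mathbf p_2$ in $\mathbf A$ whenever $\mathbf p_1,\mathbf p_2$ avoid $x$ and $\ell\ge 1$ (slide $x^2$ up to the block $x^\ell$, merge into $x^{\ell+2}\approx x^2$, slide back). Iterating this, any word whose only multiple letter is $x$ and which has at least one block $x^{e_i}$ with $e_i\ge 2$ is $\mathbf A$-equivalent to $x^2t_1t_2\cdots t_r$, where $t_1,\dots,t_r$ are its simple letters in order of occurrence. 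Part~(i) is then immediate: if $e_i>1$ and $f_j>1$ for some $i,j$, both $\mathbf u$ and $\mathbf v$ are $\mathbf A$-equivalent to $x^2t_1\cdots t_r$, so $\mathbf u\approx\mathbf v$ holds in $\mathbf A$.

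For part~(ii) assume $e_0,\dots,e_r\le 1$, so $x$ occurs exactly $e$ times in $\mathbf u$; write the decomposition $\mathbf u=\mathbf p_0x\mathbf p_1x\cdots x\mathbf p_e$, where $\mathbf p_0\cdots\mathbf p_e=t_1\cdots t_r$ and, since all $x$-blocks of $\mathbf u$ have length $\le 1$, consecutive occurrences of $x$ in $\mathbf u$ are separated by simple letters, so $\mathbf p_1,\dots,\mathbf p_{e-1}$ are non-empty. I would establish $\mathbf A\{\mathbf u\approx\mathbf v\}=\mathbf A\{\delta_{e-1}\}$ by two inclusions. For $\mathbf A\{\delta_{e-1}\}\subseteq\mathbf A\{\mathbf u\approx\mathbf v\}$: substituting $t_a\mapsto\mathbf p_a$ ($1\le a\le e-1$) into $\delta_{e-1}$, then pre- and post-multiplying by $\mathbf p_0$ and $\mathbf p_e$ and using centrality of $x^2$, yields $\mathbf u\approx x^2t_1\cdots t_r$ inside $\mathbf A\{\delta_{e-1}\}$; and $\mathbf v\approx x^2t_1\cdots t_r$ holds there as well, either by the collapsing observation above (if $\mathbf v$ carries a block of exponent $\ge 2$) or by the same computation applied to $\mathbf v$, using that $\delta_{e-1}$ forces $\delta_k$ in $\mathbf A$ for every $k\ge e-1$ (apply $\delta_{e-1}$ to a prefix carrying $e$ of the occurrences of $x$ and let the resulting square absorb the remaining ones) --- here one should take for $\mathbf u$ the side carrying no more occurrences of $x$ whenever both sides are linear in $x$. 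Hence $\mathbf A\{\delta_{e-1}\}$ satisfies $\mathbf u\approx\mathbf v$.

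For the reverse inclusion I would produce a substitution of the simple letters turning $\mathbf u\approx\mathbf v$ into $\delta_{e-1}$ modulo $\mathbf A$. Let $\varphi$ send every letter of $\mathbf p_0$ and of $\mathbf p_e$ to $\lambda$ and, for each $a$ with $1\le a\le e-1$, one chosen letter of $\mathbf p_a$ to a fresh letter $s_a$ and the remaining letters of $\mathbf p_a$ to $\lambda$; then $\varphi(\mathbf u)=xs_1xs_2\cdots xs_{e-1}x$, the left side of $\delta_{e-1}$, so it remains to check $\varphi(\mathbf v)\approx x^2s_1\cdots s_{e-1}$ in $\mathbf A$. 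If $\mathbf v$ carries a block of exponent $\ge 2$, so does $\varphi(\mathbf v)$, and the collapsing observation finishes it. If $\mathbf v$ is linear in $x$, then $\varphi(\mathbf v)$ carries $\occ_x(\mathbf v)\ge e$ occurrences of $x$ but only the $e-1$ simple letters $s_1,\dots,s_{e-1}$; a counting argument --- together with, in the boundary case $\occ_x(\mathbf v)=e$, the remark that $\mathbf u\ne\mathbf v$ forces the block decompositions of $\mathbf u$ and $\mathbf v$ to disagree, so that the chosen letters can be arranged to make two occurrences of $x$ in $\varphi(\mathbf v)$ adjacent --- produces a square in $\varphi(\mathbf v)$, and the collapsing observation once more gives $\varphi(\mathbf v)\approx x^2s_1\cdots s_{e-1}$.

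The hard part will be exactly this last step: making the choice of letters explicit in the boundary case, and, equally, verifying that the $\delta$ that emerges has index precisely $e-1$ --- not a strictly larger one (which would be a weaker consequence) and not a strictly smaller one. This comes down to tracking where the block decompositions $(\mathbf p_0,\dots,\mathbf p_e)$ of $\mathbf u$ and $(\mathbf q_0,\dots,\mathbf q_f)$ of $\mathbf v$ first differ, the location of that discrepancy telling one which occurrences of $x$ to merge; it is precisely the bookkeeping packaged in Lee's Lemma~3.3 together with its proof and Lemma~4.2 of~\cite{Lee-14}, which supply a normal form for identities modulo $\mathbf A$ and the accompanying description of the ascending chain of varieties $\mathbf A\{\delta_k\}$. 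I would expect everything beyond invoking those results to be routine, if somewhat tedious, manipulation of word decompositions.
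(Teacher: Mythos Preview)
The paper does not supply its own proof of this lemma: the statement closes with \qed and is credited to Lemma~3.3 (together with its proof) and Lemma~4.2 of Lee~\cite{Lee-14}. So there is no in-house argument to compare yours against; your sketch is already more explicit than what the paper offers.

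Your treatment of part~(i) is correct and complete. For part~(ii) the two-inclusion strategy and the substitution $\varphi$ erasing all but one chosen letter from each internal $\mathbf p_a$ are exactly the mechanism one extracts from Lee's arguments. One remark: the clause ``one should take for $\mathbf u$ the side carrying no more occurrences of $x$ whenever both sides are linear in $x$'' is not a WLOG the stated hypotheses permit --- the lemma fixes which side is $\mathbf u$, and if both sides are linear in $x$ with $f<e$ the claimed equality with $\mathbf A\{\delta_{e-1}\}$ can fail (try $xt_1xt_2x\approx xt_1t_2x$, which cuts out $\mathbf A\{\delta_1\}$, not $\mathbf A\{\delta_2\}$). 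This is a wrinkle in how the lemma is phrased rather than in your reasoning, and every invocation of part~(ii) in the paper either has some $f_j\ge 2$ on the $\mathbf v$-side or uses only the inclusion $\mathbf A\{\mathbf u\approx\mathbf v\}\subseteq\mathbf A\{\delta_{e-1}\}$, which your argument establishes without that WLOG. Since you too close by deferring the delicate bookkeeping to the same Lee lemmas, your proposal and the paper's treatment ultimately rest on the same external source.
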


We denote the trivial variety of monoids by $\mathbf T$.

\begin{lemma}
\label{Lem: L(A)}
\quad
\begin{itemize}
\item[(i)] The lattice $L(\mathbf A)$ is a set-theoretical union of the chain $\mathbf T\subset\mathbf{SL}\subset\mathbf C_2\subset\mathbf D_1\subset\mathbf D_2$ and the interval $[\mathbf D_2,\mathbf A]$.
\item[(ii)] The interval $[\mathbf D_2,\mathbf A]$ is a disjoint union of intervals of the form $[\mathbf D_k,\mathbf A\{\delta_k\}]$, where $2\le k\le\infty$.
\end{itemize}
\end{lemma}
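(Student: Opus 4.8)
Here is a plan for proving Lemma~\ref{Lem: L(A)}.

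The plan is to split $L(\mathbf A)$ at the variety $\mathbf D_2$. I will show that every $\mathbf V\subseteq\mathbf A$ either contains $\mathbf D_2$, and hence lies in $[\mathbf D_2,\mathbf A]$, or is one of $\mathbf T,\mathbf{SL},\mathbf C_2,\mathbf D_1$; this gives~(i). For~(ii) I attach to each $\mathbf V\in[\mathbf D_2,\mathbf A]$ the index $k(\mathbf V)$, the largest $k\le\infty$ with $\mathbf D_k\subseteq\mathbf V$. This is well defined: since subwords of isoterms are isoterms, the word generating $\mathbf D_j$ (a subword of the word generating $\mathbf D_{j+1}$) is an isoterm for $\mathbf D_{j+1}$, so $\mathbf D_j\subseteq\mathbf D_{j+1}$ by Lemma~\ref{Lem: S(W) in V}; hence $\{j:\mathbf D_j\subseteq\mathbf V\}$ is downward closed in $\mathbb N$, and if it equals $\mathbb N$ then $\mathbf D_\infty=\bigvee_j\mathbf D_j\subseteq\mathbf V$ and we set $k(\mathbf V)=\infty$. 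I then show $\mathbf V\in[\mathbf D_{k(\mathbf V)},\mathbf A\{\delta_{k(\mathbf V)}\}]$ and that distinct intervals $[\mathbf D_k,\mathbf A\{\delta_k\}]$ are disjoint; together with the obvious inclusions $[\mathbf D_k,\mathbf A\{\delta_k\}]\subseteq[\mathbf D_2,\mathbf A]$ this yields the decomposition.

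For~(i): as $\mathbf A$ satisfies $x^2\approx x^3$, Lemma~\ref{Lem: group variety} shows every $\mathbf V\subseteq\mathbf A$ is either $\mathbf T$ or contains $\mathbf{SL}$; assume the latter, so all identities of $\mathbf V$ preserve the content. If $\mathbf D_1=\var S(xy)\nsubseteq\mathbf V$ then, by Lemma~\ref{Lem: S(W) in V}, $xy$ is not an isoterm for $\mathbf V$, so $\mathbf V$ satisfies a non-trivial $xy\approx\mathbf w$ with $\con(\mathbf w)=\{x,y\}$; if $\mathbf w=yx$ then $\mathbf V$ is commutative and $\mathbf V\subseteq\mathbf C_2$, and otherwise $\mathbf w$ has a repeated letter, so substituting $\lambda$ for the other letter and using $x^2\approx x^3$ yields $x\approx x^2$ and $\mathbf V\subseteq\mathbf{SL}$; since $\mathbf{SL}$ is an atom and $L(\mathbf C_2)$ is the three-element chain $\mathbf T\subset\mathbf{SL}\subset\mathbf C_2$ (well known), $\mathbf V\in\{\mathbf T,\mathbf{SL},\mathbf C_2\}$. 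If instead $\mathbf D_1\subseteq\mathbf V$ but $\mathbf D_2\nsubseteq\mathbf V$, then Lemma~\ref{Lem: does not contain D_{k+1}} (with $k=1$) gives an identity $xt_1x\approx x^{e_0}t_1x^{e_1}$ of $\mathbf V$ with $e_0>1$ or $e_1>1$. This is a non-trivial identity of the form~\eqref{one letter in a block} whose left-hand exponents are both $1$ and whose right-hand exponents sum to at least $2$, so Lemma~\ref{Lem: Straubing identities in A}(ii) gives $\mathbf A\{xt_1x\approx x^{e_0}t_1x^{e_1}\}=\mathbf A\{\delta_1\}$ and hence $\mathbf V\subseteq\mathbf A\{\delta_1\}$. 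A short direct computation shows $\mathbf A\{\delta_1\}$ satisfies $\sigma_1,\sigma_2,\sigma_3$ — applying $\delta_1$ with a word substituted for its simple letter, together with the centrality of $x^2$, reduces each side of each $\sigma_i$ to one and the same word of the form $x^2\mathbf u$ — so $\mathbf A\{\delta_1\}=\mathbf D_1$ by Lemma~\ref{Lem: basis for D_k}, whence $\mathbf V=\mathbf D_1$. Together with the (routine) strictness of $\mathbf T\subset\mathbf{SL}\subset\mathbf C_2\subset\mathbf D_1\subset\mathbf D_2$, this proves~(i).

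For~(ii), fix $\mathbf V\in[\mathbf D_2,\mathbf A]$ and set $k=k(\mathbf V)$. If $k=\infty$ then $\mathbf D_\infty\subseteq\mathbf V\subseteq\mathbf A=\mathbf A\{\delta_\infty\}$, as required. If $k<\infty$, then $\mathbf D_{k+1}\nsubseteq\mathbf V$, so by Lemma~\ref{Lem: does not contain D_{k+1}} the variety $\mathbf V$ satisfies an identity $\mathbf w\approx\mathbf w^\prime$ with $\mathbf w=xt_1xt_2x\cdots t_kx$ and $\mathbf w^\prime=x^{e_0}t_1x^{e_1}\cdots t_kx^{e_k}$, where $e_i>1$ for some $i$. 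This is a non-trivial identity of the form~\eqref{one letter in a block} whose left-hand exponents are all $1$ and whose right-hand exponents sum to at least $2$, so Lemma~\ref{Lem: Straubing identities in A}(ii) applies (with its $e$ equal to $k+1$) and gives $\mathbf A\{\mathbf w\approx\mathbf w^\prime\}=\mathbf A\{\delta_k\}$, whence $\mathbf V\subseteq\mathbf A\{\delta_k\}$; together with $\mathbf D_k\subseteq\mathbf V$ this yields $\mathbf V\in[\mathbf D_k,\mathbf A\{\delta_k\}]$. For disjointness, let $2\le k<k'\le\infty$ and suppose $\mathbf V$ lies in both $[\mathbf D_k,\mathbf A\{\delta_k\}]$ and $[\mathbf D_{k'},\mathbf A\{\delta_{k'}\}]$; then $\mathbf D_{k+1}\subseteq\mathbf D_{k'}\subseteq\mathbf V\subseteq\mathbf A\{\delta_k\}$, but $\mathbf A\{\delta_k\}$ satisfies $\delta_k$, so the word $xt_1xt_2x\cdots t_kx$ generating $\mathbf D_{k+1}$ is not an isoterm for $\mathbf A\{\delta_k\}$ and $\mathbf D_{k+1}\nsubseteq\mathbf A\{\delta_k\}$ by Lemma~\ref{Lem: S(W) in V}, a contradiction. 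Since each $[\mathbf D_k,\mathbf A\{\delta_k\}]$ is non-empty (Lemma~\ref{Lem: basis for D_k} gives $\mathbf D_k\subseteq\mathbf A\{\delta_k\}$) and contained in $[\mathbf D_2,\mathbf A]$, the interval $[\mathbf D_2,\mathbf A]$ is their disjoint union, which is~(ii).

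The step I expect to be the real obstacle occurs in both parts: upgrading the failure of $\mathbf D_{k+1}\subseteq\mathbf V$ to the identity $\delta_k$. This hinges entirely on the precise form of Lemma~\ref{Lem: Straubing identities in A}(ii), that is, on Lee's classification of the identities of shape~\eqref{one letter in a block} that hold in subvarieties of $\mathbf A$; a priori the partner word $\mathbf w^\prime$ produced by Lemma~\ref{Lem: does not contain D_{k+1}} could determine inside $\mathbf A$ a variety strictly larger than $\mathbf A\{\delta_k\}$, and it is exactly that lemma which rules this out. The other ingredients — the case analysis on subvarieties of $\mathbf A$ not containing $\mathbf D_2$, the verification that $\mathbf A\{\delta_1\}$ satisfies $\sigma_1,\sigma_2,\sigma_3$, and the inclusions among the $\mathbf D_k$ — are routine.
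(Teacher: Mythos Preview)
Your proposal is correct and follows essentially the same route as the paper: both parts hinge on Lemma~\ref{Lem: does not contain D_{k+1}} followed by Lemma~\ref{Lem: Straubing identities in A}(ii) to pass from $\mathbf D_{k+1}\nsubseteq\mathbf V$ to $\mathbf V\subseteq\mathbf A\{\delta_k\}$. The only differences are presentational: for~(i) the paper simply cites Jackson for the structure of $L(\mathbf D_2)$ and then shows $\mathbf D_2\nsubseteq\mathbf V\Rightarrow\mathbf V\subseteq\mathbf D_1$ by deriving $x^2y\approx xyx\approx yx^2$ directly from $xyx\approx x^qyx^r$ (rather than going through $\mathbf A\{\delta_1\}=\mathbf D_1$ as you do), while for~(ii) the paper omits the disjointness verification that you supply explicitly.
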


\begin{proof}
(i) The lattice $L(\mathbf D_2)$ is the chain $\mathbf T\subset\mathbf{SL}\subset\mathbf C_2\subset\mathbf D_1\subset\mathbf D_2$ by Jackson~\cite[Fig.~1]{Jackson-05}. Let $\mathbf V$ be a subvariety of $\mathbf A$ with $\mathbf D_2\nsubseteq\mathbf V$. We have to check that $\mathbf V\subseteq\mathbf D_2$. In view of Lemma~\ref{Lem: does not contain D_{k+1}}, the variety $\mathbf V$ satisfies an identity of the form $xyx\approx x^kyx^\ell$, where either $k\ge 2$ or $\ell\ge 2$. Since the identities
\begin{align}
\label{xx=xxx}
x^2&\approx x^3,\\
\label{xxy=yxx}
x^2y&\approx yx^2
\end{align}
hold in the variety $\mathbf V$, this variety satisfies the identities $x^2y\approx xyx\approx yx^2$. Clearly, these identities imply the identities $\sigma_1$, $\sigma_2$, $\sigma_3$ and $\delta_1$. Now we can apply Lemma~\ref{Lem: basis for D_k} and conclude that $\mathbf X\subseteq\mathbf D_1\subset\mathbf D_2$.

\smallskip

(ii) Let $\mathbf V\in[\mathbf D_2,\mathbf A]$. If $\mathbf D_\infty\subseteq\mathbf V$, then $\mathbf V\in[\mathbf D_\infty,\mathbf A\{\delta_\infty\}]$. Otherwise, there is a natural number $k\ge 2$ such that $\mathbf D_k\subseteq\mathbf V$ but $\mathbf D_{k+1}\nsubseteq\mathbf V$. Now Lemma~\ref{Lem: does not contain D_{k+1}} applies with the conclusion that $\mathbf V$ satisfies an identity of the form~\eqref{...t_ix...=...t_ix^{e_i}...}, where $e_i>1$ for some $i$. Then $\mathbf A\{\eqref{...t_ix...=...t_ix^{e_i}...}\}=\mathbf A\{\delta_k\}$ by Lemma~\ref{Lem: Straubing identities in A}(ii). Hence $\mathbf V\in[\mathbf D_k,\mathbf A\{\delta_k\}]$.
\end{proof}

\begin{corollary}
\label{Cor: delta_k in X wedge Y}
Let $\mathbf X$ and $\mathbf Y$ be subvarieties of the variety $\mathbf A$. If $\mathbf X\wedge\mathbf Y$ satisfies the identity $\delta_k$ for some $2\le k\le\infty$, then this identity is true in either $\mathbf X$ or $\mathbf Y$.
\end{corollary}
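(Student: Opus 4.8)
The plan is to prove the contrapositive. Since $\delta_\infty$ is the trivial identity, the case $k=\infty$ is vacuous, so I assume $2\le k<\infty$ and suppose that $\delta_k$ fails both in $\mathbf X$ and in $\mathbf Y$; I must show that $\delta_k$ then fails in $\mathbf X\wedge\mathbf Y$.

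The key step is to show, by repeating the argument from the proof of Lemma~\ref{Lem: L(A)}(ii), that \emph{a subvariety $\mathbf Z$ of $\mathbf A$ not satisfying $\delta_k$ must contain $\mathbf D_{k+1}$}. Suppose $\mathbf D_{k+1}\nsubseteq\mathbf Z$. By Lemma~\ref{Lem: does not contain D_{k+1}}, the variety $\mathbf Z$ satisfies an identity $\iota$ of the form~\eqref{...t_ix...=...t_ix^{e_i}...} with $e_i>1$ for some $i$; this is a non-trivial identity of the form~\eqref{one letter in a block} in which one side (the left-hand side $xt_1xt_2x\cdots t_kx$) has all exponents equal to $1$, with exponent sum $k+1\ge 2$, while the other side has exponent sum at least $2$ as well. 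Lemma~\ref{Lem: Straubing identities in A}(ii), applied taking the side all of whose exponents equal $1$ as $\mathbf u$ (so that $e=k+1$), then gives $\mathbf A\{\iota\}=\mathbf A\{\delta_k\}$. Since $\mathbf Z\subseteq\mathbf A$ and $\mathbf Z$ satisfies $\iota$, we get $\mathbf Z\subseteq\mathbf A\{\iota\}=\mathbf A\{\delta_k\}$, so $\mathbf Z$ satisfies $\delta_k$ --- a contradiction. Applying this to $\mathbf Z=\mathbf X$ and to $\mathbf Z=\mathbf Y$ yields $\mathbf D_{k+1}\subseteq\mathbf X\wedge\mathbf Y$.

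It remains to note that $\mathbf D_{k+1}$ itself does not satisfy $\delta_k$; then neither does the larger variety $\mathbf X\wedge\mathbf Y$, which is what we want. Indeed, $\mathbf D_{k+1}=\var S(\mathbf w)$, where $\mathbf w=xt_1xt_2x\cdots t_kx$ is the left-hand side of $\delta_k$, so $S(\mathbf w)\in\mathbf D_{k+1}$ and hence $\mathbf w$ is an isoterm for $\mathbf D_{k+1}$ by Lemma~\ref{Lem: S(W) in V}; since $\delta_k$, that is, $\mathbf w\approx x^2t_1t_2\cdots t_k$, is a non-trivial identity, it cannot hold in $\mathbf D_{k+1}$.

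I do not expect a genuine obstacle, as the corollary merely repackages tools developed for Lemma~\ref{Lem: L(A)}. The points deserving a little care are the separate (trivial) treatment of $k=\infty$, the bookkeeping in matching the identity produced by Lemma~\ref{Lem: does not contain D_{k+1}} to the hypotheses of Lemma~\ref{Lem: Straubing identities in A}(ii) --- verifying that both exponent sums are at least $2$ and that the side all of whose exponents are at most $1$ has exponent sum exactly $k+1$ --- and the short isoterm computation showing that $\delta_k$ fails in $\mathbf D_{k+1}$.
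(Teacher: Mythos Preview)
Your proof is correct. It differs from the paper's only in organization: the paper invokes Lemma~\ref{Lem: L(A)} (both parts) to place $\mathbf X$, $\mathbf Y$, and $\mathbf X\wedge\mathbf Y$ into the appropriate strata $[\mathbf D_r,\mathbf A\{\delta_r\}]$ of the interval $[\mathbf D_2,\mathbf A]$, then reads off the conclusion from the ordering of the indices. You instead extract directly the single implication needed---``$\mathbf Z\subseteq\mathbf A$ and $\delta_k$ fails in $\mathbf Z$ $\Rightarrow$ $\mathbf D_{k+1}\subseteq\mathbf Z$''---by rerunning the argument from the proof of Lemma~\ref{Lem: L(A)}(ii) (namely Lemma~\ref{Lem: does not contain D_{k+1}} followed by Lemma~\ref{Lem: Straubing identities in A}(ii)), and finish with the isoterm observation that $\delta_k$ fails in $\mathbf D_{k+1}$. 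The underlying tools are identical; your route is slightly more self-contained for this corollary, while the paper's route exploits a structural lemma already established for other purposes.
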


\begin{proof}
Lemma~\ref{Lem: L(A)}(i) allows us to assume that $\mathbf X,\mathbf Y\in[\mathbf D_2,\mathbf A]$. Then Lemma~\ref{Lem: L(A)}(ii) implies that there are $r,s$ such that $2\le r,s\le\infty$, $\mathbf X\in[\mathbf D_r,\mathbf A\{\delta_r\}]$ and $\mathbf Y\in[\mathbf D_s,\mathbf A\{\delta_s\}]$. We may assume without loss of generality that $r\le s$. Then $\mathbf X\wedge\mathbf Y\in[\mathbf D_r,\mathbf A\{\delta_r\}]$. If $\mathbf X\wedge\mathbf Y$ satisfies the identity $\delta_k$, then $k\ge r$ and therefore, $\delta_k$ holds in $\mathbf X$.
\end{proof}

If $\mathbf w$ is a word and $X\subseteq\con(\mathbf w)$, then we denote by $\mathbf w_X$ the word obtained from $\mathbf w$ by deleting all letters from $X$. If $X=\{x\}$, then we write $\mathbf w_x$ rather than $\mathbf w_{\{x\}}$. Clearly, if $\mathbf u$ and $\mathbf v$ are words with $\con(\mathbf u)=\con(\mathbf v)$ and $X\subseteq\con(\mathbf u)$, then the identity $\mathbf u\approx\mathbf v$ implies the identity $\mathbf u_X\approx\mathbf v_X$. We will use this fact throughout the rest of the article many times without explicitly specifying this.

\begin{lemma}
\label{Lem: pxyq=pyxq}
Let $\mathbf V\subseteq\mathbf A$. If $\mathbf V$ does not contain the monoid $S(\mathbf pxy\mathbf q)$, where 
\begin{equation}
\label{pxyq=pyxq restrict}
\begin{array}{l}
\mathbf p=a_1t_1\cdots a_kt_k\ \text{and}\ \mathbf q=t_{k+1}a_{k+1}\cdots t_{k+\ell}a_{k+\ell}\ \text{for some}\ k,\ell\in\mathbb N_0\\
\text{and}\ a_1,a_2,\dots,a_{k+\ell}\ \text{are letters such that}\ \{a_1,a_2,\dots,a_{k+\ell}\}=\{x,y\},
\end{array}
\end{equation}
then $\mathbf V$ satisfies the identity
\begin{equation}
\label{pxyq=pyxq}
\mathbf pxy\mathbf q\approx\mathbf pyx\mathbf q.
\end{equation}
\end{lemma}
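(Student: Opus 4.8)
The plan is to show that the identity \eqref{pxyq=pyxq} follows from the identities defining $\mathbf A$ together with the hypothesis that the specific monoid $S(\mathbf{p}xy\mathbf{q})$ (for the given $\mathbf p$, $\mathbf q$ as in \eqref{pxyq=pyxq restrict}) is excluded from $\mathbf V$. By Lemma~\ref{Lem: S(W) in V}, the hypothesis $S(\mathbf{p}xy\mathbf{q})\notin\mathbf V$ means that the word $\mathbf{p}xy\mathbf{q}$ is \emph{not} an isoterm for $\mathbf V$; that is, $\mathbf V$ satisfies some non-trivial identity $\mathbf{p}xy\mathbf{q}\approx\mathbf w$. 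First I would analyze the structure of such a word $\mathbf w$. The word $\mathbf{p}xy\mathbf{q}$ has simple letters exactly $t_1,\dots,t_{k+\ell}$ (in that order), and each block is linear (each block is either empty, a single $x$, or a single $y$), with $\occ_x(\mathbf{p}xy\mathbf{q})=2$ and $\occ_y(\mathbf{p}xy\mathbf{q})=2$; in particular every letter occurs at most $2$ times. Since $\mathbf D_1\subseteq\mathbf A$ and the word $xtx$ is an isoterm for $\mathbf D_1$, Lemma~\ref{Lem: xt_1x...t_kx is isoterm} (applied with $k=1$) tells us that the identity $\mathbf{p}xy\mathbf{q}\approx\mathbf w$ is linear-balanced: $\mathbf w$ has the same simple letters in the same order, the same decomposition skeleton, and in each block $x$ (resp. $y$) occurs exactly as often as in the corresponding block of $\mathbf{p}xy\mathbf{q}$.

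Next I would pin down what this forces. Since $\mathbf{p}$, $\mathbf q$ are determined by \eqref{pxyq=pyxq restrict}, the positions of the two occurrences of $x$ and of $y$ relative to the simple letters $t_i$ are completely fixed, except possibly inside the single block that contains the factor ``$xy$'' of $\mathbf{p}xy\mathbf{q}$. More precisely, because the identity is linear-balanced, the only block of $\mathbf w$ whose content differs from the corresponding block of $\mathbf{p}xy\mathbf{q}$ as a \emph{word} (rather than as a multiset) is the block sitting between $t_k$ and $t_{k+1}$ (the one equal to $xy$ in $\mathbf{p}xy\mathbf{q}$), since every other block is at most a single letter and thus cannot be reordered. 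For the identity to be non-trivial, that block must actually change, so in $\mathbf w$ it must be $yx$ instead of $xy$. Hence $\mathbf w=\mathbf{p}yx\mathbf{q}$, and $\mathbf V$ satisfies precisely the identity \eqref{pxyq=pyxq}. One should double-check the degenerate cases where $\mathbf p$ or $\mathbf q$ is trivial or where $k=0$ or $\ell=0$; in each the block between $t_k$ and $t_{k+1}$ still makes sense (with $t_0$ and $t_{k+\ell+1}$ interpreted as the word boundaries), and the argument goes through verbatim.

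The main obstacle is the case analysis needed to rule out the possibility that $\mathbf w$ differs from $\mathbf{p}xy\mathbf{q}$ \emph{only} in blocks that are single letters — one must argue that no such block can change, which is immediate since a block consisting of a single letter is determined by its (multiset) content, and linear-balancedness preserves that content block-by-block. A secondary subtlety is verifying the hypotheses of Lemma~\ref{Lem: xt_1x...t_kx is isoterm}: we need $xtx$ to be an isoterm for $\mathbf V$. This holds because $\mathbf D_1=\var S(xy)\subseteq\mathbf A$ (every word of the form $xy$ with $x\ne y$ is an isoterm for $\mathbf V$ by Lemma~\ref{Lem: S(W) in V}, and in fact $xtx$ is an isoterm for $\mathbf D_1$); alternatively one may invoke that $\mathbf D_1\subseteq\mathbf V$ is \emph{not} automatic, so instead I would observe directly that any variety containing $\mathbf{SL}$ but failing this could be handled separately — but since the conclusion \eqref{pxyq=pyxq} is trivially true in any commutative variety (and hence when $\mathbf D_1\nsubseteq\mathbf V$, by Gusev and Vernikov~\cite[Lemma~2.14]{Gusev-Vernikov-18} $\mathbf V$ is commutative), we may assume $\mathbf D_1\subseteq\mathbf V$, at which point $xtx$ is indeed an isoterm for $\mathbf V$ and Lemma~\ref{Lem: xt_1x...t_kx is isoterm} applies.
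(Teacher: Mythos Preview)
There is a genuine gap. Your key structural claim --- that $\occ_x(\mathbf{p}xy\mathbf{q})=\occ_y(\mathbf{p}xy\mathbf{q})=2$ --- is false. The hypothesis \eqref{pxyq=pyxq restrict} says only that $\{a_1,\dots,a_{k+\ell}\}=\{x,y\}$ as a \emph{set}; each of $x,y$ may appear many times among the $a_i$. For instance, the identity $\alpha_1\colon xysxtxhy\approx yxsxtxhy$ is of this form with $k=0$, $\ell=3$, $a_1=a_2=x$, $a_3=y$, so $\occ_x(\mathbf u)=3$. Consequently, invoking Lemma~\ref{Lem: xt_1x...t_kx is isoterm} with $k=1$ (i.e., only knowing $xtx$ is an isoterm) is not enough; you would need $\bigl(\prod_{i=1}^{r-1}xt_i\bigr)x$ to be an isoterm for $r=\max(\occ_x(\mathbf u),\occ_y(\mathbf u))$, and that does \emph{not} follow from $\mathbf D_2\subseteq\mathbf V$ alone.

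There is also a subsidiary error in your reduction step: $xtx$ is \emph{not} an isoterm for $\mathbf D_1=\var S(xy)$ (indeed $S(xy)$ satisfies $xtx\approx x^2t$), so assuming $\mathbf D_1\subseteq\mathbf V$ does not make $xtx$ an isoterm for $\mathbf V$. The correct reduction is to $\mathbf D_2\subseteq\mathbf V$, which the paper achieves via Lemma~\ref{Lem: L(A)}(i) together with the observation that \eqref{pxyq=pyxq} already holds in $S(xyx)$ and hence in $\mathbf D_2$.

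The paper's proof then splits into cases. If $\mathbf u_y$ (or symmetrically $\mathbf u_x$) fails to be an isoterm for $\mathbf V$, then Lemma~\ref{Lem: Straubing identities in A}(ii) forces $\mathbf V$ to satisfy $\delta_{\occ_x(\mathbf u)-1}$, from which \eqref{pxyq=pyxq} follows via $\mathbf u\approx x^2\mathbf u_x\approx\mathbf pyx\mathbf q$. Only when \emph{both} $\mathbf u_x$ and $\mathbf u_y$ are isoterms does the linear-balanced argument you sketched apply --- and in that case the needed isoterm hypothesis of Lemma~\ref{Lem: xt_1x...t_kx is isoterm} is available precisely because $\mathbf u_x$ and $\mathbf u_y$ are isoterms. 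Your argument correctly handles this last case, but misses the first.
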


\begin{proof}
It is clear that the identity~\eqref{pxyq=pyxq} holds in the monoid $S(xyx)$ and therefore, in the variety $\mathbf D_2$. In view of Lemma~\ref{Lem: L(A)}(i), we may assume that $\mathbf D_2\subseteq\mathbf V$. Put $\mathbf u=\mathbf pxy\mathbf q$. We note that $\simple(\mathbf u)=\{t_1,t_2,\dots,t_{k+\ell}\}$. If the word $\mathbf u_y$ is not an isoterm for $\mathbf V$, then $\mathbf V$ satisfies a non-trivial identity of the form $\mathbf u_y\approx\mathbf u^\prime$. Lemma~\ref{Lem: decompositions of u and v} implies that $\mathbf u^\prime=x^{f_0}\bigl(\prod_{i=1}^{k+\ell} t_ix^{f_i}\bigr)$ for some $f_0,f_1,\dots,f_{k+\ell}\in\mathbb N_0$. Then we can apply Lemma~\ref{Lem: Straubing identities in A}(ii) with the conclusion that $\mathbf V$ satisfies the identity $\delta_{\occ_x(\mathbf u)-1}$. This identity implies the identities $\mathbf u\approx x^2\mathbf u_x\approx\mathbf pyx\mathbf q$, and we are done. Analogous considerations show that if the word $\mathbf u_x$ is not an isoterm for $\mathbf V$, then this variety satisfies the identity~\eqref{pxyq=pyxq}. 

Finally, suppose that both the words $\mathbf u_x$ and $\mathbf u_y$ are isoterms for $\mathbf V$. Lemma~\ref{Lem: S(W) in V} implies that $\mathbf V$ satisfies a non-trivial identity of the form $\mathbf u\approx\mathbf v$. In view of Lemma~\ref{Lem: xt_1x...t_kx is isoterm}, the identity $\mathbf u\approx\mathbf v$ is linear-balanced. This means that $\simple(\mathbf v)=\{t_1,t_2,\dots,t_{k+\ell}\}$ and blocks of the word $\mathbf v$ (in order of their appearance from left to right) are $a_1$, $a_2$, \dots, $a_k$, $\mathbf w$, $a_{k+1}$, \dots, $a_{k+\ell}$, where $\mathbf w\in\{xy,yx\}$. Since the identity $\mathbf u\approx\mathbf v$ is non-trivial, $\mathbf w=yx$, whence $\mathbf v=\mathbf pyx\mathbf q$.
\end{proof}

\begin{corollary}
\label{Cor: pxyq=pyxq in X wedge Y}
Let $\mathbf X$ and $\mathbf Y$ be subvarieties of the variety $\mathbf A$. If $\mathbf X\wedge\mathbf Y$ satisfies the identity~\eqref{pxyq=pyxq}, where the equalities~\eqref{pxyq=pyxq restrict} hold, then this identity is true in either $\mathbf X$ or $\mathbf Y$.
\end{corollary}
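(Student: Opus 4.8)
The plan is to obtain this as a short consequence of Lemma~\ref{Lem: pxyq=pyxq} by a contradiction argument of the type used for meet statements. The point of Lemma~\ref{Lem: pxyq=pyxq} is that it provides a dichotomy for any subvariety $\mathbf V\subseteq\mathbf A$: either $\mathbf V$ satisfies the identity~\eqref{pxyq=pyxq} for the prescribed $\mathbf p,\mathbf q$, or $\mathbf V$ contains the monoid $S(\mathbf pxy\mathbf q)$.

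First I would assume, towards a contradiction, that~\eqref{pxyq=pyxq} holds in neither $\mathbf X$ nor $\mathbf Y$. Since $\mathbf X,\mathbf Y\subseteq\mathbf A$, applying Lemma~\ref{Lem: pxyq=pyxq} separately to $\mathbf X$ and to $\mathbf Y$ yields $S(\mathbf pxy\mathbf q)\in\mathbf X$ and $S(\mathbf pxy\mathbf q)\in\mathbf Y$, whence $S(\mathbf pxy\mathbf q)\in\mathbf X\wedge\mathbf Y$. Next I would note that $S(\mathbf pxy\mathbf q)$ fails~\eqref{pxyq=pyxq}: the word $\mathbf pxy\mathbf q$ is a subword of itself, so it is an isoterm for $S(\mathbf pxy\mathbf q)$ (equivalently, apply Lemma~\ref{Lem: S(W) in V} with $W=\{\mathbf pxy\mathbf q\}$ to the variety $\var S(\mathbf pxy\mathbf q)$), and since $\mathbf pxy\mathbf q\ne\mathbf pyx\mathbf q$ as elements of $\mathfrak X^\ast$, the identity~\eqref{pxyq=pyxq} is non-trivial and therefore fails in $S(\mathbf pxy\mathbf q)$. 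Consequently it fails in $\mathbf X\wedge\mathbf Y$, contradicting the hypothesis, and the corollary follows.

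I do not expect a genuine obstacle here: the argument is purely formal once Lemma~\ref{Lem: pxyq=pyxq} is available, in contrast with Corollary~\ref{Cor: delta_k in X wedge Y}, where the lattice structure of $L(\mathbf A)$ had to be invoked. The only point worth spelling out is the verification that $\mathbf pxy\mathbf q$ is an isoterm for $S(\mathbf pxy\mathbf q)$, which is exactly what forces the non-trivial identity~\eqref{pxyq=pyxq} to fail in $\mathbf X\wedge\mathbf Y$; everything else is a one-line computation with the meet of two varieties.
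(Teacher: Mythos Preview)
Your proof is correct and is essentially the contrapositive of the paper's own argument: the paper observes directly via Lemma~\ref{Lem: S(W) in V} that $S(\mathbf pxy\mathbf q)\notin\mathbf X\wedge\mathbf Y$, hence $S(\mathbf pxy\mathbf q)\notin\mathbf X$ or $S(\mathbf pxy\mathbf q)\notin\mathbf Y$, and then applies Lemma~\ref{Lem: pxyq=pyxq}. The ingredients and the logical content are identical.
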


\begin{proof}
Lemma~\ref{Lem: S(W) in V} implies that one of the varieties $\mathbf X$ or $\mathbf Y$, say $\mathbf X$, does not contain the monoid $S(\mathbf pxy\mathbf q)$. Then Lemma~\ref{Lem: pxyq=pyxq} applies with the conclusion that $\mathbf X$ satisfies~\eqref{pxyq=pyxq}. 
\end{proof}

\subsection{Identities of the form $\mathbf w_n[\pi,\tau]\approx\mathbf w_n^\prime[\pi,\tau]$}
\label{Subsec: w_n[pi,tau]=w_n'[pi,tau]}

\begin{lemma}
\label{Lem: w_n[pi,tau]=w_n'[pi,tau] in S(w_n[xi,eta])}
Let $n\in\mathbb N$, $\pi,\tau,\xi,\eta\in S_n$ with $\mathbf w_n[\pi,\tau]\ne\mathbf w_n[\xi,\eta]$. Then the monoid $S(\mathbf w_n[\xi,\eta])$ satisfies the identity
\begin{equation}
\label{w_n[pi,tau]=w_n'[pi,tau]}
\mathbf w_n[\pi,\tau]\approx\mathbf w_n^\prime[\pi,\tau].
\end{equation}
\end{lemma}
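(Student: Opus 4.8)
The plan is to work directly with the Rees quotient. Recall that $S(\mathbf w_n[\xi,\eta])=\mathfrak X^\ast/I(\mathbf w_n[\xi,\eta])$, whose congruence classes are the singletons $\{\mathbf s\}$ for each subword $\mathbf s$ of $\mathbf w_n[\xi,\eta]$ together with the zero class $I(\mathbf w_n[\xi,\eta])$. Hence the identity~\eqref{w_n[pi,tau]=w_n'[pi,tau]} holds in $S(\mathbf w_n[\xi,\eta])$ iff, for every endomorphism $\phi$ of $\mathfrak X^\ast$, the words $\phi(\mathbf w_n[\pi,\tau])$ and $\phi(\mathbf w_n^\prime[\pi,\tau])$ are either both non-subwords of $\mathbf w_n[\xi,\eta]$ or equal; so it suffices to show: if either $\phi(\mathbf w_n[\pi,\tau])$ or $\phi(\mathbf w_n^\prime[\pi,\tau])$ is a subword of $\mathbf w_n[\xi,\eta]$, then the two coincide as words. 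Write $\mathbf w_n[\pi,\tau]=\mathbf p\cdot x\cdot\mathbf m\cdot x\cdot\mathbf q$ and $\mathbf w_n^\prime[\pi,\tau]=\mathbf p\cdot x^2\cdot\mathbf m\cdot\mathbf q$, where $\mathbf p=\prod_{i=1}^n z_it_i$, $\mathbf m=\prod_{i=1}^n z_{i\pi}z_{n+i\tau}$, $\mathbf q=\prod_{i=n+1}^{2n} t_iz_i$; note that $\mathbf w_n[\xi,\eta]=\mathbf p\cdot x\cdot\mathbf m^\ast\cdot x\cdot\mathbf q$ with $\mathbf m^\ast=\prod_{i=1}^n z_{i\xi}z_{n+i\eta}$, the outer blocks being the same words $\mathbf p,\mathbf q$. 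The first observation is that if $\phi(x)=\lambda$ or $\phi(\mathbf m)=\lambda$, then $\phi(\mathbf w_n[\pi,\tau])=\phi(\mathbf w_n^\prime[\pi,\tau])$ outright, so these cases need no work.

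Next I would dispose of the case that $\phi(\mathbf w_n^\prime[\pi,\tau])$ is a subword of $\mathbf w_n[\xi,\eta]$. Then $\phi(x)^2$ is a subword of $\mathbf w_n[\xi,\eta]$, and I claim this forces $\phi(x)=\lambda$. Suppose $\mathbf r\mathbf r$ is a subword of $\mathbf w_n[\xi,\eta]$ with $\mathbf r\ne\lambda$. Every letter of $\mathbf w_n[\xi,\eta]$ occurs at most twice, so each letter of $\mathbf r$ occurs exactly twice in $\mathbf r\mathbf r$; hence $\mathbf r$ is linear, its left copy in $\mathbf w_n[\xi,\eta]$ occupies exactly the positions of the \emph{first} occurrences of the letters of $\mathbf r$, and these positions must be consecutive. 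But the letters occurring twice in $\mathbf w_n[\xi,\eta]$ (namely $x$ and the $z_i$) all have first occurrence at an \emph{odd} position of $\mathbf w_n[\xi,\eta]$, and no two consecutive positions are both odd, so $\mathbf r$ involves at most one such letter and $|\mathbf r|\le 1$; finally $\mathbf w_n[\xi,\eta]$ has no subword $aa$ since no two of its consecutive letters are equal. Thus $\mathbf r=\lambda$, $\phi(x)=\lambda$, and we are back in a settled case. (Note this argument does not use $\mathbf w_n[\pi,\tau]\ne\mathbf w_n[\xi,\eta]$.)

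It remains to treat the case that $\mathbf u:=\phi(\mathbf w_n[\pi,\tau])=\phi(\mathbf p)\,\phi(x)\,\phi(\mathbf m)\,\phi(x)\,\phi(\mathbf q)$ is a subword of $\mathbf w_n[\xi,\eta]$, with $\phi(x)\ne\lambda$ and $\phi(\mathbf m)\ne\lambda$ by the first paragraph; here the goal is to deduce $\pi=\xi$ and $\tau=\eta$, contradicting the hypothesis. Since $x$ occurs exactly twice in $\mathbf w_n[\xi,\eta]$ and the two copies of $\phi(x)$ in $\mathbf u$ contribute $2\occ_x(\phi(x))$ occurrences of $x$, either $\occ_x(\phi(x))=1$ (and then $\phi(\mathbf p),\phi(\mathbf m),\phi(\mathbf q)$ are $x$-free) or $\occ_x(\phi(x))=0$. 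The latter possibility I would rule out by direct inspection: choosing $a\in\con(\phi(x))$, the two copies of $a$ in $\mathbf u$ must be the two occurrences of $a$ in $\mathbf w_n[\xi,\eta]$ (so $a$ is some $z_k$), which forces the two $x$-free copies of $\phi(x)$ to lie one in $\mathbf p$ and one in $\mathbf m^\ast$, or one in $\mathbf m^\ast$ and one in $\mathbf q$; linearity of $\mathbf p,\mathbf m^\ast,\mathbf q$ and the near-disjointness of their contents then give $\phi(x)=z_k$, and tracing the (forced) occurrence of $x$ inside $\phi(\mathbf m)$ back to some $\phi(z_j)$ produces a letter of $\phi(z_j)$ sitting in the wrong block — a contradiction. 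So $\phi(x)=u_1xu_2$ with $u_1,u_2$ $x$-free; the two occurrences of $x$ in $\mathbf u$ are then those of $\mathbf w_n[\xi,\eta]$, whence $\phi(\mathbf p)u_1$ is a suffix of $\mathbf p$, $u_2\phi(\mathbf m)u_1=\mathbf m^\ast$, and $u_2\phi(\mathbf q)$ is a prefix of $\mathbf q$. A common suffix of $\mathbf p$ (which ends in $t_n$) and $\mathbf m^\ast$ (which ends in a $z$-letter) is empty, so $u_1=\lambda$; symmetrically $u_2=\lambda$; thus $\phi(x)=x$, $\phi(\mathbf m)=\mathbf m^\ast$, $\phi(\mathbf p)$ is a suffix of $\mathbf p$ and $\phi(\mathbf q)$ a prefix of $\mathbf q$.

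Finally, for $j\le n$ the word $\phi(z_j)$ occurs both inside $\phi(\mathbf p)$ (a subword of $\mathbf p$) and inside $\phi(\mathbf m)=\mathbf m^\ast$, so it uses only letters of $\con(\mathbf p)\cap\con(\mathbf m^\ast)=\{z_1,\dots,z_n\}$; since $\mathbf p$ has no two consecutive $z$-letters, $|\phi(z_j)|\le 1$, and likewise $|\phi(z_j)|\le 1$ for $j>n$. Then $\phi(\mathbf m)=\mathbf m^\ast$ together with $|\mathbf m|=|\mathbf m^\ast|=2n$ forces every $\phi(z_j)$ to be a single letter, so matching the two linear words letter by letter yields $\phi(z_{i\pi})=z_{i\xi}$ and $\phi(z_{n+i\tau})=z_{n+i\eta}$ for all $i$; in particular $\phi(z_1),\dots,\phi(z_n)$ are $n$ distinct $z$-letters. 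Hence $\phi(\mathbf p)$ contains all of $z_1,\dots,z_n$ and, being a suffix of $\mathbf p=z_1t_1\cdots z_nt_n$, must equal $\mathbf p$; reading off the letters gives $\phi(z_j)=z_j$ for $j\le n$, so $z_{i\xi}=\phi(z_{i\pi})=z_{i\pi}$ and $\pi=\xi$. The symmetric analysis of $\phi(\mathbf q)$ as a prefix of $\mathbf q$ gives $\tau=\eta$, the required contradiction, which finishes the proof. The hard part is exactly this last case: showing that the rigidity of subwords of $\mathbf w_n[\xi,\eta]$ forces such a $\phi$ to reconstruct $\xi$ and $\eta$; the bookkeeping with the suffixes/prefixes of the outer blocks and, above all, the elimination of $\occ_x(\phi(x))=0$ is where the care is needed, though each individual step is elementary.
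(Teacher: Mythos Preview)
Your overall strategy — working directly in the Rees quotient and showing that for every endomorphism $\phi$ the words $\phi(\mathbf w_n[\pi,\tau])$ and $\phi(\mathbf w_n^\prime[\pi,\tau])$ are either equal or both non-subwords — is exactly the paper's approach, and your treatment of $\phi(x)=\lambda$, of the square-freeness of $\mathbf w_n[\xi,\eta]$, and of the case $\occ_x(\phi(x))=1$ is correct and considerably more explicit than the paper's terse version.

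The gap is in your elimination of $\occ_x(\phi(x))=0$. You correctly derive $\phi(x)=z_k$ and correctly note that the $x$ inside $\phi(\mathbf m)$ must come from some $\phi(z_j)$; but the conclusion ``a letter of $\phi(z_j)$ sitting in the wrong block'' does not close the argument. Take $k\le n$. Then $\phi(\mathbf p)$ lies strictly inside $\mathbf p$ and is $x$-free, so necessarily $j>n$; the second copy of $\phi(z_j)$ then sits inside $\phi(\mathbf q)$, which starts after the second $z_k$ (in $\mathbf m^\ast$) and may well cross the \emph{second} $x$. Running your own common-suffix/prefix trick here in fact gives $\phi(z_j)=x$, a single letter, with both copies sitting exactly at the two $x$'s of $\mathbf w_n[\xi,\eta]$ — no contradiction from $x$ alone.

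What does work is to track a \emph{simple} letter instead. The two occurrences of $z_k$ in $\mathbf w_n[\xi,\eta]$ lie in distinct blocks, so $\phi(\mathbf m)$ (the stretch strictly between them) contains some $t_m$. But $\phi(\mathbf m)=\prod_l\phi(z_l)$, hence $t_m\in\con(\phi(z_l))$ for some $l$; since every $z_l$ occurs twice in $\mathbf w_n[\pi,\tau]$, the letter $t_m$ would occur twice in $\mathbf u$, contradicting its simplicity in $\mathbf w_n[\xi,\eta]$. This is exactly the content of the paper's compressed justification (``occurrences of $z_i$ in $\mathbf w_n[\xi,\eta]$ lie in different blocks, while both occurrences of $x$ in $\mathbf w_n[\pi,\tau]$ lie in the same block''). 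Replace your last clause in that paragraph by this observation and the proof is complete.
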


\begin{proof}
We are going to show that if we substitute elements of the monoid $S(\mathbf w_n[\xi,\eta])$ for letters in the identity~\eqref{w_n[pi,tau]=w_n'[pi,tau]}, then we always obtain a right equality. 
 
If we substitute~1 for $x$ in the identity~\eqref{w_n[pi,tau]=w_n'[pi,tau]}, then no matter what is substituted for the other letters, the resulting identity will be trivial, whence it will be true in $S(\mathbf w_n[\xi,\eta])$. Suppose now that we substitute some other element of the monoid $S(\mathbf w_n[\xi,\eta])$ for $x$. Then value of the word $\mathbf w_n^\prime[\pi,\tau]$ equals~0 because $\mathbf w_n[\xi,\eta]$ is square free. One can denote value of the word $\mathbf w_n[\pi,\tau]$ under the substitutions by $\overline{\mathbf w_n[\pi,\tau]}$ and verify that $\overline{\mathbf w_n[\pi,\tau]}$ equals~0 as well. Non-zero elements of the monoid $S(\mathbf w_n[\xi,\eta])$ are subwords of the word $\mathbf w_n[\xi,\eta]$. Suppose that we substitute some word $\mathbf a\in S(\mathbf w_n[\xi,\eta])\setminus\{0\}$ for $x$ in $\mathbf w_n[\pi,\tau]$. If $z_i\in\con(\mathbf a)$ for some $1\le i\le 2n$, then $\overline{\mathbf w_n[\pi,\tau]}$ equals~0 because occurrences of the letter $z_i$ in the word $\mathbf w_n[\xi,\eta]$ lie in different blocks of this word, while the both occurrences of the letter $x$ in the word $\mathbf w_n[\pi,\tau]$ lie in the same block. Further, if $t_i\in\con(\mathbf a)$ for some $1\le i\le 2n$, then $\overline{\mathbf w_n[\pi,\tau]}$ equals~0 again because $t_i\in\simple(\mathbf w_n[\xi,\eta])$, while $x\in\mul(\mathbf w_n[\pi,\tau])$. It remains to consider the case when $\mathbf a=x^k$ for some $k\in\mathbb N$. We may assume that $k=1$ because $\mathbf a$ is not a subword of $\mathbf w_n[\xi,\eta]$ otherwise. Then $\overline{\mathbf w_n[\pi,\tau]}$ is a subword of the word $\mathbf w_n[\xi,\eta]$ only whenever $\mathbf w_n[\xi,\eta]=\mathbf w_n[\pi,\tau]$. But this is not the case.
\end{proof}

The arguments arising in the proof of Lemma~\ref{Lem: w_n[pi,tau]=w_n'[pi,tau] in S(w_n[xi,eta])} are very typical. There are many places below, where it will be necessary to establish that a particular identity is true in a monoid of the form $S(W)$. We will omit the corresponding calculations, since they are very similar in concept to the proof of Lemma~\ref{Lem: w_n[pi,tau]=w_n'[pi,tau] in S(w_n[xi,eta])} and simpler than this proof.

If $\mathbf u$ and $\mathbf v$ are words and $\Sigma$ is an identity or system of identities, then we will write $\mathbf u\stackrel{\Sigma}\approx\mathbf v$ in the case when the identity $\mathbf u\approx\mathbf v$ follows from $\Sigma$.

\begin{lemma}
\label{Lem: from pxqxr to px^2qr}
Let $\mathbf V$ be a monoid variety satisfying the identities~\eqref{xxy=yxx} and~\eqref{w_n[pi,tau]=w_n'[pi,tau]} for any $n\in\mathbb N$ and $\pi,\tau\in S_n$. If $\mathbf w=\mathbf px\mathbf qx\mathbf r$ and $\con(\mathbf q)\subseteq\mul(\mathbf w)$, then $\mathbf V$ satisfies the identity $\mathbf w\approx\mathbf px^2\mathbf q\mathbf r$.
\end{lemma}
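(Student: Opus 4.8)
The plan is to argue by induction on the length $|\mathbf q|$ of the inner word $\mathbf q$. Since identities are preserved by substitutions, the statement of the lemma stays true when the letter $x$ is replaced everywhere by any other letter, and I shall freely use this relettered form of the lemma as the induction hypothesis. If $\mathbf q=\lambda$ there is nothing to prove, since then $\mathbf w=\mathbf px^2\mathbf r$; so assume $|\mathbf q|\ge 1$ and that the (relettered) claim holds whenever the inner word is strictly shorter. \emph{First suppose $x\in\con(\mathbf q)$}, say $\mathbf q=\mathbf q_1x\mathbf q_2$. Reading $\mathbf w$ as $(\mathbf px\mathbf q_1)\cdot x\cdot\mathbf q_2\cdot x\cdot\mathbf r$ and noting that $\con(\mathbf q_2)\subseteq\con(\mathbf q)\subseteq\mul(\mathbf w)$ and $|\mathbf q_2|<|\mathbf q|$, the induction hypothesis gives $\mathbf w\approx\mathbf px\mathbf q_1x^2\mathbf q_2\mathbf r$. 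The new word has exactly the same letters with the same multiplicities as $\mathbf w$; reading it as $\mathbf p\cdot x\cdot\mathbf q_1\cdot x\cdot(x\mathbf q_2\mathbf r)$ and applying the induction hypothesis once more (now $\con(\mathbf q_1)\subseteq\con(\mathbf q)$ lies in its set of multiple letters, and $|\mathbf q_1|<|\mathbf q|$) yields $\mathbf px\mathbf q_1x^2\mathbf q_2\mathbf r\approx\mathbf px^2\mathbf q_1x\mathbf q_2\mathbf r=\mathbf px^2\mathbf q\mathbf r$, settling this case.

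\emph{Next suppose $x\notin\con(\mathbf q)$ and $\mathbf q$ has no repeated letter.} Then every letter of $\mathbf q$ is multiple in $\mathbf w$, differs from $x$, and occurs just once in $\mathbf q$, hence occurs in $\mathbf p$ or in $\mathbf r$; fix for each such letter an occurrence outside $\mathbf q$ and call the letter \emph{left} or \emph{right} accordingly. For a sufficiently large $n$ one can then choose $\pi,\tau\in S_n$ and a substitution $\varphi$ fixing $x$ such that $\varphi$ maps the block $\prod_{i=1}^n z_it_i$ of $\mathbf w_n[\pi,\tau]$ onto $\mathbf p$, the block $\prod_{i=1}^n z_{i\pi}z_{n+i\tau}$ onto $\mathbf q$, and the block $\prod_{i=n+1}^{2n} t_iz_i$ onto $\mathbf r$: each letter of $\mathbf q$ is realized as the value of a single $z_i$, whose other occurrence falls into $\mathbf p$ (if the letter is \emph{left}) or into $\mathbf r$ (if \emph{right}), the permutations $\pi,\tau$ serving to align the order of these outer occurrences, and the $t_i$ absorbing the remaining portions of $\mathbf p$ and $\mathbf r$. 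Then $\varphi(\mathbf w_n[\pi,\tau])=\mathbf w$ and $\varphi(\mathbf w_n^\prime[\pi,\tau])=\mathbf px^2\mathbf q\mathbf r$, so $\mathbf V$ satisfies $\mathbf w\approx\mathbf px^2\mathbf q\mathbf r$.

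\emph{Finally suppose $x\notin\con(\mathbf q)$ and some letter $y$ (necessarily $y\ne x$) occurs at least twice in $\mathbf q$.} Fix two consecutive such occurrences and write $\mathbf q=\mathbf q_1y\mathbf q_2y\mathbf q_3$ with $y\notin\con(\mathbf q_2)$. Since $\mathbf V$ satisfies $x^2y\approx yx^2$, the square $y^2$ commutes with every word. Now $\mathbf w\approx\mathbf px^2\mathbf q\mathbf r$ is obtained by concatenating five congruences: $\mathbf w=(\mathbf px\mathbf q_1)\,y\,\mathbf q_2\,y\,(\mathbf q_3x\mathbf r)\approx\mathbf px\mathbf q_1y^2\mathbf q_2\mathbf q_3x\mathbf r$ (induction hypothesis with $y$ in place of $x$, inner word $\mathbf q_2$); $\ \approx\mathbf px\mathbf q_1\mathbf q_2\mathbf q_3x\,y^2\mathbf r$ (centrality of $y^2$); $\ =\mathbf p\,x\,(\mathbf q_1\mathbf q_2\mathbf q_3)\,x\,(y^2\mathbf r)\approx\mathbf px^2\mathbf q_1\mathbf q_2\mathbf q_3y^2\mathbf r$ (induction hypothesis, inner word $\mathbf q_1\mathbf q_2\mathbf q_3$ of length $|\mathbf q|-2$); $\ \approx\mathbf px^2\mathbf q_1y^2\mathbf q_2\mathbf q_3\mathbf r$ (centrality of $y^2$); and $\ \approx\mathbf px^2\mathbf q_1y\mathbf q_2y\mathbf q_3\mathbf r=\mathbf px^2\mathbf q\mathbf r$ (induction hypothesis with $y$ in place of $x$ read backwards, inner word $\mathbf q_2$). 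At each invocation of the induction hypothesis the needed inclusion $\con(\text{inner word})\subseteq\mul(\text{whole word})$ is automatic, because every word occurring in this chain has the same content-with-multiplicities as $\mathbf w$. The heart of the argument is precisely this last case: a single substitution into an instance of $\mathbf w_n[\pi,\tau]\approx\mathbf w_n^\prime[\pi,\tau]$ fails in general exactly when $\mathbf q$ carries a letter all of whose occurrences in $\mathbf w$ lie inside $\mathbf q$, and the device that saves the day — collapse a repeated letter of $\mathbf q$ into a central square, slide that square out of the gap between the two copies of $x$ so as to shorten $\mathbf q$, and then reinstate the repeated letter — is the step I expect to demand the most care.
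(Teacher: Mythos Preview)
Your induction scheme and the handling of the two ``reducing'' cases (when $x\in\con(\mathbf q)$, and when $\mathbf q$ contains a repeated letter $y$) are correct and are essentially the same manoeuvres the paper uses. The gap is in your linear base case. The middle block of $\mathbf w_n[\pi,\tau]$ is $\prod_{i=1}^n z_{i\pi}z_{n+i\tau}$, which \emph{strictly alternates} between variables from $\{z_1,\dots,z_n\}$ (those whose second occurrence lies in the left prefix) and variables from $\{z_{n+1},\dots,z_{2n}\}$ (those whose second occurrence lies in the right suffix). The permutations $\pi,\tau$ only reshuffle within each of these two groups; they cannot break the alternating $LRLR\cdots$ pattern. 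So if $\mathbf q$ has, say, two consecutive ``left'' letters, no choice of $\pi,\tau$ together with your stated substitution (``each letter of $\mathbf q$ is realized as the value of a single $z_i$'', with the $t_i$ absorbing the rest) will send the middle block onto $\mathbf q$. A concrete obstruction: with $\mathbf q=ab$ where both $a,b$ have their fixed outer occurrence in $\mathbf p$, the middle block has shape $(\text{left})(\text{right})(\text{left})(\text{right})\cdots$, and you cannot obtain $ab$ without inserting at least one empty value.

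The repair is to take $n$ strictly larger than the number of left letters and strictly larger than the number of right letters, send the surplus $z_i$ to the empty word $\lambda$, and use a greedy placement of the letters of $\mathbf q$ into the alternating slots (one never skips two slots in a row, so $n=|\mathbf q|+1$ suffices); one must also check that an unused $z_1$ and an unused $z_{2n}$ remain available so that the $t_i$'s can absorb any initial segment of $\mathbf p$ and any terminal segment of $\mathbf r$. This works, but it is exactly the content that is missing from your sketch. The paper sidesteps the whole issue by first invoking \cite[Lemma~4.4]{Gusev-Vernikov-18} to derive the auxiliary identities $\mathbf w_{k,m}[\rho]\approx\mathbf w_{k,m}^\prime[\rho]$, whose middle block $\prod_{i=1}^{k+m} z_{i\rho}$ has no alternating constraint, so that a single substitution with $k$ left letters, $m$ right letters and an arbitrary $\rho\in S_{k+m}$ handles the linear case in one line.
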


\begin{proof}
For any $k$ and $m$ with $k+m>0$ and any $\rho\in S_{k+m}$, we put
\begin{align*}
\mathbf w_{k,m}[\rho]&=\biggl(\prod_{i=1}^k z_it_i\biggr)x\biggl(\prod_{i=1}^{k+m} z_{i\rho}\biggr)x\biggl(\prod_{i=k+1}^{k+m} t_iz_i\biggr),\\[-3pt]
\mathbf w_{k,m}^\prime[\rho]&=\biggl(\prod_{i=1}^k z_it_i\biggr)x^2\biggl(\prod_{i=1}^{k+m} z_{i\rho}\biggr)\biggl(\prod_{i=k+1}^{k+m} t_iz_i\biggr).
\end{align*}
The proof of Lemma~4.4 in Gusev and Vernikov~\cite{Gusev-Vernikov-18} implies that any identity of the form 
\begin{equation}
\label{w_{k,m}[rho]=w_{k,m}'[rho]}
\mathbf w_{k,m}[\rho]\approx\mathbf w_{k,m}^\prime[\rho]
\end{equation}
follows from an identity of the form~\eqref{w_n[pi,tau]=w_n'[pi,tau]} for some $n\in\mathbb N$ and $\pi,\tau\in S_n$. Therefore, $\mathbf V$ satisfies the identity~\eqref{w_{k,m}[rho]=w_{k,m}'[rho]} for any $k,m\in\mathbb N_0$ and $\rho\in S_{k+m}$.

We may assume that $\mathbf q\ne\lambda$ because the required conclusion is evident otherwise. Further considerations are divided into two cases.

\smallskip

\emph{Case 1}: $x\notin\con(\mathbf q)$. Suppose that the word $\mathbf q$ is linear and depends on pairwise different letters $z_1,z_2,\dots, z_n$. Then $z_i\in\con(\mathbf p\mathbf r)$ for each $i=1,2,\dots,n$. For any $i=1,2,\dots,n$, we fix one occurrence of the letter $z_i$ in the word $\mathbf p\mathbf r$. We may assume without loss of generality that $z_1,z_2,\dots,z_k\in\con(\mathbf p)$ and $z_{k+1},\dots,z_{k+m}\in\con(\mathbf r)$ for some $k$ and $m$ with $k+m=n$ (if it is not the case, we can rename letters). Then $\mathbf w=\mathbf pxz_{1\rho}z_{2\rho}\cdots z_{(k+m)\rho}x\mathbf r$ for an appropriate permutation $\rho\in S_{k+m}$, where
$$
\mathbf p=\mathbf w_0\biggl(\prod_{i=1}^k z_i\mathbf w_i\biggr)\quad\text{and}\quad\mathbf r=\biggl(\prod_{i=k+1}^{k+m}\mathbf w_iz_i\biggr)\mathbf w_{k+m+1}
$$
for some words $\mathbf w_0,\mathbf w_1,\dots,\mathbf w_{k+m+1}$. Therefore, $\mathbf V$ satisfies the identity
$$
\mathbf w\stackrel{\eqref{w_{k,m}[rho]=w_{k,m}'[rho]}}\approx\mathbf px^2z_{1\rho}z_{2\rho}\cdots z_{(k+m)\rho}\mathbf r,
$$
and we are done.

It remains to consider the case when the word $\mathbf q$ is non-linear. Then there is a letter $y_1\in\con(\mathbf q)$ such that $\mathbf q=\mathbf v_1y_1\mathbf v_2y_1\mathbf v_3$, where $y_1\notin\con(\mathbf v_2)$ and the word $\mathbf v_2$ is either empty or linear. Then the same arguments as in the previous paragraph show that $\mathbf V$ satisfies the identities
$$
\mathbf w=\mathbf px\mathbf v_1y_1\mathbf v_2y_1\mathbf v_3x\mathbf r\stackrel{\eqref{w_{k,m}[rho]=w_{k,m}'[rho]}}\approx\mathbf px\mathbf v_1y_1^2\mathbf v_2\mathbf v_3x\mathbf r\stackrel{\eqref{xxy=yxx}}\approx\mathbf py_1^2x\mathbf v_1\mathbf v_2\mathbf v_3x\mathbf r.
$$
In other words, we may delete the letter $y_1$ from the word $\mathbf q$. Repeating these considerations, we may delete from $\mathbf q$ all multiple letters. As a result, we obtain that $\mathbf V$ satisfies the identity $\mathbf w\approx\mathbf py_1^2y_2^2\cdots y_\ell^2x\mathbf q^\prime x\mathbf r$ for some letters $y_1,y_2,\dots,y_\ell$ and some word $\mathbf q^\prime$ that is either empty or linear. Then we may repeat considerations from the previous paragraph and conclude that $\mathbf V$ satisfies the identities
$$
\mathbf w\approx\mathbf py_1^2y_2^2\cdots y_\ell^2x\mathbf q^\prime x\mathbf r\approx\mathbf py_1^2y_2^2\cdots y_\ell^2x^2\mathbf q^\prime\mathbf r\stackrel{\eqref{xxy=yxx}}\approx\mathbf px^2y_1^2y_2^2\cdots y_\ell^2\mathbf q^\prime\mathbf r.
$$
It remains to return the letters $y_1,y_2,\dots,y_\ell$ to their original places using the identities~\eqref{xxy=yxx} and~\eqref{w_{k,m}[rho]=w_{k,m}'[rho]}. As a result, we obtain that $\mathbf V$ satisfies the identity $\mathbf w\approx\mathbf px^2\mathbf q\mathbf r$.

\smallskip

\emph{Case 2}: $x\in\con(\mathbf q)$. Then $\mathbf q=\mathbf q_0\prod_{i=1}^r(x\mathbf q_i)$, where $x\notin\con(\mathbf q_0\mathbf q_1\cdots\mathbf q_r)$. The same arguments as in Case~1 implies that we can step by step swap $x$ and $\mathbf q_r$, $\mathbf q_{r-1}$, \dots, $\mathbf q_0$. As a result, we get that $\mathbf V$ satisfies the identities
\begin{align*}
\mathbf w&=\mathbf px\mathbf q_0\biggl(\prod_{i=1}^{r-1} x\mathbf q_i\biggr)x\mathbf q_rx\mathbf r\approx\mathbf px\mathbf q_0\biggl(\prod_{i=1}^{r-1} x\mathbf q_i\biggr)x^2\mathbf q_r\mathbf r\approx\\[-3pt]
&\approx\mathbf px\mathbf q_0\biggl(\prod_{i=1}^{r-2} x\mathbf q_i\biggr)x^2\mathbf q_{r-1}x\mathbf q_r\mathbf r\approx\cdots\approx\mathbf px\mathbf q_0x^2\mathbf q_1\biggl(\prod_{i=2}^r x\mathbf q_i\biggr)\mathbf r\approx\mathbf px^2\mathbf q\mathbf r,
\end{align*}
and we are done.
\end{proof}

Put $\mathbf A^\ast=\mathbf A\{\mathbf w_n[\pi,\tau]\approx\mathbf w_n^\prime[\pi,\tau]\mid n\in\mathbb N,\,\pi,\tau\in S_n\}$.

\begin{lemma}
\label{Lem: reduction to linear-balanced}
Let $\mathbf X\in[\mathbf D_p,\mathbf A^\ast\{\delta_p\}]$ and $\mathbf Y\in[\mathbf D_q,\mathbf A^\ast\{\delta_q\}]$, where $2\le p,q\le\infty$. Suppose that $\mathbf X\wedge\mathbf Y$ satisfies an identity $\mathbf u\approx\mathbf v$. Then there are a linear-balanced identity $\mathbf u^\prime\approx\mathbf v^\prime$ that holds in $\mathbf X\wedge\mathbf Y$ and a word $\mathbf p$ such that $\mathbf A^\ast\{\delta_p\}$ and $\mathbf A^\ast\{\delta_q\}$ satisfy the identities $\mathbf u\approx\mathbf p\mathbf u^\prime$ and $\mathbf v\approx\mathbf p\mathbf v^\prime$, respectively, and $\con(\mathbf p)\cap\con(\mathbf u^\prime)=\varnothing$.
\end{lemma}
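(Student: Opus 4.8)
The plan is to peel off from $\mathbf u$ and $\mathbf v$ a common prefix $\mathbf p$ consisting of the squares of all the ``bad'' letters, so that what is left is linear-balanced. Since any identity holding in the non-group variety $\mathbf X\wedge\mathbf Y$ (which contains $\mathbf{SL}$, as $\mathbf{SL}\subseteq\mathbf D_p\subseteq\mathbf X\wedge\mathbf Y$) has equal contents on both sides by Lemma~\ref{Lem: group variety}, the condition $\con(\mathbf p)\cap\con(\mathbf u^\prime)=\varnothing$ is equivalent to $\con(\mathbf p)\cap\con(\mathbf v^\prime)=\varnothing$, so the conclusion is symmetric under interchanging $(\mathbf X,\mathbf u,p)$ with $(\mathbf Y,\mathbf v,q)$; hence I may assume $p\le q$, so that $\mathbf D_p\subseteq\mathbf X\wedge\mathbf Y\subseteq\mathbf A^\ast\{\delta_p\}\subseteq\mathbf A^\ast\{\delta_q\}$ and $\mathbf X\wedge\mathbf Y$ satisfies $\delta_p$. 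Since $\mathbf D_1\subseteq\mathbf X\wedge\mathbf Y$, Lemma~\ref{Lem: decompositions of u and v} gives decompositions $\mathbf u=\mathbf u_0t_1\mathbf u_1\cdots t_m\mathbf u_m$ and $\mathbf v=\mathbf v_0t_1\mathbf v_1\cdots t_m\mathbf v_m$ with the same simple letters, so $\con(\mathbf u)=\con(\mathbf v)$ and $\mul(\mathbf u)=\mul(\mathbf v)$. Call a letter $x\in\mul(\mathbf u)$ \emph{bad} if it is not linear-balanced in $\mathbf u\approx\mathbf v$, and let $B$ be the set of bad letters.

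The heart of the argument is that every bad letter can be extracted to the front as a square. The tools: (a) if $x$ occurs at least twice inside one block of a word, then, the material between two consecutive such occurrences consisting only of multiple letters, Lemma~\ref{Lem: from pxqxr to px^2qr} turns that pair into a factor $x^2$; (b) if $x$ occurs at most once per block but at least $p+1$ (resp.\ $q+1$) times, then substituting the words between consecutive occurrences of $x$ for the letters $t_1,\dots,t_p$ (resp.\ $t_1,\dots,t_q$) of $\delta_p$ (resp.\ $\delta_q$) again produces a factor $x^2$; (c) once a factor $x^2$ is present, $x^2\approx x^3$ and $x^2y\approx yx^2$ let $x^2$ absorb every remaining occurrence of $x$ (from $x^2\mathbf a x\approx\mathbf a x^3\approx\mathbf a x^2$ when $x\notin\con(\mathbf a)$) and migrate to the very front. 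Granting that each $x\in B$ triggers (a) or (b) in $\mathbf u$ (via $\delta_p$) and in $\mathbf v$ (via $\delta_q$), we get $\mathbf u\approx\mathbf p\mathbf u^\prime$ in $\mathbf A^\ast\{\delta_p\}$ and $\mathbf v\approx\mathbf p\mathbf v^\prime$ in $\mathbf A^\ast\{\delta_q\}$, where $\mathbf p=\prod_{x\in B}x^2$ (in a fixed order) and $\mathbf u^\prime$, $\mathbf v^\prime$ are $\mathbf u$, $\mathbf v$ with all bad letters deleted. Then $\con(\mathbf p)=B$ and $\con(\mathbf u^\prime)=\con(\mathbf v^\prime)=\con(\mathbf u)\setminus B$, so $\con(\mathbf p)\cap\con(\mathbf u^\prime)=\varnothing$.

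It remains to verify that $\mathbf u^\prime\approx\mathbf v^\prime$ holds in $\mathbf X\wedge\mathbf Y$ and is linear-balanced. As $\mathbf X\wedge\mathbf Y$ is contained in both $\mathbf A^\ast\{\delta_p\}$ and $\mathbf A^\ast\{\delta_q\}$ and satisfies $\mathbf u\approx\mathbf v$, it satisfies $\mathbf p\mathbf u^\prime\approx\mathbf u\approx\mathbf v\approx\mathbf p\mathbf v^\prime$; substituting $1$ for every letter of $\con(\mathbf p)=B$, which occurs in neither $\mathbf u^\prime$ nor $\mathbf v^\prime$, makes $\mathbf p$ vanish and leaves $\mathbf u^\prime\approx\mathbf v^\prime$, which therefore holds in $\mathbf X\wedge\mathbf Y$. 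For linear-balancedness, deleting the letters of $B$ from all blocks changes neither the simple letters nor the per-block occurrence numbers of a surviving letter, so each $x\in\mul(\mathbf u^\prime)=\mul(\mathbf u)\setminus B$ is linear-balanced in $\mathbf u\approx\mathbf v$ by definition of $B$ and hence remains linear-balanced in $\mathbf u^\prime\approx\mathbf v^\prime$; thus $\mathbf u^\prime\approx\mathbf v^\prime$ is linear-balanced.

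The main obstacle is the postponed claim that each bad letter triggers an extraction in \emph{both} words. Here I would use Lemma~\ref{Lem: xt_1x...t_kx is isoterm}: the word $\bigl(\prod_{i=1}^{k}xt_i\bigr)x$ generates $\mathbf D_{k+1}$ and so is an isoterm for every variety containing $\mathbf D_{k+1}$. If a bad $x$ failed to trigger (a) or (b) in $\mathbf u$, then $x$ would occur at most once per block of $\mathbf u$ and at most $p$ times, so every block of the skeleton $\mathbf u(x,t_1,\dots,t_m)$ would be linear with $x$ occurring at most $p$ times; since $\bigl(\prod_{i=1}^{p-1}xt_i\bigr)x$ is an isoterm for $\mathbf X\wedge\mathbf Y\supseteq\mathbf D_p$, Lemma~\ref{Lem: xt_1x...t_kx is isoterm} would force the identity $\mathbf u(x,t_1,\dots,t_m)\approx\mathbf v(x,t_1,\dots,t_m)$ — which holds in $\mathbf X\wedge\mathbf Y$ — to be linear-balanced in $x$, contradicting $x\in B$. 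The corresponding statement on the $\mathbf v$-side, where one must use the longer isoterm $\bigl(\prod_{i=1}^{q-1}xt_i\bigr)x$ arising from $\mathbf D_q\subseteq\mathbf Y$, is the subtle part: one has to track how the reduct identity propagates so that the larger budget $q$ (needed because extraction from $\mathbf v$ is carried out inside $\mathbf A^\ast\{\delta_q\}$) is actually available, which is where the bookkeeping of occurrence numbers becomes delicate.
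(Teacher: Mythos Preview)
Your last paragraph correctly locates the gap, but it is not just ``delicate bookkeeping'': the claim that every bad letter can be extracted from $\mathbf v$ inside $\mathbf A^\ast\{\delta_q\}$ is false. Take $p=2$, $q=3$, $\mathbf X=\mathbf D_2$, $\mathbf Y=\mathbf D_3$, and
\[
\mathbf u=xt_1xt_2xt_3,\qquad \mathbf v=xt_1xt_2t_3x.
\]
Both sides reduce to $x^2t_1t_2t_3$ via $\delta_2$, so $\mathbf u\approx\mathbf v$ holds in $\mathbf X\wedge\mathbf Y=\mathbf D_2$, and $x$ is bad. But $\occ_x(\mathbf v)=3=q$ and $x$ occurs at most once per block of $\mathbf v$, so neither $\delta_3$ nor Lemma~\ref{Lem: from pxqxr to px^2qr} applies. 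In fact $xt_1xt_2t_3x$ is an isoterm for $\mathbf D_3$ by Lemma~\ref{Lem: xt_1x...t_kx is isoterm}, hence for the larger variety $\mathbf A^\ast\{\delta_3\}$; so $\mathbf v\approx x^2\mathbf v_x$ simply fails there. Your appeal to $\mathbf D_q\subseteq\mathbf Y$ does not help: the identity $\mathbf u\approx\mathbf v$ is only known in $\mathbf X\wedge\mathbf Y$, not in $\mathbf Y$, so the longer isoterm for $\mathbf Y$ says nothing about the pattern of $x$ in $\mathbf v$.

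The paper handles precisely this sub-case (all $e_i\le 1$ and all $f_j\le 1$) by \emph{not} deleting $x$. Since $x$ is bad, the one-letter identity $\mathbf u(x,t_1,\dots,t_m)\approx\mathbf v(x,t_1,\dots,t_m)$ is nontrivial, and Lemma~\ref{Lem: Straubing identities in A}(ii) forces $\mathbf X\wedge\mathbf Y$ to satisfy $\delta_{e-1}$, whence $e-1\ge p$ and $\delta_{e-1}$ holds in $\mathbf A^\ast\{\delta_p\}$. Assuming $e\le f$ by symmetry, one then uses $\{x^2\approx x^3,\ \delta_{e-1}\}$ to \emph{redistribute} the occurrences of $x$ inside $\mathbf u$ so that they match the per-block pattern $(f_0,\dots,f_m)$ of $\mathbf v$, obtaining a word $\mathbf w$ with $\mathbf u\approx\mathbf w$ in $\mathbf A^\ast\{\delta_p\}$ and $x$ linear-balanced in $\mathbf w\approx\mathbf v$; the induction then proceeds with one fewer bad letter, and $x$ is never placed into $\mathbf p$. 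In the example above this yields $\mathbf w=\mathbf v$ and $\mathbf p=\lambda$. Thus the final $\mathbf p$ is not in general $\prod_{x\in B}x^2$, and $\mathbf u^\prime,\mathbf v^\prime$ are not in general $\mathbf u_B,\mathbf v_B$; your rigid choice of $\mathbf p$ is what makes the $\mathbf v$-side unprovable.
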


\begin{proof}
We will assume without loss of generality that $p\le q$. Let ~\eqref{decomposition of u} is the decomposition of $\mathbf u$. Then the decomposition of $\mathbf v$ has the form~\eqref{decomposition of v} by Lemma~\ref{Lem: decompositions of u and v}. Suppose that exactly $k$ letters are not linear-balanced in the identity $\mathbf u\approx\mathbf v$. We use induction on $k$.

\smallskip

\emph{Induction base}. If $k=0$, then the required statement is evident.

\smallskip

\emph{Induction step}. Suppose that $k>0$. Let $x$ be a letter that is not linear-balanced in the identity $\mathbf u\approx\mathbf v$. Put $e_i=\occ_x(\mathbf u_i)$ and $f_i=\occ_x(\mathbf v_i)$ for $i=0,1,\dots,m$. Let $e=\sum_{i=0}^m e_i$ and $f=\sum_{i=0}^m f_i$. Further considerations are naturally divided into two cases.

\smallskip

\emph{Case 1}: $e_i,f_j>1$ for some $i$ and $j$. Being a subvariety of $\mathbf A$, the variety $\mathbf A^\ast$ satisfies the identity~\eqref{xxy=yxx}. Therefore, Lemma~\ref{Lem: from pxqxr to px^2qr} implies that $\mathbf A^\ast\{\delta_p\}$ and $\mathbf A^\ast\{\delta_q\}$ satisfy the identities $\mathbf u\approx x^2\mathbf u_x$ and $\mathbf v\approx x^2\mathbf v_x$, respectively. 

The identity $\mathbf u_x\approx\mathbf v_x$ holds in $\mathbf X\wedge\mathbf Y$ and only $k-1$ letters are not linear-balanced in this identity. By the induction assumption, there are a linear-balanced identity $\mathbf u^\prime\approx\mathbf v^\prime$ that holds in $\mathbf X\wedge\mathbf Y$ and a word $\mathbf p$ such that $\mathbf A^\ast\{\delta_p\}$ and $\mathbf A^\ast\{\delta_q\}$ satisfy the identities $\mathbf u_x\approx\mathbf p\mathbf u^\prime$ and $\mathbf v_x\approx\mathbf p\mathbf v^\prime$, respectively, and $\con(\mathbf p)\cap\con(\mathbf u^\prime)=\varnothing$. Then $\mathbf A^\ast\{\delta_p\}$ and $\mathbf A^\ast\{\delta_q\}$ satisfy the identities $\mathbf u\approx x^2\mathbf u_x\approx x^2\mathbf p\mathbf u^\prime$ and $\mathbf v\approx x^2\mathbf v_x\approx x^2\mathbf p\mathbf v^\prime$, respectively, and we are done.

\smallskip

\emph{Case 2}: either $e_i\le 1$ for all $i=0,1,\dots,m$ or $f_i\le 1$ for all $i=0,1,\dots,m$. We may assume without loss of generality that the first claim is true. The identity 
$$
\mathbf u(x,t_1,t_2,\dots,t_m)\approx\mathbf v(x,t_1,t_2,\dots,t_m)
$$
is non-trivial because the letter $x$ is not linear-balanced in the identity $\mathbf u\approx\mathbf v$. Then Lemma~\ref{Lem: Straubing identities in A}(ii) implies that the variety $\mathbf X\wedge\mathbf Y$ satisfies the identity $\delta_{e-1}$. It is clear that $p\le e-1$ because $\delta_{e-1}$ is false in $\mathbf D_p$ otherwise. Hence $\mathbf A^\ast\{\delta_p\}$ satisfies the identity $\delta_{e-1}$.

Suppose that $f_i\le 1$ for all $i=0,1,\dots,m$. Then we may assume that $e\le f$ by symmetry. Put $\Psi=\{\eqref{xx=xxx},\,\delta_{e-1}\}$. Then $\mathbf A^\ast\{\delta_p\}$ satisfies the identity $\mathbf u\stackrel{\Psi}\approx\mathbf w$, where 
$$
\mathbf w=x^{f_0}(\mathbf u_0)_x\biggl(\prod_{i=1}^m t_ix^{f_i}(\mathbf u_i)_x\biggr).
$$
The identity $\mathbf w\approx\mathbf v$ holds in $\mathbf X\wedge\mathbf Y$ because $\mathbf w\approx\mathbf u$ holds in $\mathbf A^\ast\{\delta_p\}$ and $\mathbf u\approx\mathbf v$ holds in $\mathbf X\wedge\mathbf Y$. Besides that, the letter $x$ is linear-balanced in the identity $\mathbf w\approx\mathbf v$, whence only $k-1$ letters are not linear-balanced in this identity. By the induction assumption, there are linear-balanced identity $\mathbf u^\prime\approx\mathbf v^\prime$ that holds in $\mathbf X\wedge\mathbf Y$ and a word $\mathbf p$ such that $\mathbf A^\ast\{\delta_p\}$ and $\mathbf A^\ast\{\delta_q\}$ satisfy the identities $\mathbf w\approx\mathbf p\mathbf u^\prime$ and $\mathbf v\approx\mathbf p\mathbf v^\prime$, respectively, and $\con(\mathbf p)\cap\con(\mathbf u^\prime)=\varnothing$. Then $\mathbf A^\ast\{\delta_p\}$ and $\mathbf A^\ast\{\delta_q\}$ satisfy the identities $\mathbf u\approx\mathbf w\approx\mathbf p\mathbf u^\prime$ and $\mathbf v\approx\mathbf p\mathbf v^\prime$, respectively, and we are done.

Finally, suppose that $f_j>1$ for some $j\in\{0,1,\dots,m\}$. It is clear that $\mathbf A^\ast\{\delta_p\}$ satisfies the identity $\mathbf u\stackrel{\Psi}\approx x^2\mathbf u_x$. Further, Lemma~\ref{Lem: from pxqxr to px^2qr} implies that $\mathbf A^\ast\{\delta_q\}$ satisfies the identity $\mathbf v\approx x^2\mathbf v_x$. The identity $\mathbf u_x\approx\mathbf v_x$ holds in $\mathbf X\wedge\mathbf Y$ and only $k-1$ letters are not linear-balanced in this identity. This allows us to complete the proof by repeating literally arguments from the second paragraph of Case~1.
\end{proof}

Let $n\in\mathbb N$, $0\le k\le\ell\le n$ and $\pi,\tau\in S_n$. Put 
\begin{align*}
\mathbf w_n^{k,\ell}[\pi,\tau]=&\phantom{\cdot}\ \biggl(\prod_{i=1}^n z_it_i\biggr)\biggl(\prod_{i=1}^k z_{i\pi}z_{n+i\tau}\biggr)x\biggl(\prod_{i=k+1}^\ell z_{i\pi}z_{n+i\tau}\biggr)x\\[-3pt]
&\cdot\biggl(\prod_{i=\ell+1}^n z_{i\pi}z_{n+i\tau}\biggr)\biggl(\prod_{i=n+1}^{2n} t_iz_i\biggr).
\end{align*}
Note that $\mathbf w_n^{0,n}[\pi,\tau]=\mathbf w_n[\pi,\tau]$ and $\mathbf w_n^{0,0}[\pi,\tau]=\mathbf w_n^\prime[\pi,\tau]$.

\begin{lemma}
\label{Lem: S(w_n[pi,tau]) notin V}
Let $n\in\mathbb N$, $\pi,\tau\in S_n$ and $\mathbf X$ be a monoid variety such that $\mathbf L\subseteq\mathbf X\subseteq\mathbf A\{\sigma_1,\,\sigma_2\}$. If $S(\mathbf w_n[\pi,\tau])\notin\mathbf X$, then $\mathbf X$ satisfies a non-trivial identity of the form $\mathbf w_n[\pi,\tau]\approx\mathbf w_n^{k,\ell}[\pi,\tau]$ for some $0\le k\le\ell\le n$.
\end{lemma}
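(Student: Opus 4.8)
The plan is to turn the assumption $S(\mathbf w_n[\pi,\tau])\notin\mathbf X$ into a nontrivial identity $\mathbf w_n[\pi,\tau]\approx\mathbf v$ of $\mathbf X$ (Lemma~\ref{Lem: S(W) in V}) and then to analyze $\mathbf v$ block by block. First I note that $\mathbf D_2\subseteq\mathbf L\subseteq\mathbf X$: indeed $xtx$ is a subword of $xtxysy$, hence an isoterm for $\mathbf L$, so $S(xtx)\in\mathbf L$ and $\mathbf D_2=\var S(xtx)\subseteq\mathbf L$; in particular $\mathbf D_1\subseteq\mathbf X$, and both $xtx$ and $xtxysy$ are isoterms for $\mathbf X$. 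Since $\mathbf D_1\subseteq\mathbf X$, Lemma~\ref{Lem: decompositions of u and v} gives $\con(\mathbf v)=\con(\mathbf w_n[\pi,\tau])$ and a decomposition $\mathbf v=\mathbf v_0t_1\mathbf v_1\cdots t_{2n}\mathbf v_{2n}$ with the same simple letters $t_1,\dots,t_{2n}$, in the same order, as in the decomposition $\mathbf w_n[\pi,\tau]=\mathbf u_0t_1\mathbf u_1\cdots t_{2n}\mathbf u_{2n}$. Deleting all occurrences of $x$ yields an identity $(\mathbf w_n[\pi,\tau])_x\approx\mathbf v_x$ that also holds in $\mathbf X$; as every block of $(\mathbf w_n[\pi,\tau])_x$ is a linear word in which each letter occurs at most twice and $xtx$ is an isoterm for $\mathbf X$, Lemma~\ref{Lem: xt_1x...t_kx is isoterm} shows this identity is linear-balanced. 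Hence for each $i$ the letters other than $x$ occurring in $\mathbf v_i$ are exactly those occurring in $\mathbf u_i$, each once: a single $z$ for $i\ne n$ and all of $z_1,\dots,z_{2n}$ for $i=n$. In particular the $\mathbf v_i$ with $i\ne n$ differ from the $\mathbf u_i$ only in occurrences of $x$, and $\mathbf v_n$ is a permutation of $z_1,\dots,z_{2n}$ interspersed with occurrences of $x$.

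Next I would show that the $z$'s inside $\mathbf v_n$ actually appear in the order $z_{1\pi}z_{n+1\tau}\cdots z_{n\pi}z_{2n\tau}$, i.e.\ exactly as in $\mathbf u_n$. Note that inside $\mathbf u_n$ every $z_j$ with $j\le n$ occurs for the last time while every $z_{n+j}$ occurs for the first time. For a pair of $z$'s that is consecutive in $\mathbf u_n$, deleting everything except those two $z$'s and two suitably chosen simple letters turns $\mathbf w_n[\pi,\tau]$ into a renaming of $xtxysy$ or of a subword of it --- hence into an isoterm for $\mathbf X$ --- so the corresponding projection of $\mathbf v$ must agree with it and the relative order of that pair is forced. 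Since $\mathbf X$ satisfies $\sigma_1$ and $\sigma_2$ but not $\sigma_3$, two adjacent occurrences, one of which is a non-first and the other a non-last occurrence of its letter, cannot be transposed inside a block, so this ordering is genuinely rigid and one concludes that the $z$-subword of $\mathbf v_n$ is the prescribed one.

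It then remains to place the occurrences of $x$. Using $x^2\approx x^3$ one replaces every power $x^e$ with $e\ge2$ by $x^2$; using $x^2y\approx yx^2$ one shuttles such $x^2$-blocks freely through the word, while using $\sigma_1$ (which, given the positions of occurrences established above, moves a non-last occurrence of $x$ left past a letter whose occurrence there is non-last) and $\sigma_2$ (dually) one moves the remaining isolated occurrences of $x$; since $x$ can cross no simple letter, all surviving occurrences of $x$ are brought inside $\mathbf v_n$, and there only to the boundaries between the pairs $z_{i\pi}z_{n+i\tau}$, leaving at most two of them. The outcome is that $\mathbf X$ satisfies $\mathbf v\approx\mathbf w_n^{k,\ell}[\pi,\tau]$ for some $0\le k\le\ell\le n$, whence $\mathbf X$ satisfies $\mathbf w_n[\pi,\tau]\approx\mathbf w_n^{k,\ell}[\pi,\tau]$; as $\mathbf v\ne\mathbf w_n[\pi,\tau]$, the pair $(k,\ell)$ is not $(0,n)$ and the identity is nontrivial. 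The hard part is this last step: it requires ruling out, case by case --- essentially by parity of the number of occurrences of $x$ in the individual blocks, and using the isoterms supplied by $\mathbf L\subseteq\mathbf X$ --- that an occurrence of $x$ gets ``stuck'' either outside $\mathbf v_n$ or at a position splitting a pair $z_{i\pi}z_{n+i\tau}$ in a way that none of $x^2\approx x^3$, $x^2y\approx yx^2$, $\sigma_1$, $\sigma_2$ can repair; it is precisely here that both hypotheses $\mathbf L\subseteq\mathbf X$ and $\mathbf X\subseteq\mathbf A\{\sigma_1,\sigma_2\}$ are needed.
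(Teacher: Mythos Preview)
Your overall architecture matches the paper's: get a nontrivial identity $\mathbf w_n[\pi,\tau]\approx\mathbf v$, show that $\mathbf v_x$ has exactly the shape of $(\mathbf w_n[\pi,\tau])_x$ (the paper does this by citing~\cite[Lemma~4.10]{Gusev-Vernikov-18}), and then normalize the occurrences of $x$. The gap is in this last step.

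You claim that with $x^2\approx x^3$, $x^2y\approx yx^2$, $\sigma_1$ and $\sigma_2$ one can ``bring all surviving occurrences of $x$ inside $\mathbf v_n$''. But none of these identities moves a single occurrence of $x$ across a simple letter $t_j$. So if every block of $\mathbf v$ contains at most one occurrence of $x$ (and $m=\occ_x(\mathbf v)>2$), there is no $x^2$ anywhere to shuttle, and your toolkit is stuck: the isolated $x$'s in blocks $\mathbf v_i$ with $i\ne n$ cannot be moved past the $t_j$. Your suggested remedies --- ``parity of the number of occurrences of $x$ in the individual blocks'' and ``isoterms supplied by $\mathbf L$'' --- do not resolve this. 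For instance, projecting onto $\{x,t_j\}$ yields on the $\mathbf u$-side a word $t_jx^2$ or $x^2t_j$, which is \emph{not} an isoterm for $\mathbf X$ (indeed $x^2y\approx yx^2$ already kills it), so no contradiction arises that way once $m>2$.

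The missing ingredient is Lemma~\ref{Lem: Straubing identities in A}(ii). Restricting $\mathbf w_n[\pi,\tau]\approx\mathbf v$ to $\{x,t_1,\dots,t_{2n}\}$ gives a nontrivial identity of the form~\eqref{one letter in a block} in which one side has all exponents $\le 1$ and total degree $m$; hence $\mathbf A\{\mathbf u\approx\mathbf v\}=\mathbf A\{\delta_{m-1}\}$, so $\mathbf X$ satisfies $\delta_{m-1}$. With $\delta_{m-1}$, $x^2\approx x^3$ and $x^2y\approx yx^2$ in hand one gets $\mathbf w_n[\pi,\tau]\approx\mathbf w_n^{0,0}[\pi,\tau]$ directly. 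This is exactly how the paper handles the case ``$x$ appears at most once in every block of $\mathbf v$''; the remaining cases (some block contains $\ge 2$ occurrences of $x$) are then dispatched by forming $x^2$ and shuttling it, much as you describe.
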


\begin{proof}
Suppose that $S(\mathbf w_n[\pi,\tau])\notin\mathbf X$. Then $\mathbf X$ satisfies a non-trivial identity of the form $\mathbf w_n[\pi,\tau]\approx\mathbf w$ by Lemma~\ref{Lem: S(W) in V}. Repeating literally arguments from the proof of Lemma~4.10 in Gusev and Vernikov~\cite{Gusev-Vernikov-18}, we can check that 
$$
\mathbf w_x=\biggl(\prod_{i=1}^n z_it_i\biggr)\biggl(\prod_{i=1}^n z_{i\pi}z_{n+i\tau}\biggr)\biggl(\prod_{i=n+1}^{2n} t_iz_i\biggr).
$$
Lemma~\ref{Lem: x^n is isoterm} and inclusions $\mathbf C_2\subset\mathbf L\subseteq\mathbf X$ imply that $x$ is an isoterm for $\mathbf V$, whence $\occ_x(\mathbf w)\ge 2$. Therefore, 
$$
\mathbf w=\biggl(\prod_{i=1}^n \mathbf p_{2i-1}z_i\mathbf p_{2i}t_i\biggr)\mathbf q_0\biggl(\prod_{i=1}^n z_{i\pi}\mathbf q_{2i-1}z_{n+i\tau}\mathbf q_{2i}\biggr)\biggl(\prod_{i=n+1}^{2n} t_i\mathbf r_{2i-2n-1}z_i\mathbf r_{2i-2n}\biggr),
$$
where $\mathbf p_1\cdots\mathbf p_{2n}\mathbf q_0\mathbf q_1\cdots\mathbf q_{2n}\mathbf r_1\cdots\mathbf r_{2n}=x^m$ with $m\ge 2$. If $m=2$, then we can complete the proof by the same arguments as in the proof of Lemma~4.10 in~\cite{Gusev-Vernikov-18}. Now we suppose that $m>2$. 

Suppose that $x$ appears at most once in all blocks of the word $\mathbf w$. Then $\mathbf X$ satisfies the identity $\delta_{m-1}$ by Lemma~\ref{Lem: Straubing identities in A}(ii). Therefore, $\mathbf X$ satisfies the identity
$$
\mathbf w_n[\pi,\tau]\stackrel{\Psi}\approx\biggl(\prod_{i=1}^n z_it_i\biggr)x^2\biggl(\prod_{i=1}^n z_{i\pi}z_{n+i\tau}\biggr)\biggl(\prod_{i=n+1}^{2n} t_iz_i\biggr)=\mathbf w_n^{0,0}[\pi,\tau],
$$
where $\Psi=\{\eqref{xx=xxx},\,\eqref{xxy=yxx},\,\delta_{m-1}\}$. Thus, we may assume that $x$ appears at least twice in some block of the word $\mathbf w$. Further considerations are divided into three cases.

\smallskip

\emph{Case 1}: $\occ_x(\mathbf p_{2j-1}\mathbf p_{2j})>1$ for some $j\in\{1,2,\dots,n\}$. If either $\occ_x(\mathbf p_{2j-1})>1$ or $\occ(\mathbf p_{2j})>1$, then we can use the identities~\eqref{xx=xxx} and~\eqref{xxy=yxx} and obtain the identity $\mathbf w_n[\pi,\tau]\approx\mathbf w_n^{0,0}[\pi,\tau]$. 

It remains to consider the case when $\mathbf p_{2j-1}=\mathbf p_{2j}=x$. Here we use the identity $\sigma_1$ and swap the word $\mathbf p_{2j-1}$ and the letter $z_j$. Then we obtain the situation considered in the previous paragraph.

\smallskip

\emph{Case 2}: $\occ_x(\mathbf r_{2j-1}\mathbf r_{2j})>1$ for some $j\in\{1,2,\dots,n\}$. This case is dual to the previous one.

\smallskip

\emph{Case 3}: $\occ_x(\mathbf q_0\mathbf q_1\cdots\mathbf q_{2n})>1$. If $\occ_x(\mathbf q_i)>1$ for some $0\le i\le 2n$, then, as well as in Case~1, we can use the identities~\eqref{xx=xxx} and~\eqref{xxy=yxx} and obtain the identity $\mathbf w_n[\pi,\tau]\approx\mathbf w_n^{0,0}[\pi,\tau]$.

Thus, we may assume that $\occ_x(\mathbf q_i)\le 1$ for $i=0,1,\dots,2n$. Then there are $s$ and $t$ such that $s<t$, $\mathbf q_s=\mathbf q_t=x$ and $\mathbf q_{s+1}\cdots\mathbf q_{t-1}=\lambda$. Since the letter $x$ appears at most twice in the word $\mathbf w$, we have that either $x\in\con\bigl(\prod_{i=1}^{2n}\mathbf p_i\cdot\prod_{i=0}^{s-1}\mathbf q_i\bigr)$ or $x\in\con\bigl(\prod_{i=t+1}^{2n}\mathbf q_i\cdot\prod_{i=1}^{2n}\mathbf r_i\bigr)$. By symmetry, it suffices to consider the former case. Then we apply the identities $\sigma_1$ and $\sigma_2$ and replace the word $\mathbf q_s$ so that it is next to the word $\mathbf q_t$. Then we obtain the situation considered in the previous paragraph.
\end{proof}

\begin{lemma}
\label{Lem: smth imply w_n[pi,tau]=w_n'[pi,tau]}
The identities $\sigma_3$ and~\eqref{xxy=yxx} imply the identity~\eqref{w_n[pi,tau]=w_n'[pi,tau]} for any $n\in\mathbb N$ and $\pi,\tau\in S_n$.
\end{lemma}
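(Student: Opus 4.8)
The plan is to rewrite the word $\mathbf w_n[\pi,\tau]$ into $\mathbf w_n^\prime[\pi,\tau]$ by a finite chain of applications of $\sigma_3$ and~\eqref{xxy=yxx}. Write $\mathbf p=\prod_{i=1}^n z_it_i$, $\mathbf q=\prod_{i=n+1}^{2n} t_iz_i$, $\mathbf m=\prod_{i=1}^n z_{i\pi}z_{n+i\tau}$, $\mathbf m_L=\prod_{i=1}^n z_{i\pi}$ and $\mathbf m_H=\prod_{i=1}^n z_{n+i\tau}$; then $\mathbf w_n[\pi,\tau]=\mathbf p\,x\,\mathbf m\,x\,\mathbf q$, $\mathbf w_n^\prime[\pi,\tau]=\mathbf p\,x^2\,\mathbf m\,\mathbf q$, $\con(\mathbf m_L)=\{z_1,\dots,z_n\}$, $\con(\mathbf m_H)=\{z_{n+1},\dots,z_{2n}\}$, and $\mathbf m$ is a shuffle of $\mathbf m_L$ and $\mathbf m_H$ (it alternates $z_{1\pi},z_{n+1\tau},z_{2\pi},z_{n+2\tau},\dots$). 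I shall call $z_1,\dots,z_n$ the \emph{low} letters and $z_{n+1},\dots,z_{2n}$ the \emph{high} letters. First I would record the following invariant, valid for every word met in the rewriting: $\mathbf p$ is a prefix and $\mathbf q$ a suffix; $x$ has exactly two occurrences, lying inside neither $\mathbf p$ nor $\mathbf q$; each low letter has exactly one occurrence inside $\mathbf p$ and exactly one more, lying outside both $\mathbf p$ and $\mathbf q$; each high letter has exactly one occurrence inside $\mathbf q$ and exactly one more, lying outside both $\mathbf p$ and $\mathbf q$. From the invariant, in any such word the occurrence of a low letter located outside $\mathbf p$ is non-first, the occurrence of a high letter located outside $\mathbf q$ is non-last, the left occurrence of $x$ is non-last, and the right occurrence of $x$ is non-first.

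Recall that $\sigma_3$ allows us to interchange two adjacent occurrences of distinct letters whenever one of these occurrences is non-first and the other is non-last. Combined with the four facts just listed, this makes legitimate the following three elementary moves (the verifications are routine, of the kind indicated after Lemma~\ref{Lem: w_n[pi,tau]=w_n'[pi,tau] in S(w_n[xi,eta])}): \textbf{(a)} inside a factor $\mathbf f$ of the current word such that every low letter of $\mathbf f$ has its companion occurrence to the left of $\mathbf f$ and every high letter of $\mathbf f$ has its companion occurrence to the right of $\mathbf f$, any two adjacent letters of $\mathbf f$ one of which is low and the other high may be transposed — whence $\mathbf f$ may be rearranged into an arbitrary shuffle of its low part and its high part; \textbf{(b)} the left occurrence of $x$ may be shifted one position to the right across a low letter immediately following it; \textbf{(c)} the right occurrence of $x$ may be shifted one position to the left across a high letter immediately preceding it.

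Then the rewriting goes as follows:
\begin{align*}
\mathbf w_n[\pi,\tau]=\mathbf p\,x\,\mathbf m\,x\,\mathbf q
&\stackrel{\sigma_3}\approx\mathbf p\,x\,\mathbf m_L\mathbf m_H\,x\,\mathbf q&&\text{by (a)},\\
&\stackrel{\sigma_3}\approx\mathbf p\,\mathbf m_L\,x\,\mathbf m_H\,x\,\mathbf q&&\text{by (b), }n\text{ times},\\
&\stackrel{\sigma_3}\approx\mathbf p\,\mathbf m_L\,x^2\,\mathbf m_H\,\mathbf q&&\text{by (c), }n\text{ times},\\
&\stackrel{\eqref{xxy=yxx}}\approx\mathbf p\,x^2\,\mathbf m_L\mathbf m_H\,\mathbf q&&\text{sliding }x^2\text{ leftwards past }\mathbf m_L,\\
&\stackrel{\sigma_3}\approx\mathbf p\,x^2\,\mathbf m\,\mathbf q=\mathbf w_n^\prime[\pi,\tau]&&\text{by (a)}.
\end{align*}
In the first step the factor $\mathbf f=\mathbf m$ lies strictly between the two occurrences of $x$, and in the last step the factor $\mathbf f=\mathbf m_L\mathbf m_H$ lies between the block $x^2$ and $\mathbf q$; in both cases the low letters of $\mathbf f$ have their companion occurrences inside $\mathbf p$ (hence to the left of $\mathbf f$) and the high letters of $\mathbf f$ have their companion occurrences inside $\mathbf q$ (hence to the right of $\mathbf f$), so (a) applies. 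Note also that $\mathbf m$ and $\mathbf m_L\mathbf m_H$ are shuffles of the same low part $\mathbf m_L$ and high part $\mathbf m_H$, so each can be reached from the other by the transpositions of move (a).

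The main obstacle — although it is entirely routine — is the justification of moves (a)--(c): at every elementary transposition one must check against the current word that the participating occurrences of the relevant letter $z_j$, and of $x$, really are first or last (or not) as asserted, and one must confirm that each move preserves the invariant. Isolating the invariant of the first paragraph is precisely what keeps this bookkeeping under control; no other difficulty arises.
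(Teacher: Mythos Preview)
Your proof is correct and follows essentially the same route as the paper's own argument: both rewrite $\mathbf w_n[\pi,\tau]=\mathbf p\,x\,\mathbf m\,x\,\mathbf q$ via $\sigma_3$ to $\mathbf p\,x\,\mathbf m_L\mathbf m_H\,x\,\mathbf q$, then to $\mathbf p\,\mathbf m_L\,x^2\,\mathbf m_H\,\mathbf q$, then via~\eqref{xxy=yxx} to $\mathbf p\,x^2\,\mathbf m_L\mathbf m_H\,\mathbf q$, and finally via $\sigma_3$ back to $\mathbf p\,x^2\,\mathbf m\,\mathbf q=\mathbf w_n^\prime[\pi,\tau]$. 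The paper compresses your steps (b) and (c) into a single $\sigma_3$-labelled transition and omits the invariant/bookkeeping you spell out, but the skeleton of the derivation is identical.
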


\begin{proof}
Indeed, we have
$$
\mathbf w_n[\pi,\tau]=\mathbf px\mathbf qx\mathbf r\stackrel{\sigma_3}\approx\mathbf px\mathbf a\mathbf bx\mathbf r\stackrel{\sigma_3}\approx\mathbf p\mathbf ax^2\mathbf b\mathbf r\stackrel{\eqref{xxy=yxx}}\approx\mathbf px^2\mathbf a\mathbf b\mathbf r\stackrel{\sigma_3}\approx\mathbf px^2\mathbf q\mathbf r=\mathbf w_n^\prime[\pi,\tau],
$$
where
$$
\mathbf p=\prod_{i=1}^n (z_it_i),\ \mathbf q=\prod_{i=1}^n (z_{i\pi}z_{n+i\tau}),\ \mathbf r=\prod_{i=n+1}^{2n} (t_iz_i),\ \mathbf a=\prod_{i=1}^n z_{i\pi}\quad\text{and}\quad\mathbf b=\prod_{i=1}^n z_{n+i\tau}.
$$
Lemma is proved.
\end{proof}

\subsection{Identities of the form $\mathbf c_{n,m,k}[\rho]\approx\mathbf c_{n,m,k}^\prime[\rho]$}
\label{Subsec: c_{n,m,k}[rho]= c_{n,m,k}'[rho]}
For any $n,m,k\in\mathbb N_0$ and $\rho\in S_{n+m+k}$, we define the words
\begin{align*}
\mathbf c_{n,m,k}[\rho]&=\biggl(\prod_{i=1}^n z_it_i\biggr)xyt\biggl(\prod_{i=n+1}^{n+m} z_it_i\biggr)x\biggl(\prod_{i=1}^{n+m+k} z_{i\rho}\biggr)y\biggl(\prod_{i=n+m+1}^{n+m+k} t_iz_i\biggr),\\[-3pt]
\mathbf c_{n,m,k}^\prime[\rho]&=\biggl(\prod_{i=1}^n z_it_i\biggr)yxt\biggl(\prod_{i=n+1}^{n+m} z_it_i\biggr)x\biggl(\prod_{i=1}^{n+m+k} z_{i\rho}\biggr)y\biggl(\prod_{i=n+m+1}^{n+m+k} t_iz_i\biggr).
\end{align*}
Note that $\mathbf c_{n,m,0}[\rho]=\mathbf c_{n,m}[\rho]$ and $\mathbf c_{n,m,0}^\prime[\rho]=\mathbf c_{n,m}^\prime[\rho]$ for all $n,m\in\mathbb N_0$ and $\rho\in S_{n+m}$.

\begin{lemma}
\label{Lem: from pxyqxrys to pyxqxrys}
Let $\mathbf V$ be a monoid variety satisfying the identities~\eqref{xxy=yxx},~\eqref{w_n[pi,tau]=w_n'[pi,tau]} and
\begin{equation}
\label{c_{n,m,k}[rho]=c_{n,m,k}'[rho]}
\mathbf c_{n,m,k}[\rho]\approx\mathbf c_{n,m,k}^\prime[\rho]
\end{equation}
for all $n,m,k\in\mathbb N_0$, $\pi,\tau\in S_n$ and $\rho\in S_{n+m+k}$. If $\mathbf w=\mathbf pxy\mathbf qx\mathbf ry\mathbf s$ and $\con(\mathbf r)\subseteq\mul(\mathbf w)$, then $\mathbf V$ satisfies the identity $\mathbf w\approx\mathbf pyx\mathbf qx\mathbf ry\mathbf s$.
\end{lemma}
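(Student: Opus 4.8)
The plan is to follow the strategy of the proof of Lemma~\ref{Lem: from pxqxr to px^2qr} and to argue by induction on the length of the word $\mathbf r$. The base of the induction is the case in which $\mathbf r$ is a linear word with $x,y\notin\con(\mathbf r)$; here the required identity is obtained from \eqref{c_{n,m,k}[rho]=c_{n,m,k}'[rho]} by a substitution together with prepending and appending suitable words. Indeed, since $\con(\mathbf r)\subseteq\mul(\mathbf w)$, each letter occurring in $\mathbf r$ occurs in $\mathbf w$ at least once more, and since these letters differ from $x$ and $y$, such a further occurrence lies in $\mathbf p$, in $\mathbf q$ or in $\mathbf s$. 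Fix one such occurrence for every letter of $\mathbf r$; this partitions $\con(\mathbf r)$ into three groups according to whether the fixed occurrence lies in $\mathbf p$, in $\mathbf q$ or in $\mathbf s$, the sizes of the groups being the parameters $n,m,k$, while the order in which $\mathbf r$ lists its letters yields the permutation $\rho\in S_{n+m+k}$. Assigning to the $z$-letters of $\mathbf c_{n,m,k}[\rho]$ the letters of $\mathbf r$ in this way and to the simple letters $t,t_1,\dots,t_{n+m+k}$ the pieces of $\mathbf p$, $\mathbf q$, $\mathbf s$ lying strictly between consecutive fixed occurrences (the initial segment of $\mathbf p$ before the first fixed occurrence in $\mathbf p$ being prepended, and the final segment of $\mathbf s$ after the last fixed occurrence in $\mathbf s$ being appended), one checks that $\mathbf w$ becomes exactly this instance of $\mathbf c_{n,m,k}[\rho]$ and $\mathbf pyx\mathbf qx\mathbf ry\mathbf s$ the corresponding instance of $\mathbf c_{n,m,k}^\prime[\rho]$. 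This step uses \eqref{c_{n,m,k}[rho]=c_{n,m,k}'[rho]} only.

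For the induction step suppose that $\mathbf r$ is not linear, or that $x\in\con(\mathbf r)$, or that $y\in\con(\mathbf r)$. In each case I would rewrite $\mathbf w$, using \eqref{xxy=yxx} and Lemma~\ref{Lem: from pxqxr to px^2qr}, into a word $\widetilde{\mathbf w}=\widetilde{\mathbf p}\,xy\,\widetilde{\mathbf q}\,x\,\widetilde{\mathbf r}\,y\,\widetilde{\mathbf s}$ with the same content as $\mathbf w$, with $\con(\widetilde{\mathbf r})\subseteq\mul(\widetilde{\mathbf w})$, and with $\widetilde{\mathbf r}$ shorter than $\mathbf r$, in such a way that the very same rewriting carries $\mathbf pyx\mathbf qx\mathbf ry\mathbf s$ to $\widetilde{\mathbf p}\,yx\,\widetilde{\mathbf q}\,x\,\widetilde{\mathbf r}\,y\,\widetilde{\mathbf s}$ (the identities used never interchange the displayed occurrences of $x$ and $y$); the induction hypothesis applied to $\widetilde{\mathbf w}$ then gives the claim for $\mathbf w$. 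Concretely: if a letter $u$ occurs at least twice in $\mathbf r$, say $\mathbf r=\mathbf r'u\mathbf r''u\mathbf r'''$ with $u\notin\con(\mathbf r'')$, collapse $u\mathbf r''u$ to $u^2\mathbf r''$ by Lemma~\ref{Lem: from pxqxr to px^2qr} (applicable since $\con(\mathbf r'')\subseteq\con(\mathbf r)\subseteq\mul(\mathbf w)$) and then slide the square $u^2$ leftwards out of the $\mathbf r$-region via \eqref{xxy=yxx}, to just after $\mathbf p$, so that $\widetilde{\mathbf p}=\mathbf pu^2$, $\widetilde{\mathbf q}=\mathbf q$, $\widetilde{\mathbf r}=\mathbf r'\mathbf r''\mathbf r'''$, $\widetilde{\mathbf s}=\mathbf s$. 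If $\mathbf r$ is linear but $x\in\con(\mathbf r)$, say $\mathbf r=\mathbf r'x\mathbf r''$, collapse the displayed second $x$ together with this $x$ to get $\mathbf w\approx\mathbf pxy(\mathbf qx)\,x\,(\mathbf r'\mathbf r'')\,y\mathbf s$, with $\widetilde{\mathbf q}=\mathbf qx$ and $\widetilde{\mathbf r}=\mathbf r'\mathbf r''$. If $\mathbf r$ is linear, $x\notin\con(\mathbf r)$ but $y\in\con(\mathbf r)$, say $\mathbf r=\mathbf r'y\mathbf r''$, collapse this $y$ together with the displayed second $y$ to get $\mathbf w\approx\mathbf pxy\mathbf qx\,\mathbf r'\,y\,(y\mathbf r''\mathbf s)$, with $\widetilde{\mathbf r}=\mathbf r'$ and $\widetilde{\mathbf s}=y\mathbf r''\mathbf s$. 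In all cases the content is unchanged, the inclusion $\con(\widetilde{\mathbf r})\subseteq\mul(\widetilde{\mathbf w})$ is immediate and $|\widetilde{\mathbf r}|<|\mathbf r|$.

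The main obstacle I expect is the base case: one must verify carefully that $\mathbf w$ is indeed the described substitution instance of $\mathbf c_{n,m,k}[\rho]$, keeping track of the facts that a letter of $\mathbf r$ may occur in $\mathbf w$ more than twice, that a letter of $\mathbf q$ need not occur in $\mathbf r$ at all, and that the words substituted for the simple letters $t,t_1,\dots$ are allowed to be empty and to contain arbitrary further occurrences of letters. Everything else — the applicability of Lemma~\ref{Lem: from pxqxr to px^2qr} in the three reduction cases and the bookkeeping of contents and lengths — is routine.
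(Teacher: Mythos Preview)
Your proposal is correct and essentially mirrors the paper's proof. Both arguments handle the base case (linear $\mathbf r$ with $x,y\notin\con(\mathbf r)$) identically, by exhibiting $\mathbf w$ as a substitution instance of $\mathbf c_{n,m,k}[\rho]$ after fixing one further occurrence of each letter of $\mathbf r$ in $\mathbf p$, $\mathbf q$ or $\mathbf s$; and both reduce a non-linear $\mathbf r$ to the linear case via Lemma~\ref{Lem: from pxqxr to px^2qr} and~\eqref{xxy=yxx}. The only difference is in the treatment of $x\in\con(\mathbf r)$ or $y\in\con(\mathbf r)$: you shorten $\mathbf r$ using Lemma~\ref{Lem: from pxqxr to px^2qr} and appeal to the induction hypothesis, whereas the paper simply re-parses $x\mathbf ry$ as $\mathbf r_1x\mathbf r_2y\mathbf r_3$ with $x,y\notin\con(\mathbf r_2)$ (taking the last $x$ and the first subsequent $y$) and applies the $x,y\notin\con(\mathbf r)$ case directly with $\mathbf r_2$ in place of $\mathbf r$. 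The paper's re-parsing trick eliminates the need for an explicit induction and makes the argument marginally shorter, but your version is equally valid.
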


\begin{proof}
If $\mathbf r=\lambda$, then the identity $\mathbf w\approx\mathbf pyx\mathbf qxy\mathbf s$ follows from the identity $\mathbf c_{0,0,0}[\varepsilon]\approx\mathbf c_{0,0,0}^\prime[\varepsilon]$, where $\varepsilon$ is the trivial permutation, that is, from the identity
\begin{equation}
\label{xytxy=yxtxy}
xytxy\approx yxtxy.
\end{equation}
Thus, we may assume that $\mathbf r\ne\lambda$. Further considerations are divided into two cases.

\smallskip

\emph{Case 1}: $x,y\notin\con(\mathbf r)$. Suppose that the word $\mathbf r$ is linear and depends on pairwise different letters $z_1,z_2,\dots,z_s$. Then $z_i\in\con(\mathbf p\mathbf q\mathbf s)$ for each $i=1,2,\dots,s$. Fix one occurrence of $z_i$ in $\mathbf p\mathbf q\mathbf s$ for any $i=1,2,\dots,s$. Renaming the letters $z_1,z_2,\dots,z_s$ if necessary, we can achieve the inclusions $z_1,z_2,\dots,z_n\in\con(\mathbf p)$, $z_{n+1},\dots,z_{n+m}\in\con(\mathbf q)$ and $z_{n+m+1},\dots,z_{n+m+k}\in\con(\mathbf s)$ for some $n,m$ and $k$ with $n+m+k=s$. Then $\mathbf w=\mathbf pxy\mathbf qxz_{1\rho}z_{2\rho}\cdots z_{s\rho}y\mathbf s$ for an appropriate permutation $\rho\in S_{n+m+k}$, where
$$
\mathbf p=\mathbf w_0\biggl(\prod_{i=1}^n z_i\mathbf w_i\biggr),\,\mathbf q=\mathbf w_{n+1}\biggl(\prod_{i=n+1}^{n+m} z_i\mathbf w_{i+1}\biggr),\,\mathbf s=\biggl(\prod_{i=n+m+1}^{n+m+k} \mathbf w_{i+1}z_i\biggr)\mathbf w_{n+m+k+2}
$$
for some words $\mathbf w_0,\mathbf w_1,\dots,\mathbf w_{n+m+k+2}$. Then the required identity $\mathbf w\approx\mathbf pyx\mathbf qx\mathbf ry\mathbf s$ follows from the identity~\eqref{c_{n,m,k}[rho]=c_{n,m,k}'[rho]}.

It remains to consider the case when the word $\mathbf r$ is not linear. Then there is a letter $y_1\in\con(\mathbf r)$ such that $\mathbf r=\mathbf v_1y_1\mathbf v_2y_1\mathbf v_3$ for some words $\mathbf v_1$, $\mathbf v_2$ and $\mathbf v_3$. Then we can apply Lemma~\ref{Lem: from pxqxr to px^2qr} and conclude that $\mathbf V$ satisfies the identities
$$
\mathbf w=\mathbf pxy\mathbf qx\mathbf v_1y_1\mathbf v_2y_1\mathbf v_3y\mathbf s\approx\mathbf pxy\mathbf qx\mathbf v_1y_1^2\mathbf v_2\mathbf v_3y\mathbf s\stackrel{\eqref{xxy=yxx}}\approx\mathbf pxy\mathbf qy_1^2x\mathbf v_1\mathbf v_2\mathbf v_3y\mathbf s.
$$
In other words, we can eliminate two occurrences of the letter $y_1$ from the word $\mathbf r$. Repeating these arguments the necessary number of times, we obtain that $\mathbf V$ satisfies the identity $\mathbf w\approx\mathbf pxy\mathbf qy_1^2y_2^2\cdots y_\ell^2x\mathbf r^\prime y\mathbf s$ for some letters $y_1,y_2,\dots,y_\ell$ and some empty or linear word $\mathbf r^\prime$. Then we can repeat arguments from the previous paragraph and obtain that $\mathbf V$ satisfies the identity $\mathbf w\approx\mathbf pyx\mathbf qy_1^2y_2^2\cdots y_\ell^2x\mathbf r^\prime y\mathbf s$. Finally, it remains to return the letters $y_1,y_2,\dots,y_\ell$ to their original places using Lemma~\ref{Lem: from pxqxr to px^2qr} and the identity~\eqref{xxy=yxx}. As a result, we obtain that $\mathbf V$ satisfies the required identity $\mathbf w\approx\mathbf pyx\mathbf qx\mathbf ry\mathbf s$.

\smallskip

\emph{Case 2}: either $x\in\con(\mathbf r)$ or $y\in\con(\mathbf r)$. Then there are the words $\mathbf r_1$, $\mathbf r_2$ and $\mathbf r_3$ such that $x\mathbf ry=\mathbf r_1x\mathbf r_2y\mathbf r_3$ and $x,y\notin\con(\mathbf r_2)$. Then considerations from Case~1 imply that $\mathbf V$ satisfies the identities
$$
\mathbf w=\mathbf pxy\mathbf q\mathbf r_1x\mathbf r_2y\mathbf r_3\mathbf s\approx\mathbf pyx\mathbf q\mathbf r_1x\mathbf r_2y\mathbf r_3\mathbf s=\mathbf pyx\mathbf qx\mathbf ry\mathbf s,
$$
and we are done.
\end{proof}

\begin{lemma}
\label{Lem: c_{n,m}[rho]=c[n,m]'[rho]}
Let $\mathbf V$ be a monoid variety that contains the variety $\mathbf M^\delta$ and does not contain the monoid $S(\mathbf c_{n,m}[\rho])$ for some $n,m\in\mathbb N_0$ and $\rho\in S_{n+m}$. Then $\mathbf V$ satisfies the identity
\begin{equation}
\label{c_{n,m}[rho]=c_{n,m}'[rho]}
\mathbf c_{n,m}[\rho]\approx\mathbf c_{n,m}^\prime[\rho].
\end{equation}
\end{lemma}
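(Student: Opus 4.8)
The plan is to produce from the two hypotheses a non-trivial identity $\mathbf c_{n,m}[\rho]\approx\mathbf w$ valid in $\mathbf V$ and then to argue that $\mathbf w$ can only be $\mathbf c_{n,m}^\prime[\rho]$. Such an identity exists by Lemma~\ref{Lem: S(W) in V}, since $S(\mathbf c_{n,m}[\rho])\notin\mathbf V$. The inclusion $\mathbf M^\delta\subseteq\mathbf V$ will be used only via the remark that every isoterm for $\mathbf M^\delta$ is an isoterm for $\mathbf V$; the three instances I would need are: $xt_1x$ (so that $\mathbf D_2\subseteq\mathbf V$), the defining word $ysxtyx$ of $\mathbf M^\delta$, and the word $ysxtxy$. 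Only the last is not immediate; dualising, it says that $yxtxsy$ is an isoterm for $\mathbf M=\var S(xytxsy)$, which holds because $yxtxsy$ is not a factor of $xytxsy$, together with a short verification of the kind described after Lemma~\ref{Lem: w_n[pi,tau]=w_n'[pi,tau] in S(w_n[xi,eta])}. The same style of computation shows that $\mathbf M^\delta$, and therefore $\mathbf V$, satisfies $xytxy\approx yxtxy$, which is exactly the assertion of the lemma when $n=m=0$; so from now on I assume $n+m\ge1$.

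Next I would pin down $\mathbf w$ block by block. In the decomposition of $\mathbf c_{n,m}[\rho]$ the simple letters are, in order, $t_1,\dots,t_n$, then $t$, then $t_{n+1},\dots,t_{n+m}$; every block is a single letter $z_i$ except the block $xy$ between $t_n$ and $t$ and the last block $xz_{1\rho}z_{2\rho}\cdots z_{(n+m)\rho}y$. All these blocks are linear and no letter occurs more than twice, so, as $xt_1x$ is an isoterm for $\mathbf V$, Lemma~\ref{Lem: xt_1x...t_kx is isoterm} applies and the identity $\mathbf c_{n,m}[\rho]\approx\mathbf w$ is linear-balanced. Combined with Lemma~\ref{Lem: decompositions of u and v}, this forces $\mathbf w$ to have the same simple letters in the same order and, in each block, the same letters each with multiplicity one; hence $\mathbf w$ agrees with $\mathbf c_{n,m}[\rho]$ except possibly for the internal order of the block $xy$ (so that $\mathbf w$ reads $xy$ or $yx$ there) and of the last block.

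The decisive step, which I expect to be the longest, is to show that the last block of $\mathbf w$ is not reordered. Suppose it were; since $n+m\ge1$, at least one $z_i$ lies in it, and consequently there are letters $a,b$ of that block lying in one order inside the last block of $\mathbf c_{n,m}[\rho]$ and in the opposite order inside the last block of $\mathbf w$, with at least one of $a,b$ equal to some $z_i$ (a pure transposition of $x$ and $y$ being impossible when some $z_i$ separates them). Restrict the identity $\mathbf c_{n,m}[\rho]\approx\mathbf w$ to $\{a,b\}$ together with the simple letter immediately following the block of the first occurrence of whichever of $a,b$ occurs earlier, and the simple letter immediately following the block of the first occurrence of the other. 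On the $\mathbf c_{n,m}[\rho]$-side this yields a word which, after renaming letters, is $ysxtyx$ or $ysxtxy$ — the former when the first occurrences of $a$ and $b$ are in the same order as their second occurrences, the latter otherwise — while on the $\mathbf w$-side it yields the other of these two words. Since $ysxtyx$ and $ysxtxy$ are both isoterms for $\mathbf V$, this contradicts the validity of $\mathbf c_{n,m}[\rho]\approx\mathbf w$ in $\mathbf V$. (Verifying that the two required simple letters are always available in the correct positions, and checking the subcases $a\in\{x,y\}$ or $b\in\{x,y\}$, is routine.) Therefore $\mathbf w$ differs from $\mathbf c_{n,m}[\rho]$ at most in the block $xy$, so $\mathbf w\in\{\mathbf c_{n,m}[\rho],\mathbf c_{n,m}^\prime[\rho]\}$; as the identity is non-trivial, $\mathbf w=\mathbf c_{n,m}^\prime[\rho]$, completing the proof.
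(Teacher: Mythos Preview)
Your proof is correct and follows the same overall strategy as the paper: extract a non-trivial identity $\mathbf c_{n,m}[\rho]\approx\mathbf w$ from $S(\mathbf c_{n,m}[\rho])\notin\mathbf V$, then use isoterm information coming from $\mathbf M^\delta\subseteq\mathbf V$ to force $\mathbf w=\mathbf c_{n,m}^\prime[\rho]$. Both proofs invoke Lemma~\ref{Lem: xt_1x...t_kx is isoterm} to reduce to a linear-balanced situation in which only the $xy$ block and the last block can differ.

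The one substantive difference lies in how the last block is pinned down. The paper dispatches this step by citing the dual of an external result (Lemma~2.5 in~\cite{Gusev-19}), using only the single isoterm $xsytxy$ (which is your $ysxtyx$ up to renaming). You instead give a self-contained argument: restrict $\mathbf c_{n,m}[\rho]\approx\mathbf w$ to a pair $\{a,b\}$ whose relative order in the last block has changed, together with the two simple letters following their respective first-occurrence blocks, and observe that the resulting identity contradicts one of the two isoterms $ysxtyx$, $ysxtxy$. Your case analysis (including the check that at least one of $a,b$ can always be taken to be some $z_i$ when $n+m\ge1$, and that the two simple letters always exist since first occurrences never lie in the last block) is sound. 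The trade-off is that you need the additional isoterm $ysxtxy$ for $\mathbf M^\delta$, but you correctly identify this as a routine verification of the type the paper performs elsewhere. Your treatment of the $n=m=0$ case via $xytxy\approx yxtxy$ in $\mathbf M^\delta$ is also fine.
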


\begin{proof}
In view of Lemma~\ref{Lem: S(W) in V}, $\mathbf V$ satisfies a non-trivial identity of the form $\mathbf c_{n,m}[\rho]\approx\mathbf c$ and the word $xsytxy$ is an isoterm for $\mathbf V$. Then the word $xyx$ also is an isoterm for $\mathbf V$. Now Lemma~\ref{Lem: xt_1x...t_kx is isoterm} applies and we conclude that 
$$
\mathbf c=\biggl(\prod_{i=1}^n z_it_i\biggr)\mathbf c^\prime t\biggl(\prod_{i=n+1}^{n+m} z_it_i\biggr)\mathbf c^{\prime\prime},
$$
where $\mathbf c^\prime\in\{xy,yx\}$ and $\mathbf c^{\prime\prime}$ is a linear word that depends on the letters $z_1$, $z_2$, \dots, $z_{n+m}$, $x$ and $y$. The assertion dual to Lemma~2.5 in Gusev~\cite{Gusev-19} implies that 
$$
\mathbf c^{\prime\prime}=x\biggl(\prod_{i=1}^{n+m} z_{i\rho}\biggr)y.
$$
Then the claim that the identity $\mathbf c_{n,m}[\rho]\approx\mathbf c$ is non-trivial implies that $\mathbf c^\prime=yx$, whence $\mathbf c=\mathbf c_{n,m}^\prime[\rho]$. 
\end{proof}

\begin{lemma}
\label{Lem: V notin S(c_{n,m}[rho])}
Let $\mathbf V$ be a monoid variety such that $\mathbf D_2\subseteq\mathbf V$, $\mathbf N\nsubseteq\mathbf V$ and $S(\mathbf c_{n,m}[\rho])\notin\mathbf V$ for some $n,m\in\mathbb N_0$ and $\rho\in S_{n+m}$. Then the identity~\eqref{c_{n,m}[rho]=c_{n,m}'[rho]} holds in $\mathbf V$.
\end{lemma}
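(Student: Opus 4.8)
The plan is to peel off the easy cases with Lemma~\ref{Lem: V contains D_2 but does not contain L or M or M^delta} and Lemma~\ref{Lem: c_{n,m}[rho]=c[n,m]'[rho]}, reduce the general identity to the single identity $xytxy\approx yxtxy$ by $\sigma_2$-rearrangements, and use the hypothesis $\mathbf N\nsubseteq\mathbf V$ only in the one residual case. First I would record the shape of any identity of $\mathbf V$ with left-hand side $\mathbf c_{n,m}[\rho]$: all blocks of $\mathbf c_{n,m}[\rho]$ are linear and every letter occurs at most twice, so, since $\mathbf D_2\subseteq\mathbf V$ makes $xtx$ an isoterm for $\mathbf V$, Lemma~\ref{Lem: xt_1x...t_kx is isoterm} (with $k=1$) shows that every identity $\mathbf c_{n,m}[\rho]\approx\mathbf c$ holding in $\mathbf V$ is linear-balanced. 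Consequently $\mathbf c$ has the same simple letters in the same order, its block between $t_n$ and $t$ equals $xy$ or $yx$, its last block is a permutation of $x,y,z_{1\rho},\dots,z_{(n+m)\rho}$, and all other blocks coincide with those of $\mathbf c_{n,m}[\rho]$.

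If $\mathbf M^\delta\subseteq\mathbf V$, the conclusion is immediate from Lemma~\ref{Lem: c_{n,m}[rho]=c[n,m]'[rho]}. So assume $\mathbf M^\delta\nsubseteq\mathbf V$; then $\mathbf V$ satisfies $\sigma_2$ by Lemma~\ref{Lem: V contains D_2 but does not contain L or M or M^delta}(ii). In the last block $xz_{1\rho}\cdots z_{(n+m)\rho}y$ of $\mathbf c_{n,m}[\rho]$ (and likewise in that of $\mathbf c_{n,m}^\prime[\rho]$) every letter occurs non-first, so $\sigma_2$ lets $\mathbf V$ replace that block by an arbitrary permutation of its letters. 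Hence it suffices to prove that $\mathbf V$ satisfies $xytxy\approx yxtxy$: one rewrites $\mathbf c_{n,m}[\rho]$ via $\sigma_2$ as $\bigl(\prod_{i=1}^n z_it_i\bigr)xy\,t\bigl(\prod_{i=n+1}^{n+m}z_it_i\bigr)xy\,z_{1\rho}\cdots z_{(n+m)\rho}$, applies the instance of $xytxy\approx yxtxy$ obtained by substituting $t\mapsto t\bigl(\prod_{i=n+1}^{n+m}z_it_i\bigr)$ and multiplying on the left by $\prod_{i=1}^n z_it_i$ and on the right by $z_{1\rho}\cdots z_{(n+m)\rho}$, and then restores the last block by $\sigma_2$, arriving at $\mathbf c_{n,m}^\prime[\rho]$.

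It remains to prove $xytxy\approx yxtxy$ in $\mathbf V$. If $\mathbf M\nsubseteq\mathbf V$, then $\mathbf V$ satisfies $\sigma_1$ by Lemma~\ref{Lem: V contains D_2 but does not contain L or M or M^delta}(i), and since the two leading occurrences of $x$ and $y$ in $xytxy$ are non-last, $\sigma_1$ gives $xytxy\approx yxtxy$ at once. So assume $\mathbf M\subseteq\mathbf V$ and argue by contradiction: suppose $\mathbf V$ does not satisfy $xytxy\approx yxtxy$. Since $\mathbf M\subseteq\mathbf V$, the word $xytxsy$, and hence each of its subwords, is an isoterm for $\mathbf V$. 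Then $yxtxsy$ is an isoterm for $\mathbf V$ as well, for otherwise, by the linear-balanced description above, $\mathbf V$ would satisfy the non-trivial identity $yxtxsy\approx xytxsy$, contradicting the fact that $xytxsy$ is an isoterm for $\mathbf V$ (and, incidentally, substituting $s\mapsto1$ would already give $yxtxy\approx xytxy$). Iterating this kind of argument — each step using $\sigma_2$, the isoterm-ness of the words produced so far, and the standing assumption on $xytxy\approx yxtxy$ — one would reach $\mathbf N\subseteq\mathbf V$, contradicting $\mathbf N\nsubseteq\mathbf V$.

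The main obstacle is exactly this last step: making the iteration precise requires an explicit handle on the variety $\mathbf N$ — in particular an identification of $\mathbf N$, or of a convenient set of monoids generating it (several of which are of the form $S(w)$ together with the laws $\sigma_2,\sigma_3$) — so that the inclusion $\mathbf N\subseteq\mathbf V$ can be reduced to a finite list of isoterm checks for $\mathbf V$. Each single isoterm check is routine, of the type described after Lemma~\ref{Lem: w_n[pi,tau]=w_n'[pi,tau] in S(w_n[xi,eta])}; the real work is pinning down the right finite list of words and verifying that, under $\mathbf M\subseteq\mathbf V$, $\sigma_2\in\mathbf V$, and the failure of $xytxy\approx yxtxy$ in $\mathbf V$, every word on that list is forced to be an isoterm for $\mathbf V$.
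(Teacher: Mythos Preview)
Your reduction to the single identity $xytxy\approx yxtxy$ (via $\sigma_2$ in the last block) is correct and is exactly what the paper does at the end of its proof. The case $\mathbf M^\delta\subseteq\mathbf V$ is handled the same way, and your extra observation that $\mathbf M\nsubseteq\mathbf V$ gives $\sigma_1$ and hence $xytxy\approx yxtxy$ directly is fine (though the paper does not need this split).

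The genuine gap is precisely where you flag it. In the residual case you want to show that $\mathbf D_2\subseteq\mathbf V$, $\sigma_2$ holds in $\mathbf V$, and $xytxy\approx yxtxy$ fails in $\mathbf V$ together force $\mathbf N\subseteq\mathbf V$. This implication is in fact true, but your proposed route---building a list of words that must be isoterms for $\mathbf V$ and concluding $\mathbf N\subseteq\mathbf V$ via Lemma~\ref{Lem: S(W) in V}---requires a generating set for $\mathbf N$ of the form $\{S(\mathbf w_1),\dots,S(\mathbf w_r)\}$, and none is provided (nor is it clear one exists; $\mathbf N$ is given by identities, not as $\var S(W)$). The isoterm bootstrap you sketch produces words like $yxtxsy$, but there is no visible mechanism by which a finite (or even infinite) list of such isoterms certifies that \emph{every} identity of $\mathbf V$ holds in $\mathbf N$.

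The paper avoids this entirely by going in the opposite direction. Since $\mathbf N\nsubseteq\mathbf V$, pick an identity $\mathbf u\approx\mathbf v$ that holds in $\mathbf V$ and fails in $\mathbf N$. Using $\mathbf D_2\subseteq\mathbf V$ (so $xyx$ is an isoterm), one strips off the letters that occur more than twice or twice within one block, obtaining a linear-balanced consequence $\mathbf u_X\approx\mathbf v_X$ that still holds in $\mathbf V$ and still fails in $\mathbf N$. Now one uses the \emph{structure of $\mathbf N$}, not of $\mathbf V$: since $\mathbf N$ satisfies $\sigma_2$ and $\sigma_3$, within each block the second-occurrence part can be freely permuted in $\mathbf N$, so for $\mathbf u_X\approx\mathbf v_X$ to fail in $\mathbf N$ some block must have its first-occurrence letters in different orders on the two sides. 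Picking two such letters $x,y$ and the single separating simple letter $t$ yields a consequence $\mathbf u_X(x,y,t)\approx\mathbf v_X(x,y,t)$ holding in $\mathbf V$ which, up to renaming, is one of
\[
xytxy\approx yxtxy,\qquad xytxy\approx yxtyx,\qquad xytyx\approx yxtxy.
\]
Each of these, combined with $\sigma_2$, yields $xytxy\approx yxtxy$. This is the missing idea: rather than trying to push monoids into $\mathbf V$ until they generate $\mathbf N$, one extracts from a single witnessing identity the specific two-variable law needed.
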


\begin{proof}
Let $\mathbf u\approx\mathbf v$ be an arbitrary identity that fails in $\mathbf N$. We denote by $X$ the set of all letters $x$ such that either $\occ_x(\mathbf u)>2$ or $x$ occurs more than one times in some block of the word $\mathbf u$. Lemma~\ref{Lem: S(W) in V} implies that the word $xyx$ is an isoterm for $\mathbf V$. Suppose that there is a letter $x\in X$ such that $\occ_x(\mathbf v)=2$ and $x$ occurs in two different blocks of $\mathbf v$. Let $t$ be a simple in $\mathbf v$ letter that is located between the occurrences of $x$ in $\mathbf v$. Then $\mathbf V$ satisfies the identity $\mathbf v(x,t)\approx\mathbf u(x,t)$ and $\mathbf v(x,t)=xtx$. Therefore, $\mathbf u(x,t)=xtx$, contradicting with the choice of the letter $x$. Therefore, the set $X$ coincides with the set of all letters $x$ such that either $\occ_x(\mathbf v)>2$ or $x$ occurs more than one times in some block of the word $\mathbf v$. 

Let $X=\{x_1,x_2,\dots,x_s\}$. Clearly, $\mathbf N$ satisfies the identities $\mathbf u\approx x_1^2x_2^2\cdots x_s^2\mathbf u_X$ and $\mathbf v\approx x_1^2x_2^2\cdots x_s^2\mathbf v_X$. This means that the identity $\mathbf u_X\approx\mathbf v_X$ fails in the variety $\mathbf N$. Let~\eqref{decomposition of u} be the decomposition of the word $\mathbf u_X$. By Lemma~\ref{Lem: decompositions of u and v}, the decomposition of the word $\mathbf v_X$ has the form~\eqref{decomposition of v}. Since $\mathbf N$ satisfies the identity $\sigma_3$, this variety satisfies also the identities
$$
\mathbf u_X\approx\mathbf p_0\mathbf q_0\biggl(\prod_{i=1}^m t_i\mathbf p_i\mathbf q_i\biggr)\quad\text{and}\quad\mathbf v_X\approx\mathbf p_0^\prime\mathbf q_0^\prime\biggl(\prod_{i=1}^m t_i\mathbf p_i^\prime\mathbf q_i^\prime\biggr),
$$
where $\mathbf p_i$ and $\mathbf p_i^\prime$ consist of the first occurrences of letters in the words $\mathbf u_X$ and $\mathbf v_X$, respectively, while $\mathbf q_i$ and $\mathbf q_i^\prime$ consist of the second occurrences of letters in the words $\mathbf u_X$ and $\mathbf v_X$, respectively. Suppose that $\con(\mathbf p_i)\ne\con(\mathbf p_i^\prime)$ for some $i\in\{0,1,\dots,m\}$. We may assume without loss of generality that there is a letter $x\in\con(\mathbf p_i)\setminus\con(\mathbf p_i^\prime)$. Then the first occurrence of $x$ in $\mathbf v_X$ lies in $\mathbf p_j^\prime$ for some $j\ne i$. We may assume without loss of generality that $i<j$. Then $\mathbf u(x,t_{i+1})=xt_{i+1}x$ but $\mathbf v(x,t_{i+1})\ne xt_{i+1}x$. But this is impossible because $\mathbf V$ satisfies the identity $\mathbf u(x,t_{i+1})\approx\mathbf v(x,t_{i+1})$ and $xtx$ is an isoterm for $\mathbf V$. Thus, $\con(\mathbf p_i)=\con(\mathbf p_i^\prime)$ for all $i=0,1,\dots,m$. Analogous arguments show that $\con(\mathbf q_i)=\con(\mathbf q_i^\prime)$ for all $i=0,1,\dots,m$. Therefore, the identity $\mathbf u_X\approx\mathbf v_X$ is linear-balanced.

If $\mathbf p_i=\mathbf p_i^\prime$ for all $i=0,1,\dots,m$, then
$$
\mathbf u_X\stackrel{\sigma_3}\approx\mathbf p_0\mathbf q_0\biggl(\prod_{i=1}^m t_i\mathbf p_i\mathbf q_i\biggr)\stackrel{\sigma_2}\approx\mathbf p_0\mathbf q_0^\prime\biggl(\prod_{i=1}^m t_i\mathbf p_i\mathbf q_i^\prime\biggr)\stackrel{\sigma_3}\approx\mathbf v_X.
$$
Since the identity $\mathbf u_X\approx\mathbf v_X$ fails in the variety $\mathbf N$ and the identities $\sigma_2$ and $\sigma_3$ hold in this variety, there is $i\in\{0,1,\dots,m\}$ with $\mathbf p_i\ne\mathbf p_i^\prime$. All blocks in the words $\mathbf u_X$ and $\mathbf v_X$ are linear words. Hence there are $i\in\{0,1,\dots,m\}$ and $x,y\in\con(\mathbf p_i)$ such that $x$ precedes $y$ in $\mathbf p_i$ but $y$ precedes $x$ in $\mathbf p_i^\prime$. Since the identity $\mathbf u_X\approx\mathbf v_X$ is linear-balanced and the letters $x$ and $y$ occur in the words $\mathbf u_X$ and $\mathbf v_X$ exactly two times, the identity $\mathbf u(x,y,t_{i+1})\approx\mathbf v(x,y,t_{i+1})$ coincides (up to renaming of letters) with one of the identities
\begin{align}
\label{xytxy=yxtyx}
&xytxy\approx yxtyx,\\
\label{xytyx=yxtxy}
&xytyx\approx yxtxy
\end{align}
or~\eqref{xytxy=yxtxy}. Since $\mathbf u\approx\mathbf v$ is an arbitrary identity that fails in $\mathbf N$, this means that each variety that does not contain $\mathbf N$ satisfies one of the identities~\eqref{xytxy=yxtxy},~\eqref{xytxy=yxtyx} or~\eqref{xytyx=yxtxy}. In particular, one of these identities holds in $\mathbf V$.

Lemma~\ref{Lem: c_{n,m}[rho]=c[n,m]'[rho]} allows us to suppose that $\mathbf M^\delta\nsubseteq\mathbf V$. Then Lemma~\ref{Lem: V contains D_2 but does not contain L or M or M^delta}(ii) applies with the conclusion that $\mathbf V$ satisfies the identity $\sigma_2$. Therefore, if $\mathbf V$ satisfies one of the identities~\eqref{xytxy=yxtyx} or~\eqref{xytyx=yxtxy}, then $\mathbf V$ satisfies also the identity~\eqref{xytxy=yxtxy}. Thus, $\mathbf V$ satisfies the identity~\eqref{xytxy=yxtxy} in any case. Therefore, $\mathbf V$ satisfies the identities
\begin{align*}
\mathbf c_{n,m}[\rho]\stackrel{\sigma_2}\approx{}&\biggl(\prod_{i=1}^n z_it_i\biggr)xyt\biggl(\prod_{i=n+1}^{n+m} z_it_i\biggr)\biggl(\prod_{i=1}^{n+m} z_{i\pi}\biggr)xy\\[-3pt]
\stackrel{\eqref{xytxy=yxtxy}}\approx&\biggl(\prod_{i=1}^n z_it_i\biggr)yxt\biggl(\prod_{i=n+1}^{n+m} z_it_i\biggr)\biggl(\prod_{i=1}^{n+m} z_{i\pi}\biggr)xy\stackrel{\sigma_2}\approx\mathbf c_{n,m}^\prime[\rho].
\end{align*}
Lemma is proved.
\end{proof}

\subsection{The variety $\mathbf A^\prime$}
\label{Subsec: A'}
For any $n,m,k\in\mathbb N_0$ and $\rho\in S_{n+m+k}$, we denote by $\mathbf d_{n,m,k}[\rho]$ and $\mathbf d_{n,m,k}^\prime[\rho]$ the words that are obtained from the words $\mathbf c_{n,m,k}[\rho]$ and $\mathbf c_{n,m,k}^\prime[\rho]$, respectively, when reading the last words from right to left. Put 
$$
\mathbf A^\prime=\mathbf A^\ast\{\mathbf c_{n,m,k}[\rho]\approx\mathbf c_{n,m,k}^\prime[\rho],\,\mathbf d_{n,m,k}[\rho]\approx\mathbf d_{n,m,k}^\prime[\rho]\mid n,m,k\in\mathbb N_0,\,\rho\in S_{n+m+k}\}.
$$
The following statement indicates an important property of the lattice $L(\mathbf A^\prime)$, which will be useful for us in the proof of Corollaries~\ref{Cor: fi-perm distributive} and~\ref{Cor: almost fi-perm distributive}.

\begin{proposition}
\label{Prop: L(A') is distributive}
The lattice $L(\mathbf A^\prime)$ is distributive.
\end{proposition}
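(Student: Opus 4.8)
The plan is to derive the distributivity of $L(\mathbf A^\prime)$ from Lemma~\ref{Lem: smth imply distributivity}, after first peeling off the bottom of the lattice. Since $\mathbf D_2\subseteq\mathbf A^\prime\subseteq\mathbf A$, Lemma~\ref{Lem: L(A)}(i) shows that $L(\mathbf A^\prime)$ is the set-theoretical union of the chain $\mathbf T\subset\mathbf{SL}\subset\mathbf C_2\subset\mathbf D_1\subset\mathbf D_2$ and the interval $[\mathbf D_2,\mathbf A^\prime]$. Every member of the chain is contained in every member of the interval, so $L(\mathbf A^\prime)$ is the ordinal sum of these two sublattices; as an ordinal sum of distributive lattices is distributive, it suffices to prove that $[\mathbf D_2,\mathbf A^\prime]$ is distributive.

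To this end I would apply Lemma~\ref{Lem: smth imply distributivity} with $\mathbf V=\mathbf A^\prime$, $\mathbf W=\mathbf D_2$, and $\Sigma$ equal to the set of all identities that are either of the form $\delta_k$ with $2\le k<\infty$ or of the form $\mathbf pxy\mathbf q\approx\mathbf pyx\mathbf q$ subject to the restriction~\eqref{pxyq=pyxq restrict}. For such a $\Sigma$, condition~(ii) of Lemma~\ref{Lem: smth imply distributivity} holds at once: for the identities $\delta_k$ it is Corollary~\ref{Cor: delta_k in X wedge Y}, and for the transposition identities it is Corollary~\ref{Cor: pxyq=pyxq in X wedge Y}.

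The substance of the argument is condition~(i): for each $\mathbf U\in[\mathbf D_2,\mathbf A^\prime]$ one must have $\mathbf U=\mathbf A^\prime\Phi$ with $\Phi\subseteq\Sigma$, and the natural choice is to take $\Phi$ to be the set of all members of $\Sigma$ valid in $\mathbf U$. Only the inclusion $\mathbf A^\prime\Phi\subseteq\mathbf U$ requires work, that is, one has to show that every identity $\mathbf u\approx\mathbf v$ valid in $\mathbf U$ follows from $\mathbf A^\prime$ together with the identities of $\Phi$. Since $\mathbf U$ lies in some interval $[\mathbf D_p,\mathbf A^\ast\{\delta_p\}]$ with $2\le p\le\infty$ (by the argument of Lemma~\ref{Lem: L(A)}(ii)), Lemma~\ref{Lem: reduction to linear-balanced} applied with $\mathbf X=\mathbf Y=\mathbf U$ lets me replace $\mathbf u\approx\mathbf v$, modulo $\mathbf A^\prime$ and the identities $\delta_k\in\Phi$, by a linear-balanced identity $\mathbf u^\prime\approx\mathbf v^\prime$ valid in $\mathbf U$; by Lemma~\ref{Lem: decompositions of u and v} the words $\mathbf u^\prime$ and $\mathbf v^\prime$ carry the same simple letters in the same order and, being linear-balanced, agree on the content of every block, so $\mathbf v^\prime$ arises from $\mathbf u^\prime$ by rearranging letters inside blocks. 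Exploiting the identities $x^2y\approx yx^2$, $\mathbf w_n[\pi,\tau]\approx\mathbf w_n^\prime[\pi,\tau]$, $\mathbf c_{n,m,k}[\rho]\approx\mathbf c_{n,m,k}^\prime[\rho]$ and $\mathbf d_{n,m,k}[\rho]\approx\mathbf d_{n,m,k}^\prime[\rho]$ built into $\mathbf A^\prime$ — through Lemmas~\ref{Lem: from pxqxr to px^2qr} and~\ref{Lem: from pxyqxrys to pyxqxrys} — I would, after a normalization that lowers the number of occurrences of the individual letters, break the passage from $\mathbf u^\prime$ to $\mathbf v^\prime$ into a chain of identities each of which is either a consequence of $\mathbf A^\prime$ alone or is an identity $\mathbf pxy\mathbf q\approx\mathbf pyx\mathbf q$ as in~\eqref{pxyq=pyxq restrict} valid in $\mathbf U$. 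For such a transposition identity the word $\mathbf pxy\mathbf q$ is not an isoterm for $\mathbf U$; hence $S(\mathbf pxy\mathbf q)\notin\mathbf U$ by Lemma~\ref{Lem: S(W) in V}, and since $\mathbf D_2\subseteq\mathbf U\subseteq\mathbf A$, Lemma~\ref{Lem: pxyq=pyxq} yields that $\mathbf U$ satisfies~\eqref{pxyq=pyxq}, which therefore lies in $\Phi$. This establishes $\mathbf A^\prime\Phi\subseteq\mathbf U$ and finishes the proof.

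The main obstacle is the penultimate reduction in the preceding paragraph: showing that, modulo the auxiliary identities of $\mathbf A^\prime$, an arbitrary linear-balanced identity valid in $\mathbf U$ can be refactored into transpositions of the restricted shape~\eqref{pxyq=pyxq restrict} (plus consequences of $\mathbf A^\prime$), and doing the bookkeeping so that every transposition that occurs really is valid in $\mathbf U$ rather than merely an intermediate step. This is exactly where the structural results on partial collapses — Lemmas~\ref{Lem: S(w_n[pi,tau]) notin V}, \ref{Lem: smth imply w_n[pi,tau]=w_n'[pi,tau]} and~\ref{Lem: V notin S(c_{n,m}[rho])}, together with the apparatus of Subsections~\ref{Subsec: A}--\ref{Subsec: c_{n,m,k}[rho]= c_{n,m,k}'[rho]}, which pin down precisely which transposition and collapse identities a subvariety of $\mathbf A$ is forced to satisfy once a prescribed monoid $S(W)$ is excluded from it — are indispensable.
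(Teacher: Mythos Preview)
Your outline matches the paper's proof exactly: reduce to the interval $[\mathbf D_2,\mathbf A^\prime]$ via Lemma~\ref{Lem: L(A)}(i), apply Lemma~\ref{Lem: smth imply distributivity} with the same $\Sigma$, dispatch condition~(ii) by Corollaries~\ref{Cor: delta_k in X wedge Y} and~\ref{Cor: pxyq=pyxq in X wedge Y}, and for condition~(i) reduce via Lemma~\ref{Lem: reduction to linear-balanced} (with $\mathbf X=\mathbf Y$) to a linear-balanced identity.

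The step you call the ``main obstacle'' is in fact short. The paper runs an induction on the number $r$ of adjacent transpositions needed to turn $\mathbf u^\prime$ into $\mathbf v^\prime$ (it calls such an identity $r$-\emph{invertible}). For the inductive step, pick a block $\mathbf u_i$ with $\mathbf u_i\ne\mathbf v_i$ and letters $x,y$ adjacent in $\mathbf u_i$ but in the opposite order in $\mathbf v_i$. There are only two cases: if $x$ and $y$ occur together in some other block $\mathbf u_j$ ($j\ne i$), then Lemma~\ref{Lem: from pxyqxrys to pyxqxrys} or its dual swaps them using identities of $\mathbf A^\prime$ alone; otherwise at most one of $x,y$ lies in each other block, and then the restriction $\mathbf u^\prime(x,y,T_{x,y})\approx\mathbf v^\prime(x,y,T_{x,y})$ is already literally an identity of the form~\eqref{pxyq=pyxq} satisfying~\eqref{pxyq=pyxq restrict}, and it alone performs the swap. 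No ``normalization that lowers the number of occurrences'' is needed.

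One correction: the lemmas you name in your final sentence (Lemmas~\ref{Lem: S(w_n[pi,tau]) notin V}, \ref{Lem: smth imply w_n[pi,tau]=w_n'[pi,tau]}, \ref{Lem: V notin S(c_{n,m}[rho])}) play no role in this proof; the only structural input beyond the reduction to linear-balanced form is Lemma~\ref{Lem: from pxyqxrys to pyxqxrys} and its dual.
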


\begin{proof}
In view of Lemma~\ref{Lem: L(A)}(i), it suffices to verify that the interval $[\mathbf D_2,\mathbf A^\prime]$ is distributive. We are going to deduce this fact from Lemma~\ref{Lem: smth imply distributivity} with $\mathbf V=\mathbf A^\prime$, $\mathbf W=\mathbf D_2$ and the identity system $\Sigma$ that consists of all identities of the form $\delta_n$ with $2\le n\le\infty$ and all identities of the form~\eqref{pxyq=pyxq} such that the equalities~\eqref{pxyq=pyxq restrict} hold. In view of Corollaries~\ref{Cor: delta_k in X wedge Y} and~\ref{Cor: pxyq=pyxq in X wedge Y}, to do this, it remains to prove only that each monoid variety from the interval $[\mathbf D_2,\mathbf A^\prime]$ may be given within the variety $\mathbf A^\prime$ by the identity $\delta_n$ with some $2\le n\le\infty$ and identities of the form~\eqref{pxyq=pyxq} such that the equalities~\eqref{pxyq=pyxq restrict} hold.

So, let $\mathbf V\in[\mathbf D_2,\mathbf A^\prime]$ and $\mathbf a\approx\mathbf b$ be an identity that holds in $\mathbf D_2$. In view of Lemma~\ref{Lem: L(A)}(ii), $\mathbf A^\prime\{\mathbf a\approx\mathbf b\}\in[\mathbf D_n,\mathbf A^\prime\{\delta_n\}]$ for some $2\le n\le\infty$. Lemma~\ref{Lem: reduction to linear-balanced} with $\mathbf X=\mathbf Y=\mathbf A^\prime\{\mathbf a\approx\mathbf b\}$ implies that $\mathbf A^\prime\{\delta_n\}$ satisfies the identities $\mathbf a\approx\mathbf p\mathbf u$ and $\mathbf p\mathbf v\approx\mathbf b$ for some linear-balanced identity $\mathbf u\approx\mathbf v$ that holds in $\mathbf A^\prime\{\mathbf a\approx\mathbf b\}$ and some word $\mathbf p$ with $\con(\mathbf p)\cap\con(\mathbf u)=\varnothing$. Clearly, $\mathbf A^\prime\{\mathbf a\approx\mathbf b\}=\mathbf A^\prime\{\delta_n,\,\mathbf u\approx\mathbf v\}$. To complete the proof, it suffices to verify that each linear-balanced identity $\mathbf u\approx\mathbf v$ is equivalent within $\mathbf A^\prime$ to some system of identities of the form~\eqref{pxyq=pyxq} such that the equalities~\eqref{pxyq=pyxq restrict} hold. We denote the set of all identities of such a kind by $\Gamma$. 

We call an identity $\mathbf c\approx\mathbf d$ 1-\emph{invertible} if $\mathbf c=\mathbf w^\prime xy\mathbf w^{\prime\prime}$ and $\mathbf d=\mathbf w^\prime yx\mathbf w^{\prime\prime}$ for some words $\mathbf w^\prime,\mathbf w^{\prime\prime}$ and letters $x,y\in\con(\mathbf w^\prime\mathbf w^{\prime\prime})$. Let $n>1$. An identity $\mathbf c\approx\mathbf d$ is called $n$-\emph{invertible} if there is a sequence of words $\mathbf c=\mathbf w_0,\mathbf w_1,\dots,\mathbf w_n=\mathbf d$ such that the identity $\mathbf w_i\approx\mathbf w_{i+1}$ is 1-invertible for each $i=0,1,\dots,n-1$ and $n$ is the least number with such a property. For convenience, we will call the trivial identity 0-\emph{invertible}. 

Since the identity $\mathbf u\approx\mathbf v$ is linear-balanced, it is $r$-invertible for some $r\in\mathbb N_0$. We will use induction by $r$.

\smallskip

\emph{Induction base}. If $r=0$, then $\mathbf u=\mathbf v$, whence $\mathbf A^\prime\{\mathbf u\approx\mathbf v\}=\mathbf A^\prime\{\varnothing\}$.

\smallskip

\emph{Induction step}. Let $r>0$. Suppose that the decompositions of the words $\mathbf u$ and $\mathbf v$ have the forms~\eqref{decomposition of u} and~\eqref{decomposition of v}, respectively. Obviously, $\mathbf u_i\ne\mathbf v_i$ for some $i\in\{0,1,\dots,m\}$. Then the claim that the identity $\mathbf u\approx\mathbf v$ is linear-balanced implies that $\mathbf u_i=\mathbf u_i^\prime yx\mathbf u_i^{\prime\prime}$ for some words $\mathbf u_i^\prime,\mathbf u_i^{\prime\prime}$ and letters $x,y$ such that $x$ precedes $y$ in $\mathbf v_i$. We denote by $\mathbf w$ the word that is obtained from $\mathbf u$ by swapping of the occurrences of $x$ and $y$ in the block $\mathbf u_i$. 

Suppose that $x,y\in\con(\mathbf u_j)=\con(\mathbf v_j)$ for some $j\ne i$. Lemma~\ref{Lem: from pxyqxrys to pyxqxrys} shows that we can swap two adjacent occurrences of $x$ and $y$ if somewhere to the right of these occurrences there are two more occurrences of these letters lying in the same block. Thus, Lemma~\ref{Lem: from pxyqxrys to pyxqxrys} or the statement dual to it implies that $\mathbf A^\prime$ satisfies the identity $\mathbf u\approx\mathbf w$. The identity $\mathbf w\approx\mathbf v$ is \mbox{$(r-1)$}-invertible. By the induction assumption, $\mathbf A^\prime\{\mathbf w\approx\mathbf v\}=\mathbf A^\prime\Phi$ for some $\Phi\subseteq\Gamma$. Then $\mathbf A^\prime\{\mathbf u\approx\mathbf v\}=\mathbf A^\prime\Phi$, and we are done.

Thus, we may assume that at most one of the letters $x$ and $y$ occurs in $\mathbf u_j$ for any $j\ne i$. Let $s$ be the least number such that $\con(\mathbf u_s)\cap\{x,y\}\ne\varnothing$. Put
\begin{equation}
\label{T_{x,y}}
T_{x,y}=\{t_j\mid s<j\le m,\,\con(\mathbf u_j)\cap\{x,y\}\ne\varnothing\}.
\end{equation}
Since the identity $\mathbf u\approx\mathbf v$ is linear-balanced, the identity
\begin{equation}
\label{2 mult letters and their dividers}
\mathbf u(x,y,T_{x,y})\approx\mathbf v(x,y,T_{x,y})
\end{equation}
coincides (up to renaming of letters) with an identity of the form~\eqref{pxyq=pyxq} such that the equalities~\eqref{pxyq=pyxq restrict} hold. It is clear that $\mathbf u\stackrel{\eqref{2 mult letters and their dividers}}\approx\mathbf w$ and the identity $\mathbf w\approx\mathbf v$ is \mbox{$(r-1)$}-invertible. By the induction assumption, $\mathbf A^\prime\{\mathbf w\approx\mathbf v\}=\mathbf A^\prime\Phi$ for some $\Phi\subseteq\Gamma$. Then $\mathbf A^\prime\{\mathbf u\approx\mathbf v\}=\mathbf A^\prime\{\eqref{2 mult letters and their dividers},\,\Phi\}$, and we are done.
\end{proof}

\section{Certain non-permutative fully invariant congruences}
\label{Sec: non-fi-perm}

Here we find a number of pairs of non-permutative congruences of the type $(\theta_{\mathbf X},\theta_{\mathbf Y})$ on the free monoid $\mathfrak X^\ast$. This will be very helpful for us in Section~\ref{Sec: proof of main results} because the variety $\mathbf X\vee\mathbf Y$ is not $fi$-permutable in this case by Lemma~\ref{Lem: lifting}.

For any $n>1$, we denote by $\mathbf A_n$ the variety of Abelian groups of exponent $n$. Put $\mathbf Z_1=\var S(xysxtxhy)$, $\mathbf Z_2=\var S(xysxtyhx)$ and $\mathbf Z_3=\var S(xysytxhx)$.

\begin{lemma}
\label{Lem: non-fi-perm}
The following congruences on the free monoid $\mathfrak X^\ast$ do not permute:
\begin{itemize}
\item[(i)] $\theta_{\mathbf A_n}$ with $n\ge 2$ and $\theta_{\mathbf{SL}}$;
\item[(ii)] $\theta_{\mathbf A_n\vee\mathbf{SL}}$ with $n\ge 2$ and $\theta_{\mathbf C_2}$;
\item[(iii)] $\theta_{\mathbf C_3}$ and $\theta_{\mathbf D_2}$;
\item[(iv)] $\theta_{\mathbf D_2}$ and $\theta_{\mathbf E}$;
\item[(v)] $\theta_{\mathbf E}$ and $\theta_{\mathbf E^\delta}$;
\item[(vi)] $\theta_{\mathbf L}$ and $\theta_{\mathbf M}$;
\item[(vii)] $\theta_{\mathbf N}$ and $\theta_{\mathbf Z_i}$ with $1\le i\le 3$;
\item[(viii)] $\theta_{\mathbf Z_i}$ and $\theta_{\mathbf Z_j}$ with $1\le i<j\le 3$.
\end{itemize}
\end{lemma}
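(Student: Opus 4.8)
The plan is to prove each of the eight non-permutability claims by exhibiting, for each pair $(\theta_{\mathbf X},\theta_{\mathbf Y})$, a concrete pair of words $(\mathbf u,\mathbf v)$ witnessing $(\mathbf u,\mathbf v)\in\theta_{\mathbf X}\theta_{\mathbf Y}$ but $(\mathbf u,\mathbf v)\notin\theta_{\mathbf Y}\theta_{\mathbf X}$. Concretely, for a candidate intermediate word $\mathbf w$, I would first check that $\mathbf u\approx\mathbf w$ holds in $\mathbf X$ and $\mathbf w\approx\mathbf v$ holds in $\mathbf Y$ (so $\mathbf u\,\theta_{\mathbf X}\,\mathbf w\,\theta_{\mathbf Y}\,\mathbf v$), and then argue there is \emph{no} word $\mathbf w'$ with $\mathbf u\approx\mathbf w'$ in $\mathbf Y$ and $\mathbf w'\approx\mathbf v$ in $\mathbf X$. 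The second, negative half is the substance: typically one shows that $\mathbf w'$ would be forced (by the identities of $\mathbf Y$, e.g.\ via isoterm/linear-balancedness restrictions from Lemmas~\ref{Lem: S(W) in V}, \ref{Lem: x^n is isoterm}, \ref{Lem: xt_1x...t_kx is isoterm}) to be $\mathbf u$ up to a rearrangement that $\mathbf X$ cannot undo, or symmetrically, and hence $\mathbf w'\approx\mathbf v$ cannot hold in $\mathbf X$. In several items one exploits the content test (Lemma~\ref{Lem: group variety}): the group variety $\mathbf A_n$ identifies words of different content, while $\mathbf{SL}$ and $\mathbf C_2$ respect content, so the intermediate word on one side is tightly constrained.

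For the individual items I would proceed as follows. For (i), take $\mathbf u=x^n$, $\mathbf v=\lambda$ (empty word) with intermediate $\mathbf w=\lambda$: $x^n\approx\lambda$ in $\mathbf A_n$ and $\lambda\,\theta_{\mathbf{SL}}\,\lambda$, but any $\mathbf w'$ with $x^n\,\theta_{\mathbf{SL}}\,\mathbf w'$ must have content $\{x\}$, and then $\mathbf w'\approx\lambda$ fails in $\mathbf A_n$ since $\mathbf A_n$ preserves content of nonempty-vs-empty. For (ii), a similar device with $x^ny$ versus $y$ using that $\mathbf A_n\vee\mathbf{SL}$ kills the $x$-block while $\mathbf C_2$ does not. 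For (iii)--(viii) the witnesses are words built from the generators of the Rees quotients involved, e.g.\ for (iii) one uses $x^3$-type words separating $\mathbf C_3$ from $\mathbf D_2$; for (iv)--(v), words like $\delta_1$-related words $xyx$ vs.\ $x^2y$ (and their duals) separating $\mathbf E$ from $\mathbf D_2$ and from $\mathbf E^\delta$; for (vi), the words $xtxysy$ and $xytxsy$ themselves (the defining isoterms of $\mathbf L$ and $\mathbf M$) arranged so that one swap is legal in $\mathbf L$ but not $\mathbf M$ and vice versa; for (vii)--(viii), the words $xysxtxhy$, $xysxtyhx$, $xysytxhx$ defining $\mathbf Z_1,\mathbf Z_2,\mathbf Z_3$, together with the identities $\sigma_2,\sigma_3,\alpha_i,\beta_j$ available in $\mathbf N$, chosen so that $\mathbf N$ makes a transposition that none of the $\mathbf Z_i$ allows and no $\mathbf Z_j$ can compensate.

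In each case the negative direction is verified by the ``isoterm bookkeeping'' that pervades the paper: if $\mathbf Z$ is generated by $S(\mathbf r)$ then by Lemma~\ref{Lem: S(W) in V} a word $\mathbf r$ is an isoterm for $\mathbf Z$, so any identity $\mathbf r\approx\mathbf w'$ true in $\mathbf Z$ forces $\mathbf w'=\mathbf r$; combined with Lemma~\ref{Lem: xt_1x...t_kx is isoterm} (linear-balancedness when occurrence counts are bounded) this pins down the possible intermediate words almost completely, and one checks the remaining finitely many candidates by hand. The computations verifying that a given identity holds in a given $S(W)$ are of the routine ``substitution kills a square / separates a simple letter'' type described right after Lemma~\ref{Lem: w_n[pi,tau]=w_n'[pi,tau] in S(w_n[xi,eta])}, and I would suppress them.

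\medskip

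The hard part will be items (vii) and (viii): the words involved are long, the varieties $\mathbf Z_i$ and $\mathbf N$ are defined by several families of identities, and one must choose the witnessing pair $(\mathbf u,\mathbf v)$ so that simultaneously (a) $\mathbf N$ effects the required letter-transposition (using precisely which of $\sigma_2,\sigma_3,\alpha_i,\beta_j$ is crucial), (b) \emph{none} of $\mathbf Z_1,\mathbf Z_2,\mathbf Z_3$ can effect it — this is where the three words are engineered to differ exactly in which pair of occurrences is ``non-first and non-last'' vs.\ ``both non-first'' etc. — and (c) the composite $\theta_{\mathbf Z_j}\theta_{\mathbf Z_i}$ (or $\theta_{\mathbf Z_i}\theta_{\mathbf N}$) still fails, which requires ruling out a two-step detour through an auxiliary word. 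I expect (b) and (c) to need a careful case analysis of the decomposition of any putative intermediate word, leaning on Lemma~\ref{Lem: decompositions of u and v} to control block structure and on the explicit form of the $\mathbf Z_i$-isoterms to show no legal rearrangement exists; the rest of the items should follow the same template with much shorter words.
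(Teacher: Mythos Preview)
Your overall strategy --- exhibit an explicit pair $(\mathbf u,\mathbf v)\in\theta_{\mathbf X}\theta_{\mathbf Y}$ and then use isoterm/content constraints to rule out any intermediate word for $\theta_{\mathbf Y}\theta_{\mathbf X}$ --- is exactly what the paper does, and your description of the machinery for items~(iii)--(viii) (Lemmas~\ref{Lem: S(W) in V}, \ref{Lem: decompositions of u and v}, \ref{Lem: xt_1x...t_kx is isoterm} to pin down the block structure of a putative $\mathbf w'$, then finite case check) matches the paper's arguments closely.

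However, your concrete witnesses for (i) and (ii) are wrong, and the error is conceptual. For (i) you take $\mathbf u=x^n$, $\mathbf v=\lambda$ and claim that ``$\mathbf w'\approx\lambda$ fails in $\mathbf A_n$ since $\mathbf A_n$ preserves content of nonempty-vs-empty''. This is false: $\mathbf A_n$ is a \emph{group} variety, so by Lemma~\ref{Lem: group variety} it satisfies identities with different content --- in particular $x^n\approx\lambda$. Hence $x^n\,\theta_{\mathbf{SL}}\,x^n\,\theta_{\mathbf A_n}\,\lambda$, and your pair lies in $\theta_{\mathbf{SL}}\theta_{\mathbf A_n}$ as well. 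The same objection applies to your sketch for (ii): the identity $x^ny\approx y$ does \emph{not} hold in $\mathbf A_n\vee\mathbf{SL}$ (it fails in $\mathbf{SL}$ by content), so that pair never enters $\theta_{\mathbf A_n\vee\mathbf{SL}}\theta_{\mathbf C_2}$ in the first place.

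The paper's witnesses work the other way round: it is $\mathbf{SL}$ and $\mathbf C_2$ that preserve content, while $\mathbf A_n$ is used to \emph{change} it. For (i) one takes $x\,\theta_{\mathbf A_n}\,xy^n\,\theta_{\mathbf{SL}}\,xy$; then any $\mathbf w'$ with $x\,\theta_{\mathbf{SL}}\,\mathbf w'$ is a power of $x$, and $x^k\approx xy$ is never an identity of $\mathbf A_n$. For (ii) one takes $x\,\theta_{\mathbf A_n\vee\mathbf{SL}}\,x^{n+1}\,\theta_{\mathbf C_2}\,x^2$; any $\mathbf w'$ with $x\,\theta_{\mathbf C_2}\,\mathbf w'$ equals $x$ by Lemma~\ref{Lem: x^n is isoterm}, and $x\approx x^2$ fails in $\mathbf A_n$. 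Once you correct these two, the rest of your plan is sound and essentially the paper's proof.
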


\begin{proof}
(i) Suppose that the congruences $\theta_{\mathbf A_n}$ and $\theta_{\mathbf{SL}}$ permute. It is obvious that $x\,\theta_{\mathbf A_n}\,xy^n\,\theta_{\mathbf{SL}}\,xy$. Then $(x,xy)\in\theta_{\mathbf A_n}\theta_{\mathbf{SL}}=\theta_{\mathbf{SL}}\theta_{\mathbf A_n}$. Whence, there exists a word $\mathbf w$ such that $x\,\theta_{\mathbf{SL}}\,\mathbf w\,\theta_{\mathbf A_n}\,xy$. Now Lemma~\ref{Lem: group variety} applies with the conclusion that $\mathbf w=x^k$ for some $k\in\mathbb N$. But this is impossible because the variety $\mathbf A_n$ violates the identity $x^k\approx xy$.

\smallskip

(ii) Suppose that the congruences $\theta_{\mathbf A_n\vee\mathbf{SL}}$ and $\theta_{\mathbf C_2}$ permute. It is obvious that $x\,\theta_{\mathbf A_n\vee\mathbf{SL}}\,x^{n+1}\,\theta_{\mathbf C_2}\,x^2$. Then $(x,x^2)\in\theta_{\mathbf A_n\vee\mathbf{SL}}\theta_{\mathbf C_2}=\theta_{\mathbf C_2}\theta_{\mathbf A_n\vee\mathbf{SL}}$. Therefore, there exists a word $\mathbf w$ such that $x\,\theta_{\mathbf C_2}\,\mathbf w\,\theta_{\mathbf A_n\vee\mathbf{SL}}\,x^2$. Now Lemma~\ref{Lem: x^n is isoterm} implies that $\mathbf w=x$. But this is not the case because the identity $x\approx x^2$ fails in $\mathbf A_n$.

\smallskip

(iii) Obviously, $xyx\,\theta_{\mathbf C_3}\,x^2y\,\theta_{\mathbf D_2}\,x^3y$. If the congruences $\theta_{\mathbf C_3}$ and $\theta_{\mathbf D_2}$ permute, then $(xyx,x^3y)\in\theta_{\mathbf C_3}\theta_{\mathbf D_2}=\theta_{\mathbf D_2}\theta_{\mathbf C_3}$. Therefore, $xyx\,\theta_{\mathbf D_2}\,\mathbf w\,\theta_{\mathbf C_3}\,x^3y$ for some word $\mathbf w$. Now Lemma~\ref{Lem: S(W) in V} implies that $\mathbf w=xyx$. But then the variety $\mathbf C_3$ satisfies the identity~\eqref{xx=xxx}, a contradiction with Lemma~\ref{Lem: x^n is isoterm}.

\smallskip

(iv) Obviously, $xyx\,\theta_{\mathbf E}\,x^2y\,\theta_{\mathbf D_2}\,yx^2$. Thus, $(xyx,yx^2)\in\theta_{\mathbf E}\theta_{\mathbf D_2}$. If the congruences $\theta_{\mathbf E}$ and $\theta_{\mathbf D_2}$ permute, then there is a word $\mathbf w$ such that $xyx\,\theta_{\mathbf D_2}\,\mathbf w\,\theta_{\mathbf E}\,yx^2$. Then $\mathbf w=xyx$ by Lemma~\ref{Lem: S(W) in V} and, simultaneously, $\mathbf w=yx^\ell$ for some $\ell\ge 2$ by Lemma~\ref{Lem: yx^n=w in E}. We have a contradiction.

\smallskip

(v) Obviously, $x^2y\,\theta_{\mathbf E}\,xyx\,\theta_{\mathbf E^\delta}\,yx^2$, whence $(x^2y,yx^2)\in\theta_{\mathbf E}\theta_{\mathbf E^\delta}$. But Lemma~\ref{Lem: yx^n=w in E} and the dual to it show that there is no word $\mathbf w$ such that the identities $x^2y\approx\mathbf w$ and $\mathbf w\approx yx^2$ hold in the varieties $\mathbf E^\delta$ and $\mathbf E$, respectively. Therefore, the congruences $\theta_{\mathbf E}$ and $\theta_{\mathbf E^\delta}$ do not permute.

\smallskip

(vi) Put $\mathbf u=xsxyztyhz$ and $\mathbf v=xszxytyhz$. Then $\mathbf u\,\theta_{\mathbf L}\,xsxzytyhz\,\theta_{\mathbf M}\,\mathbf v$, whence $(\mathbf u,\mathbf v)\in\theta_{\mathbf L}\theta_{\mathbf M}$. Suppose that the congruences $\theta_{\mathbf L}$ and $\theta_{\mathbf M}$ permute. Then $\mathbf u\,\theta_{\mathbf M}\,\mathbf w\,\theta_{\mathbf L}\,\mathbf v$ for some word $\mathbf w$. It is evident that the monoid $S(xyx)$ lies in the varieties $\mathbf L$ and $\mathbf M$. Then the word $xyx$ is an isoterm for $\mathbf L$ and $\mathbf M$ by Lemma~\ref{Lem: S(W) in V}. Now we can apply Lemma~\ref{Lem: xt_1x...t_kx is isoterm} and conclude that $\mathbf w=xs\mathbf atyhz$, where $\mathbf a$ is a linear word with $\con(\mathbf a)=\{x,y,z\}$. If $\mathbf a\in\{xzy,zxy,zyx\}$, then $\mathbf u(y,z,t,h)=yztyhz$ and $\mathbf w(y,z,t,h)\ne yztyhz$. This contradicts with the claim that the word $yztyhz$ is an isoterm for $\mathbf M$ by Lemma~\ref{Lem: S(W) in V}. If $\mathbf a\in\{xyz,yxz\}$, then $\mathbf w(x,z,s,t)=xsxztz$ and $\mathbf v(x,z,s,t)\ne xsxztz$. Finally, if $\mathbf a=yzx$, then $\mathbf v(x,y,s,t)=xsxyty$ and $\mathbf w(x,y,s,t)\ne xsxyty$. In both the cases we have a contradiction with the fact that the word $xsxyty$ is an isoterm for $\mathbf L$ by Lemma~\ref{Lem: S(W) in V}. 

\smallskip

(vii) We consider the case when $i=3$ only. The other cases can be considered quite analogously. Put $\mathbf u=yxzsyztxhx$ and $\mathbf v=xzysyztxhx$. Then $(\mathbf u,\mathbf v)\in\theta_{\mathbf N}\theta_{\mathbf Z_3}$ because $\mathbf u\,\theta_{\mathbf N}\,xyzsyztxhx\,\theta_{\mathbf Z_3}\,\mathbf v$. Suppose that the congruences $\theta_{\mathbf N}$ and $\theta_{\mathbf Z_3}$ permute. Then $\mathbf u\,\theta_{\mathbf Z_3}\,\mathbf w\,\theta_{\mathbf N}\,\mathbf v$ for some word $\mathbf w$. It is evident that the monoid $S(xyx)$ lies in the varieties $\mathbf N$ and $\mathbf Z_3$. Then the word $xyx$ is an isoterm for $\mathbf N$ and $\mathbf Z_3$ by Lemma~\ref{Lem: S(W) in V}. Now Lemma~\ref{Lem: xt_1x...t_kx is isoterm} applies with the conclusion that $\mathbf w=\mathbf as\mathbf btxhx$, where $\mathbf a$ is a linear word with $\con(\mathbf a)=\{x,y,z\}$ and $\mathbf b\in\{yz,zy\}$. If $\mathbf a\in\{xyz,xzy,zxy\}$, then $\mathbf u_z=yxsytxhx$ and $\mathbf w_z\ne yxsytxhx$. Further, if $\mathbf a\in\{yzx,zyx\}$, then $\mathbf w_y=zxsztxhx$ and $\mathbf u_y\ne zxsztxhx$. In both the cases we have a contradiction with the fact that the word $yxsytxhx$ is an isoterm for $\mathbf Z_3$ by Lemma~\ref{Lem: S(W) in V}. This means that $\mathbf a=yxz$ and $\mathbf w\in\{yxzsyztxhx,yxzszytxhx\}$. Then the identity $\mathbf w(y,z,s)\approx\mathbf v(y,z,s)$ coincides (up to renaming of letters) with one of the identities~\eqref{xytxy=yxtxy} or~\eqref{xytyx=yxtxy}. But this is impossible because the identity $\mathbf w\approx\mathbf v$ holds in the variety $\mathbf N$ and the identities~\eqref{xytxy=yxtxy} and~\eqref{xytyx=yxtxy} fail in this variety.

\smallskip

(viii) We consider the case when $i=1$ and $j=2$. The other cases can be considered quite analogously. For brevity, put $\mathbf p=t_1xt_2yt_3xt_4z$. Let us consider the words $\mathbf u=xyz\mathbf p$, $\mathbf v=yzx\mathbf p$ and $\mathbf w=yxz\mathbf p$. It is a routine to verify that $\mathbf u\,\theta_{\mathbf Z_1}\,\mathbf w\,\theta_{\mathbf Z_2}\,\mathbf v$. Thus, $(\mathbf u,\mathbf v)\in\theta_{\mathbf Z_1}\theta_{\mathbf Z_2}$. Suppose that the congruences $\theta_{\mathbf Z_1}$ and $\theta_{\mathbf Z_2}$ permute. Then $\mathbf u\,\theta_{\mathbf Z_2}\,\mathbf w^\prime\,\theta_{\mathbf Z_1}\,\mathbf v$ for some word $\mathbf w^\prime$. Lemma~\ref{Lem: S(W) in V} implies that the word $xyt_1xt_2yt_3x$ is an isoterm for $\mathbf Z_2$. Therefore, the word $xt_1xt_2x$ is an isoterm for $\mathbf Z_2$ as well. Now we can apply Lemma~\ref{Lem: xt_1x...t_kx is isoterm} and conclude that $\mathbf w^\prime=\mathbf a\mathbf p$, where $\mathbf a$ is a linear word with $\con(\mathbf a)=\{x,y,z\}$. If $\mathbf a\in\{yxz,yzx,zyx\}$, then the identity $\mathbf u_{\{z,t_4\}}\approx\mathbf w^\prime_{\{z,t_4\}}$ is non-trivial and the left-hand side of it coincides with $xyt_1xt_2yt_3x$. Further, if $\mathbf a\in\{xzy,zxy\}$, then we substitute $t_2zt_3$ for $t_2$ in the identity $\mathbf u(y,z,t_2,t_4)\approx\mathbf w^\prime(y,z,t_2,t_4)$ and obtain a non-trivial identity whose right-hand side is $zyt_2zt_3yt_4z$. Both the cases contradict the claim that $xysxtyhx$ is an isoterm for $\mathbf Z_2$ by Lemma~\ref{Lem: S(W) in V}. Therefore, $\mathbf a=xyz$. This means that the variety $\mathbf Z_1$ satisfies the identity $\mathbf u\approx\mathbf v$. But this is not the case because the identity $\mathbf u_{\{y,t_2\}}\approx\mathbf v_{\{y,t_2\}}$ is non-trivial and the left-hand side of it coincides with the word $xzt_1xt_3xt_4z$ which is an isoterm for $\mathbf Z_1$ by Lemma~\ref{Lem: S(W) in V}. 
\end{proof}

We denote the first letter of a word $\mathbf w$ by $h(\mathbf w)$. 

\begin{lemma}
\label{Lem: var S(w_n[pi,tau]) and var S(w_n[xi,eta])}
Let $n\in\mathbb N$ and $\pi,\tau,\xi,\eta\in S_n$. Suppose that $\mathbf w_n[\pi,\tau]\ne\mathbf w_n[\xi,\eta]$ and put $\mathbf X=\var S(\mathbf w_n[\pi,\tau])$ and $\mathbf Y=\var S(\mathbf w_n[\xi,\eta])$. Then the congruences $\theta_{\mathbf X}$ and $\theta_{\mathbf Y}$ on the free monoid $\mathfrak X^\ast$ do not permute.
\end{lemma}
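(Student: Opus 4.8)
The plan is to follow the template of the parts of Lemma~\ref{Lem: non-fi-perm}: I would produce words $\mathbf u$ and $\mathbf v$, together with a word $\mathbf w$, such that $\mathbf u\,\theta_{\mathbf X}\,\mathbf w\,\theta_{\mathbf Y}\,\mathbf v$ --- so that $(\mathbf u,\mathbf v)\in\theta_{\mathbf X}\theta_{\mathbf Y}$ --- and then show that no word $\mathbf w^\prime$ satisfies $\mathbf u\,\theta_{\mathbf Y}\,\mathbf w^\prime\,\theta_{\mathbf X}\,\mathbf v$; this means $\theta_{\mathbf X}\theta_{\mathbf Y}\ne\theta_{\mathbf Y}\theta_{\mathbf X}$, i.e. the two congruences do not permute. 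The ingredients are: by Lemma~\ref{Lem: w_n[pi,tau]=w_n'[pi,tau] in S(w_n[xi,eta])}, since $\mathbf w_n[\pi,\tau]\ne\mathbf w_n[\xi,\eta]$ the variety $\mathbf Y$ satisfies the identity~\eqref{w_n[pi,tau]=w_n'[pi,tau]} and, dually, $\mathbf X$ satisfies $\mathbf w_n[\xi,\eta]\approx\mathbf w_n^\prime[\xi,\eta]$; and, by Lemma~\ref{Lem: S(W) in V}, the word $\mathbf w_n[\pi,\tau]$ is an isoterm for $\mathbf X$ and $\mathbf w_n[\xi,\eta]$ is an isoterm for $\mathbf Y$. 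Note that necessarily $n\ge2$, since $S_1$ is trivial and so the hypothesis $\mathbf w_n[\pi,\tau]\ne\mathbf w_n[\xi,\eta]$ fails for $n=1$.

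The key point is that $\mathbf X$ and $\mathbf Y$ differ precisely in which of the two orderings of the ``middle block'' --- the one dictated by $(\pi,\tau)$ or the one dictated by $(\xi,\eta)$ --- is rigid: $\mathbf X$ may pass from a word carrying a $(\xi,\eta)$-arranged $\mathbf w_n$-fragment to the word in which the two occurrences of $x$ of that fragment have been squashed together, but not from a $(\pi,\tau)$-arranged one, and symmetrically for $\mathbf Y$. I would build $\mathbf u$, $\mathbf w$, $\mathbf v$ so that the step $\mathbf u\,\theta_{\mathbf X}\,\mathbf w$ is realised by a substitution instance of $\mathbf w_n[\xi,\eta]\approx\mathbf w_n^\prime[\xi,\eta]$ applied to a $(\xi,\eta)$-fragment of $\mathbf u$, while the step $\mathbf w\,\theta_{\mathbf Y}\,\mathbf v$ is realised by an instance of~\eqref{w_n[pi,tau]=w_n'[pi,tau]} applied to a $(\pi,\tau)$-fragment that the first squashing has made available; these fragments must be interlocked through the shared occurrences of $x$ and the separators $t_i$ so that the order of the two squashings matters. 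Checking that $\mathbf u$, $\mathbf w$, $\mathbf v$ really lie in the stated congruences is of the kind carried out in Lemma~\ref{Lem: w_n[pi,tau]=w_n'[pi,tau] in S(w_n[xi,eta])} and the remarks following it.

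Granting such $\mathbf u$, $\mathbf w$, $\mathbf v$, suppose a word $\mathbf w^\prime$ with $\mathbf u\,\theta_{\mathbf Y}\,\mathbf w^\prime\,\theta_{\mathbf X}\,\mathbf v$ exists. To reach a contradiction I would describe the $\theta_{\mathbf Y}$-class of $\mathbf u$ and the $\theta_{\mathbf X}$-class of $\mathbf v$ tightly enough to see that they are disjoint. Here one uses that neither $\mathbf X$ nor $\mathbf Y$ satisfies $\sigma_3$ --- otherwise, being a subvariety of $\mathbf A$ it satisfies~\eqref{xxy=yxx}, so by Lemma~\ref{Lem: smth imply w_n[pi,tau]=w_n'[pi,tau]} it would satisfy~\eqref{w_n[pi,tau]=w_n'[pi,tau]} (respectively $\mathbf w_n[\xi,\eta]\approx\mathbf w_n^\prime[\xi,\eta]$), contradicting the relevant isoterm --- so that the letters of the middle block cannot be reordered; combining this with the identities~\eqref{xx=xxx} and~\eqref{xxy=yxx}, with the isoterm property of $\mathbf w_n[\pi,\tau]$ for $\mathbf X$ and of $\mathbf w_n[\xi,\eta]$ for $\mathbf Y$, and with Lemma~\ref{Lem: xt_1x...t_kx is isoterm}, one pins down the two classes; a short case analysis according to whether $\pi\ne\xi$ or $\tau\ne\eta$ --- governed by a letter of the middle block occupying different positions in the two arrangements --- then yields the contradiction. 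The main obstacle is exactly this: both the construction of the interlocked words $\mathbf u$, $\mathbf w$, $\mathbf v$ and the verification that the two congruence classes are disjoint require careful bookkeeping of the positions of the letters $z_1,\dots,z_{2n}$ relative to the separators $t_i$ and the occurrences of $x$, the delicacy being that the identities forced on $\mathbf X$ and $\mathbf Y$ (such as $\delta_2$) already collapse a great deal of the $x$-structure, so the distinguishing feature has to be located in the $z$-structure.
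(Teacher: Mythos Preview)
Your plan follows the same template as the paper's proof, and the ingredients you identify---Lemma~\ref{Lem: w_n[pi,tau]=w_n'[pi,tau] in S(w_n[xi,eta])} for one direction, the isoterm properties from Lemma~\ref{Lem: S(W) in V} for the other---are the right ones. But what you have written is a plan, not a proof: the entire substance lies in the steps you defer.

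You never write down $\mathbf u$, $\mathbf w$, $\mathbf v$. The paper's construction is not obvious: it uses a single shared letter $x$ and two disjoint alphabets $\{z_i,t_i\}$ and $\{z_i',t_i'\}$, taking $\mathbf u=\mathbf p_1x\mathbf q_1x\mathbf q_2\mathbf p_2$ and $\mathbf v=\mathbf p_1\mathbf q_1x\mathbf q_2x\mathbf p_2$, where $\mathbf q_1$ is the $(\pi,\tau)$-arranged middle block over the $z_i$, $\mathbf q_2$ is the $(\xi,\eta)$-arranged middle block over the $z_i'$, and $\mathbf p_1,\mathbf p_2$ interleave the two prefix/suffix strings at the specific positions $n\pi$ and $n+n\tau$. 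The intermediate word is $\mathbf p_1\mathbf q_1x^2\mathbf q_2\mathbf p_2$. Your phrase ``a $(\pi,\tau)$-fragment that the first squashing has made available'' captures the mechanism, but the choice of interleaving positions in $\mathbf p_1,\mathbf p_2$ is exactly what makes the final contradiction go through, and nothing in your sketch indicates how to choose them.

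The impossibility half is likewise underspecified, and your proposed route differs from what the paper actually does. There is no case split on $\pi\ne\xi$ versus $\tau\ne\eta$. Instead, assuming $\mathbf u\,\theta_{\mathbf X}\,\mathbf w\,\theta_{\mathbf Y}\,\mathbf v$, the paper first deletes $x$ and uses that $xzxyty$ is an isoterm for both varieties (a fact imported from~\cite{Gusev-Vernikov-18}) to force $\mathbf w_x=\mathbf p_1\mathbf q_1\mathbf q_2\mathbf p_2$ letter by letter; then the isoterm properties of $\mathbf w_n[\pi,\tau]$ for $\mathbf X$ and of $\mathbf w_n[\xi,\eta]$ for $\mathbf Y$, applied to the restrictions $\mathbf w(x,z_i,t_i)$ and $\mathbf w(x,z_i',t_i')$, force $\mathbf w=\mathbf p_1x\mathbf q_1\mathbf q_2x\mathbf p_2$. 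Finally one restricts $\mathbf u\approx\mathbf w$ to a carefully chosen subset of letters so that the resulting identity has one side equal, up to renaming, to $\mathbf w_n[\pi,\tau]$ itself---contradicting its isoterm status in $\mathbf X$. This last step is precisely where the special interleaving positions in $\mathbf p_1,\mathbf p_2$ are used, and it is the part your outline does not reach.
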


\begin{proof}
Let $\mathbf u=\mathbf p_1x\mathbf q_1x\mathbf q_2\mathbf p_2$ and $\mathbf v=\mathbf p_1\mathbf q_1x\mathbf q_2x\mathbf p_2$, where
$$
\begin{aligned}
&\mathbf p_1=\biggl(\prod_{i=1}^{n\pi-1} z_it_i\biggr)\biggl(\prod_{i=1}^n z_i^\prime t_i^\prime\biggr)\biggl(\prod_{i=n\pi}^n z_it_i\biggr),&\mathbf q_1=\prod_{i=1}^n (z_{i\pi}z_{n+i\tau}),\\[-3pt]
&\mathbf p_2=\biggl(\prod_{i=n+1}^{n+n\tau-1} t_iz_i\biggr)\biggl(\prod_{i=n+1}^{2n} t_i^\prime z_i^\prime\biggr)\biggl(\prod_{i=n+n\tau}^{2n} t_iz_i\biggr),&\mathbf q_2=\prod_{i=1}^n (z_{i\xi}^\prime z_{n+i\eta}^\prime).
\end{aligned}
$$
It is clear that $\mathbf X$ and $\mathbf Y$ satisfy the identity~\eqref{xxy=yxx}. Lemma~\ref{Lem: w_n[pi,tau]=w_n'[pi,tau] in S(w_n[xi,eta])} implies that the variety $\mathbf Y$ satisfies the identity~\eqref{w_n[pi,tau]=w_n'[pi,tau]}. The variety $\mathbf Y$ satisfies the identity $\mathbf u\approx\mathbf p_1\mathbf q_1x^2\mathbf q_2\mathbf p_2$ because
$$
\mathbf u\stackrel{\eqref{w_n[pi,tau]=w_n'[pi,tau]}}\approx\mathbf p_1x^2\mathbf q_1\mathbf q_2\mathbf p_2\stackrel{\eqref{xxy=yxx}}\approx\mathbf p_1\mathbf q_1x^2\mathbf q_2\mathbf p_2.
$$
Analogous arguments show that the identity $\mathbf v\approx\mathbf p_1\mathbf q_1x^2\mathbf q_2\mathbf p_2$ holds in $\mathbf X$. Then $(\mathbf u,\mathbf v)\in\theta_{\mathbf Y}\theta_{\mathbf X}$. Suppose that the congruences $\theta_{\mathbf X}$ and $\theta_{\mathbf Y}$ permute. Then $\mathbf u\,\theta_{\mathbf X}\,\mathbf w\,\theta_{\mathbf Y}\,\mathbf v$ for some word $\mathbf w$.

The variety $\mathbf X$ satisfies the identity $\mathbf u_x\approx\mathbf w_x$. Lemma~\ref{Lem: xt_1x...t_kx is isoterm} implies that $\mathbf w_x=\mathbf p_1\mathbf q^\prime\mathbf p_2$, where $\mathbf q^\prime$ is a linear word with $\con(\mathbf q^\prime)=\{z_i,z_i^\prime\mid 1\le i\le 2n\}$. One can verify that $\mathbf q^\prime=\mathbf q_1\mathbf q_2$. It is proved in Gusev and Vernikov~\cite[p.~28]{Gusev-Vernikov-18} that the word $xzxyty$ is an isoterm for $\mathbf X$ and $\mathbf Y$. It can be checked directly that if $h(\mathbf q^\prime)\ne h(\mathbf q_1\mathbf q_2)$, then we can deduce from $\mathbf u_x\approx\mathbf w_x$ a non-trivial identity of the form $xzxyty\approx\mathbf a$, resulting in a contradiction. Indeed, suppose that $h(\mathbf q^\prime)=z_{n+i\tau}$. Then 
\begin{align*}
&\mathbf u(z_{1\pi},t_{1\pi},z_{n+i\tau},t_{n+i\tau})=z_{1\pi}t_{1\pi}z_{1\pi}z_{n+i\tau}t_{n+i\tau}z_{n+i\tau}\\
\text{and}\quad&\mathbf w(z_{1\pi},t_{1\pi},z_{n+i\tau},t_{n+i\tau})=z_{1\pi}t_{1\pi}z_{n+i\tau}z_{1\pi}t_{n+i\tau}z_{n+i\tau},
\end{align*} 
whence $\mathbf X$ satisfies the identity $xzxyty\approx xzyxty$. Analogous arguments show that $h(\mathbf q^\prime)$ differs from the letters $z_{i\pi}$ with $i>1$, $z_{i\xi}^\prime$ and $z_{1+i\eta}^\prime$. Thus, $h(\mathbf q^\prime)=h(\mathbf q_1\mathbf q_2)$. Arguing in a similar way and moving step by step from left to right by the word $\mathbf q^\prime$, we can establish that $\mathbf q^\prime=\mathbf q_1\mathbf q_2$, whence $\mathbf u_x=\mathbf w_x=\mathbf p_1\mathbf q_1\mathbf q_2\mathbf p_2$. Lemma~\ref{Lem: S(W) in V} implies that
\begin{align*}
&\mathbf w(x,t_1,z_1,t_2,z_2,\dots,t_n,z_n)=\mathbf w_n[\pi,\tau]\quad\text{and}\\[-3pt]
&\mathbf w(x,t_1^\prime,z_1^\prime,t_2^\prime,z_2^\prime,\dots,t_n^\prime,z_n^\prime)=\biggl(\prod_{i=1}^n z_i^\prime t_i^\prime\biggr)x\biggl(\prod_{i=1}^n z_{i\xi}^\prime z_{n+i\eta}^\prime\biggr)x\biggl(\prod_{i=n+1}^{2n} t_i^\prime z_i^\prime\biggr).
\end{align*}
These two equalities together with the claim that $\mathbf u_x=\mathbf w_x=\mathbf p_1\mathbf q_1\mathbf q_2\mathbf p_2$ are possible only in the case when $\mathbf w=\mathbf p_1x\mathbf q_1\mathbf q_2x\mathbf p_2$. 

Put
$$
X=\{x,z_i,t_i,z_{n\xi}^\prime,z_{n+n\eta}^\prime,t_{n\xi}^\prime,t_{n+n\eta}^\prime\mid 1\le i\le 2n,i\ne n\pi,i\ne n+n\tau\}.
$$
Then $\mathbf X$ satisfies the identity $\mathbf u(X)\approx\mathbf w(X)$, that is,
$$
\mathbf p_1(X)x\mathbf q_1(X)x\mathbf q_2(X)\mathbf p_2(X)\approx\mathbf p_1(X)x\mathbf q_1(X)\mathbf q_2(X)x\mathbf p_2(X).
$$
The right-hand side of this identity, that is, the word
\begin{align*}
&\biggl(\prod_{i=1}^{n\pi-1} z_it_i\biggr)z_{n\xi}^\prime t_{n\xi}^\prime\biggl(\prod_{i=n\pi+1}^n z_it_i\biggr)x\biggl(\prod_{i=1}^{n-1} z_{i\pi}z_{n+i\tau}\biggr)z_{n\xi}^\prime z_{n+n\eta}^\prime x\\[-3pt]
\cdot&\biggl(\prod_{i=n+1}^{n+n\tau-1} t_iz_i\biggr)t_{n+n\eta}^\prime z_{n+n\eta}^\prime\biggl(\prod_{i=n+n\tau+1}^{2n} t_iz_i\biggr),
\end{align*}
coincides (up to renaming of letters) with the word $\mathbf w_n[\pi,\tau]$ which is an isoterm for $\mathbf X$ by Lemma~\ref{Lem: S(W) in V}. We have a contradiction.
\end{proof}

\begin{lemma}
\label{Lem: var S(c_{n+1,m}[rho]) and S(c_{n,m+1}[tau])}
Let $n,m\in\mathbb N_0$, $n+m>0$, $\pi\in S_{n+m}$ and $\mathbf V=\var S(\mathbf c_{n,m}[\pi])$. Then there are permutations $\rho,\tau\in S_{n+m+1}$ such that the varieties $\mathbf X=\var S(\mathbf c_{n+1,m}[\rho])$ and $\mathbf Y=\var S(\mathbf c_{n,m+1}[\tau])$ are contained in $\mathbf V$ and the congruences $\theta_{\mathbf X}$ and $\theta_{\mathbf Y}$ on the free monoid $\mathfrak X^\ast$ do not permute.
\end{lemma}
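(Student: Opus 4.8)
The plan is to follow the proof of Lemma~\ref{Lem: var S(w_n[pi,tau]) and var S(w_n[xi,eta])}, carrying an extra bookkeeping layer because the words $\mathbf c_{n,m}[\pi]$ have two interlocked multiple letters $x,y$ rather than one. First I would fix the permutations. Let $\sigma$ denote the order-preserving injection of $\{1,\dots,n+m\}$ into $\{1,\dots,n+m+1\}$ that misses the value $n+1$ (so $i\sigma=i$ for $i\le n$ and $i\sigma=i+1$ for $i>n$). I take $\tau\in S_{n+m+1}$ with $j\tau=j\pi$ for $j\le n+m$ and $(n+m+1)\tau=n+m+1$, and $\rho\in S_{n+m+1}$ with $j\rho=(j\pi)\sigma$ for $j\le n+m$ and $(n+m+1)\rho=n+1$. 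A direct computation then shows that deleting all occurrences of $z_{n+m+1}$ and $t_{n+m+1}$ from $\mathbf c_{n,m+1}[\tau]$ returns exactly $\mathbf c_{n,m}[\pi]$, while deleting all occurrences of $z_{n+1}$ and $t_{n+1}$ from $\mathbf c_{n+1,m}[\rho]$ returns $\mathbf c_{n,m}[\pi]$ after the renaming $z_i\mapsto z_{i-1}$, $t_i\mapsto t_{i-1}$ for $i\ge n+2$.

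For the containments $\mathbf X,\mathbf Y\subseteq\mathbf V$ it suffices, by Lemma~\ref{Lem: S(W) in V}, to prove that $\mathbf c_{n+1,m}[\rho]$ and $\mathbf c_{n,m+1}[\tau]$ are isoterms for $\mathbf V=\var S(\mathbf c_{n,m}[\pi])$. I would argue with the typical substitution technique of the proof of Lemma~\ref{Lem: w_n[pi,tau]=w_n'[pi,tau] in S(w_n[xi,eta])}: given a non-trivial identity $\mathbf c_{n+1,m}[\rho]\approx\mathbf w$ in $\mathbf V$, delete $z_{n+1}$ and $t_{n+1}$ from both sides; by the previous paragraph, and since $\mathbf c_{n,m}[\pi]$ is an isoterm for its own generated variety, the resulting identity is trivial, so $\mathbf w_{\{z_{n+1},t_{n+1}\}}$ is (after renaming) $\mathbf c_{n,m}[\pi]$; the positions of $z_{n+1}$ and $t_{n+1}$ inside $\mathbf w$ are then forced by restricting $\mathbf c_{n+1,m}[\rho]\approx\mathbf w$ to $\{z_{n+1},t_{n+1},x\}$ together with a few neighbouring letters and using that short subwords such as $xyx$, $xsytxy$ and $\bigl(\prod_{i=1}^{k}xt_i\bigr)x$ are isoterms for $\mathbf V$ (Lemma~\ref{Lem: xt_1x...t_kx is isoterm}). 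The word $\mathbf c_{n,m+1}[\tau]$ is handled identically. Note that this gives $\mathbf X,\mathbf Y\subseteq\mathbf V\subseteq\mathbf A$ and $\mathbf D_2\subseteq\mathbf X,\mathbf Y$.

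For the non-permutability I would build, as in Lemma~\ref{Lem: var S(w_n[pi,tau]) and var S(w_n[xi,eta])}, words $\mathbf u$ and $\mathbf v$ over the common multiple letters $x,y$ and common separator $t$, but with two private copies of the $z$- and $t_i$-letters, so arranged that the restriction of $\mathbf u$ to the first private alphabet together with $x,y,t$ is $\mathbf c_{n+1,m}[\rho]$ while its restriction to the second private alphabet is the ``already moved'' form of $\mathbf c_{n,m+1}[\tau]$ (with the $x,y$ block shifted aside), and for $\mathbf v$ these two roles are exchanged. Since $\mathbf c_{n+1,m}[\rho]\ne\mathbf c_{n,m+1}[\tau]$, the typical substitution argument shows that $\mathbf Y$ satisfies $\mathbf c_{n+1,m}[\rho]\approx\mathbf c_{n+1,m}^\prime[\rho]$ and $\mathbf X$ satisfies $\mathbf c_{n,m+1}[\tau]\approx\mathbf c_{n,m+1}^\prime[\tau]$; applying these together with $x^2y\approx yx^2$ one checks that $\mathbf u$ reduces in $\mathbf Y$, and $\mathbf v$ reduces in $\mathbf X$, to a common word $\mathbf w$, so that $(\mathbf u,\mathbf v)\in\theta_{\mathbf Y}\theta_{\mathbf X}$. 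To rule out $(\mathbf u,\mathbf v)\in\theta_{\mathbf X}\theta_{\mathbf Y}$, suppose $\mathbf u\,\theta_{\mathbf X}\,\mathbf w^\prime\,\theta_{\mathbf Y}\,\mathbf v$. Deleting $x$ and then applying Lemma~\ref{Lem: xt_1x...t_kx is isoterm} (the words $\bigl(\prod xt_i\bigr)x$ and $xsytxy$ being isoterms for $\mathbf X$ and $\mathbf Y$, again by the typical argument) restricts $\mathbf w^\prime$ to a short list of candidates; for each of them a suitable restriction of $\mathbf u\approx\mathbf w^\prime$ or $\mathbf w^\prime\approx\mathbf v$ produces a non-trivial identity of the form $\mathbf c_{n+1,m}[\rho]\approx(\cdots)$ valid in $\mathbf Y$, or $\mathbf c_{n,m+1}[\tau]\approx(\cdots)$ valid in $\mathbf X$, contradicting Lemma~\ref{Lem: S(W) in V}.

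The two delicate points are, first, checking that the above $\rho$ and $\tau$ genuinely make $\mathbf c_{n+1,m}[\rho]$ and $\mathbf c_{n,m+1}[\tau]$ isoterms for $\var S(\mathbf c_{n,m}[\pi])$ (an arbitrary extension of $\pi$ need not do this, so one must keep precise track of which subwords of $\mathbf c_{n,m}[\pi]$ are isoterms), and, second, the verification that no ``$\theta_{\mathbf X}$-then-$\theta_{\mathbf Y}$'' route from $\mathbf u$ to $\mathbf v$ exists; this last step is the analogue of the hard part of Lemma~\ref{Lem: var S(w_n[pi,tau]) and var S(w_n[xi,eta])}, made heavier by the need to pin down the positions of both multiple letters $x$ and $y$ simultaneously rather than one.
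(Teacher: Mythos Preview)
Your plan has the right overall shape, but it diverges from the paper at both main steps, and at the second step the divergence is a genuine gap.

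\textbf{Containment.} You propose to show directly that $\mathbf c_{n+1,m}[\rho]$ and $\mathbf c_{n,m+1}[\tau]$ are isoterms for $\mathbf V$, pinning down the positions of the extra letters by hand. The paper does this more economically: it first observes that $\sigma_2$ fails in every $S(\mathbf c_{k,\ell}[\xi])$, so by Lemmas~\ref{Lem: S(W) in V} and~\ref{Lem: V contains D_2 but does not contain L or M or M^delta}(ii) all of $\mathbf V,\mathbf X,\mathbf Y$ contain $\mathbf M^\delta$. Then Lemma~\ref{Lem: c_{n,m}[rho]=c[n,m]'[rho]} says that if $S(\mathbf c_{n+1,m}[\rho])\notin\mathbf V$, the \emph{only} possible non-trivial identity is $\mathbf c_{n+1,m}[\rho]\approx\mathbf c_{n+1,m}^\prime[\rho]$; deleting the extra $z$- and $t$-letters collapses this to $\mathbf c_{n,m}[\pi]\approx\mathbf c_{n,m}^\prime[\pi]$, contradicting Lemma~\ref{Lem: S(W) in V}. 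Your direct route would work but duplicates the word-combinatorics already packaged inside Lemma~\ref{Lem: c_{n,m}[rho]=c[n,m]'[rho]}.

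\textbf{Non-permutability.} Here the analogy with Lemma~\ref{Lem: var S(w_n[pi,tau]) and var S(w_n[xi,eta])} breaks down. You want to share the multiple letters $x,y$ between the two private alphabets, but both relevant identities $\mathbf c_{n+1,m}[\rho]\approx\mathbf c_{n+1,m}^\prime[\rho]$ (valid in $\mathbf Y$) and $\mathbf c_{n,m+1}[\tau]\approx\mathbf c_{n,m+1}^\prime[\tau]$ (valid in $\mathbf X$) act on the \emph{same} adjacent pair, swapping $xy\leftrightarrow yx$. With a single shared $xy$ block there is no way to compose one $\theta_{\mathbf X}$-step and one $\theta_{\mathbf Y}$-step to reach a genuinely different third word. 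Your phrases ``the $x,y$ block shifted aside'' and the appeal to $x^2y\approx yx^2$ do not help: no $\mathbf c$-word contains $x^2$ or $y^2$, and no alternative placement of $x,y$ is specified.

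The paper's remedy is to introduce a \emph{third} multiple letter $z$ and use overlapping pairs: the unprimed private alphabet together with $(x,y,t)$ carries a copy of $\mathbf c_{n,m+1}[\tau]$, while the primed alphabet together with $(y,z,t)$ carries a relabelled copy of $\mathbf c_{n+1,m}[\rho]$. The witnesses are $\mathbf u=\mathbf p\,yxz\,\mathbf q$ and $\mathbf v=\mathbf p\,xzy\,\mathbf q$, with intermediate $\mathbf p\,xyz\,\mathbf q$: the $\mathbf X$-identity swaps $y$ with $x$, and then the $\mathbf Y$-identity swaps $y$ with $z$, so $(\mathbf u,\mathbf v)\in\theta_{\mathbf X}\theta_{\mathbf Y}$. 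Ruling out a $\theta_{\mathbf Y}\theta_{\mathbf X}$ route then becomes a six-case check on the linear word $\mathbf a$ with $\con(\mathbf a)=\{x,y,z\}$ in $\mathbf w=\mathbf p\mathbf a\mathbf q$, after Lemma~\ref{Lem: xt_1x...t_kx is isoterm} and the dual of \cite[Lemma~2.5]{Gusev-19} have forced that shape. This overlapping-pair trick is the idea your sketch is missing.
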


\begin{proof}
Let $\mathbf X=\var S(\mathbf c_{n+1,m}[\rho])$ and $\mathbf Y=\var S(\mathbf c_{n,m+1}[\tau])$, where
\begin{align*}
&\rho=
\begin{pmatrix}
1&2&\dots&n+m&n+m+1\\
1\pi&2\pi&\dots&(n+m)\pi&n+m+1
\end{pmatrix}
\\
\text{and}\quad&\tau=
\begin{pmatrix}
1&2&3&\dots&n+m+1\\
1&1\pi+1&2\pi+1&\dots&(n+m)\pi+1
\end{pmatrix}
.
\end{align*}
For any $k,\ell\in\mathbb N_0$ and $\xi\in S_{k+\ell}$, the identity $\sigma_2$ implies a non-trivial identity of the form $\mathbf c_{k,\ell}[\xi]\approx\mathbf a$. Then Lemmas~\ref{Lem: S(W) in V} and~\ref{Lem: V contains D_2 but does not contain L or M or M^delta}(ii) imply that all the varieties $\mathbf V$, $\mathbf X$ and $\mathbf Y$ contain $\mathbf M^\delta$, whence the word $xzytxy$ is an isoterm for $\mathbf V$, $\mathbf X$ and $\mathbf Y$ by Lemma~\ref{Lem: S(W) in V}. If $\mathbf X\nsubseteq\mathbf V$, then $\mathbf V$ satisfies the identity $\mathbf c_{n+1,m}[\rho]\approx\mathbf c_{n+1,m}^\prime[\rho]$ by Lemma~\ref{Lem: c_{n,m}[rho]=c[n,m]'[rho]}. Since 
\begin{align*}
&(\mathbf c_{n+1,m}[\rho])_{\{t_{n+m+1},z_{n+m+1}\}}=\mathbf c_{n,m}[\pi]\\ 
\text{and}\quad&(\mathbf c_{n+1,m}^\prime[\rho])_{\{t_{n+m+1},z_{n+m+1}\}}=\mathbf c_{n,m}^\prime[\pi],
\end{align*}
we have a contradiction with Lemma~\ref{Lem: S(W) in V}. Therefore, $\mathbf X\subseteq\mathbf V$. It can be verified similarly that $\mathbf Y\subseteq\mathbf V$. 

Arguments similar to ones from the proof of Lemma~\ref{Lem: w_n[pi,tau]=w_n'[pi,tau] in S(w_n[xi,eta])} show that the variety $\mathbf X$ satisfies the identity 
\begin{equation}
\label{c_{n,m+1}[tau]=c_{n,m+1}'[tau]}
\mathbf c_{n,m+1}[\tau]\approx\mathbf c_{n,m+1}^\prime[\tau],
\end{equation}
while the variety $\mathbf Y$ satisfies the identity
\begin{equation}
\label{c_{n+1,m}[rho]=c_{n+1,m}'[rho]}
\mathbf c_{n+1,m}[\rho]\approx\mathbf c_{n+1,m}^\prime[\rho].
\end{equation}
Let $\mathbf u=\mathbf pyxz\mathbf q$ and $\mathbf v=\mathbf pxzy\mathbf q$, where
\begin{align*}
&\mathbf p=\biggl(\prod_{i=1}^n z_it_i\biggr)\biggl(\prod_{i=1}^{n+1} z_i^\prime t_i^\prime\biggr)\quad\text{and}\\[-3pt]
&\mathbf q=t\biggl(\prod_{i=n+1}^{n+m+1} z_it_i\biggr)\biggl(\prod_{i=n+2}^{n+m+1} z_i^\prime t_i^\prime\biggr)x\biggl(\prod_{i=1}^{n+m+1} z_{i\tau}\biggr)y\biggl(\prod_{i=1}^{n+m+1} z_{i\rho}^\prime\biggr)z.
\end{align*}
It is easy to see that $\mathbf u\stackrel{\eqref{c_{n,m+1}[tau]=c_{n,m+1}'[tau]}}\approx\mathbf pxyz\mathbf q\stackrel{\eqref{c_{n+1,m}[rho]=c_{n+1,m}'[rho]}}\approx\mathbf v$. Thus, $\mathbf u\,\theta_{\mathbf X}\,\mathbf pxyz\mathbf q\,\theta_{\mathbf Y}\,\mathbf v$, whence $(\mathbf u,\mathbf v)\in\theta_{\mathbf X}\theta_{\mathbf Y}$. Suppose that the congruences $\theta_{\mathbf X}$ and $\theta_{\mathbf Y}$ permute. Then there is a word $\mathbf w$ such that $\mathbf u\,\theta_{\mathbf Y}\,\mathbf w\,\theta_{\mathbf X}\,\mathbf v$. Since $xzytxy$ and therefore, $xtx$ are isoterms for $\mathbf X$ and $\mathbf Y$, Lemma~\ref{Lem: xt_1x...t_kx is isoterm} and the assertion dual to Lemma~2.5 in Gusev~\cite{Gusev-19} imply that $\mathbf w=\mathbf p\mathbf a\mathbf q$, where $\mathbf a$ is a linear word with $\con(\mathbf a)=\{x,y,z\}$. It is easy to see that:
\begin{itemize}
\item[(i)] if $\mathbf a\in\{xyz,xzy,zxy\}$, then $\mathbf Y$ satisfies the identity~\eqref{c_{n,m+1}[tau]=c_{n,m+1}'[tau]}; 
\item[(ii)] if $\mathbf a\in\{yzx,zyx\}$, then $\mathbf Y$ satisfies the identity 
\begin{align*}
&\biggl(\prod_{i=1}^n z_it_i\biggr)xzt\biggl(\prod_{i=n+1}^{n+m+1} z_it_i\biggr)x\biggl(\prod_{i=1}^{n+m+1} z_{i\tau}\biggr)z\\[-3pt]
\approx{}&\biggl(\prod_{i=1}^n z_it_i\biggr)zxt\biggl(\prod_{i=n+1}^{n+m+1} z_it_i\biggr)x\biggl(\prod_{i=1}^{n+m+1} z_{i\tau}\biggr)z
\end{align*}
which coincides (up to renaming of letters) with the identity~\eqref{c_{n,m+1}[tau]=c_{n,m+1}'[tau]}; 
\item[(iii)] if $\mathbf a=yxz$, then $\mathbf X$ satisfies the identity
\begin{align*}
&\biggl(\prod_{i=1}^{n+1} z_i^\prime t_i^\prime\biggr)yzt\biggl(\prod_{i=n+2}^{n+m+1} z_i^\prime t_i^\prime\biggr)y\biggl(\prod_{i=1}^{n+m+1} z_{i\rho}^\prime\biggr)z\\[-3pt]
\approx{}&\biggl(\prod_{i=1}^{n+1} z_i^\prime t_i^\prime\biggr)zyt\biggl(\prod_{i=n+2}^{n+m+1} z_i^\prime t_i^\prime\biggr)y\biggl(\prod_{i=1}^{n+m+1} z_{i\rho}^\prime\biggr)z
\end{align*}
which coincides (up to renaming of letters) with the identity~\eqref{c_{n+1,m}[rho]=c_{n+1,m}'[rho]}. 
\end{itemize}
In either case we have a contradiction with Lemma~\ref{Lem: S(W) in V}.
\end{proof}

\section{Certain $fi$-permutable varieties}
\label{Sec: fi-perm}

In view of Lemma~\ref{Lem: lifting}, to verify that a monoid variety $\mathbf V$ is $fi$-permutable, it suffices to prove that if $\mathbf X,\mathbf Y\subseteq\mathbf V$ and $(\mathbf u,\mathbf v)\in\theta_{\mathbf X}\vee\theta_{\mathbf Y}$ (equivalently, an identity $\mathbf u\approx\mathbf v$ holds in $\mathbf X\wedge\mathbf Y$), then $(\mathbf u,\mathbf v)\in\theta_{\mathbf X}\theta_{\mathbf Y}$. We will use this argument many times throughout the rest of the article without explicitly mentioning it.

The section is divided into four subsections each of which is devoted to proving the $fi$-permutability of one of the varieties $\mathbf D_\infty\vee\mathbf N$, $\mathbf P_n$, $\mathbf Q_{r,s}$ or $\mathbf R$.

\subsection{The variety $\mathbf D_\infty\vee\mathbf N$}
\label{Subsec: D_infty vee N is fi-perm}
The lattice $L(\mathbf D_\infty\vee\mathbf N)$ has a very simple structure and admits of an exhaustive description, which will be obtained below (see Corollary~\ref{Cor: L(D_infty vee N)}).

\begin{lemma}
\label{Lem: subvarieties of D_infty vee N}
Any variety from the interval $[\mathbf D_2,\mathbf D_\infty\vee\mathbf N]$ may be given within the variety $\mathbf D_\infty\vee\mathbf N$ by the identities~\eqref{xytxy=yxtxy}, $\sigma_1$ or $\delta_k$ with $k\ge 2$.
\end{lemma}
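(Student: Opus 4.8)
The plan is to analyze an arbitrary subvariety $\mathbf V\in[\mathbf D_2,\mathbf D_\infty\vee\mathbf N]$ via its defining identities relative to $\mathbf D_\infty\vee\mathbf N$, and show each such identity is implied modulo $\mathbf D_\infty\vee\mathbf N$ by instances of $\eqref{xytxy=yxtxy}$, $\sigma_1$, or $\delta_k$. First I would record the structural facts available: $\mathbf D_\infty\vee\mathbf N\subseteq\mathbf A$ (since both $\mathbf D_\infty$ and $\mathbf N$ satisfy $\eqref{xx=xxx}$ and $\eqref{xxy=yxx}$), so Lemma~\ref{Lem: L(A)} applies; moreover $\mathbf D_\infty\vee\mathbf N$ satisfies $\sigma_2$ and $\sigma_3$ (both generating varieties do), so it lies in $\mathbf O$, and by Lemma~\ref{Lem: subvarieties of O} every non-commutative subvariety is given within $\mathbf O$ — hence a fortiori within $\mathbf D_\infty\vee\mathbf N$, after adding back $\sigma_2,\sigma_3$ — by efficient identities of the forms $\eqref{one letter in a block}$ and $\eqref{two letters in a block}$. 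So it suffices to handle these two families of defining identities.

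For an identity of the form $\eqref{one letter in a block}$, Lemma~\ref{Lem: Straubing identities in A} is the key tool: if some $e_i>1$ and some $f_j>1$ the identity already holds in $\mathbf A\supseteq\mathbf D_\infty\vee\mathbf N$, so it is redundant; otherwise (say all $e_i\le 1$) part~(ii) gives $\mathbf A\{\mathbf u\approx\mathbf v\}=\mathbf A\{\delta_{e-1}\}$, and intersecting with $\mathbf D_\infty\vee\mathbf N$ shows the identity is equivalent within $\mathbf D_\infty\vee\mathbf N$ to $\delta_{e-1}$ (with $e-1\ge 2$ since $\sum e_i\ge 2$, using that $\delta_1$ already holds in $\mathbf D_2$). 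For an identity of the form $\eqref{two letters in a block}$, namely $x^{e_0}y^{f_0}\bigl(\prod_{i=1}^r t_ix^{e_i}y^{f_i}\bigr)\approx y^{f_0}x^{e_0}\bigl(\prod_{i=1}^r t_ix^{e_i}y^{f_i}\bigr)$ with $\sum e_i\ge2$ and $\sum f_i\ge2$, I would argue that modulo $\eqref{xx=xxx}$, $\eqref{xxy=yxx}$, $\sigma_2$ and $\sigma_3$ one may reduce the problem to the case where the relevant occurrences of $x$ and $y$ are adjacent — the only thing being swapped is the leading $xy$ versus $yx$ — and then, because there are further occurrences of both $x$ and $y$ to the right (guaranteed by $\sum_{i\ge1}e_i\ge 1$ and $\sum_{i\ge1}f_i\ge1$ after pulling squares out, or one can first use $\eqref{xx=xxx}$ to create them), the swap is an instance of $\eqref{xytxy=yxtxy}$ up to renaming; if instead all the ``extra'' occurrences of $x$ (or $y$) sit in the first block, one picks up $\sigma_1$ or a $\delta_k$ from a collapse of squares as in the proof of Lemma~\ref{Lem: pxyq=pyxq}. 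Conversely each of $\eqref{xytxy=yxtxy}$, $\sigma_1$, $\delta_k$ is visibly of one of the two Straubing forms (or implied by $\delta_k$), so the two families coincide up to equivalence within $\mathbf D_\infty\vee\mathbf N$.

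The main obstacle, I expect, is the bookkeeping in the $\eqref{two letters in a block}$ case: carefully justifying, using only $\eqref{xx=xxx}$, $\eqref{xxy=yxx}$, $\sigma_2$, $\sigma_3$ (all valid in $\mathbf D_\infty\vee\mathbf N$) together with the candidate identities, that an arbitrary such efficient identity decomposes into a finite conjunction of instances of $\eqref{xytxy=yxtxy}$, $\sigma_1$, and $\delta_k$'s — in particular tracking which blocks the surplus occurrences of $x$ and $y$ land in and distinguishing the sub-case where $\delta_k$ is forced (all surplus in one block, no ``witness'' pair on the right) from the sub-case where $\eqref{xytxy=yxtxy}$ or $\sigma_1$ suffices. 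A clean way to organize this is to separate squares using $\eqref{xxy=yxx}$ and Lemma~\ref{Lem: from pxqxr to px^2qr}-style moves, reducing to a linear-balanced core as in Lemma~\ref{Lem: reduction to linear-balanced}, and then invoke the invertibility analysis from the proof of Proposition~\ref{Prop: L(A') is distributive}; but unlike there, here we only have the weaker $\mathbf D_\infty\vee\mathbf N$ available rather than $\mathbf A'$, so the swap-lemma inputs must be replaced by direct appeals to $\eqref{xytxy=yxtxy}$ and $\sigma_1$, which is exactly where the verification becomes delicate.
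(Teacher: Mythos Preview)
Your overall plan coincides with the paper's: reduce via Lemma~\ref{Lem: subvarieties of O} to efficient identities of the forms~\eqref{one letter in a block} and~\eqref{two letters in a block}, and dispose of the first family with Lemma~\ref{Lem: Straubing identities in A}. The divergence is in how you handle~\eqref{two letters in a block}. You propose to decompose an arbitrary such identity into a conjunction of instances of $\sigma_1$, \eqref{xytxy=yxtxy}, and $\delta_k$ by moving occurrences around with $\sigma_2,\sigma_3$ and the identities of $\mathbf A$; you correctly anticipate that the case split (which block hosts the surplus occurrences, whether a $\delta_k$ is forced, etc.) is where the work lies.

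The paper avoids this bookkeeping entirely with a single observation: any identity of the form~\eqref{two letters in a block} in which some exponent exceeds $1$, or $\sum e_i>2$, or $\sum f_i>2$, already \emph{holds} in $\mathbf D_\infty\vee\mathbf N$. Indeed, in $\mathbf D_\infty$ every such identity follows from $\sigma_1$ (both leading occurrences are non-last since $\sum e_i,\sum f_i\ge 2$); in $\mathbf N$, if a letter has a square or at least three occurrences then the defining identity $xyxzx\approx x^2yz$ together with $x^2y\approx yx^2$ lets one push that letter's square past everything, trivializing the swap. Hence the only identities of the form~\eqref{two letters in a block} that can possibly fail in $\mathbf D_\infty\vee\mathbf N$ are those with all $e_i,f_i\le 1$ and $\sum e_i=\sum f_i=2$, and a direct enumeration shows these are, up to renaming, exactly $\sigma_1$ and~\eqref{xytxy=yxtxy}. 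So instead of decomposing complicated identities into your three generators, the paper simply throws away all but two of them as redundant. Your route would work, but the shortcut is to check redundancy in $\mathbf D_\infty\vee\mathbf N$ rather than derivability from $\sigma_1$, \eqref{xytxy=yxtxy}, $\delta_k$; this is what makes the ``delicate verification'' you flag disappear.
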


\begin{proof}
It is evident that $\mathbf D_\infty\vee\mathbf N\subseteq\mathbf A\wedge\mathbf O\{\alpha_1,\,\alpha_2,\,\alpha_3\}$. Lemmas~\ref{Lem: subvarieties of O} and~\ref{Lem: Straubing identities in A} imply that any variety from the interval $[\mathbf D_2,\mathbf D_\infty\vee\mathbf N]$ may be given within the variety $\mathbf D_\infty\vee\mathbf N$ by the identity $\delta_k$ with $k\ge 2$ and the identities of the form~\eqref{two letters in a block} with $r\in\mathbb N_0$, $e_0,f_0\in\mathbb N$, $e_1,f_1,\dots,e_r,f_r\in\mathbb N_0$, $\sum_{i=0}^r e_i\ge 2$ and $\sum_{i=0}^r f_i\ge 2$. If either at least one of the numbers $e_0,f_0,e_1,f_1,\dots,e_r,f_r$ is greater than~1 or $\sum_{i=0}^r e_i>2$ or $\sum_{i=0}^r f_i>2$, then the identity~\eqref{two letters in a block} holds in $\mathbf D_\infty\vee\mathbf N$. Therefore, if an identity of the form~\eqref{two letters in a block} fails in $\mathbf D_\infty\vee\mathbf N$, then this identity coincides (up to renaming of letters) with either $\sigma_1$ or~\eqref{xytxy=yxtxy}. 
\end{proof}

Since the identities $\delta_\ell$ with $\ell<k$, $\sigma_1$ and~\eqref{xytxy=yxtxy} fail in the varieties $\mathbf D_k$, $\mathbf M$ and $\mathbf N$, respectively, and $\sigma_1$ implies~\eqref{xytxy=yxtxy}, Lemma~\ref{Lem: subvarieties of D_infty vee N} implies the following assertion.

\begin{corollary}
\label{Cor: D_k vee N and D_k vee M}
If $2\le k\le\infty$, then $\mathbf D_k\vee\mathbf N=\mathbf A\wedge\mathbf O\{\alpha_1,\,\alpha_2,\,\alpha_3,\,\delta_k\}$ and $\mathbf D_k\vee\mathbf M=\mathbf A\wedge\mathbf O\{\alpha_1,\,\alpha_2,\,\alpha_3,\,\delta_k,\,\eqref{xytxy=yxtxy}\}$.\qed
\end{corollary}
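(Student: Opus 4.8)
The plan is to derive both equalities from Lemma~\ref{Lem: subvarieties of D_infty vee N}, together with a few elementary observations about which of the identities $\sigma_1$, \eqref{xytxy=yxtxy}, $\delta_\ell$ hold in $\mathbf D_k$, $\mathbf N$, $\mathbf M$. I would treat $\mathbf D_k\vee\mathbf N$ in detail and $\mathbf D_k\vee\mathbf M$ in parallel.

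First I would record the inclusion ``$\subseteq$''. The inclusion $\mathbf D_\infty\vee\mathbf N\subseteq\mathbf A\wedge\mathbf O\{\alpha_1,\alpha_2,\alpha_3\}$ is already established (at the beginning of the proof of Lemma~\ref{Lem: subvarieties of D_infty vee N}); since $\mathbf D_k$ satisfies $\delta_k$ (Lemma~\ref{Lem: basis for D_k}) and $\mathbf N$ satisfies $xyxzx\approx x^2yz$, i.e.\ $\delta_2$, hence $\delta_k$ for $k\ge2$, this gives $\mathbf D_k\vee\mathbf N\subseteq\mathbf A\wedge\mathbf O\{\alpha_1,\alpha_2,\alpha_3,\delta_k\}$. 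For $\mathbf D_k\vee\mathbf M$ one adds that both $\mathbf D_k$ (via $\sigma_1$) and $\mathbf M$ (a routine check on $S(xytxsy)$ of the kind explained after Lemma~\ref{Lem: w_n[pi,tau]=w_n'[pi,tau] in S(w_n[xi,eta])}) satisfy~\eqref{xytxy=yxtxy}, so $\mathbf D_k\vee\mathbf M\subseteq\mathbf A\wedge\mathbf O\{\alpha_1,\alpha_2,\alpha_3,\delta_k,\eqref{xytxy=yxtxy}\}$.

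Next I would pin these joins down inside $\mathbf D_\infty\vee\mathbf N$. Both $\mathbf D_k\vee\mathbf N$ and $\mathbf D_k\vee\mathbf M$ lie in the interval $[\mathbf D_2,\mathbf D_\infty\vee\mathbf N]$ (for the latter one uses $\mathbf M\subseteq\mathbf D_\infty\vee\mathbf N$, which is checked via Lemma~\ref{Lem: S(W) in V}), so by Lemma~\ref{Lem: subvarieties of D_infty vee N} each is obtained from $\mathbf D_\infty\vee\mathbf N$ by imposing a subset of $\{\eqref{xytxy=yxtxy},\sigma_1\}\cup\{\delta_\ell\mid\ell\ge2\}$. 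Now $\delta_\ell$ with $\ell<k$ fails in $\mathbf D_k$ and $\delta_\ell$ with $\ell>k$ follows from $\delta_k$, so the only $\delta$-identity that can and must appear is $\delta_k$; and~\eqref{xytxy=yxtxy} (hence, a fortiori, $\sigma_1$) fails in $\mathbf N$, so it cannot appear in the presentation of $\mathbf D_k\vee\mathbf N$, giving $\mathbf D_k\vee\mathbf N=(\mathbf D_\infty\vee\mathbf N)\{\delta_k\}$. For $\mathbf D_k\vee\mathbf M$, $\sigma_1$ fails in $\mathbf M$ so it still cannot appear, but~\eqref{xytxy=yxtxy} now holds, and comparing with $\mathbf D_k\vee\mathbf N=(\mathbf D_\infty\vee\mathbf N)\{\delta_k\}$ (which fails~\eqref{xytxy=yxtxy}) and, for finite $k$, with $(\mathbf D_\infty\vee\mathbf N)\{\eqref{xytxy=yxtxy}\}$ (which fails $\delta_k$, as $\mathbf D_\infty$ does) shows that $\mathbf D_k\vee\mathbf M=(\mathbf D_\infty\vee\mathbf N)\{\delta_k,\eqref{xytxy=yxtxy}\}$.

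It remains to identify $\mathbf D_\infty\vee\mathbf N$ with $\mathbf A\wedge\mathbf O\{\alpha_1,\alpha_2,\alpha_3\}$ (this is the case $k=\infty$, $\mathbf N$ of the corollary); substituting it into the relative presentations of the previous paragraph finishes the proof. One inclusion is the one already used (one also needs that $\mathbf N$ satisfies $\alpha_1,\alpha_2,\alpha_3$, which are consequences of $\delta_2$ and $x^2y\approx yx^2$). The reverse inclusion $\mathbf A\wedge\mathbf O\{\alpha_1,\alpha_2,\alpha_3\}\subseteq\mathbf D_\infty\vee\mathbf N$ is the main obstacle: it does not follow formally from the two varieties $\mathbf D_\infty$ and $\mathbf N$ both being contained in $\mathbf A\wedge\mathbf O\{\alpha_1,\alpha_2,\alpha_3\}$, and I would prove it by repeating, for $\mathbf A\wedge\mathbf O\{\alpha_1,\alpha_2,\alpha_3\}$ itself, the argument behind Lemma~\ref{Lem: subvarieties of D_infty vee N}. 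Namely, this variety is a non-commutative subvariety of $\mathbf O$ containing $\mathbf D_2$, so by Lemma~\ref{Lem: subvarieties of O} it is defined within $\mathbf O$ by finitely many efficient identities of the forms~\eqref{one letter in a block} and~\eqref{two letters in a block}; modulo $x^2\approx x^3$ and $x^2y\approx yx^2$ the former reduce to $\delta$-identities (Lemma~\ref{Lem: Straubing identities in A}), none of which may occur since every $\delta_\ell$ with finite $\ell$ fails in $\mathbf D_\infty$, while the latter must be analysed directly: one has to show that every nontrivial efficient identity of the form~\eqref{two letters in a block} valid in both $\mathbf D_\infty$ and $\mathbf N$ already follows, over $\mathbf A\wedge\mathbf O$, from $\alpha_1,\alpha_2,\alpha_3$. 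This last classification of efficient~\eqref{two letters in a block}-identities is the one genuinely laborious point; the subsequent descent to arbitrary $k$ and to $\mathbf M$ is bookkeeping.
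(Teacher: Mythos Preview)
Your proposal is correct and follows the same route as the paper. The paper's one-sentence justification (``Since the identities $\delta_\ell$ with $\ell<k$, $\sigma_1$ and~\eqref{xytxy=yxtxy} fail in the varieties $\mathbf D_k$, $\mathbf M$ and $\mathbf N$, respectively, and $\sigma_1$ implies~\eqref{xytxy=yxtxy}, Lemma~\ref{Lem: subvarieties of D_infty vee N} implies the following assertion'') is exactly your step~2: it uses Lemma~\ref{Lem: subvarieties of D_infty vee N} to pin down $\mathbf D_k\vee\mathbf N$ and $\mathbf D_k\vee\mathbf M$ inside $\mathbf D_\infty\vee\mathbf N$ by recording which of $\sigma_1$, \eqref{xytxy=yxtxy}, $\delta_\ell$ hold or fail.

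The only point on which you are more explicit than the paper is the identification $\mathbf D_\infty\vee\mathbf N=\mathbf A\wedge\mathbf O\{\alpha_1,\alpha_2,\alpha_3\}$ (your step~3). The paper treats this as already absorbed in the proof of Lemma~\ref{Lem: subvarieties of D_infty vee N}: there the claim ``if some $e_i$ or $f_i$ exceeds~$1$, or $\sum e_i>2$, or $\sum f_i>2$, then~\eqref{two letters in a block} holds in $\mathbf D_\infty\vee\mathbf N$'' is in fact verified by deriving~\eqref{two letters in a block} from the defining identities of $\mathbf A\wedge\mathbf O\{\alpha_1,\alpha_2,\alpha_3\}$, so the same identities hold in the larger variety and the two coincide. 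Thus what you call ``the one genuinely laborious point'' is precisely the routine verification the paper suppresses inside that lemma; your instinct that it needs to be done is right, but it is no harder than the unproved sentence already sitting in the proof of Lemma~\ref{Lem: subvarieties of D_infty vee N}.
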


\begin{corollary}
\label{Cor: L(D_infty vee N)}
The lattice $L(\mathbf D_\infty\vee\mathbf N)$ has the form shown in Fig.~\ref{Fig: L(D_infty vee N)}.
\end{corollary}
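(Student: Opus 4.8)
The plan is to determine the lattice $L(\mathbf D_\infty\vee\mathbf N)$ explicitly by combining the structural description of $L(\mathbf A)$ from Lemma~\ref{Lem: L(A)} with the bound on subvarieties from Lemma~\ref{Lem: subvarieties of D_infty vee N}. First I would recall that $L(\mathbf D_2)$ is the chain $\mathbf T\subset\mathbf{SL}\subset\mathbf C_2\subset\mathbf D_1\subset\mathbf D_2$ (Jackson), so every proper subvariety of $\mathbf D_\infty\vee\mathbf N$ that does not contain $\mathbf D_2$ already sits inside this chain. Thus the only interesting part is the interval $[\mathbf D_2,\mathbf D_\infty\vee\mathbf N]$, and the whole task reduces to describing that interval and then appending the chain below $\mathbf D_2$.

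For the interval $[\mathbf D_2,\mathbf D_\infty\vee\mathbf N]$, the key fact is Lemma~\ref{Lem: subvarieties of D_infty vee N}: every variety $\mathbf V$ in this interval is defined within $\mathbf D_\infty\vee\mathbf N$ by a subset of the identities $\{\delta_k\mid 2\le k<\infty\}\cup\{\sigma_1\}\cup\{\eqref{xytxy=yxtxy}\}$. Since $\delta_k$ implies $\delta_{k+1}$, the family $\{\delta_k\}$ contributes a chain, and by Lemma~\ref{Lem: x^n is isoterm}-type reasoning (or directly, since $\delta_\ell$ fails in $\mathbf D_k$ for $\ell<k$) these identities are pairwise non-equivalent modulo $\mathbf D_\infty\vee\mathbf N$; this gives the ``$\delta$-axis'' with members $\mathbf D_k\vee\mathbf N$ for $2\le k\le\infty$. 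The identity $\sigma_1$ implies $\eqref{xytxy=yxtxy}$, so adding $\sigma_1$ or adding $\eqref{xytxy=yxtxy}$ to $\mathbf D_k\vee\mathbf N$ produces at most two further varieties above each $\mathbf D_k\vee\mathbf N$ (namely $\mathbf D_k\vee\mathbf M$ obtained by adding $\eqref{xytxy=yxtxy}$, as recorded in Corollary~\ref{Cor: D_k vee N and D_k vee M}, and $\mathbf D_k$ itself obtained by adding $\sigma_1$, recalling $\mathbf D_k=\var\{x^2\approx x^3,x^2y\approx yx^2,\sigma_1,\sigma_2,\sigma_3,\delta_k\}$ by Lemma~\ref{Lem: basis for D_k} together with the fact that $\sigma_1$ forces $\sigma_2,\sigma_3$ in the presence of $x^2\approx x^3$, $x^2y\approx yx^2$). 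I would then check which of the finitely many subsets of the three-element generating set of identities yield distinct varieties and how they are ordered: this is a routine but finite bookkeeping, producing the partial order whose Hasse diagram is the asserted Figure.

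The remaining step is to verify that the resulting poset is \emph{complete}, i.e. that there are no subvarieties of $\mathbf D_\infty\vee\mathbf N$ other than those enumerated. Lemma~\ref{Lem: L(A)}(i) handles everything below $\mathbf D_2$; inside $[\mathbf D_2,\mathbf D_\infty\vee\mathbf N]$, Lemma~\ref{Lem: subvarieties of D_infty vee N} guarantees each variety is one of the explicitly listed intersections, and one checks there are no unexpected coincidences or gaps by comparing which of $\delta_k$, $\eqref{xytxy=yxtxy}$, $\sigma_1$ hold in each. I expect the main (though still modest) obstacle to be this last verification of distinctness and of the covering relations: one must produce, for each claimed cover $\mathbf U\prec\mathbf U'$, a monoid (typically one of the $S(W)$'s, e.g. $S(\mathbf c_{n,m}[\rho])$-type words, $S(xtxysy)$, or $S(xytxsy)$) or an identity witnessing that no variety lies strictly between them, using Lemma~\ref{Lem: S(W) in V} throughout. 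Once the diagram is confirmed, the corollary follows immediately from Corollary~\ref{Cor: D_k vee N and D_k vee M}, Lemma~\ref{Lem: subvarieties of D_infty vee N}, Lemma~\ref{Lem: L(A)}(i), and Jackson's description of $L(\mathbf D_2)$.
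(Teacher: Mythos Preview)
Your approach is essentially the paper's: reduce to $[\mathbf D_2,\mathbf D_\infty\vee\mathbf N]$ via Lemma~\ref{Lem: L(A)}(i), then use Lemma~\ref{Lem: subvarieties of D_infty vee N} and Corollary~\ref{Cor: D_k vee N and D_k vee M} to see that each $\delta_k$-level is the three-element chain $\mathbf D_k\subset\mathbf D_k\vee\mathbf M\subset\mathbf D_k\vee\mathbf N$. The paper streamlines this slightly by invoking Lemma~\ref{Lem: L(A)}(ii) to first partition the interval into the pieces $[\mathbf D_k,(\mathbf D_\infty\vee\mathbf N)\{\delta_k\}]$, but the substance is the same.

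One correction: your claim that ``$\sigma_1$ forces $\sigma_2,\sigma_3$ in the presence of $x^2\approx x^3$, $x^2y\approx yx^2$'' is not right (indeed $\sigma_1$ and $\sigma_2$ are dual and independent). The reason $(\mathbf D_\infty\vee\mathbf N)\{\sigma_1,\delta_k\}=\mathbf D_k$ is simply that $\mathbf D_\infty\vee\mathbf N$ \emph{already} satisfies $\sigma_2$ and $\sigma_3$ (both $\mathbf D_\infty$ and $\mathbf N$ do), so Lemma~\ref{Lem: basis for D_k} applies directly. Also, the distinctness and covering checks need nothing as elaborate as $S(\mathbf c_{n,m}[\rho])$: the paper just notes $\sigma_1$ fails in $\mathbf M$ and \eqref{xytxy=yxtxy} fails in $\mathbf N$, which immediately separates the three varieties at each level.
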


\begin{proof}
In view of Lemma~\ref{Lem: L(A)}(i), it suffices to check that the interval $[\mathbf D_2,\mathbf D_\infty\vee\mathbf N]$ is as in Fig.~\ref{Fig: L(D_infty vee N)}. According to Lemma~\ref{Lem: L(A)}(ii), the interval $[\mathbf D_2,\mathbf D_\infty\vee\mathbf N]$ is a disjoint union of the intervals of the form $[\mathbf D_k,(\mathbf D_\infty\vee\mathbf N)\{\delta_k\}]$, where $2\le k\le\infty$. Now Lemmas~\ref{Lem: subvarieties of D_infty vee N} and~\ref{Lem: basis for D_k} and Corollary~\ref{Cor: D_k vee N and D_k vee M} apply and we conclude that the interval $[\mathbf D_k,(\mathbf D_\infty\vee\mathbf N)\{\delta_k\}]$ is the chain $\mathbf D_k\subseteq\mathbf D_k\vee\mathbf M\subseteq\mathbf D_k\vee\mathbf N$. All the varieties in this chain are distinct because $\sigma_1$ holds in $\mathbf D_k$ but fails in $\mathbf M$, while~\eqref{xytxy=yxtxy} holds in $\mathbf D_k\vee\mathbf M$ but fails in $\mathbf N$.
\end{proof}

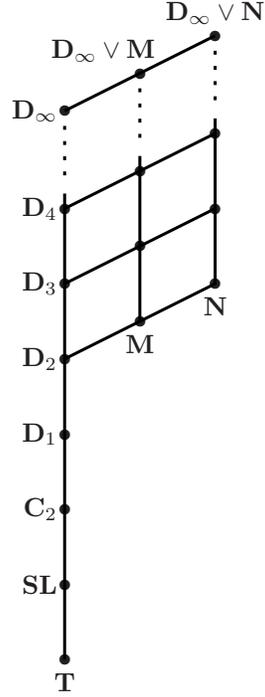
\begin{figure}[htb]
\unitlength=1mm
\linethickness{0.4pt}
\begin{center}
\begin{picture}(34,93)
\put(7,5){\circle*{1.33}}
\put(7,15){\circle*{1.33}}
\put(7,25){\circle*{1.33}}
\put(7,35){\circle*{1.33}}
\put(7,45){\circle*{1.33}}
\put(7,55){\circle*{1.33}}
\put(7,65){\circle*{1.33}}
\put(7,78){\circle*{1.33}}
\put(17,50){\circle*{1.33}}
\put(17,60){\circle*{1.33}}
\put(17,70){\circle*{1.33}}
\put(17,83){\circle*{1.33}}
\put(27,55){\circle*{1.33}}
\put(27,65){\circle*{1.33}}
\put(27,75){\circle*{1.33}}
\put(27,88){\circle*{1.33}}
\gasset{AHnb=0,linewidth=0.4}
\drawline(7,5)(7,67)
\drawline(7,45)(27,55)(27,77)
\drawline(7,55)(27,65)
\drawline(7,65)(27,75)
\drawline(7,78)(27,88)
\drawline(17,50)(17,72)
\drawline[dash={0.52 1.5}{0}](7,68)(7,76)
\drawline[dash={0.52 1.5}{0}](17,73)(17,81)
\drawline[dash={0.52 1.5}{0}](27,78)(27,86)
\put(6,25){\makebox(0,0)[rc]{$\mathbf C_2$}}
\put(6,35){\makebox(0,0)[rc]{$\mathbf D_1$}}
\put(6,45){\makebox(0,0)[rc]{$\mathbf D_2$}}
\put(6,55){\makebox(0,0)[rc]{$\mathbf D_3$}}
\put(6,65){\makebox(0,0)[rc]{$\mathbf D_4$}}
\put(6,78){\makebox(0,0)[rc]{$\mathbf D_\infty$}}
\put(19,86){\makebox(0,0)[rc]{$\mathbf D_\infty\vee\mathbf M$}}
\put(27,91){\makebox(0,0)[cc]{$\mathbf D_\infty\vee\mathbf N$}}
\put(17,47){\makebox(0,0)[cc]{$\mathbf M$}}
\put(27,52){\makebox(0,0)[cc]{$\mathbf N$}}
\put(6,15){\makebox(0,0)[rc]{$\mathbf{SL}$}}
\put(7,2){\makebox(0,0)[cc]{$\mathbf T$}}
\end{picture}
\end{center}
\caption{The lattice $L(\mathbf D_\infty\vee\mathbf N)$}
\label{Fig: L(D_infty vee N)}
\end{figure}

\begin{proposition}
\label{Prop: D_infty vee N is fi-perm}
The variety $\mathbf D_\infty\vee\mathbf N$ is $fi$-permutable.
\end{proposition}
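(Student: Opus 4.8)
The plan is to invoke Lemma~\ref{Lem: lifting} and show that $\theta_{\mathbf X}$ and $\theta_{\mathbf Y}$ permute for arbitrary $\mathbf X,\mathbf Y\subseteq\mathbf D_\infty\vee\mathbf N$; since permutability is symmetric it suffices to prove $\theta_{\mathbf X}\theta_{\mathbf Y}\subseteq\theta_{\mathbf Y}\theta_{\mathbf X}$ for every ordered pair. If $\mathbf X$ and $\mathbf Y$ are comparable this is Lemma~\ref{Lem: comparable is fi-permut}. By Lemma~\ref{Lem: L(A)}(i) two incomparable subvarieties of $\mathbf D_\infty\vee\mathbf N$ both lie in $[\mathbf D_2,\mathbf D_\infty\vee\mathbf N]$, and then Corollary~\ref{Cor: L(D_infty vee N)} (read off Fig.~\ref{Fig: L(D_infty vee N)}) shows that $\{\mathbf X,\mathbf Y\}$ is, up to interchange, one of $\{\mathbf D_p\vee\mathbf M,\mathbf D_q\}$, $\{\mathbf D_p\vee\mathbf N,\mathbf D_q\}$ or $\{\mathbf D_p\vee\mathbf N,\mathbf D_q\vee\mathbf M\}$ with $2\le p<q\le\infty$. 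A glance at the figure gives $\mathbf X\wedge\mathbf Y=\mathbf D_p$ in the first two cases and $\mathbf X\wedge\mathbf Y=\mathbf D_p\vee\mathbf M$ in the third; in particular $\mathbf X\wedge\mathbf Y$ satisfies $\delta_p$, and so does the member of level $p$, which from now on I call $\mathbf X$.

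Fix $(\mathbf u,\mathbf v)\in\theta_{\mathbf X}\theta_{\mathbf Y}$. Then $\mathbf u\approx\mathbf v$ holds in $\mathbf X\wedge\mathbf Y$, and $\con(\mathbf u)=\con(\mathbf v)$ by Lemma~\ref{Lem: group variety} (as $\mathbf{SL}\subseteq\mathbf X\wedge\mathbf Y$); the case $\mathbf u=\mathbf v=\lambda$ is trivial, so both words are non-empty. To put the identity into a tractable shape I would apply Lemma~\ref{Lem: reduction to linear-balanced}, which is available here because $\mathbf D_\infty\vee\mathbf N\subseteq\mathbf A^\ast$ (indeed $\mathbf D_\infty\subseteq\mathbf A^\ast$ and $\mathbf N\subseteq\mathbf A^\ast$ by Lemma~\ref{Lem: smth imply w_n[pi,tau]=w_n'[pi,tau]}), so that $\mathbf X\in[\mathbf D_p,\mathbf A^\ast\{\delta_p\}]$ and $\mathbf Y\in[\mathbf D_q,\mathbf A^\ast\{\delta_q\}]$. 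The lemma replaces $\mathbf u\approx\mathbf v$ by a linear-balanced identity $\mathbf u'\approx\mathbf v'$ (still holding in $\mathbf X\wedge\mathbf Y$) together with a common ``$\delta$-collapsible'' prefix $\mathbf p$ with $\con(\mathbf p)\cap\con(\mathbf u')=\varnothing$, such that $\mathbf A^\ast\{\delta_p\}\models\mathbf u\approx\mathbf p\mathbf u'$ and $\mathbf A^\ast\{\delta_q\}\models\mathbf v\approx\mathbf p\mathbf v'$; since $\mathbf A^\ast\{\delta_p\}\subseteq\mathbf A^\ast\{\delta_q\}$, the rewriting of $\mathbf v$ to $\mathbf p\mathbf v'$ is valid in both $\mathbf X$ and $\mathbf Y$ while that of $\mathbf u$ to $\mathbf p\mathbf u'$ is valid in $\mathbf X$. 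This is the step where one must use that $\mathbf X\wedge\mathbf Y$ keeps the collapsing identity $\delta_p$ of $\mathbf X$, and it reduces the problem to the permutability question for the linear-balanced identity $\mathbf u'\approx\mathbf v'$.

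For a linear-balanced identity I would argue by induction on its ``invertibility'', exactly as in the proof of Proposition~\ref{Prop: L(A') is distributive}: a non-trivial such identity is obtained from the trivial one by a sequence of transpositions of two adjacent occurrences of multiple letters inside a block, and the effect of one such transposition, isolated together with the interchanged letters $x,y$ and the simple letters $T_{x,y}$ separating their occurrences, is an identity of the form~\eqref{pxyq=pyxq}; by Lemma~\ref{Lem: subvarieties of D_infty vee N}, which lists $\sigma_1$ and~\eqref{xytxy=yxtxy} as the only linear-balanced two-variable identities failing in $\mathbf D_\infty\vee\mathbf N$, this identity holds everywhere in $\mathbf D_\infty\vee\mathbf N$ unless it is, up to renaming, a substitution instance of $\sigma_1$ or of~\eqref{xytxy=yxtxy}. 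Using Lemma~\ref{Lem: basis for D_k} and Corollary~\ref{Cor: D_k vee N and D_k vee M} one checks that in each of the three configurations such an isolated transposition, being forced by $\mathbf X\wedge\mathbf Y$, is already legal in $\mathbf X$ or in $\mathbf Y$: in $\mathbf Y$ when it is of $\sigma_1$- or~\eqref{xytxy=yxtxy}-type and is not annihilated by $\delta_p$ (here one uses that $\sigma_1$ holds in $\mathbf D_q$ and~\eqref{xytxy=yxtxy} in $\mathbf D_q$, $\mathbf D_q\vee\mathbf M$ and $\mathbf D_p\vee\mathbf M$), and in $\mathbf X$ otherwise, via $\delta_p$ and Lemma~\ref{Lem: from pxqxr to px^2qr}. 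Performing the transposition on that side, commuting it past the material further along the word when necessary by Lemma~\ref{Lem: from pxqxr to px^2qr} and Lemma~\ref{Lem: from pxyqxrys to pyxqxrys}, and applying the induction hypothesis to the remaining, less invertible identity should assemble the factorization $\theta_{\mathbf Y}\theta_{\mathbf X}$.

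The hard part is precisely this last assembly. The elementary moves one has are split between $\mathbf X$ (which carries $\delta_p$ but not $\sigma_1$) and $\mathbf Y$ (which carries $\sigma_1$ — and, in the third configuration, also~\eqref{xytxy=yxtxy} — but not $\delta_p$), and the whole content of an $fi$-permutability verification is to organise these moves into a relational product of length exactly two, rather than an arbitrary alternating product; the argument must exploit the way $\sigma_1$ and~\eqref{xytxy=yxtxy} are distributed over $\mathbf X$ and $\mathbf Y$ according to Fig.~\ref{Fig: L(D_infty vee N)}, together with the fact that the common identity $\delta_p$ of $\mathbf X\wedge\mathbf Y$ lets every $\delta$-collapse be routed through $\mathbf X$. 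The verification will split into several small cases (the three configurations, and which of $\sigma_1$, \eqref{xytxy=yxtxy} is in play), but each is then routine word combinatorics of the kind already encountered in Sections~\ref{Sec: auxiliary results} and~\ref{Sec: non-fi-perm}.
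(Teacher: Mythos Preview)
Your setup through the application of Lemma~\ref{Lem: reduction to linear-balanced} is correct and matches the paper, but you then overlook the one-line observation that finishes the argument immediately and renders your last two paragraphs unnecessary: \emph{every linear-balanced identity already holds in $\mathbf D_\infty$}. Indeed, by Lemma~\ref{Lem: basis for D_k} the variety $\mathbf D_\infty$ satisfies $\sigma_1,\sigma_2,\sigma_3$, and these three together allow any two adjacent multiple letters inside a block to be swapped (for each such adjacent pair, depending on whether the other occurrences of the two letters lie to the left or to the right, one of $\sigma_1,\sigma_2,\sigma_3$ applies). Since a linear-balanced identity differs from the trivial one only by such swaps, it holds in $\mathbf D_\infty$ and hence in every $\mathbf D_k$.

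The paper exploits this as follows (in their labeling, which is the reverse of yours: their $\mathbf X=\mathbf D_k\vee\mathbf U$ sits at the \emph{higher} $\mathbf D$-level $k>\ell$ and the \emph{lower} companion $\mathbf U\subset\mathbf W$). The linear-balanced identity $\mathbf u'\approx\mathbf v'$ holds in $\mathbf X\wedge\mathbf Y=\mathbf D_\ell\vee\mathbf U$ and, by the observation above, also in $\mathbf D_k$; hence it holds in $\mathbf D_k\vee(\mathbf D_\ell\vee\mathbf U)=\mathbf D_k\vee\mathbf U=\mathbf X$. Combined with $\mathbf u\,\theta_{\mathbf X}\,\mathbf p\mathbf u'$ and $\mathbf p\mathbf v'\,\theta_{\mathbf Y}\,\mathbf v$ from Lemma~\ref{Lem: reduction to linear-balanced}, this gives $\mathbf u\,\theta_{\mathbf X}\,\mathbf p\mathbf u'\,\theta_{\mathbf X}\,\mathbf p\mathbf v'\,\theta_{\mathbf Y}\,\mathbf v$, so $(\mathbf u,\mathbf v)\in\theta_{\mathbf X}\theta_{\mathbf Y}$ and the proof is complete. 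Your proposed induction on invertibility and the routing of individual transpositions between $\mathbf X$ and $\mathbf Y$---what you yourself flag as ``the hard part''---never arises; the entire linear-balanced identity lands in one of the two varieties in a single step. (The transposition-by-transposition strategy you sketch is closer in spirit to what the paper does for $\mathbf Q_{r,s}$ and $\mathbf R$, where no such shortcut exists; but even there, assembling the moves into a product of length exactly two rather than a longer alternation requires a considerably more delicate argument than your outline suggests.)
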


\begin{proof}
Let $\mathbf X,\mathbf Y\subseteq\mathbf D_\infty\vee\mathbf N$ and an identity $\mathbf u\approx\mathbf v$ holds in $\mathbf X\wedge\mathbf Y$. We need to check that $(\mathbf u,\mathbf v)\in\theta_{\mathbf X}\theta_{\mathbf Y}$. 

In view of Lemma~\ref{Lem: comparable is fi-permut} and Corollary~\ref{Cor: L(D_infty vee N)}, we may assume without loss of generality that $\mathbf X=\mathbf D_k\vee\mathbf U$ and $\mathbf Y=\mathbf D_\ell\vee\mathbf W$, where $2\le\ell<k\le\infty$, $\mathbf U,\mathbf W\in\{\mathbf D_2,\mathbf M,\mathbf N\}$ and $\mathbf U\subset\mathbf W$. Lemma~\ref{Lem: smth imply w_n[pi,tau]=w_n'[pi,tau]} implies that $\mathbf D_\infty\vee\mathbf N\subseteq\mathbf A^\ast$. Now Lemmas~\ref{Lem: L(A)}(ii) and~\ref{Lem: reduction to linear-balanced} apply and we conclude that there are a linear-balanced identity $\mathbf u^\prime\approx\mathbf v^\prime$ that holds in $\mathbf X\wedge\mathbf Y$ and a word $\mathbf p$ such that $\mathbf u\,\theta_{\mathbf X}\,\mathbf p\mathbf u^\prime$ and $\mathbf p\mathbf v^\prime\,\theta_{\mathbf Y}\,\mathbf v$. It follows from Corollary~\ref{Cor: L(D_infty vee N)} that $\mathbf X\wedge\mathbf Y=\mathbf D_\ell\vee\mathbf U$. It can be easily verified that an arbitrary linear-balanced identity holds in $\mathbf D_\infty$ and therefore, in $\mathbf D_k$. Therefore, the identity $\mathbf u^\prime\approx\mathbf v^\prime$ holds in $\mathbf D_k\vee(\mathbf D_\ell\vee\mathbf U)=\mathbf D_k\vee\mathbf U=\mathbf X$, whence $\mathbf u^\prime\,\theta_{\mathbf X}\,\mathbf v^\prime$. Then $\mathbf u\,\theta_{\mathbf X}\,\mathbf p\mathbf u^\prime\,\theta_{\mathbf X}\,\mathbf p\mathbf v^\prime\,\theta_{\mathbf Y}\,\mathbf v$, whence $(\mathbf u,\mathbf v)\in\theta_{\mathbf X}\theta_{\mathbf Y}$, and we are done.
\end{proof}

\subsection{The variety $\mathbf P_n$}
\label{Subsec: P_n is fi-perm}
We put $\mathbf P=\var\{\delta_1\}$.

\begin{lemma}
\label{Lem: subvarieties of P_n}
Any subvariety of the variety $\mathbf P$ can be given within $\mathbf P$ by a finite number of the following identities:
\begin{align}
\label{x^k=x^ell}
x^k&\approx x^\ell,\\
\label{x^ky^ell=y^ell x^k}
x^ky^\ell&\approx y^\ell x^k,
\end{align} 
where $k,\ell\in\mathbb N$.
\end{lemma}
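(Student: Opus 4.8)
The plan is to exploit the identity $\delta_1$ to put every word into a normal form, to show that every $\mathbf P$-relative identity of a subvariety is then equivalent to identities of the shapes \eqref{x^k=x^ell} and \eqref{x^ky^ell=y^ell x^k}, and finally to extract a finite sub-basis by a well-quasi-order argument.

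First I would prove a normal-form lemma: for every word $\mathbf w$ the identity $\mathbf w\approx\hat{\mathbf w}$ holds in $\mathbf P$, where $\hat{\mathbf w}$ is obtained from the linear word $\bar{\mathbf w}$ formed by the first occurrences of the letters of $\mathbf w$ (taken in their order in $\mathbf w$) by replacing each letter $x$ with $x^{\occ_x(\mathbf w)}$. The key observation is that substituting an arbitrary word $\mathbf b$ for $t_1$ in $\delta_1$ yields $x\mathbf bx\approx x^2\mathbf b$ in $\mathbf P$; applying this repeatedly to the successive occurrences of a fixed multiple letter $x$ of $\mathbf w$ replaces $\mathbf w$ by a $\mathbf P$-equivalent word in which all occurrences of $x$ form one block located at the position of the first occurrence of $x$. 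A routine check shows that such a step changes neither the multiset $(\occ_z(\mathbf w))_z$ nor the linear skeleton $\bar{\mathbf w}$, so iterating over all multiple letters gives $\mathbf w\approx\hat{\mathbf w}$.

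Next, let $\mathbf V\subseteq\mathbf P$ and let $\mathbf u\approx\mathbf v$ be an identity of $\mathbf V$; modulo $\mathbf P$ it is equivalent to $\hat{\mathbf u}\approx\hat{\mathbf v}$, an identity both sides of which are products of one-variable power blocks. Deleting all letters but one yields the consequences $x^{\occ_x(\mathbf u)}\approx x^{\occ_x(\mathbf v)}$, which are of the shape \eqref{x^k=x^ell} when $\con(\mathbf u)=\con(\mathbf v)$; if $\con(\mathbf u)\ne\con(\mathbf v)$ then $\mathbf V$ is a group variety by Lemma~\ref{Lem: group variety}, a case we set aside. Deleting all letters but two letters $x,y$ whose relative order in the skeletons of $\mathbf u$ and $\mathbf v$ differs, and then correcting the exponents by the one-variable consequences, produces identities of the shape \eqref{x^ky^ell=y^ell x^k}, while pairs with the same order contribute nothing new. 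Conversely, these identities imply $\hat{\mathbf u}\approx\hat{\mathbf v}$: first normalise the block exponents in $\hat{\mathbf u}$ by the one-variable identities, then transform the resulting block word into $\hat{\mathbf v}$ by adjacent transpositions of blocks, using the standard fact that one can sort any arrangement into another using only transpositions of inverted pairs — here precisely the pairs of letters whose order differs between $\mathbf u$ and $\mathbf v$ — so every transposition used is an instance of an available identity. Hence $\mathbf V=\mathbf P\Sigma$ where $\Sigma$ is the set of all identities of $\mathbf V$ of the shapes \eqref{x^k=x^ell} and \eqref{x^ky^ell=y^ell x^k}.

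It remains to pass to a finite sub-basis. The set of pairs $(k,\ell)$ for which $\mathbf V$ satisfies $x^k\approx x^\ell$ is a congruence of the finitely generated commutative monoid $(\mathbb N_0,+)$, hence finitely generated (by Rédei's theorem, or directly), so finitely many identities of the shape \eqref{x^k=x^ell} imply all of them. Using $\delta_1$ once more — to push occurrences of one letter leftward past a power of the other — one checks that the set of pairs $(k,\ell)\in\mathbb N^2$ for which $\mathbf V$ satisfies $x^ky^\ell\approx y^\ell x^k$ is closed under increasing either coordinate, i.e.\ is upward closed in $\mathbb N^2$; by Dickson's lemma it has finitely many minimal elements, and the corresponding finitely many identities of the shape \eqref{x^ky^ell=y^ell x^k} imply all of them. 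Together with $\delta_1$ these finitely many identities define $\mathbf V$ within $\mathbf P$. I expect the fiddly points to be the skeleton-invariance verification in the normal-form step, the check that sorting never forces the transposition of a pair with concordant order, and the proof that the two-variable family is upward closed, where the one-sided nature of $\delta_1$ must be used with some care.
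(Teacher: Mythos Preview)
Your proof is correct and takes a genuinely different route from the paper's.

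The paper does not work directly from $\delta_1$. Instead it observes $\mathbf P\subseteq\mathbf O$ and invokes Lee's structural result (Lemma~\ref{Lem: subvarieties of O}) that every non-commutative subvariety of $\mathbf O$ already admits a \emph{finite} basis of efficient identities of the shapes~\eqref{one letter in a block} and~\eqref{two letters in a block}; it then uses $\delta_1$ to collapse each such identity to one of the forms~\eqref{x^k=x^ell} or~\eqref{x^ky^ell=y^ell x^k}. Finiteness thus comes for free from the black box. Your argument is more self-contained: you obtain a normal form directly from $\delta_1$, reduce an arbitrary identity to finitely many one- and two-variable consequences, and then manufacture finiteness yourself via the congruence structure of $(\mathbb N_0,+)$ and Dickson's lemma. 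The upward-closure step you flag as fiddly is exactly the computation the paper carries out separately as the sufficiency direction of Lemma~\ref{Lem: P_n{x^ky^ell=y^ell x^k} subset P_n{x^py^q=y^qx^p}} (and it works in $\mathbf P$, not just $\mathbf P_n$: write $x^py^q\stackrel{\delta_1}\approx x^ky^\ell x^{p-k}y^{q-\ell}$, swap, then $\stackrel{\delta_1}\approx y^qx^p$). Your sorting step is also fine: bubble sort only transposes adjacent pairs that are inverted relative to the target order, so every transposition used is an instance of a derived swap identity with the correct exponents.

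One remark on the case you ``set aside'': the paper does not isolate group subvarieties but rather handles \emph{all} commutative subvarieties at once by citing Head~\cite{Head-68}. Your normal-form argument already covers the non-group commutative subvarieties (where $\con(\mathbf u)=\con(\mathbf v)$ always holds), so the only genuinely deferred case is the group one; you should make this explicit and point to Head (or the commutativity of groups satisfying $\delta_1$) rather than leave it hanging.
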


\begin{proof}
By results of Head~\cite{Head-68}, every commutative monoid variety can be given by the identities $xy\approx yx$ and~\eqref{x^k=x^ell} for some $k,\ell\in\mathbb N$. It remains to prove the required assertion for non-commutative subvarieties of the variety $\mathbf P$. To achieve this goal, it suffices to verify that if an identity $\mathbf u\approx\mathbf v$ does not imply the commutative law, then it is equivalent in the variety $\mathbf P$ to some system of identities of the form either~\eqref{x^k=x^ell} or~\eqref{x^ky^ell=y^ell x^k}. In view of Lemma~\ref{Lem: subvarieties of O} and the inclusion $\mathbf P\subseteq\mathbf O$, we may assume that one of the following two statements holds:
\begin{itemize}
\item[(a)] the identity $\mathbf u\approx\mathbf v$ coincides with an efficient identity of the form~\eqref{one letter in a block} with $r,e_0,f_0,e_1,f_1,\dots,e_r,f_r\in\mathbb N_0$;
\item[(b)] the identity $\mathbf u\approx\mathbf v$ coincides with an efficient identity of the form~\eqref{two letters in a block} with $r\in\mathbb N_0$, $e_0,f_0\in\mathbb N$, $e_1,f_1,\dots,e_r,f_r\in\mathbb N_0$, $\sum_{i=0}^r e_i\ge 2$ and $\sum_{i=0}^r f_i\ge 2$.
\end{itemize}
Put $e=\sum_{i=0}^r e_i$ and $f=\sum_{i=0}^r f_i$.

Suppose that the claim~(a) holds. Since the identity~\eqref{one letter in a block} is efficient, we may assume without loss of generality that $e_0>0$. The variety $\mathbf P\{\eqref{one letter in a block}\}$ satisfies the identity $x^e\approx x^f$. If $f_0>0$, then $\mathbf P\{\eqref{one letter in a block}\}=\mathbf P\{x^e\approx x^f\}$, and we are done. Finally, if $f_0=0$, then $\mathbf P\{\eqref{one letter in a block}\}$ satisfies the identity $x^et\approx tx^f$. It is easy to see that in this case $\mathbf P\{\eqref{one letter in a block}\}=\mathbf P_n\{x^e\approx x^f,\,x^et\approx tx^f\}$, and we are done again.

Suppose now that the claim~(b) holds. Since 
$$
x^ey^f\stackrel{\delta_1}\approx x^{e_0}y^{f_0}\biggl(\prod_{i=1}^r x^{e_i}y^{f_i}\biggr)\stackrel{\eqref{two letters in a block}}\approx y^{f_0}x^{e_0}\biggl(\prod_{i=1}^r x^{e_i}y^{f_i}\biggr)\stackrel{\delta_1}\approx y^fx^e,
$$
the variety $\mathbf P\{\eqref{two letters in a block}\}$ satisfies the identity $x^ey^f\approx y^fx^e$. On the other hand, the identity~\eqref{two letters in a block} holds in $\mathbf P\{x^ey^f\approx y^fx^e\}$ because this variety satisfies the identities
$$
x^{e_0}y^{f_0}\biggl(\prod_{i=1}^r t_ix^{e_i}y^{f_i}\biggr)\stackrel{\delta_1}\approx x^ey^f\cdot\prod_{i=1}^r t_i\approx y^fx^e\cdot\prod_{i=1}^r t_i\stackrel{\delta_1}\approx y^{f_0}x^{e_0}\biggl(\prod_{i=1}^r t_ix^{e_i}y^{f_i}\biggr).
$$
Therefore, $\mathbf P\{\eqref{two letters in a block}\}=\mathbf P\{x^ey^f\approx y^fx^e\}$, and we are done.
\end{proof}

\begin{lemma}
\label{Lem: P_n{x^ky^ell=y^ell x^k} subset P_n{x^py^q=y^qx^p}}
If $k\le\ell<n$ and $p\le q<n$, then $\mathbf P_n\{\eqref{x^ky^ell=y^ell x^k}\}\subseteq\mathbf P_n\{x^py^q\approx y^q x^p\}$ if and only if $k\le p$ and $\ell\le q$.
\end{lemma}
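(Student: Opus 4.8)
First I would record the one fact about $\mathbf P_n$ that drives everything: the defining identity $x^2y\approx xyx$ is the identity $\delta_1$, so $\mathbf P_n\subseteq\mathbf P=\var\{\delta_1\}$, and substituting an arbitrary word $\mathbf w$ for $t_1$ in $\delta_1$ and iterating shows that $\mathbf P_n$ satisfies $x^a\mathbf w x^b\approx x^{a+b}\mathbf w$ for all $a\ge1$, $b\ge0$ and all words $\mathbf w$, and dually $y^a\mathbf w y^b\approx y^{a+b}\mathbf w$. For the \emph{``if''} direction, assume $k\le p$ and $\ell\le q$. Multiplying $x^ky^\ell\approx y^\ell x^k$ on the right by $x^{p-k}$ and reducing the left-hand side with the displayed identity ($\mathbf w=y^\ell$) gives $x^py^\ell\approx x^ky^\ell x^{p-k}\approx y^\ell x^p$; multiplying $x^py^\ell\approx y^\ell x^p$ on the right by $y^{q-\ell}$ and reducing with the dual identity ($\mathbf w=x^p$) gives $x^py^q\approx y^\ell x^py^{q-\ell}\approx y^qx^p$. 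Hence $\mathbf P_n\{x^ky^\ell\approx y^\ell x^k\}$ satisfies $x^py^q\approx y^qx^p$, which is the asserted inclusion. Note this argument uses neither $k\le\ell$, $p\le q$, nor the bounds $\ell,q<n$.

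For the \emph{``only if''} direction I would pass to relatively free monoids over a two-letter alphabet $\{x,y\}$. Let $F$ be the free $\mathbf P_n$-monoid over $\{x,y\}$; from the $\delta_1$-identities together with $x^n\approx x^{n+1}$ and the centrality of $x^n$ and $y^n$ one obtains the normal form: every element of $F$ in which both letters occur equals $x^ay^b$ or $y^bx^a$, where $a,b\in\{1,\dots,n\}$ are the (truncated) occurrence numbers, and $x^ay^b=y^bx^a$ in $F$ exactly when $a=n$ or $b=n$. The relatively free monoid of $\mathbf P_n\{x^ky^\ell\approx y^\ell x^k\}$ over $\{x,y\}$ is $F/\rho$, where $\rho$ is the congruence on $F$ generated by all pairs $(\mathbf a^k\mathbf b^\ell,\mathbf b^\ell\mathbf a^k)$ with $\mathbf a,\mathbf b\in F$, and the inclusion to be proved is equivalent to $(x^py^q,y^qx^p)\in\rho$. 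Since each $\mathbf a^k$ and $\mathbf b^\ell$ is again of the form $1$, $x^c$, $y^c$, $x^cy^d$ or $y^dx^c$, one runs through the cases and reduces to normal form: the nontrivial generating pairs of $\rho$ are of the shape $(x^cy^d,y^dx^c)$, and the least of them (in both exponents) are $(x^ky^\ell,y^\ell x^k)$ and $(x^\ell y^k,y^kx^\ell)$, the second arising from the substitution $x\mapsto y$, $y\mapsto x$. Moreover, closing under the congruence operations in $F$ only \emph{raises} the exponents: if $(x^cy^d,y^dx^c)\in\rho$ then $(x^{c'}y^{d'},y^{d'}x^{c'})\in\rho$ for all $c'\ge c$, $d'\ge d$ (multiply by $x^{c'-c}$, then by $y^{d'-d}$, on the right, reducing each time), while all other products reduce to trivial pairs. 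Consequently $(x^ay^b,y^bx^a)\in\rho$ if and only if $a\ge n$, or $b\ge n$, or $a\ge k$ and $b\ge\ell$, or $a\ge\ell$ and $b\ge k$. Applying this with $(a,b)=(p,q)$: since $p\le q<n$ the first two options are excluded, and ``$p\ge\ell$ and $q\ge k$'' forces $p\ge\ell\ge k$ and $q\ge p\ge\ell$; so in every remaining case $k\le p$ and $\ell\le q$, as required.

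The main obstacle is exactly the determination of $\rho$ — that is, verifying that, after reduction in $F$, the substitution instances of $x^ky^\ell\approx y^\ell x^k$ create no identifications among the normal forms $x^ay^b$, $y^bx^a$ beyond the ones listed above, and that the congruence closure in $F$ merely raises exponents. The normal-form description of $F$, the $\delta_1$-manipulations, and the bookkeeping with the truncation at $n$ are routine; it is precisely the check that no ``exotic'' identification $x^py^q\sim y^qx^p$ with $p<k$ or $q<\ell$ is forced where the hypotheses $k\le\ell$, $p\le q$ and the bound $n$ genuinely enter.
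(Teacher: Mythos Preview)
Your sufficiency argument is essentially the paper's: both use $\delta_1$ to shuffle exponents, the paper in one application
\[
x^py^q\stackrel{\delta_1}{\approx}x^ky^\ell x^{p-k}y^{q-\ell}\stackrel{\eqref{x^ky^ell=y^ell x^k}}{\approx}y^\ell x^kx^{p-k}y^{q-\ell}\stackrel{\delta_1}{\approx}y^qx^p,
\]
you in two, but this is the same idea.

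For necessity the two routes genuinely diverge. The paper stays in the absolutely free monoid: it observes that every word $\mathbf w$ with $x^py^q\approx\mathbf w$ in $\mathbf P_n$ has $\occ_x(\mathbf w)=p$, $\occ_y(\mathbf w)=q$ and $h(\mathbf w)=x$ (since $p,q<n$ blocks the two ``$x^n$'' identities, and $\delta_1$ preserves the first letter); hence a derivation of $x^py^q\approx y^qx^p$ must at some step apply $x^ky^\ell\approx y^\ell x^k$ non-trivially to such a $\mathbf w$. But when $p<k$ or $q<\ell$, every such $\mathbf w$ is an isoterm for $\var\{x^ky^\ell\approx y^\ell x^k\}$: a non-trivial substitution forces one of $\sigma(x),\sigma(y)$ to be a pure power of the other letter, and counting occurrences together with $k\le\ell$, $p\le q$ gives a contradiction. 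You instead pass to the $2$-generated free $\mathbf P_n$-monoid $F$, pin down its normal forms, and compute the fully invariant congruence $\rho$ explicitly. Your case analysis is correct (the key point, which you leave implicit, is that if $h(\sigma(x))=x$ and $h(\sigma(y))=y$ then the $x$- and $y$-counts of $\sigma(x)^k\sigma(y)^\ell$ are at least $k$ and $\ell$ respectively, and the swapped case gives $(\ell,k)$); the congruence-closure step is also correct, since left multiplication collapses every generating pair to a trivial one and right multiplication only raises exponents. The paper's argument is markedly shorter; yours gives an explicit description of $\rho$ and hence of the whole interval between $\mathbf P_n\{\eqref{x^ky^ell=y^ell x^k}\}$ and $\mathbf P_n$, which is more than the lemma asks for.
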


\begin{proof}
\emph{Necessity}. Suppose that $\mathbf P_n\{\eqref{x^ky^ell=y^ell x^k}\}\subseteq\mathbf P_n\{x^py^q\approx y^q x^p\}$. Consider an identity $x^py^q\approx\mathbf w$ of $\mathbf P_n$. Clearly, $\con(\mathbf w)=\{x,y\}$, $\occ_x(\mathbf w)=p$, $\occ_y(\mathbf w)=q$ and $h(\mathbf w)=x$. It follows that $\var\{\eqref{x^ky^ell=y^ell x^k}\}$ satisfies a non-trivial identity $\mathbf w\approx\mathbf w^\prime$. But it is easy to see that if either $p<k$ or $q<\ell$, then $\mathbf w$ is an isoterm for $\var\{\eqref{x^ky^ell=y^ell x^k}\}$. Therefore, $k\le p$ and $\ell\le q$.

\smallskip

\emph{Sufficiency} follows from the fact that the identities
$$
x^py^q\stackrel{\delta_1}\approx x^ky^\ell x^{p-k}y^{q-\ell}\stackrel{\eqref{x^ky^ell=y^ell x^k}}\approx y^\ell x^k x^{p-k}y^{q-\ell}\stackrel{\delta_1}\approx y^qx^p
$$
hold in $\mathbf P_n\{\eqref{x^ky^ell=y^ell x^k}\}$.
\end{proof}

\begin{lemma}
\label{Lem: x^ky^ell=y^ell x^k in X wedge Y}
Let $\mathbf X$ and $\mathbf Y$ be subvarieties of the variety $\mathbf P_n$. If the variety $\mathbf X\wedge\mathbf Y$ satisfies the identity~\eqref{x^ky^ell=y^ell x^k} for some $1\le k,\ell<n$, then this identity holds in either $\mathbf X$ or $\mathbf Y$.
\end{lemma}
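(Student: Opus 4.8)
The plan is to read off the needed structure of the subvarieties of $\mathbf P_n$ from Lemmas~\ref{Lem: subvarieties of P_n} and~\ref{Lem: P_n{x^ky^ell=y^ell x^k} subset P_n{x^py^q=y^qx^p}}, much as in Corollaries~\ref{Cor: delta_k in X wedge Y} and~\ref{Cor: pxyq=pyxq in X wedge Y}. Since the identity~\eqref{x^ky^ell=y^ell x^k} coincides up to renaming of letters with $x^\ell y^k\approx y^kx^\ell$, we may assume $k\le\ell$; recall also $\ell<n$. As $\mathbf X,\mathbf Y\subseteq\mathbf P_n\subseteq\mathbf P$, Lemma~\ref{Lem: subvarieties of P_n} lets us write $\mathbf X=\mathbf P_n\Sigma_{\mathbf X}$ and $\mathbf Y=\mathbf P_n\Sigma_{\mathbf Y}$ with $\Sigma_{\mathbf X},\Sigma_{\mathbf Y}$ finite sets of identities of the forms~\eqref{x^k=x^ell} and~\eqref{x^ky^ell=y^ell x^k}; moreover, since $x^py^q\approx y^qx^p$ and $x^qy^p\approx y^px^q$ coincide up to renaming, we may assume that each identity of the form~\eqref{x^ky^ell=y^ell x^k} occurring in these systems is written with its first exponent not exceeding its second. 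Then $\mathbf X\wedge\mathbf Y=\mathbf P_n\Sigma$ with $\Sigma=\Sigma_{\mathbf X}\cup\Sigma_{\mathbf Y}$.

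Next I would bring $\Sigma$ to a normal form within $\mathbf P_n$. Using the identity $x^n\approx x^{n+1}$ of $\mathbf P_n$, a short computation shows that each identity of the form~\eqref{x^k=x^ell} in $\Sigma$ is equivalent within $\mathbf P_n$ to one of the form $x^c\approx x^{c+1}$, and that all of them together are equivalent within $\mathbf P_n$ to the single identity $x^a\approx x^{a+1}$, where $a$ is the least exponent so arising (set $a=n$ if $\Sigma$ contains no identity of the form~\eqref{x^k=x^ell}; in any case $a\le n$). Since $\mathbf P_n\{x^a\approx x^{a+1}\}=\mathbf P_a$, it follows that $\mathbf X\wedge\mathbf Y=\mathbf P_a\Pi$, where $\Pi$ consists of those identities of the form~\eqref{x^ky^ell=y^ell x^k} in $\Sigma$ whose second (hence both) exponent is less than $a$ — the remaining ones hold in $\mathbf P_a$ because $x^a$ is central there. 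Finally, by Lemma~\ref{Lem: P_n{x^ky^ell=y^ell x^k} subset P_n{x^py^q=y^qx^p}} applied with $\mathbf P_a$ in place of $\mathbf P_n$, the identities in $\Pi$, if any, are together equivalent within $\mathbf P_a$ to a single identity $x^Py^Q\approx y^Qx^P$ with $P\le Q<a$, where $P$ [respectively, $Q$] is the largest first [second] exponent occurring in $\Pi$. Thus $\mathbf X\wedge\mathbf Y$ equals $\mathbf P_a\{x^Py^Q\approx y^Qx^P\}$, or $\mathbf P_a$ when $\Pi=\varnothing$.

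It remains to invoke the hypothesis that $\mathbf X\wedge\mathbf Y$ satisfies~\eqref{x^ky^ell=y^ell x^k}. If $a\le\ell$, then the identity $x^a\approx x^{a+1}$ must come from $\Sigma_{\mathbf X}$ or $\Sigma_{\mathbf Y}$ (it cannot be the default identity $x^n\approx x^{n+1}$, since then $a=n>\ell$), say from $\Sigma_{\mathbf X}$; then $x^a\approx x^n$ and $y^\ell\approx y^a\approx y^n$ hold in $\mathbf X$, and since $\mathbf P_n$ (hence $\mathbf X$) satisfies $x^ny\approx yx^n$ we get $x^ky^\ell\approx x^ky^n\approx y^nx^k\approx y^\ell x^k$ in $\mathbf X$, as required. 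Now suppose $a>\ell\ge k$. Then $\mathbf P_a$ does not satisfy~\eqref{x^ky^ell=y^ell x^k}: indeed, as $k,\ell<a$, the words $x^k$ and $y^\ell$ are isoterms for $\mathbf P_a$ by Lemma~\ref{Lem: x^n is isoterm}, and then, by the argument used in the proof of Lemma~\ref{Lem: P_n{x^ky^ell=y^ell x^k} subset P_n{x^py^q=y^qx^p}}, any word equal to $x^ky^\ell$ in $\mathbf P_a$ must begin with the letter $x$. Hence $\Pi\ne\varnothing$ and $\mathbf X\wedge\mathbf Y=\mathbf P_a\{x^Py^Q\approx y^Qx^P\}$; since this variety satisfies~\eqref{x^ky^ell=y^ell x^k} and $P\le Q<a$, $k\le\ell<a$, Lemma~\ref{Lem: P_n{x^ky^ell=y^ell x^k} subset P_n{x^py^q=y^qx^p}} (again for $\mathbf P_a$) forces $P\le k$ and $Q\le\ell$. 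Consequently every identity of the form~\eqref{x^ky^ell=y^ell x^k} in $\Pi$ has first exponent at most $P\le k$ and second exponent at most $Q\le\ell$. Choosing one such identity and observing that it belongs to $\Sigma_{\mathbf X}$ or $\Sigma_{\mathbf Y}$, say to $\Sigma_{\mathbf X}$, we conclude that $\mathbf X$ satisfies an identity of the form~\eqref{x^ky^ell=y^ell x^k} with exponents at most $k$ and at most $\ell$, whence $\mathbf X$ satisfies~\eqref{x^ky^ell=y^ell x^k} by Lemma~\ref{Lem: P_n{x^ky^ell=y^ell x^k} subset P_n{x^py^q=y^qx^p}}. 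The delicate point is the normalization of $\Sigma$ inside $\mathbf P_n$ — in particular, merging the commutation identities by means of Lemma~\ref{Lem: P_n{x^ky^ell=y^ell x^k} subset P_n{x^py^q=y^qx^p}} and tracking which of them become trivial once the exponent $a$ has been lowered; after that the case analysis is routine.
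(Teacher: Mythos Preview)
There is a genuine gap in the normalization step for $\Pi$. You claim that, by Lemma~\ref{Lem: P_n{x^ky^ell=y^ell x^k} subset P_n{x^py^q=y^qx^p}}, the finitely many commutation identities in $\Pi$ are together equivalent within $\mathbf P_a$ to the single identity $x^Py^Q\approx y^Qx^P$ with $P,Q$ the \emph{largest} first and second exponents occurring. But that lemma only gives the containment order on the varieties $\mathbf P_a\{x^py^q\approx y^qx^p\}$, and this order is a two--dimensional grid, not a chain. With the largest exponents you only get $\mathbf P_a\Pi\subseteq\mathbf P_a\{x^Py^Q\approx y^Qx^P\}$; the reverse inclusion is false in general. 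Concretely, take $a$ large and $\Pi=\{\,xy^3\approx y^3x,\ x^2y^2\approx y^2x^2\,\}$, so $P=2$, $Q=3$. Then $\mathbf P_a\{x^2y^3\approx y^3x^2\}$ does \emph{not} satisfy $xy^3\approx y^3x$ (by the very lemma you cite, since $2\not\le1$), so $\mathbf P_a\{x^Py^Q\approx y^Qx^P\}\ne\mathbf P_a\Pi$. Consequently the inference ``$\mathbf P_a\{x^Py^Q\approx y^Qx^P\}$ satisfies~\eqref{x^ky^ell=y^ell x^k}, hence $P\le k$ and $Q\le\ell$'' is unsupported, and the final conclusion that some identity in $\Sigma_{\mathbf X}\cup\Sigma_{\mathbf Y}$ has both exponents small enough does not follow. (Using the \emph{smallest} exponents instead does not help either: $x^Py^Q\approx y^Qx^P$ would then imply every identity in $\Pi$, but not conversely; indeed the lemma you are trying to prove is precisely what shows that $\mathbf P_a\Pi$ need not satisfy $xy^2\approx y^2x$ in the example above.)

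The paper avoids this difficulty entirely by arguing directly with a derivation sequence: assuming $x^k$ and $y^\ell$ are isoterms for both $\mathbf X$ and $\mathbf Y$ (else the conclusion is immediate), any chain of words from $x^ky^\ell$ to $y^\ell x^k$ using identities of $\mathbf X$ or $\mathbf Y$ must consist of words on $\{x,y\}$ with the correct multiplicities, and at some step the first letter changes from $x$ to $y$; applying $\delta_1$ to both sides of that step yields exactly~\eqref{x^ky^ell=y^ell x^k} in whichever of $\mathbf X,\mathbf Y$ validates that step. Your basis-normalization idea is natural, but without an additional argument showing that the set of commutation identities satisfied by $\mathbf P_a\Pi$ is ``prime'' in the appropriate sense, it does not close.
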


\begin{proof}
Suppose that $x^k$ is not an isoterm for $\mathbf X$. Then Lemma~\ref{Lem: x^n is isoterm} applies with the conclusion that $\mathbf X$ satisfies the identity $x^k\approx x^r$ for some $r>k$. Since $\mathbf X\subseteq\mathbf P_n$, this implies that $\mathbf X$ satisfies the identities $x^ky^\ell\approx x^ny^\ell\approx y^\ell x^n\approx y^\ell x^k$, and we are done. Thus, we may assume that $x^k$ is an isoterm for $\mathbf X$. Analogously, we may assume that $y^\ell$ is an isoterm for $\mathbf X$ as well. By symmetry, $x^k$ and $y^\ell$ are isoterms for $\mathbf Y$ too.

Since the identity~\eqref{x^ky^ell=y^ell x^k} holds in $\mathbf X\wedge\mathbf Y$, there is a sequence of words $\mathbf w_0$, $\mathbf w_1$, \dots, $\mathbf w_m$ such that $\mathbf w_0=x^ky^\ell$, $\mathbf w_m=y^\ell x^k$ and, for each $i=0,1,\dots,m-1$, the identity $\mathbf w_i\approx\mathbf w_{i+1}$ holds in either $\mathbf X$ or $\mathbf Y$. The claim that $x^k$ and $y^\ell$ are isoterms for $\mathbf X$ and $\mathbf Y$ imply that $\con(\mathbf w_i)=\{x,y\}$, $\occ_x(\mathbf w_i)=k$ and $\occ_y(\mathbf w_i)=\ell$ for any $i=0,1,\dots,m$. Evidently, there is $j\in\{0,1,\dots,m-1\}$ such that $h(\mathbf w_j)=x$ but $h(\mathbf w_{j+1})=y$. The identity $\mathbf w_j\approx\mathbf w_{j+1}$ holds in either $\mathbf X$ or $\mathbf Y$. Taking into account that the varieties $\mathbf X$ and $\mathbf Y$ satisfy the identity $\delta_1$, we have that the identity $\mathbf w_j\approx\mathbf w_{j+1}$ is equivalent to~\eqref{x^ky^ell=y^ell x^k} in one of the varieties $\mathbf X$ or $\mathbf Y$.
\end{proof}

\begin{corollary}
\label{Cor: L(P_n) is distributive}
For any $n\in\mathbb N$, the lattice $L(\mathbf P_n)$ is distributive.
\end{corollary}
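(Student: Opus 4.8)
The plan is to deduce the statement from Lemma~\ref{Lem: smth imply distributivity}. Since $\mathbf{P}_n$ satisfies $x^2y\approx xyx$, which is the identity $\delta_1$, we have $\mathbf{P}_n\subseteq\mathbf{P}$; and since $\mathbf{P}_n$ satisfies~\eqref{x^n=x^{n+1}}, it is aperiodic. By Lemma~\ref{Lem: group variety}, every subvariety of $\mathbf{P}_n$ that does not contain $\mathbf{SL}$ is a group variety, hence, being aperiodic, trivial. Therefore $L(\mathbf{P}_n)$ is obtained from the interval $[\mathbf{SL},\mathbf{P}_n]$ by adjoining $\mathbf{T}$ as a new least element; as a lattice obtained from a distributive one by adjoining a new least element is again distributive, it suffices to show that $[\mathbf{SL},\mathbf{P}_n]$ is distributive.

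I would apply Lemma~\ref{Lem: smth imply distributivity} with $\mathbf{V}=\mathbf{P}_n$, $\mathbf{W}=\mathbf{SL}$, and $\Sigma$ the set consisting of all identities of the form~\eqref{x^k=x^ell} together with all identities of the form~\eqref{x^ky^ell=y^ell x^k}, where $k,\ell\in\mathbb{N}$. To check condition~(i), let $\mathbf{U}\in[\mathbf{SL},\mathbf{P}_n]$. Then $\mathbf{U}$ is a subvariety of $\mathbf{P}$, so by Lemma~\ref{Lem: subvarieties of P_n} it is given within $\mathbf{P}$ by a finite subset $\Phi$ of $\Sigma$; since $\mathbf{P}_n\subseteq\mathbf{P}$ and $\mathbf{U}\subseteq\mathbf{P}_n$, the same $\Phi$ defines $\mathbf{U}$ within $\mathbf{P}_n$, that is, $\mathbf{U}=\mathbf{P}_n\Phi$.

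To check condition~(ii), let $\mathbf{X},\mathbf{Y}\in[\mathbf{SL},\mathbf{P}_n]$ and suppose that $\mathbf{X}\wedge\mathbf{Y}$ satisfies an identity $\sigma\in\Sigma$. If $\sigma$ is of the form~\eqref{x^k=x^ell}, it is either trivial or, by Lemma~\ref{Lem: x^n=x^m in X wedge Y} (applicable since $\mathbf{X}$ and $\mathbf{Y}$ are aperiodic), holds in $\mathbf{X}$ or in $\mathbf{Y}$. If $\sigma$ is of the form~\eqref{x^ky^ell=y^ell x^k} with $k\ge n$ or $\ell\ge n$, then $\sigma$ is a consequence of the defining identities $x^n\approx x^{n+1}$ and $x^ny\approx yx^n$ of $\mathbf{P}_n$, so it holds in both $\mathbf{X}$ and $\mathbf{Y}$; otherwise $1\le k,\ell<n$, and Lemma~\ref{Lem: x^ky^ell=y^ell x^k in X wedge Y} yields that $\sigma$ holds in $\mathbf{X}$ or in $\mathbf{Y}$. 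Thus Lemma~\ref{Lem: smth imply distributivity} applies, and $[\mathbf{SL},\mathbf{P}_n]$ is distributive.

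The corollary is essentially a synthesis of Lemmas~\ref{Lem: subvarieties of P_n}, \ref{Lem: x^n=x^m in X wedge Y} and~\ref{Lem: x^ky^ell=y^ell x^k in X wedge Y}, so there is no genuinely difficult step; the points that need a little care are the separate treatment of the least element $\mathbf{T}$ (handled above via Lemma~\ref{Lem: group variety} and aperiodicity), the commutativity identities whose exponents reach $n$, and the transfer of Lemma~\ref{Lem: subvarieties of P_n} from $\mathbf{P}$ to $\mathbf{P}_n$.
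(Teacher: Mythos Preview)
Your proof is correct and follows essentially the same approach as the paper: both invoke Lemma~\ref{Lem: smth imply distributivity} with $\mathbf V=\mathbf P_n$ and $\Sigma$ the set of identities~\eqref{x^k=x^ell} and~\eqref{x^ky^ell=y^ell x^k}, appealing to Lemmas~\ref{Lem: subvarieties of P_n}, \ref{Lem: x^n=x^m in X wedge Y} and~\ref{Lem: x^ky^ell=y^ell x^k in X wedge Y}. The only difference is that the paper takes $\mathbf W=\mathbf T$ directly, while you take $\mathbf W=\mathbf{SL}$ and treat $\mathbf T$ by a separate adjoin-a-least-element argument; your version is slightly more explicit about the boundary cases ($k\ge n$ or $\ell\ge n$), which the paper leaves implicit.
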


\begin{proof}
Lemmas~\ref{Lem: subvarieties of P_n},~\ref{Lem: x^n=x^m in X wedge Y} and~\ref{Lem: x^ky^ell=y^ell x^k in X wedge Y} show that it suffices to refer to Lemma~\ref{Lem: smth imply distributivity} with $\mathbf V=\mathbf P_n$, $\mathbf W=\mathbf T$ and the identity system $\Sigma$ that consists of the identities~\eqref{x^k=x^ell} and~\eqref{x^ky^ell=y^ell x^k} for all $k,\ell\in\mathbb N$.
\end{proof}

\begin{proposition}
\label{Prop: P_n is fi-perm}
For any $n\in\mathbb N$, the variety $\mathbf P_n$ is $fi$-permutable.
\end{proposition}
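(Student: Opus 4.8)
The plan is to deduce the claim from Lemma~\ref{Lem: lifting}: it suffices to show that if $\mathbf X,\mathbf Y\subseteq\mathbf P_n$ and an identity $\mathbf u\approx\mathbf v$ holds in $\mathbf X\wedge\mathbf Y$, then $(\mathbf u,\mathbf v)\in\theta_{\mathbf X}\theta_{\mathbf Y}$. By Lemma~\ref{Lem: comparable is fi-permut} we may assume $\mathbf X$ and $\mathbf Y$ incomparable. The first step is to introduce a normal form for words modulo $\mathbf P_n$: using the identity $x^2y\approx xyx$ (equivalently, $x\mathbf ax\approx x^2\mathbf a$ for an arbitrary word $\mathbf a$) one gathers all occurrences of each multiple letter into a single power placed in the leftmost block where that letter occurs; using $x^n\approx x^{n+1}$ one caps every exponent at $n$; and using $x^ny\approx yx^n$ one moves each power $x^n$ coming from a letter with at least $n$ occurrences to the very front (where such powers already commute with one another in $\mathbf P_n$). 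This yields, for every word $\mathbf w$, a word $\mathrm N(\mathbf w)=y_1^n\cdots y_p^n\,\mathbf r_0t_1\mathbf r_1\cdots t_m\mathbf r_m$ with $\mathbf w\,\theta_{\mathbf P_n}\,\mathrm N(\mathbf w)$, where $t_1,\dots,t_m$ are the simple letters of $\mathbf r_0t_1\mathbf r_1\cdots t_m\mathbf r_m$ in order and each block $\mathbf r_i$ is a product of powers $z^e$ of pairwise distinct multiple letters with $2\le e<n$, listed in the order of their first occurrences. Since $\theta_{\mathbf X}\supseteq\theta_{\mathbf P_n}$ and $\theta_{\mathbf Y}\supseteq\theta_{\mathbf P_n}$, we may replace $\mathbf u$ and $\mathbf v$ by $\mathrm N(\mathbf u)$ and $\mathrm N(\mathbf v)$; so we assume $\mathbf u$ and $\mathbf v$ have this shape.

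By Lemma~\ref{Lem: subvarieties of P_n}, the variety $\mathbf X\wedge\mathbf Y$ can be given within $\mathbf P_n$ by finitely many identities of the forms~\eqref{x^k=x^ell} and~\eqref{x^ky^ell=y^ell x^k}; all of these preserve content, so $\con(\mathbf u)=\con(\mathbf v)$, and after first reconciling the letters whose number of occurrences collapses to one, $\mathbf u$ and $\mathbf v$ have the same simple letters and, by Lemma~\ref{Lem: decompositions of u and v}, parallel block decompositions, the corresponding blocks involving the same multiple letters but possibly with different exponents and in a different order. The idea is then to build the required intermediate word $\mathbf w$ by removing these discrepancies one at a time. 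If $\occ_x(\mathbf u)=a\ne b=\occ_x(\mathbf v)$ for some multiple letter $x$, then deleting every other letter shows that $\mathbf X\wedge\mathbf Y$ satisfies the non-trivial identity $x^a\approx x^b$ of the form~\eqref{x^n=x^m}, whence by Lemma~\ref{Lem: x^n=x^m in X wedge Y} (both $\mathbf X$ and $\mathbf Y$ being aperiodic) this identity holds in $\mathbf X$ or in $\mathbf Y$. Likewise, if two powers occupy corresponding blocks of $\mathbf u$ and of $\mathbf v$ in opposite orders, or a power sits in different blocks of $\mathbf u$ and of $\mathbf v$ — the latter situation, after deleting all but the two letters involved (one of which may be a simple letter, read as a power with exponent $1$), looking the same — then deleting the remaining letters yields an identity of the form~\eqref{x^ky^ell=y^ell x^k} with $1\le k,\ell<n$ holding in $\mathbf X\wedge\mathbf Y$, so by Lemma~\ref{Lem: x^ky^ell=y^ell x^k in X wedge Y} it holds in $\mathbf X$ or in $\mathbf Y$.

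Now one resolves the discrepancies successively: apply the identity furnished above to $\mathbf u$ when it holds in $\mathbf X$ and to $\mathbf v$ when it holds in $\mathbf Y$ (to either one if it holds in both), re-normalising afterwards. Because $\theta_{\mathbf X},\theta_{\mathbf Y}\subseteq\theta_{\mathbf X}\vee\theta_{\mathbf Y}$, which is the fully invariant congruence corresponding to $\mathbf X\wedge\mathbf Y$, the moved words stay $\theta_{\mathbf X\wedge\mathbf Y}$-equivalent, while the number of discrepancies strictly drops. When none is left, $\mathbf u$ has been carried via $\theta_{\mathbf X}$ and $\mathbf v$ via $\theta_{\mathbf Y}$ to a common word $\mathbf w$, so $\mathbf u\,\theta_{\mathbf X}\,\mathbf w\,\theta_{\mathbf Y}\,\mathbf v$ and $(\mathbf u,\mathbf v)\in\theta_{\mathbf X}\theta_{\mathbf Y}$, which is what we want.

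The main work will be the bookkeeping underlying the last two paragraphs. One has to verify that the normal form modulo $\mathbf P_n$ behaves as described (a routine but lengthy word rewriting with $x^2y\approx xyx$, $x^n\approx x^{n+1}$ and $x^ny\approx yx^n$), and, less trivially, that the successive resolutions are genuinely independent — that changing one exponent or transposing one pair of powers neither spoils a coincidence already arranged nor produces a new discrepancy. Performing all exponent (and hence mobility) adjustments before any reordering, and using that in a normal form the block pattern of simple letters, the front powers $x^n$, and the exponents $<n$ inside the blocks vary independently of one another, is what makes this go through.
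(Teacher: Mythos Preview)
Your overall plan—normalise, equalise exponents via Lemma~\ref{Lem: x^n=x^m in X wedge Y}, then handle order discrepancies via Lemma~\ref{Lem: x^ky^ell=y^ell x^k in X wedge Y}—matches the paper's, and the exponent step is essentially what the paper does (the paper's normal form $\mathbf u=x_1^{s_1}\cdots x_m^{s_m}$ is simpler than yours, but that is cosmetic). The gap is in the reordering step.

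Once exponents are equalised, $\mathbf u'$ and $\mathbf v'$ differ by a permutation of the same list of powers, and for each inverted pair $(x_i,x_j)$ you know the swap identity $x^{r_i}y^{r_j}\approx y^{r_j}x^{r_i}$ lies in $\mathbf X$ or in $\mathbf Y$. But your instruction ``apply the identity to $\mathbf u$ when it holds in $\mathbf X$ and to $\mathbf v$ when it holds in $\mathbf Y$'' is not well-defined: the two powers you wish to transpose need not be \emph{adjacent}, and the identity $x^{r_i}y^{r_j}\approx y^{r_j}x^{r_i}$ only permits swapping an adjacent block $x_i^{r_i}x_j^{r_j}$. Getting past an intervening $x_k^{r_k}$ requires the $(r_i,r_k)$- or $(r_k,r_j)$-swap, which may lie in the \emph{other} variety. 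Your appeal to ``independence'' does not address this; the sentence ``transposing one pair of powers neither spoils a coincidence already arranged nor produces a new discrepancy'' presupposes you can perform the transposition in a single step inside $\mathbf X$ (or $\mathbf Y$), which in general you cannot.

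The paper closes this gap with a specific algorithm and a genuinely non-trivial argument. Starting from $\mathbf v'$, it greedily applies adjacent $\mathbf Y$-swaps (always moving the smaller index left) to obtain $\mathbf w_1$, then applies adjacent $\mathbf X$-swaps to $\mathbf w_1$ to obtain $\mathbf w_2$, and proves $\mathbf w_2=\mathbf u'$ by contradiction. If some block $x_b^{r_b}x_a^{r_a}$ with $a<b$ survives in $\mathbf w_2$, one looks at the stretch between $x_b$ and $x_a$ back in $\mathbf w_1$ and repeatedly invokes Lemma~\ref{Lem: P_n{x^ky^ell=y^ell x^k} subset P_n{x^py^q=y^qx^p}} (the implication structure among the swap identities: the $(k,\ell)$-swap implies the $(p,q)$-swap iff $k\le p$ and $\ell\le q$) to force inequalities among the exponents $r_i$ that eventually contradict the maximality of $\mathbf w_1$ or $\mathbf w_2$. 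Your sketch never mentions Lemma~\ref{Lem: P_n{x^ky^ell=y^ell x^k} subset P_n{x^py^q=y^qx^p}}, yet this monotonicity is precisely what makes the two-phase sort terminate at $\mathbf u'$; without it the ``independence'' you assert cannot be established. This is the missing idea, not mere bookkeeping.
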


\begin{proof}
Let $\mathbf X,\mathbf Y\subseteq\mathbf P_n$ and an identity $\mathbf u\approx\mathbf v$ holds in $\mathbf X\wedge\mathbf Y$. We need to check that $(\mathbf u,\mathbf v)\in\theta_{\mathbf X}\theta_{\mathbf Y}$.

We may assume that both $\mathbf X$ and $\mathbf Y$ are non-trivial because the required conclusion is evident otherwise. Lemma~\ref{Lem: group variety} and the fact that $\mathbf P_n$ is aperiodic imply that $\con(\mathbf u)=\con(\mathbf v)$. Then, since $\mathbf P_n$ satisfies the identity $\delta_1$, we may assume that $\mathbf u=x_1^{s_1}x_2^{s_2}\cdots x_m^{s_m}$ and $\mathbf v=x_{1\pi}^{t_{1\pi}}x_{2\pi}^{t_{2\pi}}\cdots x_{m\pi}^{t_{m\pi}}$ for some $m\in\mathbb N$ and some permutation $\pi\in S_m$. The variety $\mathbf X\wedge\mathbf Y$ satisfies the identity $x_i^{s_i}\approx x_i^{t_i}$ for each $i=1,2,\dots,m$. Lemma~\ref{Lem: x^n=x^m in X wedge Y} implies that this identity holds in either $\mathbf X$ or $\mathbf Y$. We put $\mathbf u^\prime=x_1^{r_1}x_2^{r_2}\cdots x_m^{r_m}$ and $\mathbf v^\prime=x_{1\pi}^{r_{1\pi}}x_{2\pi}^{r_{2\pi}}\cdots x_{m\pi}^{r_{m\pi}}$, where
$$
r_i=
\begin{cases}
s_i&\text{if the identity}\ x^{s_i}\approx x^{t_i}\ \text{holds in}\ \mathbf Y,\\
t_i&\text{if the identity}\ x^{s_i}\approx x^{t_i}\ \text{holds in}\ \mathbf X\ \text{but does not hold in}\ \mathbf Y
\end{cases}
$$
for any $i=1,2,\dots,m$. Then $\mathbf u\,\theta_{\mathbf X}\,\mathbf u^\prime$ and $\mathbf v^\prime\,\theta_{\mathbf Y}\,\mathbf v$. It remains to prove that $(\mathbf u^\prime,\mathbf v^\prime)\in\theta_{\mathbf X}\theta_{\mathbf Y}$. Indeed, in this case $\mathbf u\,\theta_{\mathbf X}\,\mathbf u^\prime\,\theta_{\mathbf X}\,\mathbf w\,\theta_{\mathbf Y}\mathbf v^\prime\,\theta_{\mathbf Y}\,\mathbf v$ for some word $\mathbf w$ and therefore, $(\mathbf u,\mathbf v)\in\theta_{\mathbf X}\theta_{\mathbf Y}$.

By Lemma~\ref{Lem: x^ky^ell=y^ell x^k in X wedge Y}, for any $i<j$, the identity $\mathbf u^\prime(x_i,x_j)\approx\mathbf v^\prime(x_i,x_j)$ holds in either $\mathbf X$ or $\mathbf Y$. Let us step by step as long as possible apply to the word $\mathbf v^\prime $ non-trivial identities of the kind $\mathbf u^\prime(x_i,x_j)\approx\mathbf v^\prime(x_i,x_j)$ that hold in $\mathbf Y$ as follows: at each step, we will replace some subword of the form $x_j^{ r_j}x_i^{r_i}$ of the word $\mathbf v^\prime$ to the subword $x_i^{r_i}x_j^{r_j}$ whenever $i<j$ and the identity $x_j^{ r_j}x_i^{r_i}\approx x_i^{r_i}x_j^{r_j}$ holds in $\mathbf Y$. As a result, we obtain the word $\mathbf w_1=x_{1\tau}^{r_{1\tau}}x_{2\tau}^{r_{2\tau}}\cdots x_{m\tau}^{r_{m\tau}}$ for some permutation $\tau\in S_m$. Clearly, $\mathbf w_1\,\theta_{\mathbf Y}\,\mathbf v^\prime$. If $\mathbf w_1=\mathbf u^\prime$, then $(\mathbf u^\prime,\mathbf v^\prime)\in\theta_{\mathbf Y}\subseteq\theta_{\mathbf X}\theta_{\mathbf Y}$. Suppose now that $\mathbf w_1\ne\mathbf u^\prime$. 

Now we will step by step as long as possible apply to the word $\mathbf w_1$ non-trivial identities of the kind $\mathbf u^\prime(x_i,x_j)\approx\mathbf v^\prime(x_i,x_j)$ that hold in $\mathbf X$ by the same way as above: at each step, we will replace some subword of the form $x_j^{ r_j}x_i^{r_i}$ of the word $\mathbf w_1$ to the subword $x_i^{r_i}x_j^{r_j}$ whenever $i<j$ and the identity $x_j^{ r_j}x_i^{r_i}\approx x_i^{r_i}x_j^{r_j}$ holds in $\mathbf X$. As a result, we obtain some word $\mathbf w_2$. It suffices to verify that $\mathbf w_2=\mathbf u^\prime$ because $\mathbf u^\prime=\mathbf w_2\,\theta_{\mathbf X}\,\mathbf w_1\,\theta_{\mathbf Y}\,\mathbf v^\prime$ and therefore, $(\mathbf u^\prime,\mathbf v^\prime)\in\theta_{\mathbf X}\theta_{\mathbf Y}$ in this case. 

Arguing by contradiction, we suppose that $\mathbf w_2\ne\mathbf u^\prime$. Then there are indexes $a$ and $b$ such that $a<b$ and $x_b^{r_b}x_a^{r_a}$ is a subword of the word $\mathbf w_2$. Then $b=p\tau$ and $a=q\tau$ for some $p<q$. Therefore, 
$$
\mathbf w_1=\biggl(\prod_{i=1}^{p-1} x_{i\tau}^{r_{i\tau}}\biggr)x_b^{r_b}\biggl(\prod_{i=p+1}^{q-1} x_{i\tau}^{r_{i\tau}}\biggr)x_a^{r_a}\biggl(\prod_{i=q+1}^m x_{i\tau}^{r_{i\tau}}\biggr).
$$
The identity $x^{r_a}y^{r_b}\approx y^{r_b}x^{r_a}$ fails in $\mathbf X$ by the definition of the word $\mathbf w_2$. Then it holds in $\mathbf Y$ by Lemma~\ref{Lem: x^ky^ell=y^ell x^k in X wedge Y}. Suppose that $q=p+1$. Then we can apply the identity $x^{r_a}y^{r_b}\approx y^{r_b}x^{r_a}$ to the word $\mathbf w_1$ and replace the subword $x_b^{r_b}x_a^{r_a}$ to the subword $x_a^{r_a}x_b^{r_b}$. As a result, we obtain the word
$$
\mathbf w_1^\prime=\biggl(\prod_{i=1}^{p-1} x_{i\tau}^{r_{i\tau}}\biggr)x_a^{r_a}x_b^{r_b}\biggl(\prod_{i=q+1}^m x_{i\tau}^{r_{i\tau}}\biggr)
$$
such that the identity $\mathbf v^\prime\approx\mathbf w_1^\prime$ holds in $\mathbf Y$. But this is impossible by the definition of the word $\mathbf w_1$. Therefore, $p+1<q$.

We will assume without loss of generality that $r_b\le r_a$ (the case when $r_a<r_b$ can be considered quite analogously). For each $i=p+1,p+2,\dots,q-1$, the variety $\mathbf X$ satisfies either the identity $x^{r_{i\tau}}y^{r_b}\approx y^{r_b}x^{r_{i\tau}}$ or the identity $x^{r_a}y^{r_{i\tau}}\approx y^{r_{i\tau}}x^{r_a}$. This claim, Lemma~\ref{Lem: P_n{x^ky^ell=y^ell x^k} subset P_n{x^py^q=y^qx^p}} and the inequality $r_b\le r_a$ imply that $\mathbf X$ satisfies the identity $x^{r_a}y^{r_{i\tau}}\approx y^{r_{i\tau}}x^{r_a}$ for each $i=p+1,p+2,\dots,q-1$. Since the identity $x^{r_a}y^{r_b}\approx y^{r_b}x^{r_a}$ fails in $\mathbf X$, we can apply Lemma~\ref{Lem: P_n{x^ky^ell=y^ell x^k} subset P_n{x^py^q=y^qx^p}} again and obtain that $r_{i\tau}>r_b$ for each $i=p+1,p+2,\dots,q-1$. Since the identity $x^{r_a}y^{r_b}\approx y^{r_b}x^{r_a}$ holds in the variety $\mathbf Y$, Lemma~\ref{Lem: P_n{x^ky^ell=y^ell x^k} subset P_n{x^py^q=y^qx^p}} implies that this variety satisfies also the identity $x^{r_a}y^{r_{i\tau}}\approx y^{r_{i\tau}}x^{r_a}$ for each $i=p+1,p+2,\dots,q-1$. This implies that $(q-1)\tau<a$ because we have a contradiction with the definition of the word $\mathbf w_1$ and the fact that $\mathbf Y$ satisfies the identity $x^{r_a}y^{r_{(q-1)\tau}}\approx y^{r_{(q-1)\tau}}x^{r_a}$ otherwise.

Suppose that $i\tau<a$ for each $i=p+1,p+2,\dots,q-1$. Then $(p+1)\tau<a<b$. By the definition of the word $\mathbf w_1$, this implies that $\mathbf Y$ violates the identity $x^{r_{(p+1)\tau}}y^{r_b}\approx y^{r_b}x^{r_{(p+1)\tau}}$. Then $r_{(p+1)\tau}<r_a$ by Lemma~\ref{Lem: P_n{x^ky^ell=y^ell x^k} subset P_n{x^py^q=y^qx^p}}. In view of Lemma~\ref{Lem: x^ky^ell=y^ell x^k in X wedge Y}, the identity $x^{r_{(p+1)\tau}}y^{r_b}\approx y^{r_b}x^{r_{(p+1)\tau}}$ holds in $\mathbf X$. But this is impossible because this identity implies $x^{r_a}y^{r_b}\approx y^{r_b}x^{r_a}$ by Lemma~\ref{Lem: P_n{x^ky^ell=y^ell x^k} subset P_n{x^py^q=y^qx^p}}.

Finally, suppose that $j\tau>a$ for some $j\in\{p+1,p+2,\dots,q-1\}$. Let $j$ be the largest number with such a property. Put $d=j\tau$. Then there is $c\le a$ such that $x_d^{r_d}x_c^{r_c}$ is a subword of the word $\mathbf w_1$. The definition of the word $\mathbf w_1$ implies that the variety $\mathbf Y$ does not satisfy the identity $x^{r_d}y^{r_c}\approx y^{r_c}x^{r_d}$. Since $r_d\ge r_b$ and the identity $x^{r_a}y^{r_b}\approx y^{r_b}x^{r_a}$ is true in $\mathbf Y$, Lemma~\ref{Lem: P_n{x^ky^ell=y^ell x^k} subset P_n{x^py^q=y^qx^p}} implies that $r_c<r_a$. Therefore, $c<a$. Then the definition of the word $\mathbf w_2$ and the inequalities $c<a<b$ imply that the variety $\mathbf X$ satisfies the identity $x^{r_c}y^{r_b}\approx y^{r_b}x^{r_c}$. Now Lemma~\ref{Lem: P_n{x^ky^ell=y^ell x^k} subset P_n{x^py^q=y^qx^p}} applies and we obtain a contradiction with the inequality $r_c<r_a$ and the fact that the identity $x^{r_a}y^{r_b}\approx y^{r_b}x^{r_a}$ fails in $\mathbf X$. This contradiction completes the proof.
\end{proof}

\subsection{The variety $\mathbf Q_{r,s}$}
\label{Subsec: Q_{r,s} is fi-perm}

\begin{proposition}
\label{Prop: Q_{r,s} is fi-perm}
For any $1\le r,s\le 3$, the variety $\mathbf Q_{r,s}$ is $fi$-permutable.
\end{proposition}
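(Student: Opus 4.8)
The plan is to follow the strategy of the proof of Proposition~\ref{Prop: P_n is fi-perm}, fed by the reduction machinery of Section~\ref{Sec: auxiliary results}. Let $\mathbf X,\mathbf Y\subseteq\mathbf Q_{r,s}$ and let $\mathbf u\approx\mathbf v$ be an identity of $\mathbf X\wedge\mathbf Y$; we must produce a word $\mathbf w$ with $\mathbf u\,\theta_{\mathbf X}\,\mathbf w\,\theta_{\mathbf Y}\,\mathbf v$. Since $\mathbf Q_{r,s}\subseteq\mathbf A$, Lemma~\ref{Lem: L(A)}(i) and Lemma~\ref{Lem: comparable is fi-permut} let us assume $\mathbf D_2\subseteq\mathbf X,\mathbf Y$. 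Because $\mathbf Q_{r,s}$ satisfies $\sigma_3$ and~\eqref{xxy=yxx}, Lemma~\ref{Lem: smth imply w_n[pi,tau]=w_n'[pi,tau]} gives $\mathbf Q_{r,s}\subseteq\mathbf A^\ast$; hence, by Lemma~\ref{Lem: L(A)}, $\mathbf X\in[\mathbf D_p,\mathbf A^\ast\{\delta_p\}]$ and $\mathbf Y\in[\mathbf D_q,\mathbf A^\ast\{\delta_q\}]$ for some $2\le p,q\le\infty$. Now Lemma~\ref{Lem: reduction to linear-balanced} yields a linear-balanced identity $\mathbf u^\prime\approx\mathbf v^\prime$ of $\mathbf X\wedge\mathbf Y$ and a word $\mathbf p$ with $\mathbf u\,\theta_{\mathbf X}\,\mathbf p\mathbf u^\prime$ and $\mathbf p\mathbf v^\prime\,\theta_{\mathbf Y}\,\mathbf v$. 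Since congruences are compatible with multiplication and $\theta_{\mathbf X}\theta_{\mathbf X}=\theta_{\mathbf X}$, $\theta_{\mathbf Y}\theta_{\mathbf Y}=\theta_{\mathbf Y}$, it suffices to prove $(\mathbf u^\prime,\mathbf v^\prime)\in\theta_{\mathbf X}\theta_{\mathbf Y}$ for every linear-balanced identity $\mathbf u^\prime\approx\mathbf v^\prime$ of $\mathbf X\wedge\mathbf Y$.

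For this I would reuse the $n$-invertibility calculus from the proof of Proposition~\ref{Prop: L(A') is distributive}. The identity $\mathbf u^\prime\approx\mathbf v^\prime$ is $\ell$-invertible for some $\ell$, so it is a composition of elementary transpositions, each interchanging two adjacent occurrences $xy\mapsto yx$ of multiple letters inside a single block. A transposition of a pair $\{x,y\}$ leaves unchanged the restriction $\mathbf w(x^\prime,y^\prime,T_{x^\prime,y^\prime})$ for every other pair $\{x^\prime,y^\prime\}$ of multiple letters, so ``which of $\mathbf X$, $\mathbf Y$ realises a transposition'' is intrinsic to the unordered pair; mirroring the proof of Proposition~\ref{Prop: P_n is fi-perm}, one then applies to $\mathbf v^\prime$ all admissible $\mathbf Y$-valid transpositions pushing it toward $\mathbf u^\prime$, reaching $\mathbf w_1$ with $\mathbf w_1\,\theta_{\mathbf Y}\,\mathbf v^\prime$, then applies to $\mathbf w_1$ all admissible $\mathbf X$-valid ones, reaching $\mathbf w_2$ with $\mathbf w_2\,\theta_{\mathbf X}\,\mathbf w_1$, and shows $\mathbf w_2=\mathbf u^\prime$. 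That a transposition is realisable in $\mathbf X$ (resp.\ $\mathbf Y$) is decided by passing to the two letters and their separating simple letters: in the ``generic'' case the restricted identity has the form~\eqref{pxyq=pyxq} subject to~\eqref{pxyq=pyxq restrict}, and Corollary~\ref{Cor: pxyq=pyxq in X wedge Y} applies, with Lemma~\ref{Lem: pxyq=pyxq} and the fact that a subword of an isoterm is an isoterm supplying the needed monotonicity of swappable patterns; the ``first/first'' and ``last/last'' transpositions, which $\sigma_3$ cannot perform, are controlled instead by the identities $\mathbf c_{n,m}[\rho]\approx\mathbf c_{n,m}^\prime[\rho]$ and $\mathbf d_{n,m}[\rho]\approx\mathbf d_{n,m}^\prime[\rho]$ and by the $\alpha_i$, $\beta_j$, via Lemmas~\ref{Lem: c_{n,m}[rho]=c[n,m]'[rho]} and~\ref{Lem: V notin S(c_{n,m}[rho])} and their duals together with a direct inspection of the blocks of $\alpha_1,\alpha_2,\alpha_3,\beta_1,\beta_2,\beta_3$. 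The lift from a restricted transposition to one carried out inside the whole word is handled by arguments of the type of Lemmas~\ref{Lem: from pxqxr to px^2qr} and~\ref{Lem: from pxyqxrys to pyxqxrys}.

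The main obstacle is the step $\mathbf w_2=\mathbf u^\prime$, i.e.\ ruling out any inversion surviving in $\mathbf w_2$: such an inversion produces an identity on two letters holding in $\mathbf X\wedge\mathbf Y$ but, by the maximality built into $\mathbf w_1$ and $\mathbf w_2$ and the monotonicity above, failing in both $\mathbf X$ and $\mathbf Y$, contradicting Corollary~\ref{Cor: pxyq=pyxq in X wedge Y} (or its $\mathbf c$/$\mathbf d$-analogues). Carrying this out is the bookkeeping-heavy analogue of the long final argument in the proof of Proposition~\ref{Prop: P_n is fi-perm}, and it is precisely here that the hypothesis $1\le r,s\le 3$ enters: since $\mathbf Q_{r,s}$ omits exactly one of $\alpha_1,\alpha_2,\alpha_3$ and exactly one of $\beta_1,\beta_2,\beta_3$, this single omitted pair forces all ``problematic'' transpositions of a given shape to collapse into one of the two sides, so that the $\mathbf w_1$--$\mathbf w_2$ construction terminates correctly. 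Everything else is a routine compilation of the lemmas of Sections~\ref{Sec: preliminaries} and~\ref{Sec: auxiliary results}.
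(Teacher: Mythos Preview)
Your reduction paragraph is correct and matches the paper: both arguments use Lemma~\ref{Lem: L(A)}(i), Lemma~\ref{Lem: smth imply w_n[pi,tau]=w_n'[pi,tau]} and Lemma~\ref{Lem: reduction to linear-balanced} to reduce to a linear-balanced identity $\mathbf u\approx\mathbf v$ with $\mathbf X,\mathbf Y\in[\mathbf D_2,\mathbf Q_{r,s}]$.

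From that point on, however, your plan diverges from the paper and the crucial step is not justified. You propose the bubble-sort scheme of Proposition~\ref{Prop: P_n is fi-perm}: push $\mathbf v'$ toward $\mathbf u'$ by all $\mathbf Y$-valid transpositions, then by all $\mathbf X$-valid ones, and argue that the result equals $\mathbf u'$. In $\mathbf P_n$ this terminates because the governing identities $\gamma_{k,\ell}$ sit in a product of two chains (Lemma~\ref{Lem: P_n{x^ky^ell=y^ell x^k} subset P_n{x^py^q=y^qx^p}}), and the long contradiction at the end of that proof exploits exactly this order structure. You assert an analogous ``monotonicity of swappable patterns'' for $\mathbf Q_{r,s}$ but never identify what it is; your references to Lemmas~\ref{Lem: c_{n,m}[rho]=c[n,m]'[rho]}, \ref{Lem: V notin S(c_{n,m}[rho])}, \ref{Lem: from pxqxr to px^2qr}, \ref{Lem: from pxyqxrys to pyxqxrys} do not supply it. Without such a structure the $\mathbf w_2=\mathbf u'$ step is a genuine gap.

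The paper avoids bubble-sorting altogether. Using $\sigma_3$ (which $\mathbf Q_{r,s}$ satisfies), each block $\mathbf u_i$ is rewritten as $\mathbf p_i\mathbf q_i\mathbf r_i$ with $\mathbf p_i$ the first occurrences, $\mathbf r_i$ the last occurrences, and $\mathbf q_i$ the rest; likewise $\mathbf v_i=\mathbf p_i'\mathbf q_i'\mathbf r_i'$. The $\mathbf q_i$ can be freely permuted by $\sigma_3$. The key point you are missing is that the two-letter identities~\eqref{2 mult letters and their dividers} needed to turn the $\mathbf p_i$ into the $\mathbf p_i'$, once normalised via the identity $xytxy\approx xytyx$ that $\mathbf Q_{r,s}$ satisfies, either already hold in $\mathbf Q_{r,s}$ (they follow from $\Phi_r=\{\eqref{xytxy=yxtxy},\alpha_i:i\ne r\}$) or lie in a system $\Sigma_r$ whose members generate a \emph{chain} of subvarieties. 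Hence all of them are implied by a single $\gamma\in\Sigma_r$, and Corollary~\ref{Cor: pxyq=pyxq in X wedge Y} puts $\gamma$ in $\mathbf X$ or in $\mathbf Y$; say $\mathbf X$. Then $\mathbf X$ performs the entire $\mathbf p_i\to\mathbf p_i'$ transformation in one step. The dual chain argument handles $\mathbf r_i\to\mathbf r_i'$ in one of $\mathbf X,\mathbf Y$, and $\sigma_3$ handles $\mathbf q_i\to\mathbf q_i'$ in $\mathbf Y$. This gives $\mathbf u\,\theta_{\mathbf X}\,\cdot\,\theta_{\mathbf Z}\,\cdot\,\theta_{\mathbf Y}\,\mathbf v$ with $\mathbf Z\in\{\mathbf X,\mathbf Y\}$, hence $(\mathbf u,\mathbf v)\in\theta_{\mathbf X}\theta_{\mathbf Y}$. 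The role of the hypothesis on $r,s$ is precisely that omitting only one $\alpha_i$ and one $\beta_j$ makes each of $\Sigma_r$ and its dual a chain; this is the structural replacement for Lemma~\ref{Lem: P_n{x^ky^ell=y^ell x^k} subset P_n{x^py^q=y^qx^p}} that your sketch lacks.
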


\begin{proof}
Let $\mathbf X,\mathbf Y\subseteq\mathbf Q_{r,s}$ and an identity $\mathbf u\approx\mathbf v$ holds in $\mathbf X\wedge\mathbf Y$. We need to check that $(\mathbf u,\mathbf v)\in\theta_{\mathbf X}\theta_{\mathbf Y}$.

In view of Lemmas~\ref{Lem: comparable is fi-permut} and~\ref{Lem: L(A)}(i), we may assume that $\mathbf X,\mathbf Y\in[\mathbf D_2,\mathbf Q_{r,s}]$. Lemma~\ref{Lem: smth imply w_n[pi,tau]=w_n'[pi,tau]} implies that $\mathbf Q_{r,s}\subseteq\mathbf A^\ast$. Then, by Lemmas~\ref{Lem: L(A)}(ii) and~\ref{Lem: reduction to linear-balanced}, there are a linear-balanced identity $\mathbf u^\prime\approx\mathbf v^\prime$ that holds in $\mathbf X\wedge\mathbf Y$ and a word $\mathbf p$ such that $\mathbf u\,\theta_{\mathbf X}\,\mathbf p\mathbf u^\prime$ and $\mathbf p\mathbf v^\prime\,\theta_{\mathbf Y}\,\mathbf v$. It suffices to verify that $(\mathbf u^\prime,\mathbf v^\prime)\in\theta_{\mathbf X}\theta_{\mathbf Y}$. Indeed, in this case there is a word $\mathbf w$ such that $\mathbf u^\prime\,\theta_{\mathbf X}\,\mathbf w\,\theta_{\mathbf Y}\,\mathbf v^\prime$. Then $\mathbf u\,\theta_{\mathbf X}\,\mathbf p\mathbf u^\prime\,\theta_{\mathbf X}\,\mathbf p\mathbf w\,\theta_{\mathbf Y}\,\mathbf p\mathbf v^\prime\,\theta_{\mathbf Y}\,\mathbf v$, whence $(\mathbf u,\mathbf v)\in\theta_{\mathbf X}\theta_{\mathbf Y}$, and we are done. This allows us to suppose below that the identity $\mathbf u\approx\mathbf v$ is linear-balanced.

Clearly, we may assume that $\mathbf u\ne\mathbf v$ because the required conclusion is evident otherwise. Let~\eqref{decomposition of u} and~\eqref{decomposition of v} be the decompositions of the words $\mathbf u$ and $\mathbf v$, respectively. The varieties $\mathbf X$ and $\mathbf Y$ satisfy the identity $\sigma_3$. Therefore, we may assume that, for each $i=0,1,\dots,m$, $\mathbf u_i=\mathbf p_i\mathbf q_i\mathbf r_i$ and $\mathbf v_i=\mathbf p_i^\prime\mathbf q_i^\prime\mathbf r_i^\prime$, where $\mathbf p_i$ and $\mathbf p_i^\prime$ consist of the first occurrences of letters in the words $\mathbf u$ and $\mathbf v$, respectively; $\mathbf q_i$ and $\mathbf q_i^\prime$ consist of non-first and non-last occurrences of letters in the words $\mathbf u$ and $\mathbf v$, respectively; finally, $\mathbf r_i$ and $\mathbf r_i^\prime$ consist of the last occurrences of letters in the words $\mathbf u$ and $\mathbf v$, respectively. 

Let $i\in\{0,1,\dots,m\}$. Clearly, $\con(\mathbf p_i)=\con(\mathbf p_i^\prime)$. Suppose that $\mathbf p_i\ne\mathbf p_i^\prime$. Then there are letters $x,y\in\con(\mathbf p_i)$ such that $x$ precedes $y$ in $\mathbf p_i$ but $y$ precedes $x$ in $\mathbf p_i^\prime$. Let 
$$
T_{x,y}=\{t_j\mid 1\le j\le m,\,\con(\mathbf u_j)\cap\{x,y\}\ne\varnothing\}.
$$
Clearly, the variety $\mathbf X\wedge\mathbf Y$ satisfies the identity~\eqref{2 mult letters and their dividers}. Suppose that $\mathbf a_0t_{i_1}\mathbf a_1t_{i_2}\mathbf a_2\cdots t_{i_k}\mathbf a_k$ and $\mathbf b_0t_{i_1}\mathbf b_1t_{i_2}\mathbf b_2\cdots t_{i_k}\mathbf b_k$ are decompositions of the words $\mathbf u(x,y,T_{x,y})$ and $\mathbf v(x,y,T_{x,y})$, respectively. Then $\con(\mathbf a_i)=\con(\mathbf b_i)$ for all $i=0,1,\dots,k$. Further, $x,y\in\con(\mathbf a_0)$ and $x,y\in\con(\mathbf b_0)$ by the choice of letters $x$ and $y$. We may assume without loss of generality that $\mathbf a_0=xy$. Then $\mathbf b_0=yx$ by the choice of $x$ and $y$. The variety $\mathbf Q_{r,s}$ satisfies the identity $\mathbf d_{0,0}[\pi]\approx\mathbf d_{0,0}^\prime[\pi]$, that is, the identity $xytxy\approx xytyx$. Thus, if $x,y\in\con(\mathbf a_i)=\con(\mathbf b_i)$ for some $i>0$, then we may assume that $\mathbf a_i=\mathbf b_i=xy$. Therefore, the identity~\eqref{2 mult letters and their dividers} coincides with an identity of the form $xyt_1\mathbf a_1t_2\mathbf a_2\cdots t_k\mathbf a_k\approx yxt_1\mathbf a_1t_2\mathbf a_2\cdots t_k\mathbf a_k$, where $\mathbf a_i\in\{x,y,xy\}$ for all $i=1,2,\dots,k$.{\sloppy

}
For any $r=1,2,3$, we put $\Phi_r=\{\eqref{xytxy=yxtxy},\,\alpha_i\mid i=1,2,3,\,i\ne r\}$. The identity~\eqref{xytxy=yxtxy} is nothing but the identity $\mathbf c_{0,0}[\pi]\approx\mathbf c_{0,0}^\prime[\pi]$. Hence the variety $\mathbf Q_{r,s}$ satisfies the identity system $\Phi_r$. Put
$$
\Sigma_r=
\begin{cases}
\{xyt_1xt_2x\cdots t_nxty\approx yxt_1xt_2x\cdots t_nxty\mid n\in\mathbb N\}&\text{if}\ r=1,\\
\{\sigma_1,\,\alpha_2\}&\text{if}\ r=2,\\ 
\{xytyt_1xt_2x\cdots t_nx\approx yxtyt_1xt_2x\cdots t_nx\mid n\in\mathbb N\}&\text{if}\ r=3.
\end{cases}
$$
It is easy to see that, for each $r=1,2,3$, an identity of the form~\eqref{2 mult letters and their dividers} either follows from the identity system $\Phi_r$ (and therefore, holds in $\mathbf Q_{r,s}$) or coincides (up to renaming of letters) with some identity from the identity system $\Sigma_r$. 

The set of all identities of the form~\eqref{2 mult letters and their dividers} implies the identities
\begin{equation}
\label{two letters and their dividers imply smth}
\mathbf u\approx\mathbf p_0^\prime\mathbf q_0\mathbf r_0\biggl(\prod_{i=1}^m t_i\mathbf p_i^\prime\mathbf q_i\mathbf r_i\biggr)\quad\text{and}\quad\mathbf v\approx\mathbf p_0\mathbf q_0^\prime\mathbf r_0^\prime\biggl(\prod_{i=1}^m t_i\mathbf p_i\mathbf q_i^\prime\mathbf r_i^\prime\biggr).
\end{equation}
Thus, these two identities follow from the identities that hold in the variety $\mathbf Q_{r,s}$ and some (possibly empty) subsystem $\Gamma$ of the system $\Sigma_r$. Suppose that $\Gamma\ne\varnothing$. Then the definition of $\Sigma_r$ implies that the set of all varieties of the form $\var\{\gamma\}$ with $\gamma\in\Gamma$ forms a chain with the least element. Therefore, $\Gamma$ is equivalent to a single identity $\gamma\in\Gamma$, whence the identities~\eqref{two letters and their dividers imply smth} hold in the variety $\mathbf Q_{r,s}\{\gamma\}$. 

The identity $\gamma$ coincides with some identity of the form~\eqref{2 mult letters and their dividers}, whence it holds in $\mathbf X\wedge\mathbf Y$. Now Corollary~\ref{Cor: pxyq=pyxq in X wedge Y} applies and we conclude that the identity $\gamma$ holds in either $\mathbf X$ or $\mathbf Y$, say, in $\mathbf X$. Then $\mathbf X\subseteq\mathbf Q_{r,s}\{\gamma\}$, whence $\mathbf X$ satisfies~\eqref{two letters and their dividers imply smth}. The same is evidently true whenever $\Gamma=\varnothing$. Thus, $\mathbf X$ satisfies~\eqref{two letters and their dividers imply smth} in either case. Analogous arguments show that one of the varieties $\mathbf X$ or $\mathbf Y$ satisfies the identity 
$$
\mathbf p_0^\prime\mathbf q_0\mathbf r_0\biggl(\prod_{i=1}^m t_i\mathbf p_i^\prime\mathbf q_i\mathbf r_i\biggr)\approx\mathbf p_0^\prime\mathbf q_0\mathbf r_0^\prime\biggl(\prod_{i=1}^m t_i\mathbf p_i^\prime\mathbf q_i\mathbf r_i^\prime\biggr).
$$ 
Finally, since the variety $\mathbf Y$ satisfies the identity $\sigma_3$, this variety satisfies also the identity
$$
\mathbf p_0^\prime\mathbf q_0\mathbf r_0^\prime\biggl(\prod_{i=1}^m t_i\mathbf p_i^\prime\mathbf q_i\mathbf r_i^\prime\biggr)\approx\mathbf v.
$$ 
Therefore,
$$
\mathbf u\,\theta_{\mathbf X}\,\mathbf p_0^\prime\mathbf q_0\mathbf r_0\biggl(\prod_{i=1}^m t_i\mathbf p_i^\prime\mathbf q_i\mathbf r_i\biggr)\,\theta_{\mathbf Z}\,\mathbf p_0^\prime\mathbf q_0\mathbf r_0^\prime\biggl(\prod_{i=1}^m t_i\mathbf p_i^\prime\mathbf q_i\mathbf r_i^\prime\biggr)\,\theta_{\mathbf Y}\,\mathbf v,
$$
where $\mathbf Z\in\{\mathbf X,\mathbf Y\}$. Hence $(\mathbf u,\mathbf v)\in\theta_{\mathbf X}\theta_{\mathbf Y}$, and we are done.
\end{proof}

\subsection{The variety $\mathbf R$}
\label{Subsec: R is fi-perm}
For any $k,\ell\in\mathbb N$, we fix the following notation for an identity:
$$
\gamma_{k,\ell}:\enskip\biggl(\prod_{i=1}^k xt_i\biggr)xy\biggl(\prod_{i=k+1}^{k+\ell} t_iy\biggr)\approx\biggl(\prod_{i=1}^k xt_i\biggr)yx\biggl(\prod_{i=k+1}^{k+\ell} t_iy\biggr).
$$

The following evident observation will be very useful.

\begin{lemma}
\label{Lem: gamma_{e,f} implies pxyq=pyxq}
Let $\mathbf p$ and $\mathbf q$ be words, $x\in\con(\mathbf p)$ and $y\in\con(\mathbf q)$. If $e\le\occ_x(\mathbf p)$ and $f\le\occ_y(\mathbf q)$, then the identity $\gamma_{e,f}$ implies the identity $\mathbf pxy\mathbf q\approx\mathbf pyx\mathbf q$.\qed
\end{lemma}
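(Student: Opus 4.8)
The plan is to obtain $\mathbf pxy\mathbf q\approx\mathbf pyx\mathbf q$ as a single substitution instance of $\gamma_{e,f}$ followed by one multiplication by fixed words on the left and on the right. We may assume $x\ne y$, since otherwise the two words literally coincide. Write $\mathbf w_1=\prod_{i=1}^{e}xt_i$ and $\mathbf w_2=\prod_{i=e+1}^{e+f}t_iy$, so that $\gamma_{e,f}$ is the identity $\mathbf w_1xy\mathbf w_2\approx\mathbf w_1yx\mathbf w_2$. It will therefore be enough to produce a substitution $\phi$ with $\phi(x)=x$, $\phi(y)=y$, $\phi(\mathbf w_1)$ equal to a suffix of $\mathbf p$ and $\phi(\mathbf w_2)$ equal to a prefix of $\mathbf q$; multiplying the resulting instance of $\gamma_{e,f}$ by the complementary prefix of $\mathbf p$ and suffix of $\mathbf q$ then yields the claim.

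Building $\phi$ is the only step that needs a little care, namely the choice of the ``pivot'' occurrences. Since $\occ_x(\mathbf p)\ge e\ge1$, let $\mathbf c$ be the suffix of $\mathbf p$ starting at the $(\occ_x(\mathbf p)-e+1)$-th occurrence of the letter $x$ in $\mathbf p$; this suffix contains exactly $e$ occurrences of $x$, hence may be written as $\mathbf c=x\mathbf c_1x\mathbf c_2\cdots x\mathbf c_e$ with $x\notin\con(\mathbf c_i)$ for all $i$, and $\mathbf p=\mathbf a\mathbf c$ for the remaining prefix $\mathbf a$. Dually, since $\occ_y(\mathbf q)\ge f\ge1$, let $\mathbf d$ be the prefix of $\mathbf q$ ending at the $f$-th occurrence of $y$; it contains exactly $f$ occurrences of $y$, so $\mathbf d=\mathbf d_1y\mathbf d_2y\cdots\mathbf d_fy$ with $y\notin\con(\mathbf d_j)$ for all $j$, and $\mathbf q=\mathbf d\mathbf b$ for the remaining suffix $\mathbf b$. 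Now define $\phi(x)=x$, $\phi(y)=y$, $\phi(t_i)=\mathbf c_i$ for $1\le i\le e$ and $\phi(t_{e+j})=\mathbf d_j$ for $1\le j\le f$. A direct check gives $\phi(\mathbf w_1)=\mathbf c$ and $\phi(\mathbf w_2)=\mathbf d$, so applying $\phi$ to $\gamma_{e,f}$ produces the identity $\mathbf cxy\mathbf d\approx\mathbf cyx\mathbf d$, which thus follows from $\gamma_{e,f}$.

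Multiplying this identity on the left by $\mathbf a$ and on the right by $\mathbf b$ gives $\mathbf a\mathbf cxy\mathbf d\mathbf b\approx\mathbf a\mathbf cyx\mathbf d\mathbf b$, that is, $\mathbf pxy\mathbf q\approx\mathbf pyx\mathbf q$, as required. There is no genuine obstacle in the argument: apart from the degenerate case $x=y$ and the routine bookkeeping of the two decompositions above, everything is mechanical, which is consistent with the statement being labelled an evident observation.
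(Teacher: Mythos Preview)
Your argument is correct and is precisely the routine verification the paper has in mind: the paper gives no proof at all (the lemma is tagged \qed\ immediately and called an ``evident observation''), and your single substitution instance of $\gamma_{e,f}$ together with a left and right multiplication is exactly how one sees it. The only cosmetic point is that you should tacitly rename the $t_i$ in $\gamma_{e,f}$ to fresh letters outside $\con(\mathbf p\mathbf q)$ before defining $\phi$, but this is standard and does not affect the argument.
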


\begin{lemma}
\label{Lem: R{gamma_{k,ell}} subset R{gamma_{p,q}}}
For any $k,\ell,p,q\in\mathbb N$, the inclusion $\mathbf R\{\gamma_{k,\ell}\}\subseteq\mathbf R\{\gamma_{p,q}\}$ holds if and only if $k\le p$ and $\ell\le q$.
\end{lemma}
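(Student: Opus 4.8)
The plan is to prove both implications essentially by the standard isoterm technique, using the fact that $\mathbf R$ forces square-free behaviour on the relevant letters. For the \emph{sufficiency} direction, assume $k\le p$ and $\ell\le q$. I would write the left-hand side of $\gamma_{p,q}$ in the form $\mathbf p\,xy\,\mathbf q$ where $\mathbf p=\bigl(\prod_{i=1}^p xt_i\bigr)x$ has $\occ_x(\mathbf p)=p+1>k$ and $\mathbf q=\bigl(\prod_{i=p+1}^{p+q} t_iy\bigr)$ has $\occ_y(\mathbf q)=q\ge\ell$ (after a harmless re-grouping, $\occ_y\ge\ell$ suffices). Then Lemma~\ref{Lem: gamma_{e,f} implies pxyq=pyxq} with $e=k$, $f=\ell$ shows that $\gamma_{k,\ell}$ implies $\mathbf p\,xy\,\mathbf q\approx\mathbf p\,yx\,\mathbf q$, which is exactly $\gamma_{p,q}$. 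Hence $\mathbf R\{\gamma_{k,\ell}\}$ satisfies $\gamma_{p,q}$, giving the desired inclusion. (One must only be a little careful that the occurrence counts line up with the precise indexing in the definition of $\gamma$; this is routine.)

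For the \emph{necessity} direction, suppose $\mathbf R\{\gamma_{k,\ell}\}\subseteq\mathbf R\{\gamma_{p,q}\}$; I must show $k\le p$ and $\ell\le q$. Arguing by contradiction, assume (say) $p<k$; the case $q<\ell$ is dual. Consider the word $\mathbf g$ equal to the left-hand side of $\gamma_{p,q}$, namely $\bigl(\prod_{i=1}^p xt_i\bigr)xy\bigl(\prod_{i=k+1}^{k+\ell}t_iy\bigr)$ (with the block of $t_i$'s reindexed appropriately). Since $\gamma_{p,q}$ holds in $\mathbf R\{\gamma_{k,\ell}\}$, this variety satisfies the non-trivial identity $\mathbf g\approx\mathbf g'$, where $\mathbf g'$ is the right-hand side of $\gamma_{p,q}$. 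The key point is that $\mathbf g$ is an isoterm for $\mathbf R$: indeed $\mathbf g$ has $x$ occurring $p+1$ times with all occurrences non-last, $y$ occurring $\ell+1$ times with all occurrences non-first, and the word $\bigl(\prod_{i=1}^{p}xt_i\bigr)x$ of the form $\bigl(\prod xt_i\bigr)x$ is an isoterm for $\mathbf R$ (since $x^2\approx x^3$, $x^2y\approx yx^2$ and the $\mathbf w_n$-identities, when specialised, do not identify it with anything else — one checks $S(\mathbf g)\in\mathbf R$ via Lemma~\ref{Lem: S(W) in V} by verifying that each subword of $\mathbf g$ is an isoterm). Then the only way $\gamma_{p,q}$ can fail to be trivial on $S(\mathbf g)\in\mathbf R\{\gamma_{k,\ell}\}$-compatible data is if $\gamma_{k,\ell}$ already "acts" inside $\mathbf g$; but $\gamma_{k,\ell}$ requires a letter occurring $k+1$ times in a specified pattern to the left, and $x$ occurs only $p+1<k+1$ times in $\mathbf g$, while $y$ occurs with the wrong parity of position to serve as the $x$ of $\gamma_{k,\ell}$, and the simple letters $t_i$ cannot play either role. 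Hence no application of $\gamma_{k,\ell}$ (or of the defining identities of $\mathbf R$, which are linear-balanced or square-collapsing and preserve $\mathbf g$) can turn $\mathbf g$ into $\mathbf g'$, so $\mathbf g\approx\mathbf g'$ must be trivial — a contradiction. Therefore $p\ge k$, and dually $q\ge\ell$.

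The main obstacle is the necessity direction, specifically the bookkeeping showing that $\mathbf g$ is an isoterm for $\mathbf R$ and, more delicately, that adding $\gamma_{k,\ell}$ does not destroy this: one has to argue that every single-step rewriting allowed in $\mathbf R\{\gamma_{k,\ell}\}$ that starts from a word of the "shape" of $\mathbf g$ (content, occurrence counts, positions of first/last occurrences) keeps that shape, so in particular cannot introduce the transposition demanded by $\gamma_{p,q}$. This is the same kind of analysis as in Lemma~\ref{Lem: w_n[pi,tau]=w_n'[pi,tau] in S(w_n[xi,eta])} and Lemma~\ref{Lem: S(w_n[pi,tau]) notin V}: decompose an arbitrary word $\mathbf h$ with $\mathbf g\approx\mathbf h$ in $\mathbf R\{\gamma_{k,\ell}\}$ into blocks, use Lemma~\ref{Lem: xt_1x...t_kx is isoterm} to pin down $\mathbf h_x$ and $\mathbf h_y$, and then use $\sigma_1,\sigma_2$ together with the occurrence bound $p<k$ to rule out the swap of the distinguished adjacent $x,y$. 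Once the isoterm claim is in place the contradiction is immediate, and the sufficiency direction is a two-line application of Lemma~\ref{Lem: gamma_{e,f} implies pxyq=pyxq}.
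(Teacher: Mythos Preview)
Your approach matches the paper's: sufficiency via Lemma~\ref{Lem: gamma_{e,f} implies pxyq=pyxq}, necessity by showing the left-hand side of $\gamma_{p,q}$ is an isoterm for $\mathbf R\{\gamma_{k,\ell}\}$ when $p<k$ or $q<\ell$ (the paper in fact just asserts this isoterm claim without the step-by-step verification you sketch). Two typos to clean up: in the sufficiency direction the correct decomposition of the left-hand side of $\gamma_{p,q}$ as $\mathbf p\,xy\,\mathbf q$ has $\mathbf p=\prod_{i=1}^p xt_i$ with $\occ_x(\mathbf p)=p\ge k$ (your $\mathbf p$ carries an extra $x$, which would make the product $\mathbf p\,xy\,\mathbf q$ contain $x^2$), and in the necessity direction the right factor of $\mathbf g$ should read $\prod_{i=p+1}^{p+q}t_iy$, not $\prod_{i=k+1}^{k+\ell}t_iy$.
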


\begin{proof}
\emph{Necessity} follows from the fact that if either $p<k$ or $q<\ell$, then the word 
$$
\biggl(\prod_{i=1}^p xt_i\biggr)xy\biggl(\prod_{i=p+1}^{p+q} t_iy\biggr)
$$
is an isoterm for the variety $\mathbf R\{\gamma_{k,\ell}\}$.

\smallskip

\emph{Sufficiency} follows from Lemma~\ref{Lem: gamma_{e,f} implies pxyq=pyxq}.
\end{proof}

\emph{Throughout Lemmas~\ref{Lem: smth imply gamma_{e,f}}--\ref{Lem: v'av''=v'bv''} and their proofs, $\mathbf u\approx\mathbf v$ is a fixed linear-balanced identity,~\eqref{decomposition of u} and~\eqref{decomposition of v} are decompositions of the words $\mathbf u$ and $\mathbf v$, respectively, $i$ is a fixed number with $1\le i\le m$,}
\begin{equation}
\label{u',u'',v',v''}
\mathbf u^\prime=\prod_{j=0}^{i-1}\mathbf u_j,\ \mathbf u^{\prime\prime}=\prod_{j=i+1}^m\mathbf u_j,\ \mathbf v^\prime=\prod_{j=0}^{i-1}\mathbf v_j\enskip \text{and}\enskip\mathbf v^{\prime\prime}=\prod_{j=i+1}^m\mathbf v_j.
\end{equation}
Thus,
\begin{equation}
\label{u=u'u_iu'',v=v'v_iv''}
\mathbf u=\mathbf u^\prime\mathbf u_i\mathbf u^{\prime\prime}\quad\text{and}\quad\mathbf v=\mathbf v^\prime\mathbf v_i\mathbf v^{\prime\prime}.
\end{equation}

\begin{lemma}
\label{Lem: smth imply gamma_{e,f}}
Let $\mathbf X$ be a monoid variety and the identity $\mathbf u\approx\mathbf v$ holds in $\mathbf X$. Suppose that a letter $x$ precedes a letter $y$ in the block $\mathbf u_i$ but $y$ precedes $x$ in the block $\mathbf v_i$. If $x\in\con(\mathbf u^\prime)\setminus\con(\mathbf u^{\prime\prime})$ and $y\in\con(\mathbf u^{\prime\prime})\setminus\con(\mathbf u^\prime)$, then $\mathbf X$ satisfies the identity $\gamma_{e,f}$, where $e=\occ_x(\mathbf u)-1$ and $f=\occ_y(\mathbf u)-1$.
\end{lemma}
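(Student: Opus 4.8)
The plan is to take the identity $\mathbf u\approx\mathbf v$, restrict it to the letters $x$, $y$, and a suitable set of simple letters, and show that the resulting consequence is (up to renaming of letters) exactly $\gamma_{e,f}$. Since restriction to a subset of the content is a legitimate operation on identities (as recalled right after the definition of $\mathbf w(X)$), any identity obtained this way holds in $\mathbf X$.

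First I would determine which simple letters of $\mathbf u$ to keep. Because the identity $\mathbf u\approx\mathbf v$ is linear-balanced and $x\in\con(\mathbf u^\prime)\setminus\con(\mathbf u^{\prime\prime})$, all occurrences of $x$ lie in blocks with index $\le i$; writing $\occ_x(\mathbf u)=e+1$, the letter $x$ therefore occurs exactly once in each of $e+1$ distinct blocks $\mathbf u_{j_0},\dots,\mathbf u_{j_e}$ with $j_0<\dots<j_e=i$ (the last one being $\mathbf u_i$, since $x\in\con(\mathbf u_i)$). Dually, $y$ occurs exactly once in each of $f+1$ blocks $\mathbf u_{i}=\mathbf u_{k_0},\mathbf u_{k_1},\dots,\mathbf u_{k_f}$ with $i=k_0<k_1<\dots<k_f$, using $y\in\con(\mathbf u_i)\subseteq\con(\mathbf u^{\prime\prime})\cup\{$block $i\}$ and $y\notin\con(\mathbf u^\prime)$. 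Now take $T$ to be the set of $e$ simple letters $t_{j_s+1}$ ($s=0,\dots,e-1$) that separate consecutive $x$-blocks, together with the $f$ simple letters $t_{k_s+1}$ ($s=0,\dots,f-1$) separating consecutive $y$-blocks; these are genuine simple letters of $\mathbf u$ because the blocks in question are distinct. Then $\mathbf u(x,y,T)$ has the shape $\bigl(\prod_{i=1}^e xt_i\bigr)xy\bigl(\prod_{i=e+1}^{e+f} t_i y\bigr)$: the $e+1$ occurrences of $x$ come first, interleaved with the $x$-dividers, then in block $i$ the occurrence of $x$ is immediately followed by the occurrence of $y$ (since $x$ precedes $y$ in $\mathbf u_i$ and, after deleting everything else, nothing stands between them), and then the remaining $f$ occurrences of $y$ follow, interleaved with the $y$-dividers.

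Next I would apply Lemma~\ref{Lem: decompositions of u and v} (using that $\mathbf D_1\subseteq\mathbf X$, which follows from $\mathbf u\approx\mathbf v$ being linear-balanced — a linear-balanced identity is nontrivial only among multiple letters, so the word $xyx$ is automatically an isoterm; more carefully, one argues $\mathbf D_1\subseteq\mathbf X$ exactly as in the ambient context where this lemma is used) to conclude that $\mathbf v(x,y,T)$ has the same decomposition skeleton with simple letters $t_1,\dots,t_{e+f}$ in the same order. By the linear-balanced hypothesis the occurrences of $x$ in $\mathbf v$ sit in the blocks $\mathbf v_{j_0},\dots,\mathbf v_{j_e}$ and those of $y$ in $\mathbf v_{k_0},\dots,\mathbf v_{k_f}$, so $\mathbf v(x,y,T)$ again has all $x$'s first, then block $i$, then all $y$'s — and within block $i$ the letter $y$ precedes $x$ by hypothesis. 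Hence $\mathbf v(x,y,T)=\bigl(\prod_{i=1}^e xt_i\bigr)yx\bigl(\prod_{i=e+1}^{e+f} t_i y\bigr)$, and the identity $\mathbf u(x,y,T)\approx\mathbf v(x,y,T)$ is literally $\gamma_{e,f}$. Since it holds in $\mathbf X$, we are done.

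The main obstacle is purely bookkeeping: one must verify carefully that after deleting all letters outside $\{x,y\}\cup T$ the occurrences of $x$ and $y$ land in exactly the claimed positions — in particular that no $x$-occurrence slips past a $y$-divider or vice versa, and that the two occurrences flanking the "swap" in block $i$ really become adjacent. All of this is forced by the linear-balanced condition together with the placement hypotheses $x\in\con(\mathbf u^\prime)\setminus\con(\mathbf u^{\prime\prime})$, $y\in\con(\mathbf u^{\prime\prime})\setminus\con(\mathbf u^\prime)$, and the choice of $T$ as the inter-block separators; there is no real difficulty beyond writing it out.
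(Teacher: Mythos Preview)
Your proof is correct and is essentially the same argument as the paper's: both restrict $\mathbf u\approx\mathbf v$ to $\{x,y\}$ together with a set of simple letters separating the blocks that contain $x$ or $y$, and then read off $\gamma_{e,f}$ from the linear-balanced hypothesis. The only cosmetic difference is that the paper picks the separator $t_j$ immediately \emph{before} each relevant block (using the set $T_{x,y}$ from~\eqref{T_{x,y}}) whereas you pick the one immediately \emph{after}; either choice works, and your parenthetical worry about $\mathbf D_1\subseteq\mathbf X$ is unnecessary since the linear-balanced convention fixed before the lemma already guarantees the decompositions of $\mathbf u$ and $\mathbf v$ match.
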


\begin{proof}
Let $s$ be the least number such that $\con(\mathbf u_s)\cap\{x,y\}\ne\varnothing$ and $T_{x,y}$ be the set of letters defined by the equality~\eqref{T_{x,y}}. Clearly, the variety $\mathbf X$ satisfies the identity~\eqref{2 mult letters and their dividers}. Since the identity $\mathbf u\approx\mathbf v$ is linear-balanced, the identity~\eqref{2 mult letters and their dividers} coincides (up to renaming of letters) with the identity $\gamma_{e,f}$, where $e=\occ_x(\mathbf u)-1$ and $f=\occ_y(\mathbf u)-1$, and we are done.
\end{proof}

\begin{lemma}
\label{Lem: from r_1r_2xr_3 to r_1xr_2r_3}
Let $\mathbf X,\mathbf Y\in[\mathbf D_2,\mathbf R]$ and the identity $\mathbf u\approx\mathbf v$ holds in $\mathbf X\wedge\mathbf Y$. Suppose that $\mathbf v_i=\mathbf r_1\mathbf r_2x\mathbf r_3$ for some words $\mathbf r_1$, $\mathbf r_2$, $\mathbf r_3$ and a letter $x$. If the letter $x$ precedes each letter from $\con(\mathbf r_2)$ in the block $\mathbf u_i$, then one of the varieties $\mathbf X$ or $\mathbf Y$ satisfies the identity $\mathbf v\approx\mathbf v^\prime\mathbf r_1x\mathbf r_2\mathbf r_3\mathbf v^{\prime\prime}$.
\end{lemma}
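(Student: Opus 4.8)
The plan is to transport the distinguished occurrence of $x$ leftward through $\mathbf r_2$ one letter at a time, showing that each transposition can be realised by an identity of $\mathbf X$ or of $\mathbf Y$, and — the key point — that all transpositions requiring more than $\sigma_1,\sigma_2$ can be realised in a single one of $\mathbf X,\mathbf Y$. First I collect the facts forced by linear-balancedness. Since $x\in\con(\mathbf v_i)\subseteq\mul(\mathbf v)$ we have $\occ_x(\mathbf v_i)=\occ_x(\mathbf u_i)=1$, so $x$ occurs in none of $\mathbf r_1,\mathbf r_2,\mathbf r_3$; each $y\in\con(\mathbf r_2)$ occurs exactly once in $\mathbf v_i$ and once in $\mathbf u_i$, the latter occurrence being after $x$ by hypothesis; $\mathbf r_2$ is linear with content disjoint from those of $\mathbf r_1$ and $\mathbf r_3$; and $\occ_z(\mathbf u^\prime)=\occ_z(\mathbf v^\prime)$, $\occ_z(\mathbf u^{\prime\prime})=\occ_z(\mathbf v^{\prime\prime})$ for every $z\in\con(\mathbf r_2)\cup\{x\}$. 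Write $\mathbf r_2=y_1\cdots y_s$ and set $a=\occ_x(\mathbf u^\prime)$, $b=\occ_x(\mathbf u^{\prime\prime})$; as $x$ is multiple, $a+b\ge1$, and if $s=0$ there is nothing to prove.

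Suppose first that $a\ge1$ and $b\ge1$. Processing $j=s,s-1,\dots,1$, just before $x$ is carried past $y_j$ the word equals $\mathbf v^\prime\mathbf r_1y_1\cdots y_jxy_{j+1}\cdots y_s\mathbf r_3\mathbf v^{\prime\prime}$; the swapped occurrence of $x$ is neither the first nor the last occurrence of $x$, while the swapped occurrence of $y_j$ is non-first if $y_j\in\con(\mathbf v^\prime)$ and non-last if $y_j\in\con(\mathbf v^{\prime\prime})$, one of which holds since $y_j$ is multiple with just one occurrence in $\mathbf v_i$. Hence every transposition is an instance of $\sigma_2$ or of $\sigma_1$, both of which hold in $\mathbf R$, which contains $\mathbf X$ and $\mathbf Y$, and so $\mathbf v\approx\mathbf v^\prime\mathbf r_1x\mathbf r_2\mathbf r_3\mathbf v^{\prime\prime}$ holds in both $\mathbf X$ and $\mathbf Y$.

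Next suppose $b=0$, hence $a\ge1$; the case $a=0$, $b\ge1$ is handled by the mirror-image argument (using $\sigma_1$ in place of $\sigma_2$ and the fact that the family $\{\gamma_{k,\ell}\}$ is stable under the left--right mirror up to renaming of letters). Call $y_j$ \emph{easy} if $y_j\in\con(\mathbf v^\prime)$ and \emph{hard} otherwise; a hard $y_j$ lies in $\con(\mathbf v^{\prime\prime})$ because it is multiple. For a hard $y_j$ all hypotheses of Lemma~\ref{Lem: smth imply gamma_{e,f}} hold for the identity $\mathbf u\approx\mathbf v$ taken in the variety $\mathbf X\wedge\mathbf Y$ (indeed $x$ precedes $y_j$ in $\mathbf u_i$, $y_j$ precedes $x$ in $\mathbf v_i$, $x\in\con(\mathbf u^\prime)\setminus\con(\mathbf u^{\prime\prime})$ and $y_j\in\con(\mathbf u^{\prime\prime})\setminus\con(\mathbf u^\prime)$), so $\mathbf X\wedge\mathbf Y$ satisfies $\gamma_{a,f_j}$ with $f_j=\occ_{y_j}(\mathbf u)-1=\occ_{y_j}(\mathbf v^{\prime\prime})$. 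If there are no hard letters, every transposition is an instance of $\sigma_2$ and we are done as before. Otherwise let $f^\ast$ be the least of the $f_j$. Then $\gamma_{a,f^\ast}$ holds in $\mathbf X\wedge\mathbf Y$; since the two sides of $\gamma_{a,f^\ast}$ have precisely the form~\eqref{pxyq=pyxq} with the constraints~\eqref{pxyq=pyxq restrict}, Corollary~\ref{Cor: pxyq=pyxq in X wedge Y} yields that $\gamma_{a,f^\ast}$ holds in $\mathbf X$ or in $\mathbf Y$, say in $\mathbf X$; and then, by Lemma~\ref{Lem: R{gamma_{k,ell}} subset R{gamma_{p,q}}} (here the minimality of $f^\ast$ is used), $\gamma_{a,f_j}$ holds in $\mathbf X$ for every hard $y_j$.

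It remains to carry out the deduction inside $\mathbf X$. Process $j=s,s-1,\dots,1$: for an easy $y_j$ use $\sigma_2$ as above; for a hard $y_j$ the current word is $\mathbf p^\prime y_jx\mathbf q^\prime$ with $\mathbf p^\prime=\mathbf v^\prime\mathbf r_1y_1\cdots y_{j-1}$, $\mathbf q^\prime=y_{j+1}\cdots y_s\mathbf r_3\mathbf v^{\prime\prime}$, and $\occ_x(\mathbf p^\prime)=a$, $\occ_{y_j}(\mathbf q^\prime)=f_j$, so Lemma~\ref{Lem: gamma_{e,f} implies pxyq=pyxq} makes $\gamma_{a,f_j}$ perform the transposition $y_jx\to xy_j$ within $\mathbf X$. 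After the $s$ steps the word is exactly $\mathbf v^\prime\mathbf r_1x\mathbf r_2\mathbf r_3\mathbf v^{\prime\prime}$, and every step used an identity of $\mathbf X$. The main obstacle is the bookkeeping in the previous paragraph: one must notice that the obstructing transpositions are governed by identities $\gamma_{a,f_j}$ with a common first index $a$ — so that they are linearly ordered by Lemma~\ref{Lem: R{gamma_{k,ell}} subset R{gamma_{p,q}}} and a single strongest one can be forced into one variety — and that each $\gamma_{k,\ell}$ is an identity of the form~\eqref{pxyq=pyxq}, bringing Corollary~\ref{Cor: pxyq=pyxq in X wedge Y} into play; the remainder is the routine verification of occurrence counts needed to invoke Lemmas~\ref{Lem: smth imply gamma_{e,f}} and~\ref{Lem: gamma_{e,f} implies pxyq=pyxq}.
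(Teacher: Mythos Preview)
Your proof is correct and follows essentially the same strategy as the paper's: separate the letters of $\mathbf r_2$ into those that can be passed using $\sigma_1$ or $\sigma_2$ and those that require a $\gamma$-identity, observe via Lemma~\ref{Lem: smth imply gamma_{e,f}} that the needed $\gamma$-identities all share the first index, pick the strongest one and force it into a single variety via Corollary~\ref{Cor: pxyq=pyxq in X wedge Y}. The only cosmetic difference is that you invoke Lemma~\ref{Lem: R{gamma_{k,ell}} subset R{gamma_{p,q}}} to upgrade $\gamma_{a,f^\ast}$ to each $\gamma_{a,f_j}$ before applying Lemma~\ref{Lem: gamma_{e,f} implies pxyq=pyxq}, whereas the paper applies Lemma~\ref{Lem: gamma_{e,f} implies pxyq=pyxq} directly with the single identity $\gamma_{a,f^\ast}$; and you spell out the case $a=0$ explicitly while the paper dismisses it with ``without loss of generality''.
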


\begin{proof}
If $\mathbf r_2=\lambda$, then the required conclusion is evident. Let now $\mathbf r_2\ne\lambda$. Suppose that $x\in\con(\mathbf v^\prime)\cap\con(\mathbf v^{\prime\prime})$. Since letters from $\mathbf r_2$ are multiple in $\mathbf v$, $\con(\mathbf r_2)\subseteq\con(\mathbf v^\prime\mathbf v^{\prime\prime})$. Then the identity $\mathbf v\approx\mathbf v^\prime\mathbf r_1x\mathbf r_2\mathbf r_3\mathbf v^{\prime\prime}$ follows from $\sigma_1$ and $\sigma_2$, whence it holds in $\mathbf X$. Therefore, we may assume without loss of generality that $x\in\con(\mathbf v^\prime)\setminus\con(\mathbf v^{\prime\prime})$. We can write the word $\mathbf r_2$ in the form $\mathbf r_2=\mathbf p_1\mathbf q_1\mathbf p_2\mathbf q_2\cdots\mathbf p_n\mathbf q_n$ for some words $\mathbf p_1,\mathbf q_1,\mathbf p_2,\mathbf q_2,\dots,\mathbf p_n,\mathbf q_n$ such that $\con(\mathbf p_1\mathbf p_2\cdots\mathbf p_n)\subseteq\con(\mathbf v^\prime)$ and $\con(\mathbf q_1\mathbf q_2\cdots\mathbf q_n)\cap\con(\mathbf v^\prime)=\varnothing$. Let $y$ be a letter from $\con(\mathbf q_1\mathbf q_2\cdots\mathbf q_n)$ with the least number of occurrences in the word $\mathbf v^{\prime\prime}$ among all letters from $\con(\mathbf q_1\mathbf q_2\cdots\mathbf q_n)$. According to Lemma~\ref{Lem: smth imply gamma_{e,f}}, $\mathbf X\wedge\mathbf Y$ satisfies $\gamma_{e,f}$, where $e=\occ_x(\mathbf v)-1$ and $f=\occ_y(\mathbf v)-1$. In view of Corollary~\ref{Cor: pxyq=pyxq in X wedge Y}, the identity $\gamma_{e,f}$ holds in either $\mathbf X$ or $\mathbf Y$, say, in $\mathbf X$. In view of Lemma~\ref{Lem: gamma_{e,f} implies pxyq=pyxq}, the choice of the letter $y$ allows us, using the identity $\gamma_{e, f}$, to swap the letter $x$ and the letter immediately to the left of $x$ whenever that adjacent letter lies in $\con(\mathbf q_1\mathbf q_2\cdots\mathbf q_n)$. The identity $\sigma_2$ allows us to do the same whenever the letter immediately to the left of $x$ lies in $\con(\mathbf p_1\mathbf p_2\cdots\mathbf p_n)$. Therefore, $\mathbf X$ satisfies the identities
\begin{align*}
\mathbf v\stackrel{\gamma_{e,f}}\approx&\mathbf v^\prime\mathbf r_1\biggl(\prod_{j=1}^{n-1} \mathbf p_j\mathbf q_j\biggr)\mathbf p_nx\mathbf q_n\mathbf r_3\mathbf v^{\prime\prime}\stackrel{\sigma_2}\approx\mathbf v^\prime\mathbf r_1\biggl(\prod_{j=1}^{n-1} \mathbf p_j\mathbf q_j\biggr)x\mathbf p_n\mathbf q_n\mathbf r_3\mathbf v^{\prime\prime}\\[-3pt]
\stackrel{\gamma_{e,f}}\approx&\cdots\stackrel{\gamma_{e,f}}\approx\mathbf v^\prime\mathbf r_1\mathbf p_1x\mathbf q_1\biggl(\prod_{j=2}^n \mathbf p_j\mathbf q_j\biggr)\mathbf r_3\mathbf v^{\prime\prime}\stackrel{\sigma_2}\approx\mathbf v^\prime\mathbf r_1x\mathbf r_2\mathbf r_3\mathbf v^{\prime\prime}.
\end{align*}
This completes the proof.
\end{proof}

Let $\mathbf X$ be a monoid variety. A pair of letters $(a,b)$ is called $\mathbf X$-\emph{invertible in the block $\mathbf u_i$ of the word $\mathbf u$ in the identity $\mathbf u\approx\mathbf v$} (or \mbox{$(\mathbf X,\mathbf u,\mathbf v,i)$}-\emph{invertible}, for short) if $\mathbf u_i=\mathbf cab\mathbf d$ for some words $\mathbf c$ and $\mathbf d$, the letter $b$ precedes the letter $a$ in the block $\mathbf v_i$ and $\mathbf X$ satisfies the identity $\mathbf u\approx\mathbf u^\prime\mathbf cba\mathbf d\mathbf u^{\prime\prime}$. 

\begin{lemma}
\label{Lem: v'av''=v'bv''}
Let $\mathbf X$ be a subvariety of the variety $\mathbf R$. Suppose that, for some letters $x$ and $y$, the word $xy$ is a subword of $\mathbf u_i$, while $y$ precedes $x$ in $\mathbf v_i$. Let $\mathbf a$ and $\mathbf b$ be linear words with $\con(\mathbf a)=\con(\mathbf b)=\con(\mathbf u_i)$. If $x$ precedes $y$ in $\mathbf a$ but $y$ precedes $x$ in $\mathbf b$ and the identity $\mathbf v^\prime\mathbf a\mathbf v^{\prime\prime}\approx\mathbf v^\prime\mathbf b\mathbf v^{\prime\prime}$ holds in $\mathbf X$, then the pair $(x,y)$ is \mbox{$(\mathbf X,\mathbf u,\mathbf v,i)$}-invertible.
\end{lemma}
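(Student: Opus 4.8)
\textbf{Proof proposal for Lemma~\ref{Lem: v'av''=v'bv''}.}

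The plan is to reduce the abstract-looking hypothesis "$\mathbf{v}'\mathbf{a}\mathbf{v}''\approx\mathbf{v}'\mathbf{b}\mathbf{v}''$ holds in $\mathbf X$" to a concrete swap of the adjacent occurrences of $x$ and $y$ in the block $\mathbf u_i$, by comparing the positions of $x$ and $y$ relative to the rest of the word. Since the identity $\mathbf u\approx\mathbf v$ is linear-balanced, $x$ and $y$ are multiple in $\mathbf u$ and each occurs at most once in every block; in particular $\con(\mathbf u_i)=\con(\mathbf v_i)$ and the occurrences of $x$ and $y$ in $\mathbf u_i$ are exactly the ones being permuted. First I would dispose of the case where \emph{both} $x$ and $y$ occur outside $\mathbf u_i$ on the same side (both in $\mathbf u'$ or both in $\mathbf u''$): here the swap follows from $\sigma_1$ or $\sigma_2$ (which hold in $\mathbf R\supseteq\mathbf X$), so the pair is trivially $(\mathbf X,\mathbf u,\mathbf v,i)$-invertible. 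Likewise, if $x$ (or $y$) occurs on both sides, one of $\sigma_1$, $\sigma_2$ again does the job. So the essential case is $x\in\con(\mathbf u')\setminus\con(\mathbf u'')$ and $y\in\con(\mathbf u'')\setminus\con(\mathbf u')$, or the symmetric situation with the roles of the two sides exchanged.

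In that essential case I would invoke Lemma~\ref{Lem: smth imply gamma_{e,f}}: since $x$ precedes $y$ in $\mathbf u_i$ but $y$ precedes $x$ in $\mathbf v_i$, and $x\in\con(\mathbf u')\setminus\con(\mathbf u'')$, $y\in\con(\mathbf u'')\setminus\con(\mathbf u')$, the hypothesis $\mathbf u\approx\mathbf v$-in-$\mathbf X$ would give $\gamma_{e,f}$ in $\mathbf X$ with $e=\occ_x(\mathbf u)-1$, $f=\occ_y(\mathbf u)-1$ --- but wait, here the hypothesis is only that $\mathbf v'\mathbf a\mathbf v''\approx\mathbf v'\mathbf b\mathbf v''$ holds in $\mathbf X$, not the full identity $\mathbf u\approx\mathbf v$. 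The point is that this restricted identity, after projecting onto the letters $\{x,y\}$ together with the simple letters $t_j$ separating their blocks (the set $T_{x,y}$ of~\eqref{T_{x,y}}), still contains the relevant inversion of $x$ and $y$, because $\mathbf a$ and $\mathbf b$ differ precisely in the order of $x$ and $y$ and everything outside $\mathbf u_i$ is left untouched. So projecting $\mathbf v'\mathbf a\mathbf v''\approx\mathbf v'\mathbf b\mathbf v''$ onto $\{x,y\}\cup T_{x,y}$ yields (up to renaming) the identity $\gamma_{e',f'}$ for suitable $e',f'$ with $e'\le\occ_x(\mathbf u)-1$ and $f'\le\occ_y(\mathbf u)-1$. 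By Lemma~\ref{Lem: gamma_{e,f} implies pxyq=pyxq} this $\gamma_{e',f'}$ is enough to swap the two adjacent occurrences of $x$ and $y$ in the block $\mathbf u_i$, i.e.\ $\mathbf X$ satisfies $\mathbf u\approx\mathbf u'\mathbf c yx\mathbf d\mathbf u''$ where $\mathbf u_i=\mathbf c xy\mathbf d$. That is exactly the assertion that $(x,y)$ is $(\mathbf X,\mathbf u,\mathbf v,i)$-invertible.

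The main obstacle I anticipate is the bookkeeping of \emph{which} occurrences of $x$ and $y$ are being compared and making sure the projection argument genuinely produces a $\gamma$-type identity with the right exponents --- one must check that no occurrence of $x$ lies in $\mathbf u''$ (resp.\ no occurrence of $y$ in $\mathbf u'$) in the essential case, so that the "left block structure for $x$" and "right block structure for $y$" line up as in the definition of $\gamma_{k,\ell}$, and that the simple letters in $T_{x,y}$ interleave the blocks in the required way. Linear-balancedness is what makes this work: it forces $\occ_x(\mathbf u_j)=\occ_x(\mathbf v_j)\le 1$ for every $j$, so the projection $\mathbf w(x,y,T_{x,y})$ is literally of the shape $\bigl(\prod xt_j\bigr)xy\bigl(\prod t_j y\bigr)$ on one side and with $yx$ in the middle on the other. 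Once that shape is confirmed, everything else is a direct appeal to the already-established Lemmas~\ref{Lem: smth imply gamma_{e,f}} and~\ref{Lem: gamma_{e,f} implies pxyq=pyxq} together with $\sigma_1,\sigma_2\in\mathbf R$.
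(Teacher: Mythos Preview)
Your proposal is correct and follows essentially the same route as the paper: dispose of the cases where $\sigma_1$ or $\sigma_2$ already gives the swap, then in the remaining case use the linear-balanced identity $\mathbf v'\mathbf a\mathbf v''\approx\mathbf v'\mathbf b\mathbf v''$ to extract $\gamma_{e,f}$ and apply Lemma~\ref{Lem: gamma_{e,f} implies pxyq=pyxq}. The only cosmetic difference is that where you unpack the ``projection onto $\{x,y\}\cup T_{x,y}$'' by hand, the paper simply invokes Lemma~\ref{Lem: smth imply gamma_{e,f}} directly on $\mathbf v'\mathbf a\mathbf v''\approx\mathbf v'\mathbf b\mathbf v''$; and in fact one gets $e'=\occ_x(\mathbf u)-1$, $f'=\occ_y(\mathbf u)-1$ with equality (since $\occ_x(\mathbf v'\mathbf a\mathbf v'')=\occ_x(\mathbf u)$ by linear-balancedness), not merely $\le$---though the inequality you wrote is all that Lemma~\ref{Lem: gamma_{e,f} implies pxyq=pyxq} needs.
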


\begin{proof}
By the hypothesis, $\mathbf u_i=\mathbf p_1xy\mathbf p_2$ for some words $\mathbf p_1$ and $\mathbf p_2$. If $x,y\in\con(\mathbf u^\prime)$ [respectively, $x,y\in\con(\mathbf u^{\prime\prime})$], then the identity $\mathbf u\approx\mathbf u^\prime\mathbf p_1yx\mathbf p_2\mathbf u^{\prime\prime}$ follows from $\sigma_2$ [respectively, $\sigma_1$], whence it holds in $\mathbf X$ and, therefore, the pair $(x,y)$ is \mbox{$(\mathbf X,\mathbf u,\mathbf v,i)$}-invertible. Thus, we may assume without loss of generality that $x\in\con(\mathbf u^\prime)\setminus\con(\mathbf u^{\prime\prime})$ and $y\in\con(\mathbf u^{\prime\prime})\setminus\con(\mathbf u^\prime)$. Then $x\in\con(\mathbf v^\prime)\setminus\con(\mathbf v^{\prime\prime})$ and $y\in\con(\mathbf v^{\prime\prime})\setminus\con(\mathbf v^\prime)$ because the identity $\mathbf u\approx\mathbf v$ is linear-balanced. Since the identity $\mathbf v^\prime\mathbf a\mathbf v^{\prime\prime}\approx\mathbf v^\prime\mathbf b\mathbf v^{\prime\prime}$ is linear-balanced and holds in $\mathbf X$, Lemma~\ref{Lem: smth imply gamma_{e,f}} implies that $\mathbf X$ satisfies the identity $\gamma_{e,f}$, where $e=\occ_x(\mathbf u)-1$ and $f=\occ_y(\mathbf u)-1$. Then $\mathbf X$ satisfies $\mathbf u\stackrel{\gamma_{e,f}}\approx\mathbf u^\prime\mathbf p_1yx\mathbf p_2\mathbf u^{\prime\prime}$ by Lemma~\ref{Lem: gamma_{e,f} implies pxyq=pyxq}, whence the pair $(x,y)$ is \mbox{$(\mathbf X,\mathbf u,\mathbf v,i)$}-invertible.
\end{proof}

\begin{proposition}
\label{Prop: R is fi-perm}
The variety $\mathbf R$ is $fi$-permutable.
\end{proposition}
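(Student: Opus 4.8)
The plan is to follow the general scheme of the proofs of Propositions~\ref{Prop: Q_{r,s} is fi-perm} and~\ref{Prop: P_n is fi-perm}: fix $\mathbf X,\mathbf Y\subseteq\mathbf R$ and an identity $\mathbf u\approx\mathbf v$ holding in $\mathbf X\wedge\mathbf Y$, and exhibit a word $\mathbf w$ with $\mathbf u\,\theta_{\mathbf X}\,\mathbf w\,\theta_{\mathbf Y}\,\mathbf v$. First I would reduce to the principal case. Since $\mathbf R\subseteq\mathbf A$, Lemmas~\ref{Lem: comparable is fi-permut} and~\ref{Lem: L(A)}(i) let us assume $\mathbf X,\mathbf Y\in[\mathbf D_2,\mathbf R]$; and since $\mathbf R$ is defined by a superset of the defining identities of $\mathbf A^\ast$, we have $\mathbf R\subseteq\mathbf A^\ast$, so Lemmas~\ref{Lem: L(A)}(ii) and~\ref{Lem: reduction to linear-balanced} reduce the problem to the case when $\mathbf u\approx\mathbf v$ is a \emph{linear-balanced} identity (the word $\mathbf p$ furnished by Lemma~\ref{Lem: reduction to linear-balanced} is simply prepended to the connecting word, as in the proof of Proposition~\ref{Prop: Q_{r,s} is fi-perm}). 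We may also assume $\mathbf u\ne\mathbf v$.

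For a linear-balanced identity, take the decompositions~\eqref{decomposition of u} and~\eqref{decomposition of v} and recall that every letter occurring in a block is multiple. Hence, within a block, a first occurrence is non-last and a last occurrence is non-first, so $\sigma_1$ permits any transposition of adjacent occurrences none of which is a last occurrence, $\sigma_2$ permits any transposition of adjacent occurrences none of which is a first occurrence, and the \emph{only} adjacent transpositions inside a block that do not follow from $\sigma_1$ or $\sigma_2$ are those exchanging a last occurrence of some letter $x$ with an immediately following first occurrence of some letter $y$. Using $\sigma_1,\sigma_2$ (which hold throughout $\mathbf R$, hence in $\mathbf X$ and $\mathbf Y$) I would normalize both $\mathbf u$ and $\mathbf v$ so that in every block the first occurrences appear in a fixed order, the last occurrences appear in a fixed order, and all middle occurrences are collected at the front; after this normalization the two (still linear-balanced) words differ only in how, block by block, the subsequence of first occurrences is interleaved with the subsequence of last occurrences, and passing from one interleaving to another uses only transpositions of the type just described. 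For such a transposition of $x$ and $y$ inside block $i$, Lemma~\ref{Lem: smth imply gamma_{e,f}} shows that the corresponding restricted identity equals $\gamma_{e,f}$ with $e=\occ_x(\mathbf u)-1$, $f=\occ_y(\mathbf u)-1$ and holds in $\mathbf X\wedge\mathbf Y$; since $\gamma_{e,f}$ has the form~\eqref{pxyq=pyxq} subject to~\eqref{pxyq=pyxq restrict}, Corollary~\ref{Cor: pxyq=pyxq in X wedge Y} gives that $\gamma_{e,f}$ holds in $\mathbf X$ or in $\mathbf Y$, and then Lemma~\ref{Lem: gamma_{e,f} implies pxyq=pyxq} makes the transposition a consequence of it. Because $x$ does not occur after block $i$ and $y$ does not occur before it, such a transposition changes only block $i$, so the blocks may be treated independently.

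It remains to merge. Working one block at a time, I would start from $\mathbf v$ and apply, as long as possible, those admissible transpositions that hold in $\mathbf Y$ and move the block toward the corresponding block of $\mathbf u$, obtaining $\mathbf w_1$ with $\mathbf w_1\,\theta_{\mathbf Y}\,\mathbf v$; then apply, as long as possible, the admissible transpositions holding in $\mathbf X$, obtaining $\mathbf w_2$ with $\mathbf w_2\,\theta_{\mathbf X}\,\mathbf w_1$. The crux of the argument, and the step I expect to be the main obstacle, is to prove $\mathbf w_2=\mathbf u$, which yields $\mathbf u=\mathbf w_2\,\theta_{\mathbf X}\,\mathbf w_1\,\theta_{\mathbf Y}\,\mathbf v$ and hence $(\mathbf u,\mathbf v)\in\theta_{\mathbf X}\theta_{\mathbf Y}$. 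Supposing $\mathbf w_2\ne\mathbf u$, some block of $\mathbf w_2$ still has a first occurrence $y$ directly after a last occurrence $x$ that is ``too far right'', so the relevant $\gamma_{e,f}$ fails in $\mathbf X$ and therefore holds in $\mathbf Y$; tracing this back through the block exactly as in the endgame of the proof of Proposition~\ref{Prop: P_n is fi-perm}, and exploiting the grid-monotonicity of the family $\{\gamma_{k,\ell}\}$ recorded in Lemma~\ref{Lem: R{gamma_{k,ell}} subset R{gamma_{p,q}}} together with Lemmas~\ref{Lem: from r_1r_2xr_3 to r_1xr_2r_3} and~\ref{Lem: v'av''=v'bv''}, one forces a contradiction with the construction of $\mathbf w_1$ or $\mathbf w_2$. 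The real difficulty is that, unlike for $\mathbf Q_{r,s}$, the identities $\gamma_{k,\ell}$ are indexed by a two-parameter poset rather than a chain, so---as in the case of $\mathbf P_n$---the bookkeeping needed to locate the offending pair and propagate the swap is delicate; the presence of several blocks is absorbed by the independence noted above.
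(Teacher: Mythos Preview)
Your reduction to the linear-balanced case is correct and matches the paper. The gap is in what comes next.

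The normalization you describe is not achievable. You claim that with $\sigma_1,\sigma_2$ one can arrange every block so that the subsequence of first occurrences and the subsequence of last occurrences agree in $\mathbf u$ and $\mathbf v$, leaving only the \emph{interleaving pattern} to adjust via $\gamma$-swaps. But $\sigma_1$ only swaps two adjacent non-last occurrences and $\sigma_2$ only swaps two adjacent non-first occurrences, so a first occurrence and an adjacent last occurrence block each other in \emph{both} directions. Concretely, take the linear-balanced identity
\[
b\,t_1\,abc\,t_2\,ac \;\approx\; b\,t_1\,cba\,t_2\,ac,
\]
where in block~$1$ the letters $a,c$ are first occurrences and $b$ is a last occurrence. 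Both blocks $abc$ and $cba$ have the \emph{same} interleaving pattern $FLF$, yet each is rigid under $\sigma_1,\sigma_2$ (neither adjacent pair is of type first--first or last--last). So after your ``normalization'' the two words still differ, and not merely in the interleaving. Your statement that ``the only adjacent transpositions that do not follow from $\sigma_1$ or $\sigma_2$ are last-then-first'' is also off: first-then-last is equally blocked (though it is handled by the same $\gamma_{e,f}$ read backwards).

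Because $\sigma$-moves and $\gamma$-moves must be interleaved, the clean two-pass greedy you propose (all $\mathbf Y$-moves, then all $\mathbf X$-moves, then contradiction ``exactly as in $\mathbf P_n$'') does not go through as stated; at minimum it would have to incorporate $\sigma$-rearrangements between $\gamma$-steps, and you give no argument for that. The paper does \emph{not} use a two-pass greedy here. It runs an induction on the number of inversions: fixing a block $\mathbf u_i\neq\mathbf v_i$, it assumes no $(\mathbf X,\mathbf u,\mathbf v,i)$-invertible pair exists in $\mathbf u_i$ and no $(\mathbf Y,\mathbf v,\mathbf u,i)$-invertible pair in $\mathbf v_i$, analyzes the leftmost discrepancy $\mathbf u_i=\mathbf q x_0\mathbf u_i'y_0\mathbf u_i''$, $\mathbf v_i=\mathbf q y_0\mathbf v_i'x_0\mathbf v_i''$, and---via a case split (the key Subcase~2.2) using Lemmas~\ref{Lem: smth imply gamma_{e,f}}, \ref{Lem: from r_1r_2xr_3 to r_1xr_2r_3}, \ref{Lem: v'av''=v'bv''} and the grid monotonicity of Lemma~\ref{Lem: R{gamma_{k,ell}} subset R{gamma_{p,q}}}---constructs auxiliary words $\mathbf u^\sharp,\mathbf v^\sharp$ (obtained by dragging a specific letter $y_q$ to the front of the block in both words) with $\mathbf u\,\theta_{\mathbf X}\,\mathbf u^\sharp$, $\mathbf v\,\theta_{\mathbf Y}\,\mathbf v^\sharp$, and strictly fewer inversions in $\mathbf u^\sharp\approx\mathbf v^\sharp$. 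The lemmas you cite are exactly the ones the paper uses, but the way they are combined is this inductive construction, not a global greedy pass.
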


\begin{proof}
Let $\mathbf X,\mathbf Y\subseteq\mathbf R$ and an identity $\mathbf u\approx\mathbf v$ holds in $\mathbf X\wedge\mathbf Y$. We need to check that $(\mathbf u,\mathbf v)\in\theta_{\mathbf X}\theta_{\mathbf Y}$.

Since $\mathbf R\subseteq\mathbf A^\ast$, we can repeat literally arguments from the second paragraph of the proof of Proposition~\ref{Prop: Q_{r,s} is fi-perm} and reduce our considerations to the case when $\mathbf X,\mathbf Y\in[\mathbf D_2,\mathbf R]$ and the identity $\mathbf u\approx\mathbf v$ is linear-balanced.

Let~\eqref{decomposition of u} be the decomposition of the word $\mathbf u$. Since the identity $\mathbf u\approx\mathbf v$ is linear-balanced, the decomposition of the word $\mathbf v$ has the form~\eqref{decomposition of v} and the identity $\mathbf u\approx\mathbf v$ is $n$-invertible for some $n\in\mathbb N_0$ (the notion of $n$-invertible identity was introduced in Subsection~\ref{Subsec: A'}). We will use induction on $n$.

\smallskip

\emph{Induction base}. If $n=0$, then $\mathbf u=\mathbf v$, and we are done.

\smallskip

\emph{Induction step}. Let now $n>0$. Since $\mathbf u\ne\mathbf v$, there is $i\in\{0,1,\dots,m\}$ with $\mathbf u_i\ne\mathbf v_i$. Let us fix a number $i$ with such a property. Let $\mathbf u^\prime$, $\mathbf u^{\prime\prime}$, $\mathbf v^\prime$ and $\mathbf v^{\prime\prime}$ be words defined by the equalities~\eqref{u',u'',v',v''}. Thus, the equalities~\eqref{u=u'u_iu'',v=v'v_iv''} are valid. Suppose that the block $\mathbf u_i$ of the word $\mathbf u$ contains an \mbox{$(\mathbf X,\mathbf u,\mathbf v,i)$}-invertible pair of letters $(a,b)$. This means that $\mathbf u_i=\mathbf cab\mathbf d$ for some words $\mathbf c$ and $\mathbf d$, the identity $\mathbf u\approx\mathbf u^\prime\mathbf cba\mathbf d\mathbf u^{\prime\prime}$ holds in $\mathbf X$ and the letter $b$ precedes the letter $a$ in the block $\mathbf v_i$. The identity $\mathbf u^\prime\mathbf cba\mathbf d\mathbf u^{\prime\prime}\approx\mathbf v$ is \mbox{$(n-1)$}-invertible and holds in $\mathbf X\wedge\mathbf Y$. By the induction assumption, $(\mathbf u^\prime\mathbf cba\mathbf d\mathbf u^{\prime\prime},\mathbf v)\in\theta_{\mathbf X}\theta_{\mathbf Y}$. This implies that $\mathbf u\,\theta_{\mathbf X}\,\mathbf u^\prime\mathbf cba\mathbf d\mathbf u^{\prime\prime}\,\theta_{\mathbf X}\,\mathbf w\,\theta_{\mathbf Y}\,\mathbf v$ for some word $\mathbf w$, whence $(\mathbf u,\mathbf v)\in\theta_{\mathbf X}\theta_{\mathbf Y}$. 

Thus, we may assume that
\begin{itemize}
\item[(a)] the block $\mathbf u_i$ does not contain \mbox{$(\mathbf X,\mathbf u,\mathbf v,i)$}-invertible pairs of letters.
\end{itemize}
Analogous arguments allows us to assume that 
\begin{itemize}
\item[(b)] the block $\mathbf v_i$ does not contain \mbox{$(\mathbf Y,\mathbf v,\mathbf u,i)$}-invertible pairs of letters.
\end{itemize}

In order to facilitate understanding of further considerations, we outline their general scheme. We want to find the words $\mathbf u^\sharp$ and $\mathbf v^\sharp$ such that the identity $\mathbf u^\sharp\approx\mathbf v^\sharp$ is $t$-invertible for some $t<n$ and holds in the variety $\mathbf X\wedge\mathbf Y$, while the varieties $\mathbf X$ and $\mathbf Y$ satisfy the identities $\mathbf u\approx\mathbf u^\sharp$ and $\mathbf v\approx\mathbf v^\sharp$, respectively. This is sufficient for our aims. Indeed, if such words $\mathbf u^\sharp$ and $\mathbf v^\sharp$ exist, then $(\mathbf u^\sharp,\mathbf v^\sharp)\in\theta_{\mathbf X}\theta_{\mathbf Y}$ by the induction assumption. Then $\mathbf u\,\theta_{\mathbf X}\,\mathbf u^\sharp\,\theta_{\mathbf X}\,\mathbf w\,\theta_{\mathbf Y}\,\mathbf v^\sharp\,\theta_{\mathbf Y}\,\mathbf v$ for some word $\mathbf w$, whence $(\mathbf u,\mathbf v)\in\theta_{\mathbf X}\theta_{\mathbf Y}$, and we are done.

To understand, how to construct the sought words $\mathbf u^\sharp$ and $\mathbf v^\sharp$ from the words $\mathbf u$ and $\mathbf v$, we need to clarify the structure of the words $\mathbf u$ and $\mathbf v$ whenever the claims~(a) and~(b) hold. As we will see, these two claims impose rather strict restrictions on the words $\mathbf u$ and $\mathbf v$. After we find that the claims~(a) and~(b) make many cases impossible, we will eventually come to the conclusion that the words $\mathbf u$ and $\mathbf v$ must have some well-defined form, which will allows us to define the words $\mathbf u^\sharp$ and $\mathbf v^\sharp$ with the desired properties.

Let $\mathbf q$ be the largest common prefix of the words $\mathbf u_i$ and $\mathbf v_i$. Then $\mathbf u_i=\mathbf qx_0\mathbf u_i^\prime y_0\mathbf u_i^{\prime\prime}$ and $\mathbf v_i=\mathbf qy_0\mathbf v_i^\prime x_0\mathbf v_i^{\prime\prime}$ for some different letters $x_0,y_0$ and some words $\mathbf u_i^\prime,\mathbf u_i^{\prime\prime},\mathbf v_i^\prime$ and $\mathbf v_i^{\prime\prime}$. Lemma~\ref{Lem: from r_1r_2xr_3 to r_1xr_2r_3} implies that one of the varieties $\mathbf X$ and $\mathbf Y$ satisfies the identity 
\begin{equation}
\label{u=u'qy_0x_0u_i''u''}
\mathbf u\approx\mathbf u^\prime\mathbf qy_0x_0\mathbf u_i^\prime\mathbf u_i^{\prime\prime}\mathbf u^{\prime\prime}.
\end{equation} 
Suppose that this identity holds in $\mathbf X$. Let $z=x_0$ whenever $\mathbf u_i^\prime=\lambda$ and $z$ be the last letter of the word $\mathbf u_i^\prime$ otherwise. Then the pair $(y_0,z)$ is \mbox{$(\mathbf X,\mathbf u,\mathbf v,i)$}-invertible. This is impossible by the claim~(a). Therefore, the identity~\eqref{u=u'qy_0x_0u_i''u''} holds in $\mathbf Y$. If $\mathbf v_i^\prime=\lambda$, then Lemma~\ref{Lem: v'av''=v'bv''} implies that the pair $(x_0,y_0)$ is \mbox{$(\mathbf Y,\mathbf v,\mathbf u,i)$}-invertible, contradicting with the claim~(b). Therefore, $\mathbf v_i^\prime\ne\lambda$. Analogous arguments show that $\mathbf u_i^\prime\ne\lambda$. Let $\mathbf u_i^\prime=x_1x_2\cdots x_p$ and $\mathbf v_i^\prime=y_1y_2\cdots y_q$. Thus,
$$
\mathbf u=\mathbf u^\prime\mathbf qx_0x_1\cdots x_py_0\mathbf u_i^{\prime\prime}\mathbf u^{\prime\prime}\quad\text{and}\quad\mathbf v=\mathbf v^\prime\mathbf qy_0y_1\cdots y_qx_0\mathbf v_i^{\prime\prime}\mathbf v^{\prime\prime}.
$$ 

Put $X_j=\{x_1,x_2,\dots,x_j\}$ for all $j=1,2,\dots,p$ and $Y_j=\{y_1,y_2,\dots,y_j\}$ for all $j=1,2,\dots,q$. We are going to verify that either $y_j$ precedes $y_{j+1}$ in $\mathbf u_i$ for any $j=0,1,\dots,q-1$ or $x_j$ precedes $x_{j+1}$ in $\mathbf v_i$ for any $j=0,1,\dots,p-1$. Suppose that this is not the case. Then there are $r\in\{0,1,\dots,p-1\}$ and $s\in\{0,1,\dots,q-1\}$ such that $x_{r+1}$ precedes $x_r$ in $\mathbf v_i$ and $y_{s+1}$ precedes $y_s$ in $\mathbf u_i$. We may assume without any loss that $r$ and $s$ are the least numbers with such properties. It follows that $X_r\subseteq\con(\mathbf v_i^{\prime\prime})$ and $Y_s\subseteq\con(\mathbf u_i^{\prime\prime})$. Then we apply Lemma~\ref{Lem: from r_1r_2xr_3 to r_1xr_2r_3} $s+r+2$ times and obtain that the variety $\mathbf X\wedge\mathbf Y$ satisfies the identities
\begin{align*}
\mathbf u&\approx\mathbf u^\prime\mathbf qy_0x_0\mathbf u_i^\prime\mathbf u_i^{\prime\prime}\mathbf u^{\prime\prime}\approx\mathbf u^\prime\mathbf qy_0y_1x_0\mathbf u_i^\prime(\mathbf u_i^{\prime\prime})_{Y_1}\mathbf u^{\prime\prime}\approx\mathbf u^\prime\mathbf qy_0y_1y_2x_0\mathbf u_i^\prime(\mathbf u_i^{\prime\prime})_{Y_2}\mathbf u^{\prime\prime}\\
&\approx\cdots\approx\mathbf u^\prime\mathbf qy_0y_1y_2\cdots y_sx_0\mathbf u_i^\prime (\mathbf u_i^{\prime\prime})_{Y_s}\mathbf u^{\prime\prime}\quad\text{and}\\
\mathbf v&\approx\mathbf v^\prime\mathbf qx_0y_0\mathbf v_i^\prime\mathbf v_i^{\prime\prime}\mathbf v^{\prime\prime}\approx\mathbf v^\prime\mathbf qx_0x_1y_0\mathbf v_i^\prime(\mathbf v_i^{\prime\prime})_{X_1}\mathbf v^{\prime\prime}\approx\mathbf v^\prime\mathbf qx_0x_1x_2y_0\mathbf v_i^\prime(\mathbf v_i^{\prime\prime})_{X_2}\mathbf v^{\prime\prime}\\
&\approx\cdots\approx\mathbf v^\prime\mathbf qx_0x_1x_2\cdots x_ry_0\mathbf v_i^\prime (\mathbf v_i^{\prime\prime})_{X_r}\mathbf v^{\prime\prime}.
\end{align*}
In particular, the variety $\mathbf X\wedge\mathbf Y$ satisfies the identities
\begin{align}
\label{s identity u}
&\mathbf u^\prime\mathbf qy_0y_1\cdots y_{s-1}x_0\mathbf u_i^\prime(\mathbf u_i^{\prime\prime})_{Y_{s-1}}\mathbf u^{\prime\prime}\approx\mathbf u^\prime\mathbf qy_0y_1\cdots y_sx_0\mathbf u_i^\prime(\mathbf u_i^{\prime\prime})_{Y_s}\mathbf u^{\prime\prime},\\
\label{r identity v}
&\mathbf v^\prime\mathbf qx_0x_1\cdots x_{r-1}y_0\mathbf v_i^\prime(\mathbf v_i^{\prime\prime})_{X_{r-1}}\mathbf v^{\prime\prime}\approx\mathbf v^\prime\mathbf qx_0x_1\cdots x_ry_0\mathbf v_i^\prime(\mathbf v_i^{\prime\prime})_{X_r}\mathbf v^{\prime\prime}.
\end{align}

If the identity~\eqref{s identity u} holds in the variety $\mathbf Y$, then the pair $(y_s,y_{s+1})$ is \mbox{$(\mathbf Y,\mathbf v,\mathbf u,i)$}-invertible by Lemma~\ref{Lem: v'av''=v'bv''}, contradicting with the claim~(b). Therefore, the identity~\eqref{s identity u} fails in $\mathbf Y$ and so holds in $\mathbf X$ by Lemma~\ref{Lem: from r_1r_2xr_3 to r_1xr_2r_3}. Analogously, the identity~\eqref{r identity v} holds in $\mathbf Y$ but fails in $\mathbf X$.

Further, if $y_s,y_{s+1}\in\con(\mathbf v^\prime)$ or $y_s,y_{s+1}\in\con(\mathbf v^{\prime\prime})$, then, since $\mathbf Y$ satisfies $\sigma_1$ and $\sigma_2$, the pair $(y_s,y_{s+1})$ is \mbox{$(\mathbf Y,\mathbf v,\mathbf u,i)$}-invertible, contradicting with the claim~(b). Therefore, we may assume without loss of generality that $y_s\in\con(\mathbf v^\prime)\setminus\con(\mathbf v^{\prime\prime})$ and $y_{s+1}\in\con(\mathbf v^{\prime\prime})\setminus\con(\mathbf v^\prime)$. 

In view of Lemma~\ref{Lem: smth imply gamma_{e,f}}, $\mathbf X$ satisfies the identity $\gamma_{f_s,f_{s+1}}$. If this identity holds in the variety $\mathbf Y$, then, by Lemma~\ref{Lem: gamma_{e,f} implies pxyq=pyxq}, the pair $(y_s,y_{s+1})$ is \mbox{$(\mathbf Y,\mathbf v,\mathbf u,i)$}-invertible contradicting with the claim~(b). Therefore, the identity $\gamma_{f_s,f_{s+1}}$ fails in $\mathbf Y$.

Analogously, we can verify that one of the following two claims holds: 
\begin{itemize}
\item[(c)]$x_r\in\con(\mathbf v^\prime)\setminus\con(\mathbf v^{\prime\prime})$, $x_{r+1}\in\con(\mathbf v^{\prime\prime})\setminus\con(\mathbf v^\prime)$ and the identity $\gamma_{e_r,e_{r+1}}$ fails in $\mathbf X$ and holds in $\mathbf Y$;
\item[(d)]$x_{r+1}\in\con(\mathbf v^\prime)\setminus\con(\mathbf v^{\prime\prime})$, $x_r\in\con(\mathbf v^{\prime\prime})\setminus\con(\mathbf v^\prime)$ and the identity $\gamma_{e_{r+1},e_r}$ fails in $\mathbf X$ and holds in $\mathbf Y$. 
\end{itemize}

First, suppose that the claim~(c) is true. Since $\mathbf X$ satisfies the identity~\eqref{s identity u}, the identity $\gamma_{f_s,e_{r+1}}$ holds in $\mathbf X$ by Lemma~\ref{Lem: smth imply gamma_{e,f}}. Then Lemma~\ref{Lem: R{gamma_{k,ell}} subset R{gamma_{p,q}}} and the fact that $\mathbf X$ violates $\gamma_{e_r,e_{r+1}}$ imply that $e_r<f_s$. Since $\mathbf Y$ satisfies the identity~\eqref{r identity v}, the identity $\gamma_{e_r,f_{s+1}}$ holds in $\mathbf Y$ by Lemma~\ref{Lem: smth imply gamma_{e,f}}. Then Lemma~\ref{Lem: R{gamma_{k,ell}} subset R{gamma_{p,q}}} and the fact that $\mathbf Y$ violates $\gamma_{f_s,f_{s+1}}$ imply that $f_s<e_r$, a contradiction. So, the claim~(c) is false.

Suppose now that the claim~(d) is true. Since $\mathbf X$ satisfies the identity~\eqref{s identity u}, the identity $\gamma_{f_s,e_r}$ holds in $\mathbf X$ by Lemma~\ref{Lem: smth imply gamma_{e,f}}. Then Lemma~\ref{Lem: R{gamma_{k,ell}} subset R{gamma_{p,q}}} and the fact that $\mathbf X$ violates $\gamma_{e_{r+1},e_r}$ imply that $e_{r+1}<f_s$. Since $\mathbf Y$ satisfies the identity~\eqref{r identity v}, the identity $\gamma_{f_s,e_r}$ holds in $\mathbf Y$ by Lemma~\ref{Lem: smth imply gamma_{e,f}}. Then Lemma~\ref{Lem: R{gamma_{k,ell}} subset R{gamma_{p,q}}} and the fact that $\mathbf Y$ violates $\gamma_{f_s,f_{s+1}}$ imply that $f_{s+1}<e_r$.

Further, if $x_{r+1}\in\con(\mathbf v_i^\prime)$, then $x_{r+1}=y_c$ for some $c\in\{1,2,\dots,q\}$. This means that $y_c\in\con(\mathbf u_i^\prime)$. Then $s+1<c$ because $Y_s\subseteq\con(\mathbf u_i^{\prime\prime})$ and $x_{r+1}\ne y_{s+1}$. It follows that $y_{s+1}$ precedes $x_{r+1}$ in $\mathbf v_i$. If $x_{r+1}\notin\con(\mathbf v_i^\prime)$, then, evidently, $y_{s+1}$ precedes $x_{r+1}$ in $\mathbf v_i$. So, $y_{s+1}$ precedes $x_{r+1}$ in $\mathbf v_i$ in either case. Analogously, we can verify that $x_{r+1}$ precedes $y_{s+1}$ in $\mathbf u_i$. Then $\mathbf X\wedge\mathbf Y$ satisfies the identity $\gamma_{e_{r+1},f_{s+1}}$ by Lemma~\ref{Lem: smth imply gamma_{e,f}}. According to Corollary~\ref{Cor: pxyq=pyxq in X wedge Y}, either $\mathbf X$ or $\mathbf Y$ satisfies this identity. If $\gamma_{e_{r+1},f_{s+1}}$ holds in $\mathbf X$, then $\gamma_{e_{r+1},e_r}$ holds in $\mathbf X$ too by Lemma~\ref{Lem: R{gamma_{k,ell}} subset R{gamma_{p,q}}}, a contradiction. If $\gamma_{e_{r+1},f_{s+1}}$ holds in $\mathbf Y$, then $\gamma_{f_s,f_{s+1}}$ holds in $\mathbf Y$ too by Lemma~\ref{Lem: R{gamma_{k,ell}} subset R{gamma_{p,q}}}, a contradiction again. So, the claim~(d) is false too.

Thus, the hypothesis that there are $r\in\{0,1,\dots,p-1\}$ and $s\in\{0,1,\dots,q-1\}$ such that $x_{r+1}$ precedes $x_r$ in $\mathbf v_i$ and $y_{s+1}$ precedes $y_s$ in $\mathbf u_i$ is false in either case. Then either $y_j$ precedes $y_{j+1}$ in $\mathbf u_i$ for any $j=0,1,\dots,q-1$ or $x_j$ precedes $x_{j+1}$ in $\mathbf v_i$ for any $j=0,1,\dots,p-1$. By symmetry, we may assume that $y_j$ precedes $y_{j+1}$ in $\mathbf u_i$ for any $j=0,1,\dots,q-1$. In particular, $X_p\cap Y_q=\varnothing$. It follows that $X_p\subseteq\con(\mathbf v_i^{\prime\prime})$ and $Y_q\subseteq\con(\mathbf u_i^{\prime\prime})$. Considerations similar to the deduction of the identities~\eqref{s identity u} and~\eqref{r identity v} allow us to conclude that the variety $\mathbf X\wedge\mathbf Y$ satisfies the identities
\begin{align}
\label{q identity u}
&\mathbf u^\prime\mathbf qy_0y_1\cdots y_{q-1}x_0\mathbf u_i^\prime(\mathbf u_i^{\prime\prime})_{Y_{q-1}}\mathbf u^{\prime\prime}\approx\mathbf u^\prime\mathbf qy_0y_1\cdots y_qx_0\mathbf u_i^\prime(\mathbf u_i^{\prime\prime})_{Y_q}\mathbf u^{\prime\prime},\\
\label{p identity v}
&\mathbf v^\prime\mathbf qx_0x_1\cdots x_{p-1}y_0\mathbf v_i^\prime(\mathbf v_i^{\prime\prime})_{X_{p-1}}\mathbf v^{\prime\prime}\approx\mathbf v^\prime\mathbf qx_0x_1\cdots x_py_0\mathbf v_i^\prime(\mathbf v_i^{\prime\prime})_{X_p}\mathbf v^{\prime\prime}.
\end{align}

If the identity~\eqref{q identity u} holds in the variety $\mathbf Y$, then the pair $(y_{q-1},y_q)$ is \mbox{$(\mathbf Y,\mathbf v,\mathbf u,i)$}-invertible by Lemma~\ref{Lem: v'av''=v'bv''}, contradicting with the claim~(b). Therefore, the identity~\eqref{q identity u} fails in $\mathbf Y$ and so holds in $\mathbf X$ by Lemma~\ref{Lem: from r_1r_2xr_3 to r_1xr_2r_3}. Analogously, the identity~\eqref{p identity v} holds in $\mathbf Y$ but fails in $\mathbf X$.

If either $x_p,y_0\in\con(\mathbf u^\prime)$ or $x_p,y_0\in\con(\mathbf u^{\prime\prime})$, then, since $\mathbf X$ satisfies $\sigma_1$ and $\sigma_2$, the pair $(y_0,x_p)$ is \mbox{$(\mathbf X,\mathbf u,\mathbf v,i)$}-invertible. This contradicts the claim~(a). Therefore, we may assume without loss of generality that $y_0\in\con(\mathbf u^\prime)\setminus\con(\mathbf u^{\prime\prime})$ and $x_p\in\con(\mathbf u^{\prime\prime})\setminus\con(\mathbf u^\prime)$. 

Analogously, either $y_q\in\con(\mathbf v^\prime)\setminus\con(\mathbf v^{\prime\prime})$ and $x_0\in\con(\mathbf v^{\prime\prime})\setminus\con(\mathbf v^\prime)$ or $x_0\in\con(\mathbf v^\prime)\setminus\con(\mathbf v^{\prime\prime})$ and $y_q\in\con(\mathbf v^{\prime\prime})\setminus\con(\mathbf v^\prime)$. Further considerations are divided into two cases.

\smallskip

\emph{Case 1}: $y_q\in\con(\mathbf v^\prime)\setminus\con(\mathbf v^{\prime\prime})$ and $x_0\in\con(\mathbf v^{\prime\prime})\setminus\con(\mathbf v^\prime)$. If the identity $\gamma_{f_0,e_p}$ holds in $\mathbf X$, then Lemma~\ref{Lem: gamma_{e,f} implies pxyq=pyxq} implies that the pair $(x_p,y_0)$ is $\mbox(\mathbf X,\mathbf u,\mathbf v,i)$-invertible, contradicting with the claim~(a). Therefore, $\gamma_{f_0,e_p}$ fails in $\mathbf X$. But, since the identity~\eqref{q identity u} holds in the variety $\mathbf X$, this variety satisfies the identity $\gamma_{f_q,e_p}$ by Lemma~\ref{Lem: smth imply gamma_{e,f}}. This fact and Lemma~\ref{Lem: R{gamma_{k,ell}} subset R{gamma_{p,q}}} imply that $f_0<f_q$. Further, since $\mathbf Y$ satisfies the identity~\eqref{u=u'qy_0x_0u_i''u''}, $\mathbf Y$ satisfies also the identity $\gamma_{f_0,e_0}$ by Lemma~\ref{Lem: smth imply gamma_{e,f}}. Now Lemma~\ref{Lem: R{gamma_{k,ell}} subset R{gamma_{p,q}}} applies with the conclusion that the identity $\gamma_{f_q,e_0}$ holds in $\mathbf Y$. According to Lemma~\ref{Lem: gamma_{e,f} implies pxyq=pyxq}, the pair $(y_q,x_0)$ is \mbox{$(\mathbf Y,\mathbf v,\mathbf u,i)$}-invertible. But this is not the case by the claim~(b). So, Case~1 is impossible.

\smallskip

\emph{Case 2}: $x_0\in\con(\mathbf v^\prime)\setminus\con(\mathbf v^{\prime\prime})$ and $y_q\in\con(\mathbf v^{\prime\prime})\setminus\con(\mathbf v^\prime)$. Further considerations are divided into three subcases.

\smallskip

\emph{Subcase 2.1}: there are $k\in\{0,1,\dots,p\}$ and $\ell\in\{0,1,\dots,q\}$ such that $x_k,y_\ell\in\con(\mathbf v^\prime)\setminus\con(\mathbf v^{\prime\prime})$ and the identities $\gamma_{f_\ell,f_q}$ and $\gamma_{e_k,e_p}$ fail in $\mathbf X$ and $\mathbf Y$, respectively. Since the identities~\eqref{q identity u} and~\eqref{p identity v} hold in the varieties $\mathbf X$ and $\mathbf Y$, respectively, Lemma~\ref{Lem: smth imply gamma_{e,f}} implies that $\mathbf X$ satisfies $\gamma_{e_k,f_q}$ and $\mathbf Y$ satisfies $\gamma_{f_\ell,e_p}$. Then $f_\ell<e_k<f_\ell$ by Lemma~\ref{Lem: R{gamma_{k,ell}} subset R{gamma_{p,q}}}, a contradiction. So, this subcase is impossible.

\smallskip

\emph{Subcase 2.2}: $\mathbf X$ satisfies the identity $\gamma_{f_j,f_q}$ for any $j$ such that $y_j\in\con(\mathbf v^\prime)\setminus\con(\mathbf v^{\prime\prime})$. Clearly, $\mathbf u_i^{\prime\prime}=\mathbf a_iy_q\mathbf b_i$ for some words $\mathbf a_i$ and $\mathbf b_i$ and $y_1,y_2,\dots,y_{q-1}\in\con(\mathbf a_i)$. Let $\mathbf w=x_0\mathbf u_i^\prime y_0\mathbf a_i$. Then $\mathbf u=\mathbf u^\prime\mathbf q\mathbf wy_q\mathbf b_i\mathbf u^{\prime\prime}$. For convenience, we rename letters from $\con(\mathbf w)$ and put $\mathbf w=z_1z_2\cdots z_a$. We are going to check that the variety $\mathbf X$ satisfies the identities
\begin{equation}
\label{move y_q to q}
\left\{\!\!
\begin{array}{ll}
\mathbf u\!\!\!\!&=\mathbf u^\prime\mathbf qz_1z_2\cdots z_ay_q\mathbf b_i\mathbf u^{\prime\prime}\approx\mathbf u^\prime\mathbf qz_1z_2\cdots z_{a-1}y_qz_a\mathbf b_i\mathbf u^{\prime\prime}\\
&\approx\mathbf u^\prime\mathbf qz_1z_2\cdots z_{a-2}y_qz_{a-1}z_a\mathbf b_i\mathbf u^{\prime\prime}\approx\cdots\approx\mathbf u^\prime\mathbf qy_qz_1z_2\cdots z_a\mathbf b_i\mathbf u^{\prime\prime}\\
&=\mathbf u^\prime\mathbf qy_qx_0\mathbf u_i^\prime y_0\mathbf a_i\mathbf b_i\mathbf u^{\prime\prime}=\mathbf u^\prime\mathbf qy_qx_0\mathbf u_i^\prime y_0(\mathbf u_i^{\prime\prime})_{y_q}\mathbf u^{\prime\prime}.
\end{array}
\right.
\end{equation}
Let $r\in\{1,2,\dots,a\}$. If $z_r\in\con(\mathbf v^{\prime\prime})$, then we can swap $y_q$ with $z_r$ using the identity $\sigma_2$. Let now $z_r\notin\con(\mathbf v^{\prime\prime})$ and therefore, $z_r\in\con(\mathbf v^\prime)$. If $z_r=y_j$ for some $j\in\{1,2,\dots,q-1\}$, then we can swap $y_q$ with $z_r$ using the identity $\gamma_{f_j,f_q}$ that holds in $\mathbf X$ by the hypothesis of Subcase~2.2. Finally, suppose that $z_r\notin\{y_1,y_2,\dots,y_{q-1}\}$. Since $\mathbf X$ satisfies the identity~\eqref{q identity u}, we can apply Lemma~\ref{Lem: smth imply gamma_{e,f}} and conclude that $\mathbf X$ satisfies the identity $\gamma_{h,f_q}$, where $h=\occ_{z_r}(\mathbf u)-1$. Then we can swap $y_q$ with $z_r$ by Lemma~\ref{Lem: gamma_{e,f} implies pxyq=pyxq}. Thus, we really can step by step swap $y_q$ with $z_a$, $z_{a-1}$, \dots, $z_1$ using identities that hold in $\mathbf X$ on each step. This implies that $\mathbf X$ satisfies the identities~\eqref{move y_q to q} and in particular, the identity $\mathbf u\approx\mathbf u^\sharp$, where 
$$
\mathbf u^\sharp=\mathbf u^\prime\mathbf qy_qx_0\mathbf u_i^\prime y_0(\mathbf u_i^{\prime\prime})_{y_q}\mathbf u^{\prime\prime}.
$$

Further, $\mathbf X$ violates $\gamma_{f_0,e_p}$ because the pair $(y_0,x_p)$ is \mbox{$(\mathbf X,\mathbf u,\mathbf v,i)$}-invertible by Lemma~\ref{Lem: gamma_{e,f} implies pxyq=pyxq} otherwise. Recall that $y_0\in\con(\mathbf u^\prime)\setminus\con(\mathbf u^{\prime\prime})$. Since the identity $\mathbf u\approx\mathbf v$ is linear-balanced, $y_0\in\con(\mathbf v^\prime)\setminus\con(\mathbf v^{\prime\prime})$. Then $\mathbf X$ satisfies $\gamma_{f_0,f_q}$ by the hypothesis, whence Lemma~\ref{Lem: R{gamma_{k,ell}} subset R{gamma_{p,q}}} implies that $e_p<f_q$. Since the identity~\eqref{p identity v} holds in the variety $\mathbf Y$, this variety satisfies $\gamma_{f_j,e_p}$ for any $j$ such that $y_j\in\con(\mathbf v^\prime)\setminus\con(\mathbf v^{\prime\prime})$ by Lemma~\ref{Lem: smth imply gamma_{e,f}}. Taking into account Lemma~\ref{Lem: R{gamma_{k,ell}} subset R{gamma_{p,q}}}, we get that $\gamma_{f_j,f_q}$ holds in $\mathbf Y$ for any $j$ such that $y_j\in\con(\mathbf v^\prime)\setminus\con(\mathbf v^{\prime\prime})$. Then arguments similar to ones from the previous paragraph imply that $\mathbf Y$ satisfies the identity $\mathbf v\approx\mathbf v^\sharp$, where 
$$
\mathbf v^\sharp=\mathbf v^\prime\mathbf qy_qy_0(\mathbf v_i^\prime)_{y_q}x_0\mathbf v_i^{\prime\prime}\mathbf v^{\prime\prime}.
$$

To obtain $\mathbf v$ from $\mathbf u$ by swapping of adjacent occurrences of multiple letters, we need to transform the word $x_0\mathbf u_i^\prime y_0\mathbf u_i^{\prime\prime}$ to the word $y_0\mathbf v_i^\prime x_0\mathbf v_i^{\prime\prime}$. By the hypothesis, this can be done in $n$ steps. Further, to obtain $\mathbf v^\sharp$ from $\mathbf u^\sharp$ by the same way, we need to transform the word $x_0\mathbf u_i^\prime y_0(\mathbf u_i^{\prime\prime})_{y_q}$ to the word $y_0(\mathbf v_i^\prime)_{y_q}x_0\mathbf v_i^{\prime\prime}$. We see that in the second case we need to replace one letter less than in the first one and the mutual location of all letters in the word $x_0\mathbf u_i^\prime y_0(\mathbf u_i^{\prime\prime})_{y_q}$ [respectively, $y_0(\mathbf v_i^\prime)_{y_q}x_0\mathbf v_i^{\prime\prime}$] coincides with the mutual location of all letters except $y_q$ in the word $x_0\mathbf u_i^\prime y_0\mathbf u_i^{\prime\prime}$ [respectively, $y_0\mathbf v_i^\prime x_0\mathbf v_i^{\prime\prime}$]. Therefore, the identity $\mathbf u^\sharp\approx\mathbf v^\sharp$ is $t$-invertible for some $t<n$. It is clear that this identity holds in $\mathbf X\wedge\mathbf Y$. Thus, the words $\mathbf u^\sharp$ and $\mathbf v^\sharp$ have the properties indicated in the paragraph after the claim~(b). As we have seen there, this implies the desired conclusion.

\smallskip

\emph{Subcase 2.3}: $\mathbf Y$ satisfies the identity $\gamma_{e_j,e_p}$ for any $j$ such that $x_j\in\con(\mathbf v^\prime)\setminus\con(\mathbf v^{\prime\prime})$. This subcase is similar to Subcase~2.2.

\smallskip

Proposition~\ref{Prop: R is fi-perm} is proved. 
\end{proof}

\section{Proof of main results}
\label{Sec: proof of main results}

\begin{proof}[Proof of Theorem~\ref{Th: fi-perm}]
Throughout the proof, we will use Lemma~\ref{Lem: lifting} many times without explicitly specifying this. 

\smallskip

\emph{Necessity}. Let $\mathbf V$ be a non-group $fi$-permutable variety of monoids. Then $\mathbf{SL}\subseteq\mathbf V$ by Lemma~\ref{Lem: group variety}. Now Lemma~\ref{Lem: non-fi-perm}(i) applies and we conclude that $\mathbf A_n\nsubseteq\mathbf V$ for any $n>1$. In other words, the variety $\mathbf V$ is aperiodic, whence it satisfies the identity~\eqref{x^n=x^{n+1}} for some $n\in\mathbb N$. Let $n$ be the least number with such a property. 

If $n=1$, then the variety $\mathbf V$ is completely regular. But every completely regular aperiodic variety consists of idempotent monoids, and we are done. 

Suppose now that $n>2$. Then Lemma~\ref{Lem: x^n is isoterm} implies that $\mathbf C_3\subseteq\mathbf V$. It is verified by Gusev~\cite[Lemma~5]{Gusev-20+} that the lattice $L(\mathbf C_3\vee\mathbf E)$ is non-modular. Clearly, the lattice $L(\mathbf C_3\vee\mathbf E^\delta)$ is non-modular too. Now Lemma~\ref{Lem: fi-perm implies modularity} applies and we conclude that $\mathbf E,\mathbf E^\delta\nsubseteq\mathbf V$. Then Lemma~\ref{Lem: does not contain E} and the statement dual to it imply that $\mathbf V$ satisfies the identities~\eqref{x^nyx^n=yx^n} and $x^nyx^n\approx x^ny$. Then the identity $x^ny\approx yx^n$ holds in $\mathbf V$. Further, $\mathbf D_2\nsubseteq\mathbf V$ by Lemma~\ref{Lem: non-fi-perm}(iii). Then Lemmas~\ref{Lem: x^n is isoterm} and~\ref{Lem: does not contain D_{k+1}} imply that one of the identities $\delta_1$ or $yx^2\approx xyx$ is true in $\mathbf V$. This means that either $\mathbf V\subseteq\mathbf P_n$ or $\mathbf V\subseteq\mathbf P_n^\delta$, and we are done.

Finally, suppose that $n=2$. Then Lemma~\ref{Lem: x^n is isoterm} implies that $\mathbf C_2\subseteq\mathbf V$. Suppose that $\mathbf E\subseteq\mathbf V$. Then Lemma~\ref{Lem: non-fi-perm}(v) implies that $\mathbf E^\delta\nsubseteq\mathbf V$. Now the statement dual to Lemma~\ref{Lem: does not contain E} shows that $\mathbf V$ satisfies the identity
\begin{equation}
\label{xxyxx=xxy}
x^2yx^2\approx x^2y.
\end{equation}

Put $\mathbf{LRB}=\var\{xy\approx xyx\}$. By Lee~\cite[Proposition~4.1(i) and Lemma~3.3(iv)]{Lee-12}, the lattice $L(\mathbf C_2\vee\mathbf{LRB})$ is non-modular. Then Lemma~\ref{Lem: fi-perm implies modularity} implies that $\mathbf{LRB}\nsubseteq\mathbf V$. Hence there is an identity $\mathbf u\approx\mathbf v$ that holds in $\mathbf V$ but fails in $\mathbf{LRB}$. For any word $\mathbf w$, we denote by $\ini(\mathbf w)$ the word obtained from $\mathbf w$ by retaining only the first occurrence of each letter. It is evident that an identity $\mathbf a\approx\mathbf b$ holds in the variety $\mathbf{LRB}$ if and only if $\ini(\mathbf a)=\ini(\mathbf b)$. Thus, $\ini(\mathbf u)\ne\ini(\mathbf v)$. Lemma~\ref{Lem: group variety} implies that $\con(\mathbf u)=\con(\mathbf v)$. Therefore, we may assume that there are letters $x,y\in\con(\mathbf u)$ such that $\mathbf u(x,y)=x^sy\mathbf w_1$ and $\mathbf v(x,y)=y^tx\mathbf w_2$, where $s,t>0$ and $\con(\mathbf w_1)=\con(\mathbf w_2)=\{x,y\}$. Let us substitute $x^2$ and $y^2$ for $x$ and $y$, respectively, in the identity $\mathbf u(x,y)\approx\mathbf v(x,y)$. After that we apply the identities~\eqref{xx=xxx} and~\eqref{xxyxx=xxy}, resulting in the identity $x^2y^2\approx y^2x^2$. 

Further, $\mathbf D_2\nsubseteq\mathbf V$ by Lemma~\ref{Lem: non-fi-perm}(iv). Then Lemma~\ref{Lem: does not contain D_{k+1}} implies that $\mathbf V$ satisfies the identity
\begin{equation}
\label{xyx=x^qyx^r}
xyx\approx x^qyx^r,
\end{equation}
where either $q>1$ or $r>1$. If $q>1$, then $\mathbf V$ satisfies the identities
$$
xyx\stackrel{\eqref{xyx=x^qyx^r}}\approx x^qyx^r\stackrel{\eqref{xxyxx=xxy}}\approx x^qyx^{r+2}\stackrel{\eqref{xx=xxx}}\approx x^2yx^2\stackrel{\eqref{xxyxx=xxy}}\approx x^2y,
$$
whence $\mathbf V\subseteq\mathbf E\subset\mathbf K$. Let now $q\le 1$. Then $r>1$. If $q=0$, then the claim that $\mathbf V$ satisfies the identity~\eqref{xx=xxx} implies that the identity $xyx\approx yx^2$ holds in $\mathbf V$. Besides that, $\mathbf V$ satisfies the identity~\eqref{xxyxx=xxy}. Now Lemma~\ref{Lem: basis for D_k} implies that $\mathbf V\subseteq\mathbf D_1\subset\mathbf K$. Finally, let $q=1$. Since the variety $\mathbf V$ satisfies the identity~\eqref{xx=xxx}, it satisfies also the identity
\begin{equation}
\label{xyx=xyxx}
xyx\approx xyx^2.
\end{equation}
Besides that, the identities $x^2yx\stackrel{\eqref{xyx=xyxx}}\approx x^2yx^2\stackrel{\eqref{xxyxx=xxy}}\approx x^2y$ hold in $\mathbf V$. Therefore, $\mathbf V\subseteq\mathbf K$. 

Thus, if $\mathbf E\subseteq\mathbf V$, then $\mathbf V\subseteq\mathbf K$. By symmetry, if $\mathbf E^\delta\subseteq\mathbf V$, then $\mathbf V\subseteq\mathbf K^\delta$. We are done in both the cases. 

Below we assume that $\mathbf E,\mathbf E^\delta\nsubseteq\mathbf V$. Then Lemma~\ref{Lem: does not contain E} and the dual statement imply that $\mathbf V\subseteq\mathbf A$. In view of Lemmas~\ref{Lem: comparable is fi-permut} and~\ref{Lem: L(A)}(i), we may assume that $\mathbf D_2\subseteq\mathbf V$. If $\mathbf V$ does not contain the varieties $\mathbf L$, $\mathbf M$ and $\mathbf M^\delta$, then Lemmas~\ref{Lem: basis for D_k} and~\ref{Lem: V contains D_2 but does not contain L or M or M^delta} imply that $\mathbf V\subseteq\mathbf D_\infty\subset\mathbf D_\infty\vee\mathbf N$, and we are done. In view of symmetry, we may assume that either $\mathbf M\subseteq\mathbf V$ or $\mathbf L\subseteq\mathbf V$.

Suppose at first that $\mathbf M\subseteq\mathbf V$. Then it follows from Lemma~\ref{Lem: non-fi-perm}(vi) that $\mathbf L\nsubseteq\mathbf V$. Then Lemma~\ref{Lem: V contains D_2 but does not contain L or M or M^delta}(iii) implies that $\mathbf V$ satisfies the identity $\sigma_3$. 

Suppose that $\mathbf N\subseteq\mathbf V$. It is verified by Gusev~\cite[Theorem~1.1 and Fig.~1]{Gusev-19} that the lattice $L(\mathbf N\vee\mathbf M^\delta)$ is non-modular. Now Lemma~\ref{Lem: fi-perm implies modularity} applies and we have that $\mathbf M^\delta\nsubseteq\mathbf V$. Further, $\mathbf Z_i\nsubseteq\mathbf V$ for each $i=1,2,3$ by Lemma~\ref{Lem: non-fi-perm}(vii). In view of the above and Corollary~\ref{Cor: pxyq=pyxq in X wedge Y}, we have that $\mathbf V$ satisfies the identities $\sigma_2$ and $\alpha_i$ with $i=1,2,3$. Then Corollary~\ref{Cor: D_k vee N and D_k vee M} implies that $\mathbf V\subseteq\mathbf D_\infty\vee\mathbf N$, and we are done. 

Whence, we may assume that $\mathbf N\nsubseteq\mathbf V$. Taking into account Lemma~\ref{Lem: var S(c_{n+1,m}[rho]) and S(c_{n,m+1}[tau])}, we have that $S(\mathbf c_{n,m}[\pi])\notin\mathbf V$ for all $n,m\in\mathbb N_0$ and $\pi\in S_{n+m}$. Then Lemma~\ref{Lem: V notin S(c_{n,m}[rho])} applies with the conclusion that $\mathbf V$ satisfies the identity~\eqref{c_{n,m}[rho]=c_{n,m}'[rho]} for all $n,m\in\mathbb N_0$ and $\rho\in S_{n+m}$. The lattice $L(\mathbf M\vee\mathbf N^\delta)$ is non-modular because the lattice $L(\mathbf N\vee\mathbf M^\delta)$ is so. Now Lemma~\ref{Lem: fi-perm implies modularity} applies and we have that $\mathbf N^\delta\nsubseteq\mathbf V$. Then the dual to Lemma~\ref{Lem: V notin S(c_{n,m}[rho])} implies that $\mathbf V$ satisfies the identity $\mathbf d_{n,m}[\rho]\approx\mathbf d_{n,m}^\prime[\rho]$ for all $n,m\in\mathbb N_0$ and $\rho\in S_{n+m}$.

Finally, Lemma~\ref{Lem: non-fi-perm}(viii) implies that there are $r,s\in\{1,2,3\}$ such that $\mathbf Z_i,\mathbf Z_j\nsubseteq\mathbf V$ for all $i\in\{1,2,3\}\setminus\{r\}$ and $j\in\{1,2,3\}\setminus\{s\}$. Then $\mathbf V$ satisfies the identities $\alpha_i$ and $\beta_j$ with $i\in\{1,2,3\}\setminus\{r\}$ and $j\in\{1,2,3\}\setminus\{s\}$ by Lemma~\ref{Lem: pxyq=pyxq}. Summarizing all we say above, we have that $\mathbf V\subseteq\mathbf Q_{r,s}$, and we are done.

Finally, suppose that $\mathbf L\subseteq\mathbf V$. Then Lemma~\ref{Lem: non-fi-perm}(vi) and the dual to it imply that $\mathbf M,\mathbf M^\delta\nsubseteq\mathbf V$. Then Lemma~\ref{Lem: V contains D_2 but does not contain L or M or M^delta}(i),(ii) implies that $\mathbf V$ satisfies the identities $\sigma_1$ and $\sigma_2$. It is proved in Gusev and Vernikov~\cite[p.~32]{Gusev-Vernikov-18} that every variety of the form $\var S(\mathbf w_n[\pi,\tau])$ contains two non-comparable subvarieties of the same form. This claim and Lemma~\ref{Lem: var S(w_n[pi,tau]) and var S(w_n[xi,eta])} imply that $S(\mathbf w_n[\pi,\tau])\notin\mathbf V$ for all $n\in\mathbb N$ and $\pi,\tau\in S_n$. 
 
It is checked in the paragraph starting on p.~32 and ending on p.~33 in~\cite{Gusev-Vernikov-18} that if $\mathbf L\subseteq\mathbf X\subseteq\mathbf A$, $\mathbf X$ satisfies the identities $\sigma_1$, $\sigma_2$ and $\delta_2$ and $S(\mathbf w_n[\pi,\tau])\notin\mathbf X$ for all $n\in\mathbb N$ and $\pi,\tau\in S_n$, then $\mathbf X$ satisfies the identity~\eqref{w_n[pi,tau]=w_n'[pi,tau]} for all $n\in\mathbb N$ and $\pi,\tau\in S_n$. Repeating arguments from that paragraph in~\cite{Gusev-Vernikov-18} but referring to Lemma~\ref{Lem: S(w_n[pi,tau]) notin V} rather than to~\cite[Lemma~4.10]{Gusev-Vernikov-18}, we can verify that the same conclusion is true without the hypothesis that $\mathbf X$ satisfies the identity $\delta_2$. Together with the saying in the previous paragraph, we see that the identity~\eqref{w_n[pi,tau]=w_n'[pi,tau]} holds in $\mathbf V$ for any $n\in\mathbb N$ and $\pi,\tau\in S_n$. Therefore, $\mathbf V\subseteq\mathbf R$, and we are done.

\smallskip

\emph{Sufficiency}. An arbitrary group variety is $fi$-permutable because it is congruence permutable. Lemmas~\ref{Lem: group variety} and~\ref{Lem: cr is fi-perm} imply that an arbitrary variety of idempotent monoids is $fi$-permutable too. By symmetry, it remains to consider the varieties $\mathbf K$, $\mathbf D_\infty\vee\mathbf N$, $\mathbf P_n$, $\mathbf Q_{r,s}$ and $\mathbf R$. The variety $\mathbf K$ is $fi$-permutable by Lemmas~\ref{Lem: comparable is fi-permut} and~\ref{Lem: L(K) is a chain}. The same conclusion for the varieties $\mathbf D_\infty\vee\mathbf N$, $\mathbf P_n$, $\mathbf Q_{r,s}$ and $\mathbf R$ follows from Propositions~\ref{Prop: D_infty vee N is fi-perm},~\ref{Prop: P_n is fi-perm},~\ref{Prop: Q_{r,s} is fi-perm} and~\ref{Prop: R is fi-perm}, respectively. 

\smallskip

Theorem~\ref{Th: fi-perm} is proved.
\end{proof}

\begin{proof}[Proof of Theorem~\ref{Th: almost fi-perm}]
\emph{Necessity}. Let $\mathbf V$ be a non-completely regular almost $fi$-permutable variety of monoids. Then $\mathbf C_2\subseteq\mathbf V$ by Corollary~\ref{Cor: cr}. Now the inclusion $\mathbf{SL}\subset\mathbf C_2$ and Lemma~\ref{Lem: non-fi-perm}(ii) imply that $\mathbf A_n\nsubseteq\mathbf V$ for any $n>1$. Therefore, $\mathbf V$ is an aperiodic variety. Lemma~\ref{Lem: group variety} shows that for aperiodic varieties the properties to be $fi$-permutable and almost $fi$-permutable are equivalent. Now Theorem~\ref{Th: fi-perm} implies that $\mathbf V$ is contained in one of the varieties listed in the item~(iii) of this theorem.

\smallskip

\emph{Sufficiency} immediately follows from Lemma~\ref{Lem: cr is fi-perm} and Theorem~\ref{Th: fi-perm}. 

\smallskip

Theorem~\ref{Th: almost fi-perm} is proved.
\end{proof}

\begin{proof}[Proof of Corollaries~\ref{Cor: fi-perm distributive} and~\ref{Cor: almost fi-perm distributive}]
The lattice of all varieties of idempotent monoids is completely described by Wismath~\cite{Wismath-86}. In particular, it turns out to be distributive. Note that, in actual fact, this claim follows from Lemma~\ref{Lem: embedding} and the assertion that the lattice of all varieties of idempotent semigroups is distributive; this fact was independently discovered by Biryukov, Fennemore and Gerhard in early 1970s (see Evans~\cite[Section~XI]{Evans-71}, for instance).

Theorems~\ref{Th: fi-perm} and~\ref{Th: almost fi-perm} show that, up to duality, it remains to check that the varieties $\mathbf K$, $\mathbf D_\infty\vee\mathbf N$, $\mathbf P_n$, $\mathbf Q_{r,s}$ and $\mathbf R$ have distributive subvariety lattices. The lattices $L(\mathbf K)$, $L(\mathbf D_\infty\vee\mathbf N)$ and $L(\mathbf P_n)$ with any $n\in\mathbb N$ are distributive by Lemma~\ref{Lem: L(K) is a chain} and Corollaries~\ref{Cor: L(D_infty vee N)} and~\ref{Cor: L(P_n) is distributive}, respectively. In view of Proposition~\ref{Prop: L(A') is distributive}, to complete the proof, it suffices to verify that $\mathbf Q_{r,s}\subseteq\mathbf A^\prime$ and $\mathbf R\subseteq\mathbf A^\prime$. The second inclusion is evident because the identities~\eqref{c_{n,m,k}[rho]=c_{n,m,k}'[rho]} and $\mathbf d_{n,m,k}[\pi]\approx\mathbf d_{n,m,k}^\prime[\pi]$ follow from the identities $\sigma_1$ and $\sigma_2$, respectively. Lemma~\ref{Lem: smth imply w_n[pi,tau]=w_n'[pi,tau]} implies that the variety $\mathbf Q_{r,s}$ satisfies the identity~\eqref{w_n[pi,tau]=w_n'[pi,tau]} for any $n\in\mathbb N_0$ and $\pi,\tau\in S_n$. By symmetry, it remains to verify that $\mathbf Q_{r,s}$ satisfies also the identity~\eqref{c_{n,m,k}[rho]=c_{n,m,k}'[rho]} for any $n,m,k\in\mathbb N_0$ and $\rho\in S_{n+m+k}$. If $k=0$, then this claim is evident. Finally, let $k\ge 1$. Put
$$
X = \{t_i,z_i\mid n+m+1\le i\le n+m+k\}.
$$ 
Clearly, the identity $(\mathbf c_{n,m,k}[\rho])_X\approx(\mathbf c_{n,m,k}^\prime[\rho])_X$ coincides with
\begin{equation}
\label{c_{n,m}[rho']=c_{n,m}'[rho']}
\mathbf c_{n,m}[\rho^\prime]\approx \mathbf c_{n,m}^\prime[\rho^\prime]
\end{equation}
for some $\rho^\prime\in S_{n+m}$. Then $\mathbf Q_{r,s}$ satisfies the identities
$$
\mathbf c_{n,m,k}[\rho]\stackrel{\sigma_3}\approx\mathbf pxy\mathbf qx\mathbf ry\mathbf s\stackrel{\eqref{c_{n,m}[rho']=c_{n,m}'[rho']}}\approx\mathbf pyx\mathbf qx\mathbf ry\mathbf s\stackrel{\sigma_3}\approx\mathbf c_{n,m,k}^\prime[\rho],
$$
where
\begin{align*}
&\mathbf p=\prod_{i=1}^n (z_it_i),\ \mathbf q=t\biggl(\prod_{i=n+1}^{n+m} z_it_i\biggr),\ \mathbf r=\prod_{i\rho\le n+m}z_{i\rho}\quad\text{and}\\[-3pt]
&\mathbf s=\biggl(\prod_{i\rho>n+m} z_{i\rho}\biggr)\biggl(\prod_{i=n+m+1}^{n+m+k} t_iz_i\biggr).
\end{align*}

Corollaries~\ref{Cor: fi-perm distributive} and~\ref{Cor: almost fi-perm distributive} are proved.
\end{proof} 

\section{Generalizations of $fi$-permutability}
\label{Sec: generalizations}

Let $\alpha$ and $\beta$ be congruences on an algebra $A$. For any $n$, we put
$$
\alpha\circ_n\beta=\underbrace{\alpha\beta\alpha\beta\cdots}_{n\ \text{letters}}.
$$
Congruences $\alpha$ and $\beta$ are said to $n$-\emph{permute} if $\alpha\circ_n\beta=\beta\circ_n\alpha$. Clearly, 2-permutative congruences are nothing but simply permutative ones. 3-permutative congruences, that is, congruences $\alpha$ and $\beta$ such that $\alpha\beta\alpha=\beta\alpha\beta$ are usually called \emph{weakly permutative}. It is evident that if congruences $\alpha$ and $\beta$ on some algebra $n$-permute then $\alpha\vee\beta=\alpha\circ_n\beta$. A variety of algebras $\mathbf V$ is called \emph{congruence $n$-permutable} [\emph{weakly congruence permutable}] if, on any member of $\mathbf V$, every two congruences $n$-permute [respectively, weakly permute]. 

As well as congruence permutability, the property to be congruence $n$-permutable for some $n$ is very rigid for semigroup or monoid varieties. According to Lipparini~\cite[Corollary~0]{Lipparini-95}, a semigroup variety is congruence $n$-permutable for some $n$ if and only if it is a periodic group variety (this fact with $n=3$ follows also from Jones~\cite[Theorem~1.2(iii)]{Jones-88}). The same is true for monoid varieties. This fact can be easily deduced from Lemma~\ref{Lem: group variety} and results by Freese and Nation~\cite{Freese-Nation-73} and Lipparini~\cite{Lipparini-95}, as well as from Lemma~\ref{Lem: group variety} and~\cite[Lemma~9.13]{Hobby-McKenzie-88}.

For arbitrary $n$, we call a variety of algebras $\mathbf V$ $fi$-$n$-\emph{permutable} if any two fully invariant congruences on every $\mathbf V$-free object $n$-permute. Clearly, $fi$-2-permutable varieties is simply $fi$-permutable ones; $fi$-3-permutable varieties are called \emph{weakly $fi$-permutable}. It is proved by Vernikov~\cite{Vernikov-04c} that every weakly $fi$-permutable semigroup variety is either completely regular or a nil-variety. We provide here an analogous, in a sense, fact concerning monoid varieties.

\begin{lemma}
\label{Lem: weakly fi-permut}
If $\mathbf V$ is a weakly $fi$-permutable variety of monoids, then $\mathbf V$ is either completely regular or aperiodic.
\end{lemma}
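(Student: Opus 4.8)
The plan is to argue by contraposition: assuming $\mathbf V$ is neither completely regular nor aperiodic, I will exhibit two fully invariant congruences on $\mathfrak X^\ast/\theta_{\mathbf V}$ (equivalently, by Lemma~\ref{Lem: lifting}, two fully invariant congruences on $\mathfrak X^\ast$ containing $\theta_{\mathbf V}$) that fail to weakly permute. Since $\mathbf V$ is not aperiodic, it contains a nontrivial group, hence by Lemma~\ref{Lem: group variety} it contains $\mathbf{SL}$, and moreover it contains the variety $\mathbf A_n$ of abelian groups of some prime exponent $n\ge 2$. Since $\mathbf V$ is not completely regular, Corollary~\ref{Cor: cr} gives $\mathbf C_2\subseteq\mathbf V$. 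So both $\mathbf A_n\vee\mathbf{SL}$ and $\mathbf C_2$ are subvarieties of $\mathbf V$, and I would work with $\theta_{\mathbf A_n\vee\mathbf{SL}}$ and $\theta_{\mathbf C_2}$.

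The key computation is to show these two congruences do not weakly permute, i.e. that $\theta_{\mathbf A_n\vee\mathbf{SL}}\,\theta_{\mathbf C_2}\,\theta_{\mathbf A_n\vee\mathbf{SL}}\ne\theta_{\mathbf C_2}\,\theta_{\mathbf A_n\vee\mathbf{SL}}\,\theta_{\mathbf C_2}$. The starting point is the chain already used in the proof of Lemma~\ref{Lem: non-fi-perm}(ii): $x\,\theta_{\mathbf A_n\vee\mathbf{SL}}\,x^{n+1}\,\theta_{\mathbf C_2}\,x^2$, so $(x,x^2)$ lies in $\theta_{\mathbf A_n\vee\mathbf{SL}}\,\theta_{\mathbf C_2}$ and a fortiori in $\theta_{\mathbf A_n\vee\mathbf{SL}}\,\theta_{\mathbf C_2}\,\theta_{\mathbf A_n\vee\mathbf{SL}}$. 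I would then show $(x,x^2)\notin\theta_{\mathbf C_2}\,\theta_{\mathbf A_n\vee\mathbf{SL}}\,\theta_{\mathbf C_2}$. Suppose $x\,\theta_{\mathbf C_2}\,\mathbf w_1\,\theta_{\mathbf A_n\vee\mathbf{SL}}\,\mathbf w_2\,\theta_{\mathbf C_2}\,x^2$. Since $x$ is an isoterm for $\mathbf C_2$ (Lemma~\ref{Lem: x^n is isoterm}, as $\mathbf C_2\nsupseteq\mathbf C_2$... more precisely $x$ is an isoterm for $\mathbf C_2$ because $\mathbf C_2$ does not satisfy $x\approx x^m$), we get $\mathbf w_1=x$. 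Now $x\,\theta_{\mathbf A_n\vee\mathbf{SL}}\,\mathbf w_2$: by Lemma~\ref{Lem: group variety} the content is preserved, so $\mathbf w_2=x^k$ for some $k\in\mathbb N$, and since $\mathbf A_n\vee\mathbf{SL}$ satisfies $x\approx x^k$ only when $k\equiv 1\pmod n$ together with $x^k\approx x$ holding in $\mathbf{SL}$ forcing nothing extra, actually $\mathbf{SL}$ satisfies $x\approx x^k$ for all $k\ge 1$, so the constraint comes only from $\mathbf A_n$: we need $k\equiv1\pmod n$. Finally $x^k\,\theta_{\mathbf C_2}\,x^2$ forces, since $\mathbf C_2$ satisfies $x^2\approx x^3$ but distinguishes $x$ from $x^2$, that $k\ge 2$. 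So $k\equiv1\pmod n$ and $k\ge2$; this is perfectly possible (e.g. $k=n+1$). So the naive choice fails to produce a contradiction, and I must instead track a more refined invariant.

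The fix is to use a word that carries independent information in two letters. Following the spirit of Lemma~\ref{Lem: non-fi-perm}(i)--(ii), consider the pair $(x, xy^{n})$ or better $(xy, x^2y^2)$: one checks $xy\,\theta_{\mathbf A_n\vee\mathbf{SL}}\,x^{n+1}y^{n+1}\,\theta_{\mathbf C_2}\,x^2y^2$ (the first step uses the identities $x\approx x^{n+1}$ of $\mathbf A_n$ and $x\approx x^2$ of $\mathbf{SL}$, which hold in the join on the relevant words since contents match, so both letters can be raised to exponent dividing appropriately). Then for a hypothetical weak-permutation witness $xy\,\theta_{\mathbf C_2}\,\mathbf w_1\,\theta_{\mathbf A_n\vee\mathbf{SL}}\,\mathbf w_2\,\theta_{\mathbf C_2}\,x^2y^2$, the fact that $xy$ is an isoterm for $\mathbf C_2$ (it is linear, hence an isoterm for any monoid variety that does not collapse it — here because $\mathbf C_2$ is generated by a commutative monoid in which $x,y$ can take distinct values, $S(xy)\in\mathbf C_2$, cf. Lemma~\ref{Lem: S(W) in V}) gives $\mathbf w_1=xy$; then $xy\,\theta_{\mathbf A_n\vee\mathbf{SL}}\,\mathbf w_2$ with content preservation forces $\mathbf w_2=x^ay^b$ (up to commuting, since $\mathbf A_n\vee\mathbf{SL}$ is commutative) with $a\equiv b\equiv1\pmod n$ — here the abelian-group part pins down the exponent of \emph{each} letter modulo $n$; and finally $x^ay^b\,\theta_{\mathbf C_2}\,x^2y^2$ forces $a,b\ge2$. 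These are again mutually consistent, so I would instead consider the asymmetric word $(xy,\;x^2y)$: then $xy\,\theta_{\mathbf A_n\vee\mathbf{SL}}\,x^{n+1}y\,\theta_{\mathbf C_2}\,x^2y$, giving $(xy,x^2y)\in\theta_{\mathbf A_n\vee\mathbf{SL}}\theta_{\mathbf C_2}\subseteq\theta_{\mathbf A_n\vee\mathbf{SL}}\theta_{\mathbf C_2}\theta_{\mathbf A_n\vee\mathbf{SL}}$; but if $xy\,\theta_{\mathbf C_2}\,\mathbf w_1\,\theta_{\mathbf A_n\vee\mathbf{SL}}\,\mathbf w_2\,\theta_{\mathbf C_2}\,x^2y$ then $\mathbf w_1=xy$ (isoterm for $\mathbf C_2$), so $\mathbf w_2=x^ay^b$ with $a\equiv b\equiv1\pmod n$, and $x^ay^b\,\theta_{\mathbf C_2}\,x^2y$ forces $a\ge2$ but $b=1$ (since $y$ occurs once on the right and $\mathbf C_2$ preserves single occurrences of a letter — again $S(x^2y)$ or rather the relevant isoterm argument), contradicting $b\equiv1\pmod n$ only if $n\nmid 0$... which is false. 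I therefore expect the genuinely correct witness to be the \emph{three}-step object forcing the group-exponent constraint on the middle term in a way $\mathbf C_2$ on both ends cannot undo; concretely I would take $x\,\theta_{\mathbf A_n\vee\mathbf{SL}}\,x^{n+1}\,\theta_{\mathbf C_2}\,x^2$ and prove $(x,x^2)\notin\theta_{\mathbf C_2}\theta_{\mathbf A_n\vee\mathbf{SL}}\theta_{\mathbf C_2}$ by observing that in any such chain the first $\mathbf C_2$-step fixes $x$, the $\mathbf A_n\vee\mathbf{SL}$-step replaces $x$ by $x^k$ with $k\equiv1\pmod n$, and the last $\mathbf C_2$-step takes $x^k$ to $x^2$, which in the free monoid of $\mathbf C_2$ (where $x,x^2,x^3=x^2,\dots$ are the only powers and $x\ne x^2=x^3$) requires $k\ge 2$; since $n\ge 2$ and $k\equiv1\pmod n$ forces $k\in\{1\}$ precisely when we additionally know $k< n+1$... and here is the subtlety — in the word problem one may pick $k$ as large as one likes, so I will need to combine this with Lemma~\ref{Lem: non-fi-perm}(ii) itself, which already shows $\theta_{\mathbf A_n\vee\mathbf{SL}}$ and $\theta_{\mathbf C_2}$ do not \emph{permute}; the main obstacle is that non-permutation does not immediately give non-weak-permutation, so the real work is to locate a pair in $\theta_{\mathbf A_n\vee\mathbf{SL}}\theta_{\mathbf C_2}$ (hence in the left weak product) but provably outside $\theta_{\mathbf C_2}\theta_{\mathbf A_n\vee\mathbf{SL}}\theta_{\mathbf C_2}$, which I anticipate requires a word in two letters exploiting that $\mathbf C_2$ cannot change the number of occurrences of a letter past the threshold $2$ while $\mathbf A_n$ can only change occurrence counts by multiples of $n$ — and $2\not\equiv 0\pmod n$ when $n>2$, with the case $n=2$ handled separately by a three-letter variant. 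Once the right pair $(\mathbf a,\mathbf b)$ is fixed, the verification is a routine isoterm/content bookkeeping using Lemmas~\ref{Lem: lifting}, \ref{Lem: group variety}, \ref{Lem: S(W) in V} and \ref{Lem: x^n is isoterm}, and I would then conclude that $\mathbf V\supseteq(\mathbf A_n\vee\mathbf{SL})\vee\mathbf C_2$ is not weakly $fi$-permutable by Lemma~\ref{Lem: lifting}, completing the contrapositive.
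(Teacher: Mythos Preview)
Your proposal never closes: you try the pairs $(x,x^2)$, $(xy,x^2y^2)$, and $(xy,x^2y)$, correctly observe that each one \emph{is} in $\theta_{\mathbf C_2}\theta_{\mathbf A_n\vee\mathbf{SL}}\theta_{\mathbf C_2}$ (for instance via $xy\,\theta_{\mathbf C_2}\,xy\,\theta_{\mathbf A_n\vee\mathbf{SL}}\,x^{n+1}y\,\theta_{\mathbf C_2}\,x^2y$), and end by promising that ``once the right pair is fixed, the verification is routine''. But no such pair is ever produced, and none of your exponent-bookkeeping ideas will work: since $(\mathbf A_n\vee\mathbf{SL})\wedge\mathbf C_2=\mathbf{SL}$, the join $\theta_{\mathbf A_n\vee\mathbf{SL}}\vee\theta_{\mathbf C_2}$ equals $\theta_{\mathbf{SL}}$, and for any words $\mathbf a,\mathbf b$ with $\con(\mathbf a)=\con(\mathbf b)$ you can always reach $\mathbf b$ from $\mathbf a$ in three steps of the kind you analyze (raise all exponents by multiples of $n$ past $2$, then collapse in $\mathbf C_2$). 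In short, your choice of the pair $(\theta_{\mathbf A_n\vee\mathbf{SL}},\theta_{\mathbf C_2})$ makes the problem hopeless.

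The paper's proof avoids this by working with $\theta_{\mathbf A_n}$ itself rather than $\theta_{\mathbf A_n\vee\mathbf{SL}}$. The point is that $\mathbf A_n\wedge\mathbf C_2=\mathbf T$, so $\theta_{\mathbf A_n}\vee\theta_{\mathbf C_2}$ is the \emph{universal} relation $\nabla$ on $\mathfrak X^\ast$. If these two congruences weakly permuted we would have $\theta_{\mathbf C_2}\theta_{\mathbf A_n}\theta_{\mathbf C_2}=\nabla$, and in particular $(x,y)\in\theta_{\mathbf C_2}\theta_{\mathbf A_n}\theta_{\mathbf C_2}$ for two distinct letters $x,y$. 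But $x$ and $y$ are isoterms for $\mathbf C_2$ (Lemma~\ref{Lem: x^n is isoterm}), so any chain $x\,\theta_{\mathbf C_2}\,\mathbf w_1\,\theta_{\mathbf A_n}\,\mathbf w_2\,\theta_{\mathbf C_2}\,y$ forces $\mathbf w_1=x$ and $\mathbf w_2=y$, whence $x\,\theta_{\mathbf A_n}\,y$, which is absurd. The trick you were missing is to let the witness pair live outside the content-preserving world altogether; joining $\mathbf{SL}$ into one of the varieties destroys exactly this possibility. (Also, your sentence ``it contains a nontrivial group, hence by Lemma~\ref{Lem: group variety} it contains $\mathbf{SL}$'' is a non sequitur; Lemma~\ref{Lem: group variety} says $\mathbf{SL}\subseteq\mathbf V$ iff $\mathbf V$ is not a group variety, which here follows from $\mathbf V$ not being completely regular.)
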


\begin{proof}
Suppose that $\mathbf V$ is neither completely regular nor aperiodic. Then $\mathbf C_2\subseteq\mathbf V$ by Corollary~\ref{Cor: cr} and $\mathbf A_n\subseteq\mathbf V$ for some $n>1$. We denote by $\nabla$ the universal relation on the free monoid $\mathfrak X^\ast$. Then $\theta_{\mathbf A_n}\vee\theta_{\mathbf C_2}=\nabla$ because $\mathbf A_n\wedge\mathbf C_2=\mathbf T$. By the analog of Lemma~\ref{Lem: lifting} for weakly permutative equivalences, the congruences $\theta_{\mathbf A_n}$ and $\theta_{\mathbf C_2}$ weakly permute, whence $\nabla=\theta_{\mathbf A_n}\vee\theta_{\mathbf C_2}=\theta_{\mathbf C_2}\theta_{\mathbf A_n}\theta_{\mathbf C_2}$. Therefore, $(x,y)\in\theta_{\mathbf C_2}\theta_{\mathbf A_n}\theta_{\mathbf C_2}$ for any letters $x$ and $y$. Thus, $x\,\theta_{\mathbf C_2}\,\mathbf w_1\,\theta_{\mathbf A_n}\,\mathbf w_2\,\theta_{\mathbf C_2}\,y$ for some words $\mathbf w_1$ and $\mathbf w_2$. Now Lemma~\ref{Lem: x^n is isoterm} applies with the conclusion that $\mathbf w_1=x$ and $\mathbf w_2=y$. Therefore, $x\,\theta_{\mathbf A_n}\,y$ that is evidently not the case.
\end{proof}

Weakly $fi$-permutable completely regular semigroup varieties are completely determined in Vernikov~\cite{Vernikov-04c}. The following problem naturally arises.

\begin{problem}
\label{Prob: weakly fi-permut}
Describe 
\begin{itemize}
\item[(i)] weakly $fi$-permutable completely regular monoid varieties;
\item[(ii)] weakly $fi$-permutable aperiodic monoid varieties.
\end{itemize}
\end{problem}

We do not know even whether there exists a completely regular monoid variety that is not weakly $fi$-permutable.

It follows from Lipparini~\cite[Theorem~1]{Lipparini-95} that if a variety of algebras $\mathbf V$ is $fi$-$n$-permutable for some $n$, then the lattice $L(\mathbf V)$ satisfies some non-trivial lattice identity. There are no any additional information about $fi$-$n$-permutable semigroup or monoid varieties with $n>3$ so far. The following observation shows that the analog of Lemma~\ref{Lem: weakly fi-permut} for $fi$-$n$-permutable monoid varieties with $n>3$ is false.

\begin{remark}
\label{fi-4-permut variety}
If $p$ is a prime number, then the variety $\mathbf A_p\vee\mathbf C_2$ is $fi$-$4$-permutable.
\end{remark} 

\begin{proof}
Let $\mathbf X,\mathbf Y\subseteq\mathbf A_p\vee\mathbf C_2$ and $(\mathbf u,\mathbf v)\in\theta_{\mathbf X}\vee\theta_{\mathbf Y}$. By the analog of Lemma~\ref{Lem: lifting} for 4-permutative equivalences, it suffices to verify that $(\mathbf u,\mathbf v)\in\theta_{\mathbf X}\theta_{\mathbf Y}\theta_{\mathbf X}\theta_{\mathbf Y}$. Results by Head~\cite{Head-68} imply that the lattice $L(\mathbf A_p\vee\mathbf C_2)$ has the form shown in Fig.~\ref{Fig: L(A_p vee C_2)}. In view of Lemma~\ref{Lem: comparable is fi-permut}, we may assume without loss of generality that either one of the varieties $\mathbf X$ or $\mathbf Y$ coincides with $\mathbf A_p$, while another one lies in the set $\{\mathbf{SL},\mathbf C_2\}$ or one of these two varieties coincides with $\mathbf A_p\vee\mathbf{SL}$, while another one equals $\mathbf C_2$. Suppose that $\mathbf X=\mathbf A_p$ and $\mathbf Y\in\{\mathbf{SL},\mathbf C_2\}$. Then
\begin{equation}
\label{from u to v}
\mathbf u\,\theta_{\mathbf X}\,\mathbf u^{p+1}\mathbf v^p\,\theta_{\mathbf Y}\,\mathbf u^p\mathbf v^{p+1}\,\theta_{\mathbf X}\,\mathbf v,
\end{equation}
whence $(\mathbf u,\mathbf v)\in\theta_{\mathbf X}\theta_{\mathbf Y}\theta_{\mathbf X}$. Further, if $\mathbf X=\mathbf A_p\vee\mathbf{SL}$ and $\mathbf Y=\mathbf C_2$ then the identity $\mathbf u\approx\mathbf v$ holds in $\mathbf X\wedge\mathbf Y=\mathbf{SL}$. Then Lemma~\ref{Lem: group variety} implies that $\con(\mathbf u)=\con(\mathbf v)$. Therefore,~\eqref{from u to v} holds, whence $(\mathbf u,\mathbf v)\in\theta_{\mathbf X}\theta_{\mathbf Y}\theta_{\mathbf X}$ again. Finally, if either  $\mathbf Y=\mathbf A_p$ and $\mathbf X\in\{\mathbf{SL},\mathbf C_2\}$ or $\mathbf Y=\mathbf A_p\vee\mathbf{SL}$ and $\mathbf X=\mathbf C_2$, then the same arguments as above show that $(\mathbf u,\mathbf v)\in\theta_{\mathbf Y}\theta_{\mathbf X}\theta_{\mathbf Y}$. Thus,  
$$
(\mathbf u,\mathbf v)\in\theta_{\mathbf X}\theta_{\mathbf Y}\theta_{\mathbf X}\cup\theta_{\mathbf Y}\theta_{\mathbf X}\theta_{\mathbf Y}\subseteq\theta_{\mathbf X}\theta_{\mathbf Y}\theta_{\mathbf X}\theta_{\mathbf Y}
$$ 
in either case, and we are done.
\end{proof}

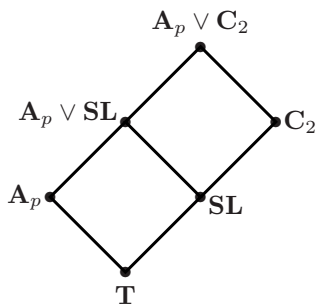
\begin{figure}[htb]
\unitlength=1mm
\linethickness{0.4pt}
\begin{center}
\begin{picture}(42,40)
\put(6,15){\circle*{1.33}}
\put(16,5){\circle*{1.33}}
\put(16,25){\circle*{1.33}}
\put(26,15){\circle*{1.33}}
\put(26,35){\circle*{1.33}}
\put(36,25){\circle*{1.33}}
\gasset{AHnb=0,linewidth=0.4}
\drawline(16,25)(6,15)(16,5)(26,15)(16,25)(26,35)(36,25)(26,15)
\put(5,15){\makebox(0,0)[rc]{$\mathbf A_p$}}
\put(26,38){\makebox(0,0)[cc]{$\mathbf A_p\vee\mathbf C_2$}}
\put(15,26){\makebox(0,0)[rc]{$\mathbf A_p\vee\mathbf{SL}$}}
\put(37,25){\makebox(0,0)[lc]{$\mathbf C_2$}}
\put(27,14){\makebox(0,0)[lc]{$\mathbf{SL}$}}
\put(16,2){\makebox(0,0)[cc]{$\mathbf T$}}
\end{picture}
\end{center}
\caption{The lattice $L(\mathbf A_p\vee\mathbf C_2)$}
\label{Fig: L(A_p vee C_2)}
\end{figure}

It is natural to define \emph{almost $fi$-$n$-permutable} [\emph{almost weakly $fi$-permutable}] varieties of semigroups or monoids as varieties on whose free objects any two fully invariant congruences contained in the least semilattice congruence $n$-permute [respectively, weakly permute]. A classification of almost weakly $fi$-permutable semigroup varieties in some wide partial case was provided in Vernikov~\cite{Vernikov-04b}. A minor inaccuracy in this result is fixed by Vernikov and Shaprynski\v{\i}~\cite{Vernikov-Shaprynskii-14}. Almost $fi$-$n$-permutable semigroup varieties with $n>3$ as well as almost $fi$-$n$-permutable monoid varieties with $n>2$ are not examined so far.

\end{document}